\theoremstyle{plain}
\newtheorem*{theorem*}{Theorem}
\newtheorem{theorem}{Theorem}[section] 
\newtheorem{lemma}[theorem]{Lemma}
\newtheorem{proposition}[theorem]{Proposition}
\newtheorem{corollary}[theorem]{Corollary}
\newtheorem{assumption}[theorem]{Assumption}
\newtheorem*{assumption*}{Assumption}
\theoremstyle{definition}
\newtheorem{definition}[theorem]{Definition}
\newtheorem{remark}[theorem]{Remark}
\newtheorem{fact}[theorem]{Fact}
\numberwithin{equation}{section}
\newcommand{\ZZ}{\mathbb{Z}}
\newcommand{\RR}{\mathbb{R}}
\newcommand{\CC}{\mathbb{C}}
\newcommand{\Ordo}{\mathcal{O}}
\newcommand{\EE}{\mathbb{E}}
\newcommand{\PP}{\mathbb{P}}
\DeclareMathOperator{\im}{Im}
\DeclareMathOperator{\re}{Re}
\DeclareMathOperator{\diag}{diag}
\renewcommand{\d}{\,\mathrm{d}}
\renewcommand{\i}{\mathrm{i}}
\newcommand{\e}{\mathrm{e}}
\newcommand{\quadgraph}{\tikz[scale=.08]{\draw (1,0)--(0,1)--(-1,0)--(0,-1)--(1,0)}}
  \tikzset{
  compass/.pic = {
    \foreach[count=\i,evaluate={\m=div(\i-1,4);\a=90*\i-45*(\m+1)}] \d in {NE,NW,SW,SE,E,N,W,S}{
      \filldraw[pic actions,rotate=\a,scale=.7+.3*\m] (0,0) -- (45:1)--(0:3) node[scale=3, transform shape,rotate=-90,above]{\d};
      \filldraw[pic actions,fill=white,rotate=\a,scale=.7+.3*\m] (0,0) -- (-45:1)--(0:3)--cycle;
    };
  }
}
  \tikzset{
  compassOp/.pic = {
    \foreach[count=\i,evaluate={\m=div(\i-1,4);\a=90*\i-45*(\m+1)}] \d in {NW,NE,SE,SW,W,N,E,S}{
      \filldraw[pic actions,rotate=\a,scale=.7+.3*\m] (0,0) -- (45:1)--(0:3) node[scale=3, transform shape,rotate=-90,above]{\d};
      \filldraw[pic actions,fill=white,rotate=\a,scale=.7+.3*\m] (0,0) -- (-45:1)--(0:3)--cycle;
    };
  }
}
\DeclareMathOperator{\Tr}{Tr}
\DeclareMathOperator{\Log}{Log}
\DeclareMathOperator{\Int}{Int}
\DeclareMathOperator{\adj}{adj}
\DeclareMathOperator{\one}{\mathds{1}}
\title{Geometry of the doubly periodic Aztec dimer model}
\author{Tomas Berggren\footnote{Department of Mathematics, Massachusetts Institute of Technology, 77 Massachusetts Ave., Cambridge, MA 02139, USA. E-mail: tomasb@mit.edu}
	\and Alexei Borodin\footnote{Department of Mathematics, Massachusetts Institute of Technology, 77 Massachusetts Ave., Cambridge, MA 02139, USA. E-mail: borodin@math.mit.edu}}
\date{}
\begin{document}

\maketitle

\begin{abstract}
The purpose of the present work is to provide a detailed asymptotic analysis of the~$k\times\ell$ doubly periodic Aztec diamond dimer model of growing size for any~$k$ and~$\ell$ and under mild conditions on the edge weights. We explicitly describe the limit shape and the 'arctic' curves that separate different phases, as well as prove the convergence of local fluctuations to the appropriate translation-invariant Gibbs measures away from the arctic curves. We also obtain a homeomorphism between the rough region and the amoeba of an associated Harnack curve, and illustrate, using this homeomorphism, how the geometry of the amoeba offers insight into various aspects of the geometry of the arctic curves. In particular, we determine the number of frozen and smooth regions and the number of cusps on the arctic 
curves.

Our framework essentially relies on three somewhat distinct areas: (1) Wiener-Hopf factorization approach to computing dimer correlations; (2) Algebraic geometric `spectral' parameterization of periodic dimer models; and (3) Finite-gap theory of linearization of (nonlinear) integrable partial differential and difference equations on the Jacobians of the associated algebraic curves. In addition, in order to access desired asymptotic results we develop a novel approach to steepest descent analysis on Riemann surfaces via their amoebas. 
\end{abstract}

\tableofcontents

\section{Introduction}

\subsection{Preface}

 \begin{figure}
 \begin{center}
  \begin{tikzpicture}[scale=.5]
    \draw (0,0) node { \includegraphics[trim={16cm, 2cm, 14cm, 1cm}, clip, scale=.3]{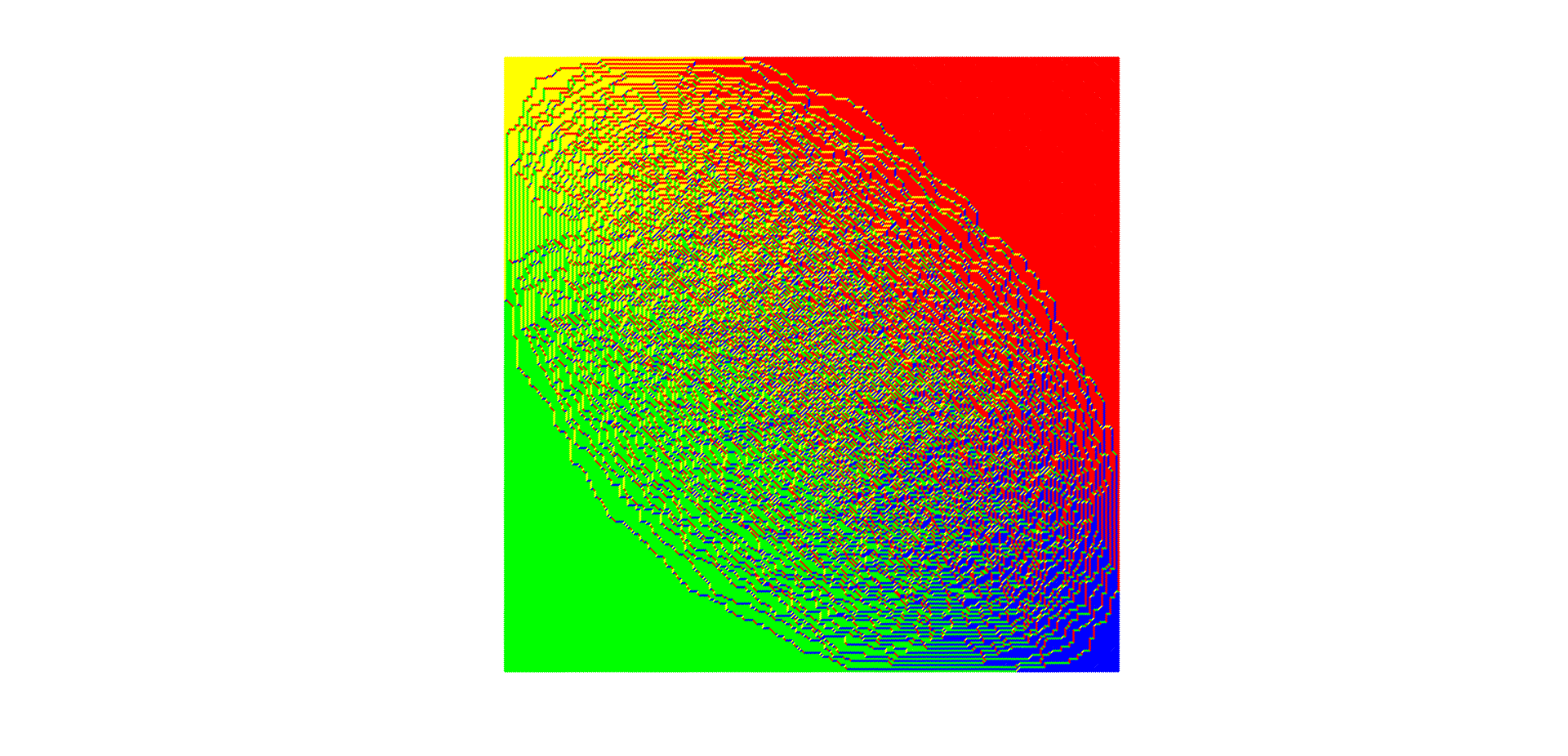}};
    %Compass
    \pic[scale=.2, rotate=-45] at (8,4.5) {compass};
%\draw [ultra thick,->](7.5,3)--(9,4.5);
%\draw [ultra thick,->](8.5,3)--(7,4.5);
%\draw (9,4.5) node[above]{north};
%\draw (7,4.5) node[above] {west};
   \end{tikzpicture}   
     \begin{tikzpicture}[scale=.5]
    \draw (0,0) node { \includegraphics[trim={1cm, 1cm, 1cm, 1cm}, clip, scale=.55]{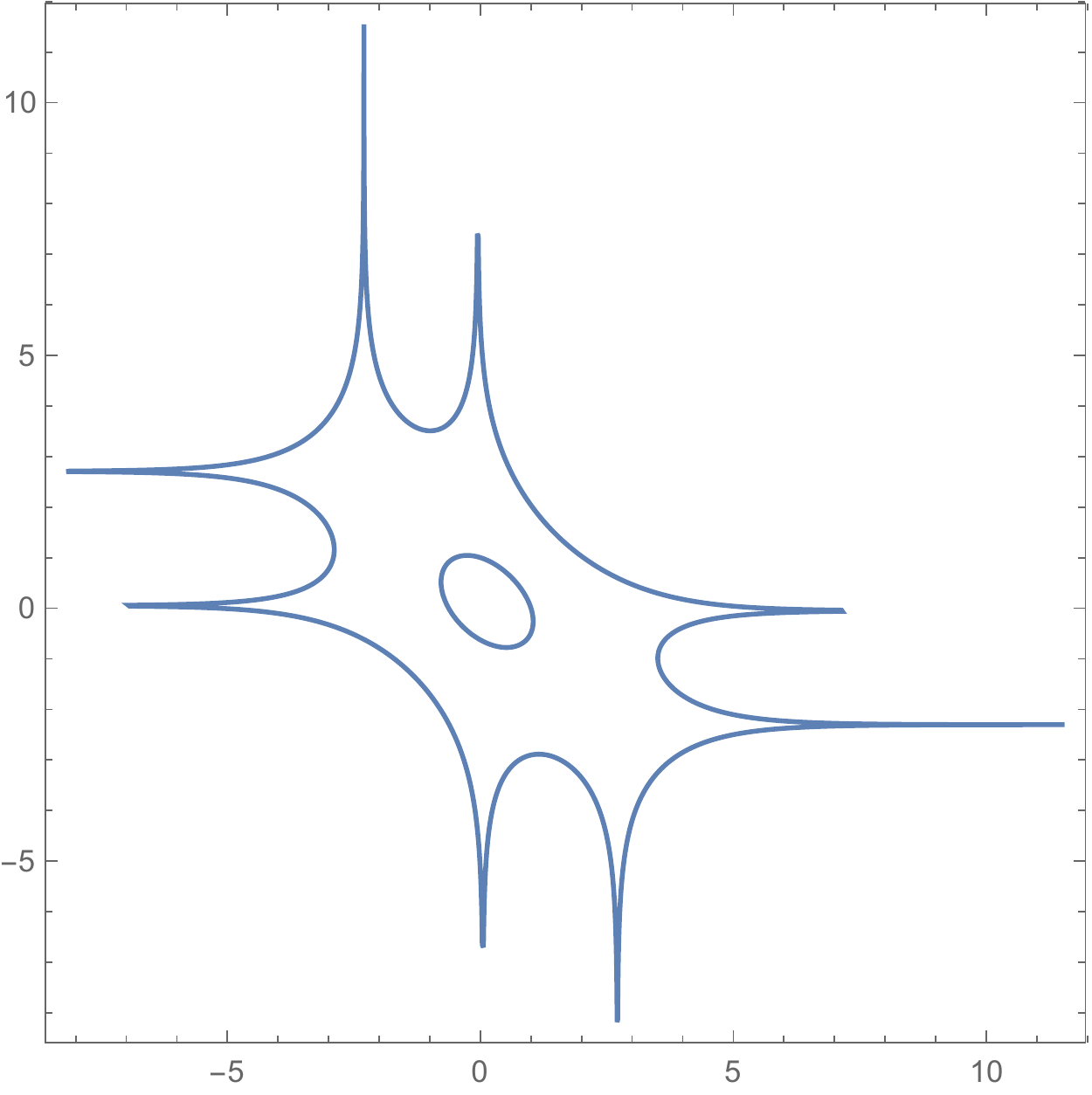}};
       %Compass
       \pic[scale=.2, rotate=-45] at (5,5) {compassOp};
%\draw [ultra thick,->](2.5,4)--(4,5.5);
%\draw [ultra thick,->](2.5,5)--(4,3.5);
%\draw (4,5.5) node[above]{north};
%\draw (4,3.5) node[below] {west};
   \end{tikzpicture}
 \end{center}
  \caption{A randomly generated tiling (via so-called domino-shuffling, see Elkies--Kuperberg--Larsen--Propp~\cite{EKLP92b}, Jockusch-Propp-Shor~\cite{JPS98} and Propp~\cite{Pro03}) and the associated amoeba. The periodicity is~$2\times 2$, the edge weights are given by~$\alpha_{2,2}=1.5$,~$\beta_{1,1}=.95$,~$\beta_{2,2}=.1$,~$\gamma_{2,1}=.95$,~$\gamma_{1,2}=.1$, and all the other weights are~$1$. Note that the orientation of the compass in the right picture is opposite to the orientation of the compass in the left picture. The description of the arctic curves is influenced by this fact, see the discussion after Definition~\ref{def:regions} and Corollaries~\ref{cor:arctic_curve_slopes},~\ref{cor:convex} and~\ref{cor:cusps}. 
  \label{fig:intro:tiling_amoeba1}}
\end{figure}

Planar \emph{dimer models}, also known as \emph{domino tiling models}, have been an active area of research over the past few decades. The origin of this field traces back to the work of Kasteleyn~\cite{Kas61} and Temperley--Fisher~\cite{TF61} in the early 1960s. A \emph{dimer covering}, or a \emph{perfect matching}, of a finite graph is a subset of edges called \emph{dimers} such that each vertex in the graph is covered by exactly one dimer. A dimer model is a probability measure on the set of all possible dimer configurations of the graph. Typically, the probability measure is defined by assigning positive \emph{edge weights} to the graph's edges. The probability of a dimer configuration is then defined to be proportional to the product of the edge weights of the dimers contained in the configuration. 

In the present work we focus on the subgraphs of the square lattice known as \emph{Aztec diamonds}, see Figure~\ref{fig:intro:aztec_diamond}, introduced by Elkies--Kuperberg--Larsen--Propp~\cite{EKLP92a, EKLP92b}, and assign the edge weights in a \emph{doubly periodic} fashion -- the edge weights are periodic in two coordinate directions. We use the notation~$k\times  \ell$ to indicate that the edge weights are~$k$-periodic in the vertical direction and~$\ell$-periodic in the horizontal direction. When both~$k$ and~$\ell$ are at least 2, three types of macroscopic regions might appear in the limit of large domains -- frozen, rough and smooth regions. This is in contrast with the uniform setting on the square lattice, where only frozen and rough regions are expected. The first instance of a model with all three types of regions was discovered in the physics literature by Nienhuis--Hilhorst--Blöte~\cite{NHB84}. In the context of Aztec diamonds, doubly periodic edge weights were first considered by Chhita--Young, Chhita--Johansson~\cite{CJ16, CY14}, and Di Francesco--Soto-Garrido~\cite{FSG14}. For a general introduction to dimer models, we refer to the lecture notes by Kenyon~\cite{Ken04} and Gorin~\cite{Gor21}.

A dimer configuration on a bipartite graph can be characterized by its \emph{height function}, as introduced independently by Thurston~\cite{Thu90} in mathematics, and by Levitov~\cite{Lev90} in the physics literature. When the size of the graph tends to infinity, or equivalently, the lattice mesh size tends to zero, Cohn--Kenyon--Propp~\cite{CKP00} proved that the random height function concentrates around a \emph{limit shape}, and that this limit shape is a solution to a variational problem, see also Kenyon--Okounkov--Sheffield~\cite[Section 6]{KOS06} and references therein, and Kuchumov~\cite{Kuc17}. The study of the variational problem was pioneered by Kenyon--Okounkov~\cite{KO07} and significantly deepened by Astala--Duse--Prause--Zhong~\cite{ADPZ20} providing a complete characterization of the regularity of the limit shape.

Once the existence of a limit shape is established, a natural progression is to investigate fluctuations. The present paper primarily focuses on local fluctuations in the bulk (on the lattice scale). Groundbreaking works of Sheffield~\cite{She05} and Kenyon--Okounkov--Sheffield~\cite{KOS06} proved that in the case of periodic weightings of planar bipartite graphs, for each \emph{slope} within a certain corresponding \emph{Newton polygon}, there exists an \emph{ergodic translation-invariant Gibbs measure} with that slope which is unique for all non-extreme slopes.\footnote{The slope describes the asymptotically linear behavior of the corresponding height function.} The Gibbs measures from three different phases are characterized by the behavior of dimer-dimer correlations: In the \emph{frozen} (or \emph{solid}) phase, the correlation between two dimers does not decay as the distance increases; in the \emph{rough disordered} (or \emph{liquid}) phase, it decays polynomially; and in the \emph{smooth disordered} (or \emph{gas}) phase, it decays exponentially~\cite{KOS06}. All three phases can be observed in Figures~\ref{fig:intro:tiling_amoeba1} and~\ref{fig:intro:tiling_amoeba2}. It was conjectured in~\cite{CKP00} and also in~\cite{KOS06}, that any dimer model must converge, in the local limit, to the ergodic translation-invariant Gibbs measure with slope determined by the slope of the limit shape at the corresponding point. 

For uniform weights, several checks of this conjecture across various, yet rather restrictive classes of domains within square and hexagonal lattices had been achieved over the last two decades, until recently Aggarwal~\cite{Agg19} established the conjecture for the hexagonal lattice for general simply connected domains. On the other hand, research on models with doubly periodic edge weights, particularly those that lead to all phases, is still in an early stage. The same asymptotic techniques seem to be no longer directly applicable, and, in fact, the only model for which the limiting local statistics have been obtained so far is the Aztec diamond with certain (limited) choices of edge weights, see Chhita--Johansson~\cite{CJ16}, Duits--Kuijlaars~\cite{DK21}, Berggren~\cite{Ber21}, and Borodin--Duits~\cite{BD22}. 

The main contribution of the present paper is a detailed asymptotic analysis of the~$k\times \ell$ doubly periodic Aztec diamond dimer model for any~$k$ and~$\ell$, and with mild (likely removable) conditions on the edge weights. We explicitly describe the limit shape and the `arctic' curves that separate different phases, as well as prove the convergence of local fluctuations to the appropriate translation-invariant Gibbs measures away from the arctic curves. In addition, we obtain a homeomorphism between the rough region and an associated amoeba, and illustrate, using this homeomorphism, how the geometry of the amoeba offers insight into various aspects of the geometry of the arctic curves. In particular, we determine the number of frozen and smooth regions, the number of cusps on the arctic curves, and establish local convexity of the rough region\footnote{We are very grateful to Nicolai Reshetikhin for drawing our attention to the latter property.}. To the best of our knowledge, no previous work has been able to achieve any of the above\footnote{With the exception of the local concavity, which was recently established in~\cite{ADPZ20}.} in any setting with generic parameters and with all three phases present. 

Our framework essentially relies on three somewhat distinct areas. Let us provide a brief description of these areas and explain how we utilize them.

The first area is asymptotic analysis of orthogonal polynomials, and it has played a key role in random matrix models since the 1960's, see the classical book of Mehta~\cite{Meh04} for early developments. In the mid-1990's, the introduction of a powerful new method of steepest descent for Riemann--Hilbert problems (close relatives of Wiener--Hopf factorizations) substantially expanded the range of analyzable models, see the book of Deift~\cite{Dei99a} for an exposition. When discrete models of random matrix type, including dimer models, came into light in the late 1990's, both classical and Riemann--Hilbert approaches found immediate applications. In particular, Johansson employed the classical discrete Krawtchouk orthogonal polynomials for asymptotic analysis of the dimers on the Aztec diamond with uniform weights with great success~\cite{Joh01, Joh02, Joh05a}. However, the case of periodic edge weights with both periods greater than 1 turned out to be much more difficult, and it was not until fairly recently that the orthogonal polynomials and Riemann--Hilbert approach was successfully applied by Duits--Kuijlaars~~\cite{DK21}. An essential novelty is that \emph{matrix-valued} orthogonal polynomials are needed. The associated Riemann--Hilbert problem was analyzed in~\cite{DK21} to study the two-periodic Aztec diamond, and later, to study lozenge tilings of hexagons by Charlier--Duits--Kuijlaars--Lenells~\cite{CDKL19} and Charlier~\cite{Cha20a}. It was proved by Berggren--Duits~\cite{BD19} (and re-proved by Chhita--Duits~\cite{CD22} using more of a classical approach, cf. Widom~\cite{Wid74}) that in certain cases, the study of matrix-valued orthogonal polynomials reduces to the study of Wiener--Hopf factorizations of matrix-valued symbols. This was used in~\cite{Ber21} and~\cite{BD22} to analyze the Aztec diamond with certain specific sub-classes of doubly periodic edge weights.

The second area is the algebraic geometric approach to dimer models that was pioneered two decades ago by Kenyon--Okounkov--Sheffield in~\cite{KO06, KOS06}. They established a remarkable correspondence between dimers on weighted periodic graphs and their \emph{spectral data} consisting of a \emph{spectral curve} and a point of its Jacobian. It turned out that the possible spectral curves are exactly the so-called \emph{Harnack} curves, singled out earlier in the work of Mikhalkin~\cite{Mik00}. The invertibility of the \emph{spectral transform}, consisting in associating the spectral data to a dimer model, is a deep fact that required further work and led to new developments, see Goncharov--Kenyon~\cite{GK13}, Fock~\cite{Foc15}, and George--Goncharov--Kenyon~\cite{GGK22}. An elegant exposition of these and further results is available in Boutillier--Cimasoni--de Tilière~\cite{BCT22}. We will use the spectral parameterization in our approach. 

The third area is an algebraic geometric take on integrable partial differential equations. A beautiful geometric theory of the so-called \emph{finite gap solutions} of such equations dates back to the 1970s,  see a book-length exposition by Belokolos--Bobenko--Enol'skii--Its--Matveev~\cite{BBEIM94} with historical notes and references therein. The matrix case, which is most relevant for us, had been originally developed by Dubrovin~\cite{Dub76, Dub77}, Its~\cite{Its76}, and Krichever~\cite{Kri76, Kri77}. In fact, what we mostly need is a later discrete time off-spring of the finite gap theory whose detailed exposition and applications were given by Moser--Veselov~\cite{MV91} (see also Deift--Li--Tomei~\cite{DLT92}). In that work, it was shown how an isospectral flow on certain quadratic matrix polynomials, obtained by repeatedly moving a right linear divisor of the polynomial with a given spectrum to the left side, is linearizable on the Jacobian (or the Prym variety) of the corresponding spectral curve. The main goal of~\cite{MV91} was to describe the dynamics of certain discrete analogs of classical integrable systems in terms of Abelian (or elliptic, for curves of genus 1) functions.

Our starting point is a result of~\cite{BD19} from the first area above that provides an expression for the correlation kernel of a determinantal point process which is equivalent to the Aztec dimer model. The kernel has the form of a double contour integral with the integrand containing the factors of a Wiener--Hopf factorization of a high power of a  matrix-valued polynomial. The key challenge lies in obtaining an expression for this factorization that would enable a steepest descent analysis of the contour integral. In contrast to classical models, where this matrix factorization reduces to a scalar factorization problem, the factors of the polynomial do not have to commute. In~\cite{BD19}, an iterative procedure for obtaining the Wiener--Hopf factorization was discussed. This iterative procedure was later employed in~\cite{Ber21} to study a class of~$2\times \ell$-periodically weighted Aztec diamonds. In that class of models the procedure actually terminated, which led to an expression that was suitable for asymptotic analysis. A~$2\times 2$-periodic Aztec dimer model that falls outside that class was studied in~\cite{BD22}. In the setting of that paper, the iterative procedure does not terminate, necessitating the need for additional ideas to be discussed next.

A novel viewpoint presented in~\cite{BD22} consisted in realizing the above iterative procedure as an integrable system and solving it through a mapping to a linear flow on an elliptic curve. It came straight from the second area (finite-gap solutions) discussed above. Generically, this flow is ergodic. However, at torsion points, when the flow is periodic (which is precisely when the iterative procedure terminates), the authors demonstrated that the resulting expression for the correlation kernel was suitable for asymptotic analysis. In the present paper, that initial approach is brought to fruition on higher levels in three different ways: (1) The algebraic geometric power of the third area from above (spectral transform of the dimer models) allowed us to handle arbitrary periodicity, with higher genus Riemann surfaces\footnote{more specifically, \emph{M-curves}.} replacing the elliptic curve; (2) A novel take on the steepest descent analysis via amoebas removed the complexity in contour deformation on multi-sheeted surfaces; and (3) The eigenvector interpretation of the iterative re-factorization procedure resulted in effective asymptotic control of any arising linear flow on the tori-Jacobians, including the ergodic ones. 

It was noted already in~\cite{Ber21} that the spectral curve naturally associated with the determinantal processes/non-intersecting paths perspective, coincides with the spectral curve from the spectral data of the dimer model, although this observation was not used there. In the present work, we were able to employ it to our advantage. Specifically, it implies, via~\cite{KOS06}, that the spectral curve is always a Harnack curve. In its turn, this allows us to view the spectral curve in terms of the corresponding amoeba, which provides massive benefits for geometric, essentially visual understanding of the critical points of the relevant action functional and the steepest descent and ascent trajectories in the steepest descent analysis. A remarkable fact is that the complexity of the arguments ends up being independent of the complexity (genus) of the underlying Riemann surface.
Furthermore, we demonstrate that the connection between the viewpoints of the non-intersecting paths and the dimers extends beyond the level of spectral curves, roughly speaking, lifting it from determinants of matrices to the matrices themselves. This observation, together with deep facts from area three about zeros of the adjugate of the magnetically altered Kasteleyn matrix, turns out to be critical for our understanding and controlling the flow of the integrable system. 

Let us now discuss our results in more detail.

 \begin{figure}
 \begin{center}
  \begin{tikzpicture}[scale=.5]
    \draw (0,0) node { \includegraphics[trim={16cm, 2cm, 14cm, 1cm}, clip, scale=.3]{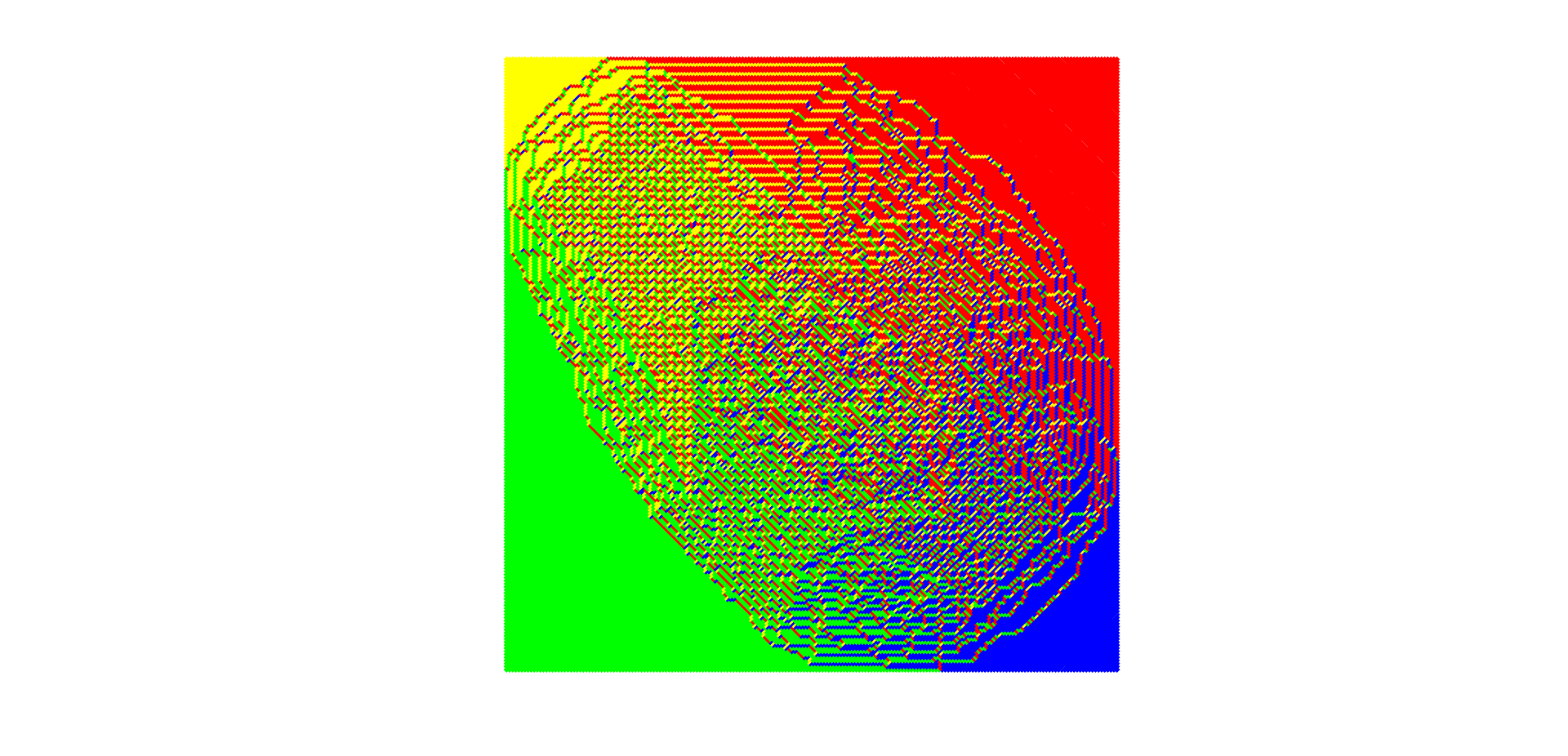}};
    %Compass
    \pic[scale=.2, rotate=-45] at (8,4.5) {compass};
%\draw [ultra thick,->](7.5,3)--(9,4.5);
%\draw [ultra thick,->](8.5,3)--(7,4.5);
%\draw (9,4.5) node[above]{north};
%\draw (7,4.5) node[above] {west};
   \end{tikzpicture}   
 \begin{tikzpicture}[scale=1]
  \tikzset{->-/.style={decoration={
  markings, mark=at position .5 with {\arrow{stealth}}},postaction={decorate}}}
    \draw (0,0) node {\includegraphics[trim={1cm, 1cm, 1cm, 1cm}, clip, angle=180, scale=.45]{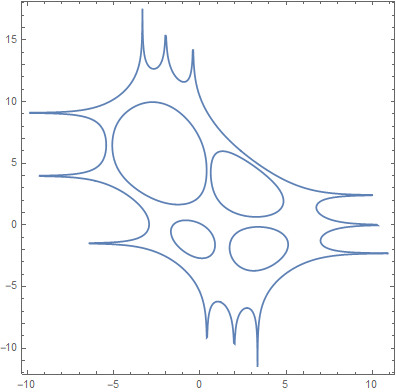}};
   %Compass
    \pic[scale=.2, rotate=-45] at (2.7,2.4) {compassOp};
%\draw [ultra thick,->](1,2)--(1.75,2.75);
%\draw [ultra thick,->](1,2.5)--(1.75,1.75);
%\draw (1.75,2.75) node[above]{north};
%\draw (1.75,1.75) node[below] {west};
	%A_0
%	\draw (.6,1.6) node {$A_{0,1}$};
%	\draw (-1.7,1.9) node {$A_{0,4}$};    
%	\draw (-1.2,-1.) node {$A_{0,7}$};    
%	\draw (2.0,-1.9) node {$A_{0,10}$};    
%	\draw [red,line width=0,-{Stealth[length=2.3mm,blue]}] (.8,.98)--(.7,1.025);
    %A_i
%	\draw (-.8,-.2) node {$A_1$};
%	\draw [red,line width=0,-{Stealth[length=2.3mm,blue]}] (-.3,.02)--(-.4,.16);
%	\draw (-1,.9) node {$A_2$};	
%	\draw [red,line width=0,-{Stealth[length=2.3mm,blue]}] (-.1,1.09)--(-.2,1.09);
%	\draw (.6,-.5) node {$A_3$};
%	\draw [red,line width=0,-{Stealth[length=2.3mm,blue]}] (.8,.055)--(.7,.1);
%	\draw (.1,.75) node {$A_4$};
%	\draw [red,line width=0,-{Stealth[length=2.3mm,blue]}] (-.54,.97)--(-.53,1.);
	%angles
	\draw (.3,2.5) node {$q_{\infty,3}$};
	\draw (-.45,2.6) node {$q_{\infty,2}$};
	\draw (-1.15,3.0) node {$q_{\infty,1}$};
	
	\draw (-3.1,1) node {$p_{0,1}$};
	\draw (-3.1,.5) node {$p_{0,2}$};
	\draw (-3.1,0) node {$p_{0,3}$};

	\draw (0,-2.6) node {$q_{0,3}$};
	\draw (.5,-3.0) node {$q_{0,2}$};
	\draw (1.1,-3.0) node {$q_{0,1}$};

	\draw (2.2,.8) node {$p_{\infty,1}$};
	\draw (3.0,-.4) node {$p_{\infty,2}$};
	\draw (3.2,-1.4) node {$p_{\infty,3}$};
   \end{tikzpicture}
 \end{center}
  \caption{A randomly generated tiling and the associated amoeba. The periodicity is~$3\times 3$ and the edge weights are the same in the two pictures.
 % Here the angles are given by~$\alpha_i^v/\gamma_i^v=(\beta_i^v)^{-1}=d_i^{-1}$,~$\alpha_j^h/\beta_j^h=d_{4-j}^2c_{4-j}$, and~$\gamma_j^h=c_{4-j}$ with~$d_1=1.5$,~$d_2=7.3$,~$d_3=28.1$ and~$c_1=.1$,~$c_2=1$,~$c_3=11.2$.
  \label{fig:intro:tiling_amoeba2}}
\end{figure}

\subsection{The Aztec dimer model and a non-intersecting paths model}\label{sec:intro:model}
The dimer model of interest is defined on the subgraph of the square lattice known as the Aztec diamond, see Figure~\ref{fig:intro:aztec_diamond}. Going forward we consider the rotated square lattice, see Figure~\ref{fig:aztec_diamond} in Section~\ref{sec:model}. The probability measure on the set of all dimer configurations of the Aztec diamond is given by
\begin{equation}\label{eq:intro:dimer_model}
\PP(M)=\frac{1}{Z}\prod_{e\in M}w(e),
\end{equation}
where~$w$ is a positive weight function defined on the edges, and~$Z$ is a normalizing factor known as the partition function. The weight function~$w$ is taken to be~$k$-periodic in the vertical direction and~$\ell$-periodic in the horizontal direction. We say that the model is~$k\times \ell$-periodic. Details are provided in Section~\ref{sec:dimer}.

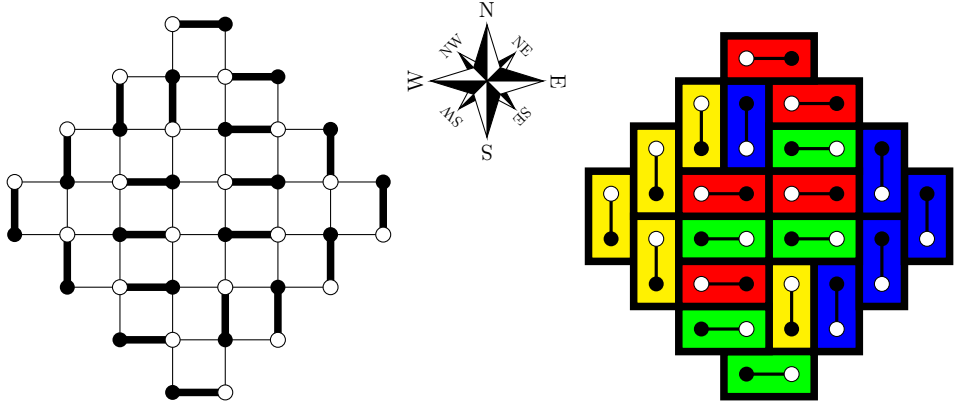
\begin{figure}
\begin{center}
\begin{tikzpicture}[scale=.70]

% Coordinate axes
\foreach \x in {0,1,2,3}
\foreach \y in {0,...,3}
{\draw (-3.5+\x+\y,.5-\x+\y) rectangle (-2.5+\x+\y,1.5-\x+\y);
}
% West dominos
\foreach \x/\y in {-4/0,-3/1,-2/2,-3/-1,0/-2}
{ 
\draw[line width = 1mm] (\x+.5,\y+.5)--(\x+.5,\y+1.5);
\draw (\x+.5,\y+.5) node[circle,fill,inner sep=2pt]{};
\draw (\x+.5,\y+1.5) node[circle,draw=black,fill=white,inner sep=2pt]{};
}

% East dominos
\foreach \x/\y in {1/-2,2/-1,3/0,2/1,-1/2}
{
\draw[line width = 1mm] (\x+.5,\y+.5)--(\x+.5,\y+1.5);
\draw (\x+.5,\y+.5) node[circle,draw=black,fill=white,inner sep=2pt]{};
\draw (\x+.5,\y+1.5) node[circle,fill,inner sep=2pt]{};
}

% South dominos
\foreach \x/\y in {-1/-3,-2/-2,0/2,0/0,-2/0}
{
\draw[line width = 1mm] (\x+.5,\y+.5)--(\x+1.5,\y+0.5);
\draw (\x+.5,\y+.5) node[circle,fill,inner sep=2pt]{};
\draw (\x+1.5,\y+.5) node[circle,draw=black,fill=white,inner sep=2pt]{};
}

% North dominos
\foreach \x/\y in {-2/1,0/1,0/3,-1/4,-2/-1}
{
\draw[line width = 1mm] (\x+.5,\y+.5)--(\x+1.5,\y+0.5);
\draw (\x+.5,\y+.5) node[circle,draw=black,fill=white,inner sep=2pt]{};
\draw (\x+1.5,\y+.5) node[circle,fill,inner sep=2pt]{};
}

\end{tikzpicture}
%\qquad
\begin{tikzpicture}[scale=.6]

% West dominos
\foreach \x/\y in {-4/0,-3/1,-2/2,-3/-1,0/-2}
{ \fill[color=yellow]
(\x,\y) rectangle (\x+1,\y+2); 
\draw [line width = 1mm] (\x,\y) rectangle (\x+1,\y+2);
\draw[very thick,black] (\x+.5,\y+.5)--(\x+.5,\y+1.5);
\draw (\x+.5,\y+.5) node[circle,fill,inner sep=2pt]{};
\draw (\x+.5,\y+1.5) node[circle,draw=black,fill=white,inner sep=2pt]{};
}

% East dominos
\foreach \x/\y in {1/-2,2/-1,3/0,2/1,-1/2}
{  \fill[color=blue]
(\x,\y) rectangle (\x+1,\y+2);
\draw [line width = 1mm] (\x,\y) rectangle (\x+1,\y+2);
\draw[very thick,black] (\x+.5,\y+.5)--(\x+.5,\y+1.5);
\draw (\x+.5,\y+.5) node[circle,draw=black,fill=white,inner sep=2pt]{};
\draw (\x+.5,\y+1.5) node[circle,fill,inner sep=2pt]{};
}

% South dominos
\foreach \x/\y in {-1/-3,-2/-2,0/2,0/0,-2/0}
{  \fill[color=green]
(\x,\y) rectangle (\x+2,\y+1);
\draw [line width = 1mm] (\x,\y) rectangle (\x+2,\y+1);
\draw[very thick,black] (\x+.5,\y+.5)--(\x+1.5,\y+0.5);
\draw (\x+.5,\y+.5) node[circle,fill,inner sep=2pt]{};
\draw (\x+1.5,\y+.5) node[circle,draw=black,fill=white,inner sep=2pt]{};
}

% North dominos
\foreach \x/\y in {-2/1,0/1,0/3,-1/4,-2/-1}
{  \fill[color=red]
(\x,\y) rectangle (\x+2,\y+1);
\draw [line width=1mm] (\x,\y) rectangle (\x+2,\y+1);
\draw[very thick,black] (\x+.5,\y+.5)--(\x+1.5,\y+0.5);
\draw (\x+.5,\y+.5) node[circle,draw=black,fill=white,inner sep=2pt]{};
\draw (\x+1.5,\y+.5) node[circle,fill,inner sep=2pt]{};
}
%Compass
\pic[scale=.25] at (-6.25,4) {compass};
%\draw [ultra thick,->](-4.5,3)--(-4.5,4.5);
%\draw [ultra thick,->](-4,3.5)--(-5.5,3.5);
%\draw (-4.5,4.5) node[above] {north};
%\draw (-5.5,3.7) node[above] {west};

\end{tikzpicture}
\end{center}
\caption{The Aztec diamond of size~$4$. Left: An example of a dimer covering. Right: The corresponding domino tiling of the Aztec diamond. In the rest of the paper and the definition of the model, the Aztec diamond is rotated by~$-\pi/4$ compared with the orientation depicted in this image. \label{fig:intro:aztec_diamond}}
\end{figure}

There is a measure-preserving bijection between the dimer model~\eqref{eq:intro:dimer_model} and a non-intersecting paths model, see Section~\ref{sec:paths}. The non-intersecting paths model is defined by the probability measure~$\PP_\text{path}$ on~$(\ZZ^{kn})^{2k\ell N}$, with weights proportional to 
\begin{equation}\label{eq:intro:measure_on_points}
 \prod_{m=1}^{2k\ell N} \det \left( T_{\phi_m}(u_{m-1}^i,u_{m}^j)\right)_{i,j=1}^{kn}, \quad u^i_m\in \ZZ \,\,\, \text{for all} \,\,\, i,m,
\end{equation} 
where the transition matrices~$T_{\phi_m}$,~$m=1\dots,2k\ell N$, are \emph{block Toeplitz matrices} with \emph{symbols}~$\phi_m$. The transition matrices are~$2\ell$-periodic, that is,~$\phi_m=\phi_{m+2\ell}$, and 
\begin{equation}
\phi_{2i-1}(z)=
\begin{psmallmatrix}
\gamma_{1,i} & 0 & \cdots & 0 & \alpha_{k,i} z^{-1} \\
\alpha_{1,i} & \gamma_{2,i} & \cdots & 0 & 0 \\
\vdots & \vdots & \ddots & \cdots \\
0 & 0 & \cdots & \alpha_{k-1,i} & \gamma_{k,i}
\end{psmallmatrix}, \quad
\phi_{2i}(z)=\frac{1}{1-\beta_i^v z^{-1}}
\begin{psmallmatrix}
1 & \prod_{j=2}^{k}\beta_{j,i}z^{-1} & \cdots & \beta_{k,i}z^{-1}  \\
\beta_{1,i} & 1 &  \cdots & \beta_{k,i}\beta_{1,i}z^{-1}  \\
\vdots & \vdots & \vdots & \ddots & \cdots \\
\prod_{j=1}^{k-1}\beta_{j,i} & \prod_{j=2}^{k-1}\beta_{j,i} & \cdots & 1
\end{psmallmatrix},
\end{equation}
for~$i=1,\dots,\ell$, where~$\alpha_{j,i}$,~$\beta_{j,i}$ and~$\gamma_{j,i}$, for~$j=1,\dots,k$, are the edge weights (see Figure~\ref{fig:weigths} in Section~\ref{sec:model}) and~$\beta_i^v=\prod_{j=1}^k\beta_{j,i}$. The measure~$\PP_\text{path}$ is a determinantal point process and fits into the framework of~\cite{BD19}, which means that the associated \emph{correlation kernel}~$K_\text{path}$ can be expressed as a double contour integral. 

Even though the two point processes~\eqref{eq:intro:dimer_model} and~\eqref{eq:intro:measure_on_points} are equivalent, it is not obvious how to express statistics of dimers in terms of observables from the paths model; even expressing the probability of observing a dimer at a certain location is challenging. Our first result relates the inverse Kasteleyn matrix for the dimer model with the correlation kernel from the paths model.
\begin{theorem}[Theorem~\ref{thm:inverse_kasteleyn}]\label{thm:intro:inverse_kasteleyn}
Let~$K_\text{path}$ be the correlation kernel associated with the point processes~$\PP_\text{path}$, expressed in terms of a matrix with rows and columns indexed by the vertices of the Aztec diamond. Then the inverse Kasteleyn matrix~$K_{G_\text{Az}}^{-1}$ is the transpose of the submatrix of~$K_\text{path}$ constructed from the rows indexed by the white vertices and the columns indexed by the black vertices. 
\end{theorem}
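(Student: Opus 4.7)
My plan is to identify the two sides by matching joint dimer probabilities across the two equivalent models. Both quantities encode such probabilities as determinantal minors: on the dimer side via Kenyon's theorem for $K^{-1}_{G_\text{Az}}$, and on the paths side via the determinantal structure of $\PP_\text{path}$ for $K_\text{path}$. Since the bijection between the two models is measure-preserving, corresponding minors must be equal, and entry-by-entry comparison yields the matrix identity.

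Concretely, I would first invoke Kenyon's formula: for any finite collection of edges $e_i=(b_i,w_i)$ in $G_\text{Az}$,
\begin{equation}
\PP(e_1,\ldots,e_n\in M)=\left(\prod_{i=1}^n \mathcal{K}_{G_\text{Az}}(b_i,w_i)\right)\det\bigl(K^{-1}_{G_\text{Az}}(w_j,b_i)\bigr)_{i,j=1}^n,
\end{equation}
where $\mathcal{K}_{G_\text{Az}}$ is the signed weighted Kasteleyn matrix. On the paths side, the determinantal property of $\PP_\text{path}$ gives joint particle probabilities as minors of $K_\text{path}$. I would then unpack the bijection carefully, identifying each black vertex $b$ and each white vertex $w$ of $G_\text{Az}$ with specific points $x_b,x_w$ in the time-position lattice on which the paths live, so that the presence of a prescribed collection of edges in the dimer configuration translates (possibly via inclusion-exclusion between presence and absence of particles, depending on the dimer type) into a prescribed particle event. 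Applying both determinantal formulas to this event and canceling the scalar Kasteleyn-sign and edge-weight prefactors identifies $K^{-1}_{G_\text{Az}}(w,b)$ with the corresponding entry of $K_\text{path}(x_w,x_b)$, which is the claimed identity for pairs $(b,w)$ adjacent in $G_\text{Az}$.

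For $(b,w)$ pairs that are not adjacent the Kasteleyn matrix entry vanishes and the bijective argument is vacuous. For those entries I would instead use the defining relation $\mathcal{K}_{G_\text{Az}}\cdot K^{-1}_{G_\text{Az}}=I$ and verify that the candidate matrix (the transpose of the white-row, black-column submatrix of $K_\text{path}$) solves the same linear system; this reduces to a local identity involving the block Toeplitz symbols $\phi_m$ and the Wiener--Hopf factorization that enters the contour-integral formula for $K_\text{path}$. The main obstacle is precisely this bookkeeping step: one must translate the explicit double-contour-integral form of $K_\text{path}$ into vertex-indexed entries, and verify that the row operations coming from $\mathcal{K}_{G_\text{Az}}$ produce the correct delta function on every (white, black) pair. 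Once that structural compatibility is established, the matrix identity follows formally.
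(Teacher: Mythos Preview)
Your second step—verifying that the candidate matrix $\widetilde K$ satisfies $K_{G_\text{Az}}\widetilde K=I$—is exactly what the paper does, and it is the \emph{entire} proof, not a supplement for non-adjacent pairs. The paper splits by the position of the white vertex (interior, top boundary, bottom boundary) and computes each row of $K_{G_\text{Az}}\widetilde K$ by combining the four (or two, on the boundary) nonzero entries of that row of $K_{G_\text{Az}}$ with the corresponding double-integral expressions for $K_\text{path}$; the calculation reduces to elementary identities for the symbols $\phi_m$ together with a residue argument at the boundary. This handles adjacent and non-adjacent pairs uniformly.

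Your first step, by contrast, has a genuine gap. The bijection between dimer configurations and non-intersecting paths does not translate ``edge $e=(b,w)$ is present'' into ``particle at a single site'': particles sit at vertices covered by west or south dimers, so a single west or south dimer corresponds to \emph{two} particles, while an east or north dimer corresponds to the absence of particles at its endpoints. Consequently, single-edge probabilities on the dimer side do not match $1\times 1$ entries of $K_\text{path}$, and even for west/south edges the dimer probability $\mathcal K_{G_\text{Az}}(b,w)\,K^{-1}_{G_\text{Az}}(w,b)$ would have to be matched with a $2\times 2$ minor of $K_\text{path}$, from which you cannot read off a single entry. There is no clean cancellation of prefactors that recovers $K^{-1}_{G_\text{Az}}(w,b)$ for adjacent pairs, so this route does not get off the ground. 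Drop it and run the linear-algebra verification for all pairs.
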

A similar statement was given in~\cite{CJY15} for the biased Aztec diamond, for which~$k=\ell=1$. Analogous statements for the hexagonal lattice (equivalently, lozenge tilings) can be found in~\cite{BF09, BGR10, Pet14}. A somewhat different relation for the Aztec diamond was also given in~\cite{CD22}. 

Given the symbols of the transition matrices~$\phi_m$, we define a matrix-valued function~$\Phi(z)=\prod_{m=1}^{2\ell}\phi_m(z)$, that is, the product of the symbols over one period. The matrix~$\Phi$ and the zero set of its characteristic polynomial, known as the spectral curve, appear naturally in the asymptotic analysis of the correlation kernel~$K_\text{path}$. Let us define~$P$ as 
\begin{equation}\label{eq:intro:characteristic_polynomial}
P(z,w) = \prod_{i=1}^\ell(1-\beta^v_iz^{-1})\det\left(\Phi(z)-w I\right),
\end{equation}
which is, up to a factor, the characteristic polynomial of~$\Phi$. Generically, the spectral curve~$\mathcal R$ is of higher genus, and, if~$k>2$, we leave the realm of hyperelliptic curves. On the other hand, in~\cite{KOS06} the authors introduced a characteristic polynomial defined by the Kasteleyn matrix on the torus defined from the fundamental domain. We prove that in our case the two polynomials coincide.
\begin{proposition}[Proposition~\ref{prop:spectral_curve}]\label{prop:intro:spectral_curve}
Let~$P$ be as in~\eqref{eq:intro:characteristic_polynomial}. Then
\begin{equation}
P(z,w)=\det K_{G_1}(z,w),
\end{equation}
where~$K_{G_1}$ is the magnetically altered Kasteleyn matrix defined by~\eqref{eq:magnetic_kasteleyn_matrix} below.
\end{proposition}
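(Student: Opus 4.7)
The plan is to compute $\det K_{G_1}(z,w)$ directly via a block transfer-matrix reduction and identify it with the stated expression. First I would write down the $2k\ell \times 2k\ell$ magnetically altered Kasteleyn matrix, indexed by the $k\ell$ black and $k\ell$ white vertices of the fundamental domain, with the twists $z$ and $w$ applied to edges crossing the two boundaries of the torus as specified in~\eqref{eq:magnetic_kasteleyn_matrix}. Since the underlying graph is bipartite, this matrix has the block form $\begin{psmallmatrix} 0 & A(z,w) \\ B(z,w) & 0 \end{psmallmatrix}$, so its determinant equals, up to a sign, $\det A(z,w)\cdot \det B(z,w)$; the Aztec geometry relates the two factors by the involution $(z,w)\leftrightarrow(z^{-1},w^{-1})$, so it suffices to analyze one of them.

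Next I would order the vertices column by column so that $A(z,w)$ becomes block-cyclic bidiagonal with $2\ell$ block rows and columns of size $k\times k$. In this ordering, $w$ appears only in the single off-diagonal block that closes the cycle horizontally, while the other off-diagonal blocks are given by the local black/white adjacency data within one column. I would then identify these $k\times k$ blocks with the symbols $\phi_{2i-1}(z)$ and $\phi_{2i}(z)$ from the statement, clearing the denominator $1-\beta_i^v z^{-1}$ from $\phi_{2i}$ along the way. This identification is exactly the translation between the Kasteleyn-matrix and transition-matrix descriptions of the Aztec dimer model that underlies the construction of~\eqref{eq:intro:measure_on_points}, so the bulk of the work is already present in the setup of Section~\ref{sec:paths}.

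Once in this form, the determinant is evaluated by the standard block transfer-matrix identity: the determinant of a $(2\ell)\times(2\ell)$ block cyclic bidiagonal matrix with identity diagonal blocks and subdiagonal blocks $M_1,\dots,M_{2\ell}$, with a factor of $w$ closing the cycle, equals, up to sign, $\det\bigl(\prod_{m=1}^{2\ell} M_m - w\, I\bigr)$. Applying this with $M_m = \phi_m(z)$ produces $\det(\Phi(z)-wI)$, and the aggregate denominator cleared from the $\ell$ blocks $\phi_{2i}$ is precisely $\prod_{i=1}^\ell(1-\beta_i^v z^{-1})$, reproducing the prefactor in~\eqref{eq:intro:characteristic_polynomial}.

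The hard part will be bookkeeping: I have to check that the Kasteleyn sign conventions on the torus combine with the magnetic twists defined in~\eqref{eq:magnetic_kasteleyn_matrix} to yield exactly the entries of $\phi_{2i-1}$ and $\phi_{2i}$ as written, and not a transposed or sign-flipped version, and that the overall factor obtained matches $\prod_{i=1}^\ell(1-\beta_i^v z^{-1})$ without spurious monomial factors of $z$ or $w$. A useful sanity check will be to specialize to $k=\ell=1$, where both sides reduce to the well-known characteristic polynomial of the biased Aztec diamond, and to $k=\ell=2$, where the formula can be matched against~\cite{BD22}.
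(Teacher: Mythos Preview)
Your overall strategy---reduce the block-cyclic bidiagonal structure of $K_{G_1}$ to a single $k\times k$ determinant via a transfer-matrix/Schur reduction---is exactly what the paper does. But your setup misreads the definition of $K_{G_1}$. In this paper $K_{G_1}(z,w):\CC^{\mathcal B_1}\to\CC^{\mathcal W_1}$ is already the bipartite Kasteleyn matrix, a $k\ell\times k\ell$ matrix with rows indexed by white vertices and columns by black vertices; it is \emph{not} the $2k\ell\times 2k\ell$ full adjacency matrix with block antidiagonal form $\begin{psmallmatrix}0&A\\B&0\end{psmallmatrix}$. So the detour through $\det A\cdot\det B$ and the claimed $(z,w)\leftrightarrow(z^{-1},w^{-1})$ involution is spurious and should be dropped entirely. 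Equation~\eqref{eq:magnetic_kasteleyn_matrix} already gives $K_{G_1}$ explicitly as an $\ell\times\ell$ (not $2\ell\times 2\ell$) block cyclic bidiagonal matrix with $k\times k$ blocks: diagonal blocks $A_i=\phi_{2i-1}(z)^T$ and off-diagonal blocks $B_i=-(\phi_{2i}(z)^{-1})^T$, with the single factor $w$ in the lower-left corner. In particular the diagonal blocks are not the identity, so the version of the cyclic-bidiagonal identity you quote needs adjustment as well.

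Once you correct the size, block count, and block identification, your reduction becomes essentially the paper's argument. The paper implements it via Schur's formula $\det\begin{psmallmatrix}A&B\\C&D\end{psmallmatrix}=\det D\,\det(A-BD^{-1}C)$, peeling off the first block row and column: the remaining $(\ell-1)\times(\ell-1)$ block matrix $D$ is block upper bidiagonal with inverse computable by back-substitution, and one finds $A-BD^{-1}C=\bigl((\Phi-wI)(\prod_{m=2}^{2\ell}\phi_m)^{-1}\bigr)^T$. Taking determinants and using $\det\phi_m$ from Lemma~\ref{lem:transition_matrices_properties}\eqref{eq:determinant_bern_geo} produces the prefactor $\prod_{i=1}^\ell(1-\beta_i^v z^{-1})$ with no leftover monomials in $z$ or $w$---the bookkeeping you were worried about falls out in a couple of lines once the blocks are read off correctly.
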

This is the link that allows us to use the theory of spectral data associated with periodically weighted lattices. In particular, it implies that the spectral curve~$\mathcal R$ is a Harnack curve. We also establish another important relation, expressing the Kasteleyn matrix~$K_{G_1}$ in terms of the transition matrices~$\phi_m$, see Lemma~\ref{lem:phi_kasteleyn}. This allows us to use the properties of the adjugate of~$K_{G_1}$ obtained in~\cite{BCT22, KOS06} in order to understand the linear flow discussed in Section~\ref{sec:intro:linear_flow} below. 

Let us recall a few important properties of Harnack curves. Harnack curves are characterized in terms of their amoebas. The amoeba is the image of the curve in~$\RR^2$ under the map~$(z,w)\mapsto (\log|z|,\log|w|)$, and the Harnack curves are the curves for which this map is at most~$2$-to-$1$. In particular, this means that~$\mathcal R$ can be thought of as gluing together two copies of the amoeba along their boundaries. Generically, the boundary of the amoeba is in bijection with the real locus of~$\mathcal R$, and, in our case, it consists of~$g=(k-1)(\ell-1)$ compact ovals that form the inner boundary, and~$2(k+\ell)$ unbounded components that form the outer boundary. The unbounded components are separated by asymptotes,~$2\ell$ vertical and~$2k$ horizontal asymptotes. The parts of the amoeba extending to infinity are called tentacles. See Figures~\ref{fig:intro:tiling_amoeba1} and~\ref{fig:intro:tiling_amoeba2} for examples of amoebas, and Section~\ref{sec:harnack} for detailed definitions. 

The curve~$\mathcal R$ comes with~$2(k+\ell)$ special points which we call angles (following~\cite{GK13}). These are the points for which~$z$ or~$w$ is either~$0$ or~$\infty$, and they are mapped to infinities along different tentacles in the amoeba. The angles are given by
\begin{equation}
q_{0,i}=((-1)^k\alpha^v_i/\gamma^v_i,0), \quad q_{\infty,i}=(\beta^v_i,\infty), \quad
p_{0,j}=(0,(-1)^\ell\alpha^h_j/\beta^h_j), \quad p_{\infty,j}=(\infty,\gamma^h_j),
\end{equation}
for~$i=1,\dots,\ell$ and~$j=1,\dots,k$, where the nonzero/non-infinity entries are alternating products of the edge weights along a vertical or a horizontal \emph{train track}, that is, 
\begin{equation}\label{eq:intro:entries_angles}
\frac{\alpha^v_i}{\gamma^v_i}=\frac{\prod_{j=1}^k\alpha_{j,i}}{\prod_{j=1}^k\gamma_{j,i}}, \,\,\, 
\beta^v_i=\prod_{j=1}^k\beta_{j,i}, \,\,\, 
\frac{\alpha^h_j}{\beta^h_j}=\frac{\prod_{i=1}^\ell \alpha_{j,i}}{\prod_{i=1}^\ell \beta_{j,i}}, \,\,\,
\text{and} \,\,\, \gamma^h_j=\prod_{i=1}^\ell\gamma_{j,i}.
\end{equation}

The equality of Proposition~\ref{prop:intro:spectral_curve} implies that we can complement the spectral data with the data from the action function, in particular, the critical points of the action function. This combination of the two types of data determines the limit shape, the arctic curves and the local fluctuations. Standard arguments that go back to~\cite{Oko03, OR03} suggest that the global picture and local fluctuations should be derived through the action function, however, the novelty here is that the amoeba takes the role of the upper half plane. 

 \begin{figure}[t]
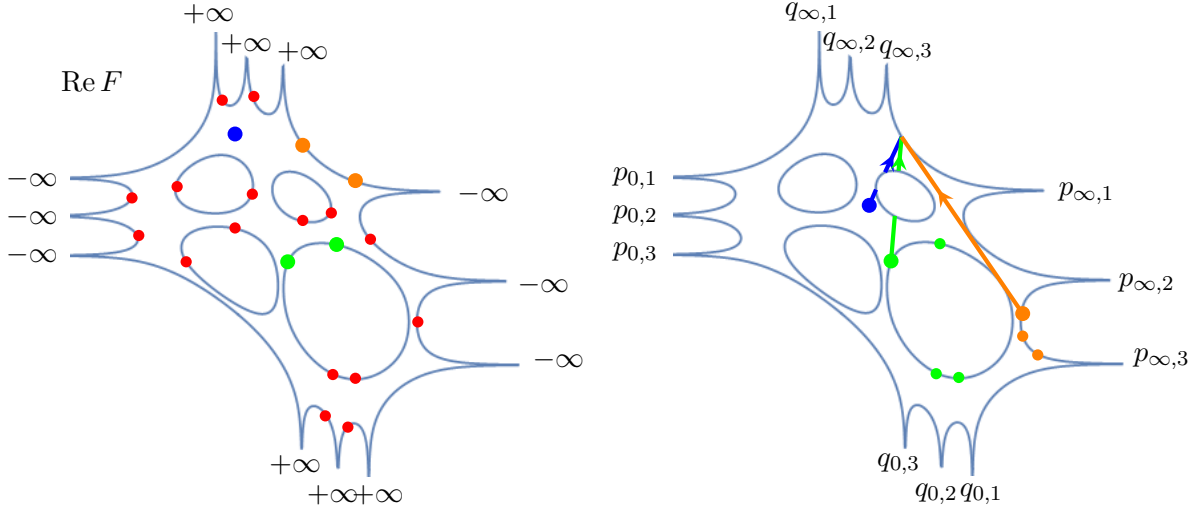

 \begin{center}
 \begin{tikzpicture}[scale=1]
  \tikzset{->-/.style={decoration={
  markings, mark=at position .5 with {\arrow{stealth}}},postaction={decorate}}}
    \draw (0,0) node {\includegraphics[trim={1cm, 1cm, 1cm, 1cm}, clip, angle=180, scale=.5]{amoeba3x3applicable.png}};
	%\re F
	\draw (-2.7,2.3) node {$\re F$};    
    %critical points
    %non-compact oval
	\draw (-.55,2.1) node[circle,fill,red,inner sep=1.5pt]{};
	\draw (-.97,2.05) node[circle,fill,red,inner sep=1.5pt]{};
	
	\draw (-2.17,.75) node[circle,fill,red,inner sep=1.5pt]{};
	\draw (-2.08,.25) node[circle,fill,red,inner sep=1.5pt]{};

	\draw (0.4,-2.15) node[circle,fill,red,inner sep=1.5pt]{};
	\draw (.7,-2.3) node[circle,fill,red,inner sep=1.5pt]{};

	\draw (1.,.2) node[circle,fill,red,inner sep=1.5pt]{};	
	\draw (1.63,-.9) node[circle,fill,red,inner sep=1.5pt]{};

    %compact ovals
	\draw (-1.45,-.1) node[circle,fill,red,inner sep=1.5pt]{};
	\draw (-.8,.35) node[circle,fill,red,inner sep=1.5pt]{};
	\draw (.5,-1.6) node[circle,fill,red,inner sep=1.5pt]{};
	\draw (.8,-1.65) node[circle,fill,red,inner sep=1.5pt]{};	
	\draw (-1.57,.9) node[circle,fill,red,inner sep=1.5pt]{};
	\draw (-.57,.8) node[circle,fill,red,inner sep=1.5pt]{};
	\draw (.1,.45) node[circle,fill,red,inner sep=1.5pt]{};
	\draw (.48,.55) node[circle,fill,red,inner sep=1.5pt]{};
	%special critical points
	%smooth
	\draw (-.1,-.1) node[circle,fill,green,inner sep=2pt]{};
	\draw (.55,.13) node[circle,fill,green,inner sep=2pt]{};
	%rough
	\draw (-.8,1.6) node[circle,fill,blue,inner sep=2pt]{};
	%frozen
	\draw (.1,1.45) node[circle,fill,orange,inner sep=2pt]{};
	\draw (.8,.98) node[circle,fill,orange,inner sep=2pt]{};

	%angles
	\draw (.1,2.7) node {$+\infty$};
	\draw (-.65,2.8) node {$+\infty$};
	\draw (-1.15,3.2) node {$+\infty$};
	
	\draw (-3.5,1) node {$-\infty$};
	\draw (-3.5,.5) node {$-\infty$};
	\draw (-3.5,0) node {$-\infty$};

	\draw (0,-2.8) node {$+\infty$};
	\draw (.5,-3.2) node {$+\infty$};
	\draw (1.1,-3.2) node {$+\infty$};

	\draw (2.5,.8) node {$-\infty$};
	\draw (3.3,-.4) node {$-\infty$};
	\draw (3.5,-1.4) node {$-\infty$};
  \end{tikzpicture}
   \begin{tikzpicture}[scale=1]
  \tikzset{->-/.style={decoration={
  markings, mark=at position .5 with {\arrow{stealth}}},postaction={decorate}}}
  \tikzset{-->-/.style={decoration={
  markings, mark=at position .7 with {\arrow{stealth}}},postaction={decorate}}}
    \draw (0,0) node {\includegraphics[trim={1cm, 1cm, 1cm, 1cm}, clip, angle=180, scale=.5]{amoeba3x3applicable.png}};
   %\gamma_{\xi,\eta}
   %Rough
   \draw[blue,ultra thick](-.39,.64)--(-.3,.85);
   \draw[->-,blue,ultra thick](-.18,1.08)--(0.04,1.55);
   %Smooth
   \draw[green,ultra thick](-.1,-.1)--(-.05,.5);
   \draw[->-,green,ultra thick](.0,1.1)--(0.04,1.55);   
   %Frozen
%   \draw[->-,orange] (0.85,.55)--(0.04,1.55);
   \draw[-->-,orange,ultra thick] (1.65,-.8)--(0.04,1.55);
%   %Curve at angle 
%   \draw[-->-,red,ultra thick] (.9,.6)--(.9,.95);
%   \draw[-->-,red,ultra thick] (1.8,-.5)--(1.65,-.2);
   
   % Points in the amoeba
   % Rough
	\draw (-.39,.64) node[blue,circle,fill,inner sep=2pt]{};
	%smooth
	\draw (-.1,-.1) node[circle,fill,green,inner sep=2pt]{};
	\draw (.55,.13) node[circle,fill,green,inner sep=1.5pt]{};
	\draw (.5,-1.6) node[circle,fill,green,inner sep=1.5pt]{};
	\draw (.8,-1.65) node[circle,fill,green,inner sep=1.5pt]{};	
   % Frozen
   	\draw (1.65,-.8) node[circle,fill,orange,inner sep=2pt]{};
   	\draw (1.65,-1.1) node[circle,fill,orange,inner sep=1.5pt]{};
   	\draw (1.85,-1.35) node[circle,fill,orange,inner sep=1.5pt]{};
   	
	%angles
	\draw (.1,2.7) node {$q_{\infty,3}$};
	\draw (-.65,2.8) node {$q_{\infty,2}$};
	\draw (-1.15,3.2) node {$q_{\infty,1}$};
	
	\draw (-3.5,1) node {$p_{0,1}$};
	\draw (-3.5,.5) node {$p_{0,2}$};
	\draw (-3.5,0) node {$p_{0,3}$};

	\draw (0,-2.8) node {$q_{0,3}$};
	\draw (.5,-3.2) node {$q_{0,2}$};
	\draw (1.1,-3.2) node {$q_{0,1}$};

	\draw (2.5,.8) node {$p_{\infty,1}$};
	\draw (3.3,-.4) node {$p_{\infty,2}$};
	\draw (3.5,-1.4) node {$p_{\infty,3}$};
  \end{tikzpicture}
 \end{center}
  \caption{Left: The behavior of~$\re F$ at the angles, and the locations of the critical points of~$F$ mapped to the amoeba. Right: The curves in the formula for the limit shape. The coloring indicates the region: blue corresponds to the rough region, green corresponds to the smooth region, and orange corresponds to the frozen region. Exactly one of these possibilities is realized at any given point of the limit shape.
 \label{fig:intro:amoeba}}
\end{figure}

Let~$(\xi,\eta)\in (-1,1)^2$ be the global coordinates in the scaled Aztec diamond. For~$q=(z,w)\in \mathcal R$ the action function is defined by
\begin{equation}\label{eq:intro:def_action_function}
F(\tilde q;\xi,\eta) 
= \frac{k}{2}(1-\xi)\log w-\frac{\ell}{2}(1-\eta) \log z - \log \frac{\prod_{i=1}^\ell E(\tilde q_{0,i},\tilde q)^k}{\prod_{j=1}^k E(\tilde p_{0,j},\tilde q)^\ell},
\end{equation}
where~$E$ is the prime form of~$\mathcal R$, and~$\tilde q$ is a lift of~$q$ to the universal cover of~$\mathcal R$, see the end of Section~\ref{sec:abel_theta} and Definition~\ref{def:action_function}. The third term can be thought of as the logarithm of a meromorphic function with zeros of order~$k$ at~$q_{0,i}$,~$j=1,\dots,\ell$, and poles of order~$\ell$ at~$p_{0,j}$,~$j=1,\dots,k$. However, generically such function does not exist (the existence is closely related to torsion points), and instead we need to define it as a function on the universal cover of~$\mathcal R$. The differential~$\d F$ and the real part~$\re F$, however, are well-defined objects on~$\mathcal R$, and these are the objects of interest to us. 

Following~\cite{CJ16, OR03}, we define the limiting regions in terms of the critical points of~$F$, that is, the zeros of~$\d F$. More precisely, we show that all but two critical points are mapped to the boundary of the amoeba, and the location of the final two points determines the region. We classify~$(\xi,\eta)$ as being in the rough region if the final two critical points are mapped to the interior of the amoeba, in a smooth region if they are mapped to the inner boundary of the amoeba, and in a frozen region if they are mapped to the outer boundary of the amoeba. See Figure~\ref{fig:intro:amoeba}, left panel. These critical points provide the connection between the amoeba and the global limit shape.

\subsection{Main results}
The results are stated here in an informal way, with references to the precise statements. The mild assumptions under which these results hold are given and discussed in Section~\ref{sec:ass}.

By analyzing the critical points of the action function~$F$, as a function on the spectral curve~$\mathcal R$, we obtain a homeomorphism between the rough region and the amoeba, which captures the global picture. The homeomorphism is defined as the unique critical point of the action function~$F$ in the interior of the amoeba, see Figure~\ref{fig:intro:amoeba}.
\begin{theorem}[Theorem~\ref{thm:arctic_curves} and Corollary~\ref{cor:arctic_curve_slopes}]\label{thm:intro:phase_diagram}
The critical point map is a homeomorphism from the closure of the rough region to the amoeba. Moreover, the induced map between the boundaries, in a correct coordinate system, preserves the slope of the tangent lines of the respective curves. 
\end{theorem}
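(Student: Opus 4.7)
The plan hinges on the observation that the critical point equation $dF(q;\xi,\eta)=0$ is \emph{linear} in $(\xi,\eta)$. From~\eqref{eq:intro:def_action_function},
\begin{equation}
dF|_q \;=\; \tfrac{k}{2}(1-\xi)\,\tfrac{dw}{w}\Big|_q \;-\; \tfrac{\ell}{2}(1-\eta)\,\tfrac{dz}{z}\Big|_q \;-\; dG|_q,
\end{equation}
where $dG$ is a meromorphic 1-form on $\mathcal R$ independent of $(\xi,\eta)$, coming from the prime-form term. At any $q\in\mathcal R$ at which $dz/z$ and $dw/w$ are $\RR$-linearly independent, the vanishing of $dF|_q$ is a real-linear $2\times 2$ system that uniquely recovers $(\xi,\eta)$ from $q$. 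This is the source of the candidate inverse map and will drive the whole proof.

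First I would verify that the critical-point map $\Psi$ is well-defined and continuous on the closed rough region. By the setup preceding the theorem, for each $(\xi,\eta)$ in the interior of the rough region $F$ has a unique pair of complex-conjugate critical points $q_*,\overline{q_*}$ off the real locus of $\mathcal R$, and $\Psi$ sends $(\xi,\eta)$ to their common amoeba image; continuity follows from the implicit function theorem applied to $dF=0$. Injectivity is immediate from the linear system above: two $(\xi,\eta)$ giving rise to the same critical point $q$ would differ by a kernel vector, which is trivial wherever $dz/z,dw/w$ are independent. Surjectivity is the substantive point: given an interior amoeba point $p$, pick a lift $q\in\mathcal R$, solve the linear system for $(\xi,\eta)$, and verify $(\xi,\eta)\in(-1,1)^2$ and that the resulting critical point is indeed $q$. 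The containment in $(-1,1)^2$ is read off from the signed behavior of $\re F$ at the angles (cf.\ Figure~\ref{fig:intro:amoeba}, left) combined with the residue structure of $dG$ at the $q_{0,i}$ and $p_{0,j}$, which pins the signs of $1-\xi$ and $1-\eta$. A continuous bijection between compact Hausdorff spaces is a homeomorphism, yielding the first part.

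For the slope statement, I would parametrize the boundary of the amoeba by the real locus of $\mathcal R$ with a local real parameter $s$; under $\Psi^{-1}$ this produces an arctic arc $(\xi(s),\eta(s))$ via the same linear system, now evaluated at the real point $q(s)$. Differentiating
\begin{equation}
\tfrac{k}{2}(1-\xi(s))\,\tfrac{dw}{w}\Big|_{q(s)}-\tfrac{\ell}{2}(1-\eta(s))\,\tfrac{dz}{z}\Big|_{q(s)}=dG|_{q(s)}
\end{equation}
in $s$ and using that $dz/z$ and $dw/w$ are real on the real locus, one extracts $(\xi'(s),\eta'(s))$ in closed form and compares with the amoeba tangent $(\tfrac{d}{ds}\log|z|,\tfrac{d}{ds}\log|w|)$. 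A direct calculation should then exhibit the two tangent vectors as proportional after the affine change of coordinates on $(\xi,\eta)$ that accounts for the compass flip between tiling and amoeba noted in Figure~\ref{fig:intro:tiling_amoeba1}.

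The main obstacle will be the surjectivity and the handling of degeneracies. One must rule out that the inverse linear system ever produces $(\xi,\eta)$ outside $(-1,1)^2$, and one must control the degenerate locus where $dz/z,dw/w$ become $\RR$-dependent: by Harnack-ness this locus sits on the real locus of $\mathcal R$ and corresponds to tentacle tips and extrema of the ovals, i.e.\ exactly to the corner and cusp points of the amoeba boundary, which is where the boundary map should also become singular and must be matched carefully. Identifying the precise coordinate change for slope preservation is the other delicate point, since it must reconcile the compass-orientation conventions on the Aztec side with the standard $(\log|z|,\log|w|)$ coordinates on the amoeba.
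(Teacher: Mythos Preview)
Your interior argument is essentially the paper's: Theorem~\ref{thm:homeomorphism} writes $\d F=0$ as a real $2\times 2$ linear system $A(z,w)\binom{\xi}{\eta}=B(z,w)$ by separating real and imaginary parts, proves $\det A\neq 0$ on $\mathcal R_0$ via the logarithmic Gauss map, and then argues that if $(\xi,\eta)\notin(-1,1)^2$ then \emph{all} critical points of $F$ are real and simple (by reexamining the tentacle asymptotics~\eqref{eq:limit_f_1}--\eqref{eq:limit_f_4} when one coordinate leaves $(-1,1)$). This last step is the actual mechanism for the $(-1,1)^2$ containment; your ``signs of $1-\xi$ and $1-\eta$'' sketch does not yet capture it.

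There is a genuine gap in your boundary treatment. On the real locus of $\mathcal R$ the forms $\d z/z$, $\d w/w$, $\d G$ are all real, so $\d F=0$ at a boundary point is a \emph{single} real equation in $(\xi,\eta)$, not two; your linear system degenerates precisely on $\partial\mathcal A$. The paper resolves this by using that arctic-curve points correspond to \emph{double} zeros of $\d F$ (Definition~\ref{def:regions}): parametrizing $\partial\mathcal A$ by $t\mapsto(r_1(t),r_2(t))$, one imposes both $\partial_t\re F=0$ and $\partial_t^2\re F=0$, yielding a new $2\times 2$ system $\tilde A(t)\binom{\xi}{\eta}=\tilde B(t)$ whose matrix involves $(r_1',r_2')$ and $(r_1'',r_2'')$. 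Invertibility of $\tilde A$ is then the statement that $\partial\mathcal A$ contains no straight segment, and the slope-preservation of Corollary~\ref{cor:arctic_curve_slopes} is obtained by differentiating $\binom{\xi}{\eta}=\tilde A^{-1}\tilde B$ and simplifying to $\binom{u'}{v'}=\lambda(t)\binom{r_1'}{r_2'}$ in the coordinates $u=-\frac{\xi+1}{2\ell}$, $v=-\frac{\eta+1}{2k}$. Your plan to ``differentiate the relation in $s$'' is in fact how the \emph{second} equation of the boundary system arises, not how one computes the tangent once $(\xi(s),\eta(s))$ is already known.

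Finally, your compactness shortcut fails: the amoeba has unbounded tentacles, so neither side is compact. The paper instead proves continuity of the inverse directly from the explicit formulas $A^{-1}B$ and $\tilde A^{-1}\tilde B$, and checks that the boundary map is the limit of the interior map using continuity of zeros of analytic functions.
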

\begin{remark}\label{rem:intro:coordinates}
The coordinates~$(\xi,\eta)$ are chosen so that the scaled Aztec diamond is the square $(-1,1)^2$. However, the coordinate system in which the homeomorphism of Theorem~\ref{thm:intro:phase_diagram} preserves the slope, is given by~$u=-\frac{\xi+1}{2\ell}$ and~$v=-\frac{\eta+1}{2k}$. From certain perspectives, these coordinates may be considered as more natural, see Remark~\ref{rem:burgers}.  
\end{remark}
\begin{remark}\label{rem:relation_to_adpz}
In the context of~\cite{ADPZ20}, the authors introduce a map from the rough region to the interior of the amoeba which, when extended to the boundary of the north, east, south and west frozen regions, preserves the slope of the tangent lines\footnote{We are grateful to Erik Duse and István Prause for drawing our attention to this fact.}. It turns out that the two maps are, indeed, the same. See Proposition~\ref{prop:app:maps_relations} and, in particular,~\eqref{eq:maps_relations} where the left hand side is the map from~\cite[Proof of Theorem~1.4]{ADPZ20} and the right hand side is the map of Theorem~\ref{thm:intro:phase_diagram}. 
\end{remark}

The inverse of the homeomorphism from the previous theorem is, in fact, explicit. This explicit formulation is used to prove that the homeomorphism preserves the slope of the tangent line. We have been informed by Nikolai Bobenko that combining our formulas with methods currently under development, which will be described in detail in an upcoming paper of Bobenko--Bobenko--Suris~\cite{BBS}, provides accurate numerical plots of the arctic curves which fit well with the random samples produced from the domino-shuffling algorithm.

The previous theorem yields several corollaries, the most straightforward of which determines the number of smooth and frozen regions.
\begin{corollary}[Corollary~\ref{cor:smooth_frozen_regions}]\label{cor:intro:smooth_frozen_regions}
There is a one-to-one correspondence between the smooth regions and the compact components of the complement of the amoeba. Similarly, there is a one-to-one correspondence between the frozen regions and the unbounded components of the complement of the amoeba.
\end{corollary}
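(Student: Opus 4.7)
The plan is to deduce the corollary from the homeomorphism $\Psi\colon\overline R\to\mathcal A$ of Theorem~\ref{thm:intro:phase_diagram}, combined with the standard structure of the boundary of the amoeba of a Harnack curve with Newton polygon $[0,\ell]\times[0,k]$. That boundary has $g=(k-1)(\ell-1)$ compact inner ovals, each enclosing a bounded component of $\RR^2\setminus\mathcal A$, together with a single connected outer component whose $2(k+\ell)$ tentacles partition $\RR^2\setminus\mathcal A$ into $2(k+\ell)$ unbounded components.

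The second step is to transfer this structure through $\Psi$. By the classification of phases given right after~\eqref{eq:intro:def_action_function}, a boundary point of $R$ abuts a smooth region exactly when $\Psi$ sends it to an inner oval of $\mathcal A$, and abuts a frozen region exactly when $\Psi$ sends it to the outer boundary. Since $\Psi$ is a homeomorphism, this forces $\partial R$ to decompose into $g$ simple closed curves (one per smooth region, each mapped bijectively onto one inner oval) together with one outer component which meets $\partial([-1,1]^2)$ at exactly $2(k+\ell)$ points; between consecutive such touching points it bounds a single frozen region, and these arcs are mapped bijectively onto the arcs of the outer boundary of $\mathcal A$ lying between consecutive tentacles. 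Collecting the matchings yields the two asserted one-to-one correspondences.

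The main subtlety is that $\overline R$ is compact while $\mathcal A$ is not, so the homeomorphism in Theorem~\ref{thm:intro:phase_diagram} must be understood in an appropriate compactified sense. I would handle this by passing to the moment-map compactification of $\mathcal A$, in which each tentacle collapses to the corresponding lattice point on the boundary of the Newton polygon; with $\Psi$ extended so that the $2(k+\ell)$ touching points on $\partial([-1,1]^2)$ are sent to the $2(k+\ell)$ collapsed tentacles, in accordance with the angles $q_{0,i},q_{\infty,i},p_{0,j},p_{\infty,j}$ introduced above~\eqref{eq:intro:entries_angles}, the component bookkeeping in the previous paragraph becomes a clean consequence of $\Psi$ being a homeomorphism of compact planar sets, and the bijections drop out. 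The hardest part is thus establishing, from the description of $\Psi$ used in the proof of Theorem~\ref{thm:intro:phase_diagram}, that $\partial R$ really does have the claimed arc-and-loop structure; the rest is purely topological.
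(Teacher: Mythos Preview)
Your approach is the paper's own: the corollary is stated without an explicit proof, as an immediate consequence of Theorem~\ref{thm:arctic_curves} combined with Definition~\ref{def:regions}. By that definition the arctic curve already comes labeled by ovals---points on the smooth--rough boundary carry a double critical point on some compact oval $A_i$, and points on the frozen--rough boundary carry one on some arc $A_{0,i}$---so the homeomorphism sends each piece of $\partial\mathcal F_R$ to the matching piece of $\partial\mathcal A$, and surjectivity onto every $\Log(A_i)$ and every $\Log(A_{0,i})$ gives non-emptiness of every smooth and frozen region.

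Your compactification concern, however, is based on a misreading of the theorem. In the precise statement (Theorem~\ref{thm:arctic_curves} in the body of the paper) the homeomorphism is
\[
\Log\circ\,\Omega:\ \overline{\mathcal F_R}\cap(-1,1)^2\ \longrightarrow\ \mathcal A,
\]
with domain the closure of the rough region \emph{inside the open square}. The $2(k+\ell)$ turning points, which lie on $\partial([-1,1]^2)$, are therefore already removed, and the domain is not compact. Both sides are non-compact in exactly matching ways: the tentacles of $\mathcal A$ correspond to the deleted turning points, consistently with the description after Corollary~\ref{cor:intro:smooth_frozen_regions} in which the angles $p_{0,j},q_{0,i},p_{\infty,j},q_{\infty,i}$ are mapped to the turning points on the four sides of the diamond. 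No moment-map compactification is needed; once you work with the correct non-compact domain, the arc-and-loop bookkeeping you outline goes through directly and the two bijections follow.
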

The one-to-one correspondence between the frozen regions and the unbounded components of the complement of the amoeba can be stated more concretely: The angles~$p_{0,j}$,~$j=1,\dots,k$, are mapped to distinct turning points\footnote{These are the points where the arctic curve touches the boundary of the Aztec diamond.} along the south-east side of the Aztec diamond, and the boundary of the amoeba between each pair of angles corresponds to the boundary of a frozen region. Similarly, the angles~$q_{0,i}$,~$p_{\infty,j}$ and~$q_{\infty,i}$ are mapped to the turning points along the south-west, north-west and north-east side of the Aztec diamond, respectively, and the part of the boundary of the amoeba between two angles corresponds to the frozen boundary between the corresponding turning points. See Figures~\ref{fig:intro:tiling_amoeba1} and~\ref{fig:intro:tiling_amoeba2}.

We note that the homeomorphism reverses the orientation of the arctic curve, in comparison to the orientation of the boundary of the amoeba. This property together with the property that the map preserves the slope of the tangent line, has the following consequences.
\begin{corollary}[Corollary~\ref{cor:convex}]\label{cor:intro:convex}
The rough region is locally convex at all smooth points of the arctic curve.
\end{corollary}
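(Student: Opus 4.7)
The plan is to combine two ingredients: (i) every connected component of the complement of an amoeba is convex, a classical fact that holds in particular for our Harnack curve~$\mathcal R$; and (ii) the critical point map~$\Phi$ of Theorem~\ref{thm:intro:phase_diagram} restricts to an orientation-reversing, slope-preserving homeomorphism between~$\partial R$ and~$\partial A$.

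First I would fix a smooth point~$\tilde p$ of the arctic curve and set~$p = \Phi(\tilde p)$, a smooth point of~$\partial A$ separating the amoeba from a convex complement component~$C$. I parametrize~$\partial A$ near~$p$ by arclength~$s$ with the amoeba on the left, so that~$C$ is on the right. Since~$C$ is convex and lies on the right of this walk, the tangent angle~$\theta_A(s)$ is non-increasing at~$s=0$; equivalently, the signed curvature~$\kappa_A^L := d\theta_A/ds$ satisfies~$\kappa_A^L(0)\le 0$.

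Next, let~$\gamma_R(s) := \Phi^{-1}(\gamma_A(s))$. Slope preservation gives~$\tilde\theta_R(s) = \theta_A(s) + k\pi$ for a locally constant integer~$k$, so~$d\tilde\theta_R/ds = \kappa_A^L$. A 2D orientation-reversing homeomorphism between two regions induces an orientation-reversing map between their boundaries when the boundaries carry the natural ``region on left'' orientation; in our case this places the rough region~$R$ on the \emph{right} of the image walk. Letting~$\tilde s$ denote arclength along~$\gamma_R$ in the image direction, the signed curvature of~$\partial R$ with respect to the standard ``$R$ on left'' convention therefore satisfies
\[
\kappa_R^L \;=\; -\frac{d\tilde\theta_R}{d\tilde s} \;=\; -\frac{\kappa_A^L}{|\gamma_R'(s)|} \;\ge\; 0,
\]
since~$\kappa_A^L \le 0$ and~$|\gamma_R'(s)| > 0$ at smooth points. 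Non-negative signed curvature (with respect to~$R$ on left) along the smooth arctic arc through~$\tilde p$ is by definition local convexity of~$R$ at~$\tilde p$, which is the claim.

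The main obstacle I anticipate is carefully tracking the orientation conventions to justify the sign flip~$\kappa_R^L = -\kappa_A^L/|\gamma_R'(s)|$, and verifying that the boundary restriction of~$\Phi^{-1}$ is smooth enough at~$p$ for the second-derivative manipulations to make sense. The latter should follow from the explicit description of the inverse of~$\Phi$ alluded to after Theorem~\ref{thm:intro:phase_diagram}, together with the smoothness of~$\partial A$ at smooth boundary points of the amoeba of a Harnack curve.
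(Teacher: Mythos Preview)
Your proposal is correct and follows essentially the same approach as the paper: combine the convexity of the complement components of the amoeba with the fact that the boundary homeomorphism is orientation-reversing and slope-preserving (Corollary~\ref{cor:arctic_curve_slopes}). The paper phrases the conclusion more qualitatively—the tangent line lies on one side of~$\partial\mathcal A$ (by convexity of the complement) and hence on the opposite side of the arctic curve—whereas you track signed curvatures explicitly, but the content is the same.
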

\begin{corollary}[Corollary~\ref{cor:cusps}]\label{cor:intro:cusps}
The arctic curve has four cusps in each smooth region, and one cusp in each frozen region, except the north, east, south and west frozen regions.
\end{corollary}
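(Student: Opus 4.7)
The plan is to convert the slope-preserving, orientation-reversing boundary homeomorphism from Theorem~\ref{thm:intro:phase_diagram} (as noted just before Corollary~\ref{cor:intro:convex}) into combinatorial information about cusps via a turning-number identity. For any piecewise smooth oriented planar curve with a continuous lift $\psi$ of the tangent direction and a continuous lift $\theta$ of the slope angle, one has $\psi-\theta\in\pi\mathbb{Z}$ pointwise: $\psi-\theta$ is locally constant on smooth pieces, and at each cusp $\psi$ jumps by $\pm\pi$ while $\theta$ stays continuous. Summing contributions along any closed curve or arc yields
\begin{equation}
\Delta\psi - \Delta\theta = \pi \cdot c,
\end{equation}
where $c$ is the signed cusp count. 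It therefore suffices to compute $\Delta\psi$ and $\Delta\theta$ on each component of the arctic curve, pulling the latter back from the amoeba.

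For the smooth regions, the arctic boundary of each smooth region is a piecewise-smooth Jordan curve $\Gamma$, and the Hopf Umlaufsatz gives $\Delta\psi_\Gamma = \pm 2\pi$. Its image under the homeomorphism is a compact oval of the amoeba which, by the Harnack property of $\mathcal{R}$ from Proposition~\ref{prop:intro:spectral_curve}, is smooth, so $\Delta\psi_{\mathrm{oval}}=\Delta\theta_{\mathrm{oval}}=\pm 2\pi$. Since slopes are preserved but the boundary orientation is reversed, $\Delta\theta_\Gamma = -\Delta\theta_{\mathrm{oval}}$ has opposite sign from $\Delta\psi_\Gamma$, and the identity gives $|c|=4$: exactly four cusps per smooth region.

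For the frozen regions, the arctic component is an arc $\gamma$ joining two turning points $T_1,T_2$, tangent to adjacent Aztec edges; correspondingly, the amoeba arc connects two tentacle asymptotes where the amoeba boundary is horizontal or vertical. I split into cases using the cyclic ordering of the angles pictured in Figure~\ref{fig:intro:amoeba}. In the \emph{axial} case (the $2(k+\ell)-4$ non-corner regions), the amoeba arc connects two tentacles of the same axial direction, so its slope angle passes through the perpendicular direction exactly once, yielding $\Delta\theta_{\mathrm{amoeba}}=\pm\pi$; meanwhile the tangents to $\gamma$ at $T_1,T_2$ are parallel to a common Aztec edge, so $\Delta\psi_\gamma = 0$. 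Slope preservation with orientation reversal gives $\Delta\theta_\gamma=\mp\pi$, and the identity yields $c=\mp 1$: exactly one cusp. In the \emph{corner} case (the four north, east, south, west regions), the endpoints lie on tentacles of orthogonal axial directions, so the slope varies monotonically from $0$ to $\infty$ along the amoeba arc, giving $\Delta\theta_{\mathrm{amoeba}}=\pm\pi/2$; at the same time $\gamma$ rotates between two perpendicular Aztec sides, so $\Delta\psi_\gamma=\pm\pi/2$. After the orientation reversal, the signs match and $c=0$: no cusps.

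The main obstacle will be careful sign bookkeeping. Specifically, one must confirm that (i) the orientation reversal noted in the paper (compatible with the compass flip seen in Figures~\ref{fig:intro:tiling_amoeba1} and~\ref{fig:intro:tiling_amoeba2}) produces the precise relative signs asserted above, and (ii) the slope varies monotonically on each amoeba boundary arc away from a unique interior extremum, which is a consequence of the strict convexity of the complement components of the Harnack amoeba. With both ingredients in hand, the identity $\Delta\psi-\Delta\theta = c\pi$ delivers the counts $4$, $1$, and $0$ in the smooth, axial-frozen, and corner-frozen cases respectively.
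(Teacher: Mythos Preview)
Your approach is essentially the same as the paper's: both arguments use that the homeomorphism preserves tangent slopes while reversing orientation, then compare the total rotation of the tangent/normal on the arctic side with that on the amoeba side, so that the discrepancy—which is accounted for by $\pm\pi$ jumps at cusps—yields the count. The paper phrases this via the rotation of the normal vector rather than your identity $\Delta\psi-\Delta\theta=\pi c$, and treats only the non-corner frozen case explicitly, but the content is the same.
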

Both these corollaries rely on the fact that we know the geometry of the amoeba, in particular, that the complement of the amoeba is convex. The fact that the rough region is locally convex is a general fact proved in~\cite[Theorem 1.2]{ADPZ20}, however, this provides a rather intuitive reason for it. The second corollary is, to the best of our knowledge, new; its graphical illustration can be found in Figure~\ref{fig:arctic_cruce_amoeba} below.

Our final global result is an expression for the limit shape given in terms of the action function~\eqref{eq:intro:def_action_function}.
\begin{proposition}[Proposition~\ref{prop:limit_shape}]\label{prop:intro:limit_shape}
The limit shape is given in terms of its normalized height function~$\bar h$ by
\begin{equation}\label{eq:intro:limit_shape}
\bar h(\xi,\eta)=\frac{1}{k\ell}\frac{1}{2\pi\i}\int_{\gamma_{\xi,\eta}}\d F+1,
\end{equation}
where the curve~$\gamma_{\xi,\eta}$ is as indicated in Figure~\ref{fig:intro:amoeba}, right panel, and described in Definition~\ref{def:curve_integration}.
\end{proposition}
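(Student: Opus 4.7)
The plan is to derive the formula from the convergence of local dimer statistics to the appropriate KOS ergodic Gibbs measures, which is one of the main technical outputs of the paper, and then to recognize the resulting line integral as the stated integral of $\d F$ on $\mathcal R$.

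First, express the normalized height function as an averaged signed path integral of dimer densities. By Thurston's correspondence, for any face $f(\xi,\eta)$ of the Aztec diamond,
\[
\EE[h(f(\xi,\eta))] \;=\; h_{\text{bd}} \;+\; \sum_{e\in\gamma_\text{lat}}\sigma(e)\bigl(\PP(e\in M)-c(e)\bigr),
\]
where $\gamma_\text{lat}$ is a lattice path from a pinned boundary face to $f(\xi,\eta)$, $\sigma(e)=\pm 1$ is the standard height-increment sign, and $c(e)$ is a reference constant. Upon dividing by the $k\ell N$ normalization and passing to the scaling limit, the right-hand side becomes the line integral of the limiting local dimer densities along the scaled path from the boundary to $(\xi,\eta)$.

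Second, invoke Theorem~\ref{thm:intro:inverse_kasteleyn} to express each dimer probability as a multiple of an entry of the correlation kernel $K_\text{path}$, and apply the steepest-descent asymptotic analysis of the double-contour representation of $K_\text{path}$ driven by the action function $F$. In the rough region, Theorem~\ref{thm:intro:phase_diagram} supplies a unique critical point $q(\xi,\eta)$ of $F(\,\cdot\,;\xi,\eta)$ in the interior of the amoeba, and the saddle-point contribution identifies the limiting dimer density on each edge with the corresponding density of the ergodic KOS Gibbs measure at the critical point $q(\xi,\eta)$. In the smooth and frozen regions the critical point lies on the inner or outer boundary of the amoeba, producing pinned limiting densities. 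In all three cases the integrated density along the scaled path computes the change of $\bar h$ between the boundary and $(\xi,\eta)$.

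Third, rewrite this integrated density as a contour integral of $\d F$. Since $F$ depends on $(\xi,\eta)$ linearly through $\log z$ and $\log w$, formula~\eqref{eq:intro:def_action_function} gives $\partial_\xi F = -\tfrac{k}{2}\log w$ and $\partial_\eta F = \tfrac{\ell}{2}\log z$, independently of the remaining (non-Aztec) data in $F$. Evaluated at $q(\xi,\eta)$, these partials translate, via the KOS dictionary and Theorem~\ref{thm:intro:phase_diagram}, into the slope of $\bar h$ at $(\xi,\eta)$. Lifting the scaled path in $(\xi,\eta)$-space to $\mathcal R$ by $(\xi,\eta)\mapsto q(\xi,\eta)$ and using that $\d F$ vanishes at its critical points, differentiation under the integral sign matches the gradient of $\frac{1}{k\ell}\frac{1}{2\pi\i}\int_{\gamma_{\xi,\eta}}\d F$ with $\nabla\bar h$. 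The additive constant $+1$ is then pinned by evaluating the formula at a single reference point -- e.g., a corner of the Aztec diamond, where the height function is known explicitly and $\gamma_{\xi,\eta}$ degenerates.

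The main obstacle is the global consistency of the construction across the three phases. Because $F$ is well-defined only on the universal cover of $\mathcal R$, the curve $\gamma_{\xi,\eta}$ must be specified consistently up to homotopy (as in Definition~\ref{def:curve_integration}), and one must verify that the period ambiguities of $\d F$ along the cycles of $\mathcal R$, together with the transitions between rough, smooth, and frozen regions (where $q(\xi,\eta)$ moves onto the boundary of the amoeba and the critical-point picture degenerates), combine so as to preserve the identity. This is the point where having the formula entirely in terms of $\d F$ rather than $F$ itself is essential, since all remaining ambiguities then land in $2\pi\i\, k\ell\, \ZZ$ and are absorbed by the prefactor $\frac{1}{k\ell}\frac{1}{2\pi\i}$.
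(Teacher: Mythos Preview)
Your approach is conceptually reasonable but differs from the paper's, and in a way that creates genuine technical debt you have not paid off.

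\textbf{What the paper does.} The paper does \emph{not} pass through the limiting Gibbs densities. Instead it writes
\[
\EE[h(2\ell x,2ky)]=\ell x+\sum_{0\le y_0<y}\Tr\bigl[K_\text{path}(2\ell x,ky_0+j_0;2\ell x,ky_0+j_0')\bigr]_{j_0',j_0=0}^{k-1},
\]
inserts the double contour formula for $K_\text{path}$, and performs the finite geometric sum over $y_0$ \emph{inside} the integral. This promotes the simple pole $(z_2-z_1)^{-1}$ to a \emph{double} pole $(z_2-z_1)^{-2}$. One then reruns the same steep descent deformation as in the proof of Theorem~\ref{thm:local_limit}; the residue at $z_2=z_1$ along $\gamma_{\xi,\eta}$ now picks up a derivative in $z_2$, and the chain rule applied to $\e^{N(F(q_1)-F(q_2))}$ produces the factor $N$ times $\d F$ directly:
\[
-\frac{1}{2\pi\i}\int_{\gamma_{\xi,\eta}}\left.\frac{\d}{\d z_2}\Bigl(\e^{N(F(q_1)-F(q_2))}\Tr G(q_1;q_2)\Bigr)\right|_{q_2=q_1}\!\d z_1
=\frac{N}{2\pi\i}\int_{\gamma_{\xi,\eta}}\d F+\Ordo(1).
\]
Dividing by $k\ell N$ gives the formula immediately; the constant $+1$ comes from evaluating the first piece of the integral (via Jacobi's formula and the argument principle) rather than from a boundary condition.

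\textbf{Where your approach is incomplete.} Your route takes the local limit of each term first and then sums. Two issues:
\begin{itemize}
\item The Riemann sum $\frac{1}{N}\sum_{y_0}\PP(\text{particle at }y_0)\to\int\rho$ requires uniform convergence of the densities along the entire path from the boundary to $(\xi,\eta)$. Theorem~\ref{thm:local_limit} gives this only on compact subsets of each phase, and your path necessarily crosses the arctic curve and touches the domain boundary. You need a separate argument (boundedness of densities by $1$ plus a measure-zero estimate) to control those contributions; you have not supplied it.
\item The step ``these partials translate, via the KOS dictionary, into the slope of $\bar h$'' is doing real work. What you need is that the limiting density, summed over a fundamental domain in the direction of the path, equals $\frac{1}{2\pi\i}\int_{\gamma_{\xi,\eta}}\frac{\d w}{w}$ (resp.\ $-\frac{1}{2\pi\i}\int_{\gamma_{\xi,\eta}}\frac{\d z}{z}$). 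This is exactly the content of the slope computation in the proof of Corollary~\ref{cor:convergence_gibbs_measure}, which involves a nontrivial Jacobi-formula manipulation of $\adj K_{G_1}$; invoking ``the KOS dictionary'' does not substitute for that calculation.
\end{itemize}

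In short, the paper's approach sums first and extracts the $\d F$ integral from a single second-order residue; your approach limits first and then must reassemble the answer from Gibbs slopes, which is possible but requires the extra estimates above and the slope computation you did not carry out.
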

With the coordinates~$(u,v)$ of Remark~\ref{rem:intro:coordinates}, the slope of the limit shape has a particularly simple form:
\begin{equation}\label{eq:intro:slope_new_coord}
\nabla \bar h(u,v)=\left(\frac{1}{2\pi\i}\int_{\gamma_{\xi,\eta}}\frac{\d w}{w}, -\frac{1}{2\pi\i}\int_{\gamma_{\xi,\eta}}\frac{\d z}{z}\right).
\end{equation}
In the smooth regions the limit shape is flat, and it follows from~\eqref{eq:intro:slope_new_coord} that its slope has a rather nice interpretation in terms of the geometry of the spectral curve. Moreover, the additive shift of the limit shape in a smooth region is closely related to a linear flow on the Jacobian. See Theorem~\ref{thm:intro:height_function_smooth} for the second part, and Corollary~\ref{cor:height_function_smooth} for the combined statement.

Let us continue with local fluctuations, the determination of which can be viewed as the most powerful result of this paper. 
\begin{theorem}[Corollary~\ref{cor:convergence_gibbs_measure}]\label{thm:intro:gibbs_limit}
The local statistics of the dimer model~$\PP$ defined by~\eqref{eq:intro:dimer_model} converge, away from the arctic curves, to those of the ergodic translation-invariant Gibbs measure with slope given by~\eqref{eq:intro:slope_new_coord}.
\end{theorem}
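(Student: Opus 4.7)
The plan is to reduce the computation of local statistics to the asymptotic behavior of entries of the inverse Kasteleyn matrix $K_{G_{\text{Az}}}^{-1}$, and then to perform a steepest descent analysis on the double contour integral representation of these entries, with the entire analysis carried out on the amoeba rather than on the spectral curve $\mathcal{R}$ itself.

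\emph{Step 1: from dimer statistics to contour integrals.} Local statistics of the dimer model~\eqref{eq:intro:dimer_model} are encoded, via the Kasteleyn method, in finite minors of $K_{G_{\text{Az}}}^{-1}$. By Theorem~\ref{thm:intro:inverse_kasteleyn}, these minors are minors of the correlation kernel $K_{\text{path}}$. The framework of~\cite{BD19}, specialized to the block Toeplitz symbols $\phi_m$ written in Section~\ref{sec:intro:model}, expresses $K_{\text{path}}$ as a double contour integral whose integrand contains a high power of $\Phi(z)$ together with factors from its Wiener--Hopf factorization. Using Proposition~\ref{prop:intro:spectral_curve} and Lemma~\ref{lem:phi_kasteleyn} to identify the rescaled exponential part of the integrand, one rewrites the integrand, after setting $(x_2,y_2) \sim N(\xi,\eta)$ up to bounded shifts, in the form $\e^{N F(\tilde q;\xi,\eta)}$ times a non-exponential matrix factor, with $F$ the action function~\eqref{eq:intro:def_action_function}.

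\emph{Step 2: steepest descent on the amoeba.} Because $\mathcal{R}$ is Harnack (by Proposition~\ref{prop:intro:spectral_curve} combined with~\cite{KOS06}), the amoeba map is at most $2$-to-$1$ and the critical points of $F$ can be tracked visually in $\RR^2$. I deform both the $z$- and $w$-contours to steepest descent/ascent paths passing through the pair of active critical points identified in Theorem~\ref{thm:intro:phase_diagram}. Away from the arctic curves these saddles stay bounded away from each other and from the branch points, so the resulting saddle expansion is uniform. In the rough region the two saddles form a conjugate pair in the interior of the amoeba, producing a polynomially decaying leading term; in the smooth region they sit on a compact oval, producing exponentially small correlations; in the frozen region they sit on an unbounded boundary component, producing constant-in-offset contributions matching a pure brick-wall pattern.

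\emph{Step 3: asymptotics of the Wiener--Hopf factor and matching to KOS.} The remaining ingredient is the non-exponential matrix factor, which comes from the Wiener--Hopf factorization of $\Phi(z)^N$ and whose $N$-evolution is an isospectral discrete flow on quadratic matrix polynomials, linearized on the Jacobian of $\mathcal{R}$ in the sense of~\cite{MV91}. Its columns can be realized as eigenvectors of shifts of $\Phi$, and via Lemma~\ref{lem:phi_kasteleyn} together with the description of the zeros of $\adj K_{G_1}$ from~\cite{BCT22, KOS06}, these eigenvectors admit an explicit representation in terms of theta functions on $\mathcal{R}$ that is valid even when the flow on the Jacobian is ergodic. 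Substituting this representation at the saddles and collecting the leading contribution reproduces exactly the integral formula of~\cite{KOS06} for the inverse Kasteleyn matrix of the ergodic translation-invariant Gibbs measure whose magnetic coordinates are the image of the saddle in the amoeba. By~\eqref{eq:intro:slope_new_coord} and Theorem~\ref{thm:intro:phase_diagram}, the slope of that Gibbs measure equals the one in the statement; the KOS classification then upgrades convergence of the kernel entries to convergence of the full local distribution.

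\emph{Main obstacle.} The hardest step is carrying out steepest descent on a multi-sheeted surface $\mathcal{R}$ of genus $g=(k-1)(\ell-1)$ while simultaneously controlling the non-commutative Wiener--Hopf factor whose evolution is generically an ergodic linear flow on the Jacobian. The two innovations that make this tractable are the reduction of the contour geometry to the two-dimensional amoeba via the Harnack property, and the eigenvector interpretation of the factorization that converts the flow into an explicit theta-functional expression, bypassing any need for termination of the iterative re-factorization procedure.
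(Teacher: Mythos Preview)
Your proposal is correct and follows essentially the same approach as the paper: reduce local dimer statistics to entries of $K_{G_{\text{Az}}}^{-1}$ via Theorem~\ref{thm:intro:inverse_kasteleyn}, rewrite the resulting double contour integral on the spectral curve using the eigenvector interpretation of the Wiener--Hopf factorization (linearized on the Jacobian as in~\cite{MV91}), perform steep descent with the contour geometry read off from the amoeba, and identify the limit with the KOS formula~\eqref{eq:inverse_infinite_kasteleyn} through Lemma~\ref{lem:phi_kasteleyn}. The only minor deviation is that the paper first proves convergence of $K_{\text{path}}$ to an explicit limiting kernel (Theorem~\ref{thm:local_limit}) and only afterward matches it to $K_{G,(r_1,r_2)}^{-1}$ and computes the slope (Corollary~\ref{cor:convergence_gibbs_measure}), whereas you fold the identification with KOS into the saddle-point step; this is a packaging difference rather than a mathematical one.
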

This proves the conjecture of~\cite{CKP00} and~\cite{KOS06} discussed in the preface for dimer coverings of the Aztec diamond with periodic edge weights. 

Within the non-corner frozen regions, the ergodic translation-invariant Gibbs measure is not uniquely determined by its slope, since the slope lies on the boundary of the Newton polygon. It is therefore interesting to determine which measure emerges in the large~$N$ limit of the Aztec diamond. In Section~\ref{sec:frozen_region}, we elaborate on this by illustrating how the frozen configuration is determined from the edge weights, specifically, from the quantities~\eqref{eq:intro:entries_angles}.

The limit of Theorem~\ref{thm:intro:gibbs_limit} is obtained in terms of the correlation kernel~$K_\text{path}$ and is stated in Theorem~\ref{thm:local_limit}. The dimer form of the statement is then recovered using the relations between the models~\eqref{eq:intro:dimer_model} and~\eqref{eq:intro:measure_on_points} discussed in Section~\ref{sec:intro:model}. In the following two sections, we provide a brief overview of the key steps involved in proving Theorem~\ref{thm:local_limit}.

\subsection{The Wiener--Hopf factorization}\label{sec:intro:linear_flow}
An essential part of the asymptotic analysis of the correlation kernel~$K_\text{path}$ is to obtain a suitable expression for the Wiener--Hopf factorization~$\phi=\widetilde \phi_-\widetilde \phi_+$ of~$\phi=\Phi^{kN}$. The process of obtaining the Wiener--Hopf factorization involves two main steps. Firstly, we construct a dynamical system that enables us to construct the Wiener--Hopf factorization, and, secondly, we explore the way the dynamical system affects the eigenvectors of~$\Phi$. We discuss these two steps briefly below.

Let us begin by discussing the first part of the procedure; details are given in Sections~\ref{sec:switching} and~\ref{sec:wh_eigenvectors}. The dynamical system 
\begin{equation}
\Phi_0 \mapsto \Phi_1 \mapsto \Phi_2 \mapsto \Phi_3 \mapsto \dots
\end{equation}
is defined as follows. Given~$\Phi_j$, we construct a factorization~$\Phi_j=(\Phi_j)_-(\Phi_j)_+$, where~$(\Phi_j)_+$ is analytic and non-singular in the interior of the unit circle except, possibly, at~$0$, and~$(\Phi_j)_-$ is analytic and non-singular in the exterior of the unit circle except, possibly, at~$\infty$. The function~$\Phi_{j+1}$ is then defined by~$\Phi_{j+1}=(\Phi_j)_+(\Phi_j)_-$ (the two factors are swapped). As the initial condition we take~$\Phi_0=\Phi$. Informally speaking, the system takes the factor with zeros and poles in the interior of the unit circle and moves it to the right, and the factor with zeros and poles in the exterior of the unit circle is moved to the left. The procedure eventually provides a factorization of~$\phi$,
\begin{equation}
\phi=(\Phi_0)_-(\Phi_1)_-\cdot\dots\cdot (\Phi_{kN-1})_-(\Phi_{kN-1})_+\cdot\dots\cdot (\Phi_1)_+(\Phi_0)_+,
\end{equation}
which, up to the multiplicative factor~$z^{\ell N}C$, for some constant invertible matrix~$C$, is the required Wiener--Hopf factorization.

 \begin{figure}[t]
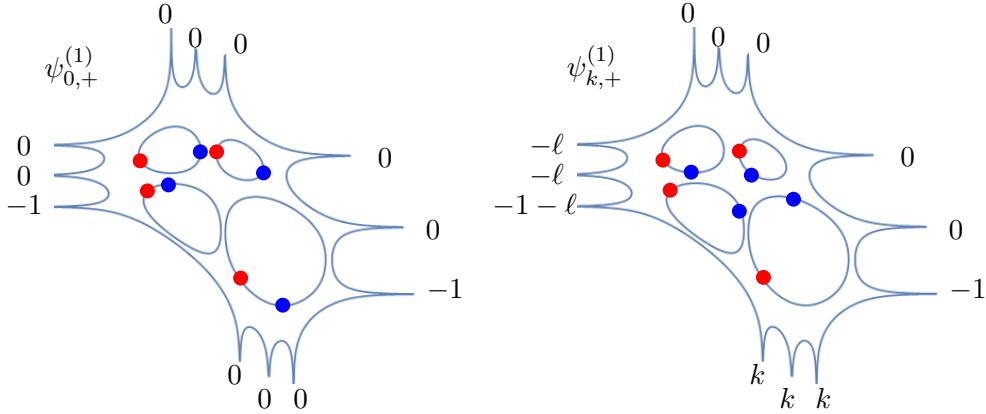

 \begin{center}
 \begin{tikzpicture}[scale=.8]
    \draw (0,0) node {\includegraphics[trim={1cm, 1cm, 1cm, 1cm}, clip, angle=180, scale=.4]{amoeba3x3applicable.png}};
	%nullvector
	\draw (-2.7,2.3) node {$\psi_{0,+}^{(1)}$};    
    %zeros and poles

    %compact ovals
    %Fixed
	\draw (-1.45,.25) node[circle,fill,red,inner sep=2pt]{};
	\draw (-1.1,.35) node[circle,fill,blue,inner sep=2pt]{};
	\draw (.1,-1.2) node[circle,fill,red,inner sep=2pt]{};
	\draw (.8,-1.65) node[circle,fill,blue,inner sep=2pt]{};	
	\draw (-1.57,.75) node[circle,fill,red,inner sep=2pt]{};
	\draw (-.57,.9) node[circle,fill,blue,inner sep=2pt]{};
	\draw (-.3,.9) node[circle,fill,red,inner sep=2pt]{};
	\draw (.48,.55) node[circle,fill,blue,inner sep=2pt]{};

	%angles
	\draw (.1,2.7) node {$0$};
	\draw (-.65,2.8) node {$0$};
	\draw (-1.15,3.2) node {$0$};
	
	\draw (-3.5,1) node {$0$};
	\draw (-3.5,.5) node {$0$};
	\draw (-3.5,0) node {$-1$};

	\draw (0,-2.8) node {$0$};
	\draw (.5,-3.2) node {$0$};
	\draw (1.1,-3.2) node {$0$};

	\draw (2.5,.8) node {$0$};
	\draw (3.3,-.4) node {$0$};
	\draw (3.5,-1.4) node {$-1$};
  \end{tikzpicture}
%	\quad
	 \begin{tikzpicture}[scale=.8]
    \draw (0,0) node {\includegraphics[trim={1cm, 1cm, 1cm, 1cm}, clip, angle=180, scale=.4]{amoeba3x3applicable.png}};
	%nullvector
	\draw (-2.7,2.3) node {$\psi_{k,+}^{(1)}$};    
    %zeros and poles

    %compact ovals
    %Fixed
	\draw (-1.45,.25) node[circle,fill,red,inner sep=2pt]{};
	\draw (-.3,-.1) node[circle,fill,blue,inner sep=2pt]{};
	\draw (.1,-1.2) node[circle,fill,red,inner sep=2pt]{};
	\draw (.6,.1) node[circle,fill,blue,inner sep=2pt]{};	
	\draw (-1.57,.75) node[circle,fill,red,inner sep=2pt]{};
	\draw (-1.1,.55) node[circle,fill,blue,inner sep=2pt]{};
	\draw (-.3,.9) node[circle,fill,red,inner sep=2pt]{};
	\draw (-.1,.5) node[circle,fill,blue,inner sep=2pt]{};

	%angles
	\draw (.1,2.7) node {$0$};
	\draw (-.65,2.8) node {$0$};
	\draw (-1.15,3.2) node {$0$};
	
	\draw (-3.5,1) node {$-\ell$};
	\draw (-3.5,.5) node {$-\ell$};
	\draw (-3.7,0) node {$-1-\ell$};

	\draw (0,-2.8) node {$k$};
	\draw (.5,-3.2) node {$k$};
	\draw (1.1,-3.2) node {$k$};

	\draw (2.5,.8) node {$0$};
	\draw (3.3,-.4) node {$0$};
	\draw (3.5,-1.4) node {$-1$};
  \end{tikzpicture}
 \end{center}
  \caption{The zeros and poles of the first entry of~$\psi_{0,+}$ (left) and~$\psi_{k,+}$ (right) are displayed. The numbers at the angles indicate the order of the zero or pole at each angle. The dots on the inner boundary represent hypothetical zeros. The red zeros remain unchanged, while the blue zeros are modified by the flow. 
 \label{fig:intro:zeros_poles}}
\end{figure}

We then use the fact that~$\det (\Phi_j(z)-wI)$ is independent of~$j$, which implies that the nullvector of~$(\Phi_j(z)-wI)$ is naturally defined as a function on~$\mathcal R$, for all~$j$. Furthermore, for~$(z,w)\in \mathcal R$, if~$\psi_{j,+}(z,w)$ and~$\psi_{j,-}(z,w)$ are right and left nullvectors of~$(\Phi_j(z)-w I)$, then
\begin{equation}\label{eq:intro:eigenvector_j}
\psi_{j+1,+}(z,w)=(\Phi_j)_+(z)\psi_{j,+}(z,w) \quad \text{and} \quad \psi_{j+1,-}(z,w)=\psi_{j,-}(z,w)(\Phi_j)_-(z)
\end{equation}
are right and left nullvectors of~$(\Phi_{j+1}(z)-wI)$, respectively. This observation leads to the following theorem.
\begin{theorem}[Theorem~\ref{thm:wiener-hopf}]\label{thm:intro:wiener-hopf}
For a constant invertible~$k\times k$ matrix~$C$, the following relations hold:
\begin{equation}
z^{\ell N}C\psi_{k N,+}(z,w)=\widetilde \phi_+(z)\psi_{0,+}(z,w), \quad \text{and} \quad \psi_{k N,-}(z,w)z^{-\ell N}C^{-1}=\psi_{0,-}(z,w)\widetilde \phi_-(z),
\end{equation}
where~$\phi=\widetilde \phi_-\widetilde \phi_+$ is the Wiener--Hopf factorization of~$\phi$.
\end{theorem}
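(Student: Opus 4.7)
The plan is to iterate both the swap dynamics $\Phi_j\mapsto\Phi_{j+1}$ and its eigenvector companion \eqref{eq:intro:eigenvector_j}, obtaining in parallel a factorization of $\phi=\Phi^{kN}$ and the transport formulas for $\psi_{kN,\pm}$, and then to match that factorization against the canonical Wiener--Hopf factors $\widetilde\phi_\pm$ via a Liouville-type uniqueness argument.

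First I would establish a telescoped product formula. The identity $(\Phi_j)_+\Phi_j=(\Phi_j)_+(\Phi_j)_-(\Phi_j)_+=\Phi_{j+1}(\Phi_j)_+$ commutes an outer $+$-factor past a power of $\Phi_j$, and a short induction on $n$ yields
\[
\Phi^n=(\Phi_0)_-(\Phi_1)_-\cdots(\Phi_{n-1})_-\cdot(\Phi_{n-1})_+\cdots(\Phi_1)_+(\Phi_0)_+ \;=:\; \Psi_-(z)\Psi_+(z).
\]
At $n=kN$ this gives a factorization of $\phi$ with $\Psi_-$ analytic and invertible outside the unit circle except possibly at $\infty$, and $\Psi_+$ analytic and invertible inside except possibly at $0$. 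A direct induction on \eqref{eq:intro:eigenvector_j} runs in parallel and produces
\[
\psi_{kN,+}(z,w)=\Psi_+(z)\,\psi_{0,+}(z,w), \qquad \psi_{kN,-}(z,w)=\psi_{0,-}(z,w)\,\Psi_-(z).
\]

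Next I would identify $\Psi_\pm$ with $\widetilde\phi_\pm$ up to the prescribed prefactor. Since both pairs share the same one-sided analyticity structure except possibly at $\{0,\infty\}$, the mismatch
\[
M(z):=\widetilde\phi_+(z)\,\Psi_+(z)^{-1}=\widetilde\phi_-(z)^{-1}\,\Psi_-(z)
\]
extends to a matrix-valued rational function on $\CC\cup\{\infty\}$ whose only singularities lie at $0$ and $\infty$, i.e.\ a matrix Laurent polynomial that is invertible off $\{0,\infty\}$. The theorem is then equivalent to the claim that $M(z)=z^{\ell N}C$ for some invertible constant matrix $C\in GL_k(\CC)$: once this is known, substituting $\widetilde\phi_+=z^{\ell N}C\,\Psi_+$ and $\widetilde\phi_-=\Psi_-\,z^{-\ell N}C^{-1}$ into the transport identities above immediately yields both displayed formulas.

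The main obstacle, which I expect to carry most of the technical weight, is establishing this precise form of $M$. Two ingredients are required. The first is an index computation pinning down the exponent $\ell N$: using the explicit determinants of $\phi_{2i-1}$ and $\phi_{2i}$, together with the fact that $\det\Phi_j$ is independent of $j$ (the $\Phi_j$ being pairwise conjugate via the swaps), one reads off the pole and zero orders of $\det\Psi_\pm$ at $\{0,\infty\}$, yielding $\det M(z)\propto z^{k\ell N}$. The second is a partial-index argument ruling out a non-trivial Birkhoff--Grothendieck splitting of $M$, so that its diagonal form collapses to a single factor $z^{\ell N}\cdot I$ up to a constant change of basis. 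For this I would exploit the eigenvector description of $\Psi_+$: the columns obtained by applying $\Psi_+$ to the null-vectors $\psi_{0,+}$ at the $k$ sheets above a generic $z$ come from a uniform construction on the spectral curve $\mathcal R$ (furnished by Proposition~\ref{prop:intro:spectral_curve} and the accompanying spectral-data machinery), which symmetrises across sheets and forces all partial indices to coincide.
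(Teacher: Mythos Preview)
Your telescoping of $\phi=\Psi_-\Psi_+$ and the eigenvector transport $\psi_{kN,+}=\Psi_+\psi_{0,+}$, $\psi_{kN,-}=\psi_{0,-}\Psi_-$ are exactly what the paper does, and this is indeed the heart of the argument. The divergence is in how you pin down the relation between $\Psi_\pm$ and $\widetilde\phi_\pm$.

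Your route via the mismatch $M(z)=\widetilde\phi_+\Psi_+^{-1}$ and a Birkhoff--Grothendieck partial-index computation is a detour around a problem that does not actually arise. The paper never compares two a priori factorizations; instead it simply \emph{defines} $\widetilde\phi_+ := z^{\ell N}C\,\Psi_+$ and $\widetilde\phi_-:=z^{-\ell N}\Psi_-C^{-1}$ and checks directly that these satisfy the Wiener--Hopf conditions. The key is that the building blocks $\phi^b,\phi^g$ have very concrete behaviour at $0$ and $\infty$ (Lemma~\ref{lem:transition_matrices_properties}): a product of $k$ Bernoulli factors $\phi^b$ equals $z^{-1}C'(I+\Ordo(z))$ near $0$ with $C'$ constant and invertible, while each geometric factor $\phi^g$ equals $C''(I+\Ordo(z^{-1}))$ near $\infty$. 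Since $\Psi_+$ is a product of $k\ell N$ Bernoulli factors, grouping them in blocks of $k$ gives $\Psi_+=z^{-\ell N}\cdot(\text{invertible at }0)$; similarly $\Psi_-\to C$ (constant, invertible) at $\infty$. This immediately yields the exponent $\ell N$ and the constant $C$, and uniqueness of the Wiener--Hopf factorization finishes the identification.

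Your proposed resolution of the ``main obstacle''---forcing equal partial indices via some symmetry of eigenvectors across sheets---is vague as stated and would be hard to make rigorous; in any case it is unnecessary once you exploit the explicit triangular structure of $\phi^b$ and $\phi^g$ at $0$ and $\infty$.
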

This statement allows us to also analyze points at which the flow is ergodic. This is in contrast to~\cite{BD22}, where~$\Phi_j$ were considered for all~$j$, but to obtain an expression that was suitable for asymptotic analysis, it was necessary to consider torsion points. The above formulation is closer to the formulation used in~\cite{MV91}. Taking into account both the right and left nullvectors has not been done before, and, even though it is probably not strictly necessary, it simplifies our calculations significantly.

We continue by explaining the idea of the second part (dynamics of the nullvectors); details are given in Section~\ref{sec:linear_flow}. First, we determine the zeros and poles of~$\psi_{0,+}$ and~$\psi_{0,-}$, which determines them uniquely, up to an irrelevant (for our purposes) multiplicative constant. We use the fact that the adjugate of~$K_{G_1}$ can be expressed in terms of the matrices~$\phi_m$,~$m=1,\dots,2\ell$, as mentioned after Proposition~\ref{prop:intro:spectral_curve}, to express the nullvectors~$\psi_{0,\pm}$ in terms of~$\adj K_{G_1}$. This allows us to use results from~\cite{KOS06} and~\cite{BCT22} to obtain the zeros and poles. Indeed, we find that each entry of~$\psi_{0,\pm}$ has two zeros on each compact oval, and all other zeros and poles are at the angles, see Figure~\ref{fig:intro:zeros_poles}. We then use arguments inspired by those in~\cite{MV91} to obtain the zeros and poles of~$\psi_{m,\pm}$ for all~$m$. It follows from~\eqref{eq:intro:eigenvector_j} that we obtain the zeros and poles of~$\psi_{m+1,\pm}$ by adding the divisor of the zeros and poles of~$(\Phi_j)_\pm$ to the divisor of the zeros and poles of~$\psi_{m,\pm}$. The former divisor introduces zeros and poles to certain angles, and on each compact oval there will be one zero which changes with~$m$, and one which is fixed. The zeros on the compact ovals change to ensure that the nullvectors stay meromorphic on~$\mathcal R$ (cf. Abel's theorem), and their change can be expressed in terms of a linear flow on the Jacobi variety. 

Let us provide a more concrete explanation of the transition from~$\psi_{km,+}$ to~$\psi_{k(m+1),+}$, see Figure~\ref{fig:intro:zeros_poles}. This transition involves adding~$k$ zeros at each angle~$q_{0,i}$,~$i=1,\dots,\ell$, and~$\ell$ poles at each angle~$p_{0,j}$,~$j=1,\dots,k$. These are the zeros and poles appearing in the quotient in the third term of~\eqref{eq:intro:def_action_function}. We denote by~$e_j^{(km)}$ the point on the Jacobian determining the zeros of the~$(j+1)$st entry of~$\psi_{km,+}$ in the compact ovals which changes with~$m$\footnote{Later in the text we will use the more detailed notation~$e_{\mathrm w_{0,j}}^{(km)}$, see Proposition~\ref{prop:zeros_poles}}. The linear flow on the Jacobian is given by
\begin{equation}
e_j^{(km+k)}-e_j^{(km)}=\ell\sum_{i=1}^ku(p_{0,i})-k\sum_{i=1}^\ell u(q_{0,i}) \in \RR^g/\ZZ^g,
\end{equation} 
where~$u$ is the Abel map. In particular 
\begin{equation}
e_j^{(kN)}-e_j^{(0)}=N\left(\ell\sum_{i=1}^ku(p_{0,i})-k\sum_{i=1}^\ell u(q_{0,i})\right) \in \RR^g/\ZZ^g.
\end{equation} 

Surprisingly, at least to us, this linear flow makes a seemingly unrelated appearance in the limit shape in the smooth regions. There the limit shape is flat, and we define~$H$ as the difference between the limit shape and the parallel plane that passes through the origin.
\begin{theorem}[Corollary~\ref{cor:height_function_smooth} and Remark~\ref{rem:linear_flow}]\label{thm:intro:height_function_smooth}
Let~$H$ be as above and let~$(u_n,v_n)$ be points in the~$n$th smooth region,~$n=1,\dots,g$, where~$g=(k-1)(\ell-1)$ is the genus of~$\mathcal R$, and we use the coordinate system of Remark~\ref{rem:intro:coordinates}. Then
\begin{equation}\label{eq:intro:shift_height_function}
e_j^{(kN)}-e_j^{(0)}=-N\big(H(u_1,v_1),\dots,H(u_g,v_g)\big)\!\! \mod \ZZ^g.
\end{equation} 
\end{theorem}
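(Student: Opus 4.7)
The plan is to evaluate $H(u_n,v_n)$ directly from the limit-shape representation \eqref{eq:intro:limit_shape} in the $n$-th smooth region and to recognize the result, modulo $\ZZ$, as the negative of the per-step shift of the linear flow on the Jacobian. The theorem then follows on multiplying by $N$ and comparing with the cumulative shift $e_j^{(kN)}-e_j^{(0)}=N\bigl(\ell\sum_i u(p_{0,i})-k\sum_i u(q_{0,i})\bigr)$ from Section~\ref{sec:intro:linear_flow}.

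The first step is to identify the contour $\gamma_n:=\gamma_{\xi,\eta}$ when $(\xi,\eta)$ sits in the $n$-th smooth region. By the characterization of smooth regions recalled in Section~\ref{sec:intro:linear_flow}, the two movable critical points of $F$ have collided on the $n$-th compact oval, so both endpoints of the arc of Definition~\ref{def:curve_integration} get pinned to a single point on that oval. Consequently $\gamma_n$ becomes, up to orientation and homology, the $A$-cycle going around the $n$-th compact oval. With this cycle interpretation, $\oint_{\gamma_n}\d F$ is a period of the meromorphic differential
\begin{equation*}
\d F = \tfrac{k(1-\xi)}{2}\tfrac{\d w}{w}-\tfrac{\ell(1-\eta)}{2}\tfrac{\d z}{z}-k\sum_{i=1}^\ell \d\log E(\tilde q_{0,i},\cdot)+\ell\sum_{j=1}^k \d\log E(\tilde p_{0,j},\cdot),
\end{equation*}
and the $(u_n,v_n)$-dependent coefficients in the first two terms cancel exactly against the contribution of $\nabla\bar h(u_n,v_n)\cdot(u_n,v_n)$ coming from \eqref{eq:intro:slope_new_coord} when one passes from $\bar h$ to $H$. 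This cancellation is the explicit manifestation of the flatness of the limit shape there.

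What remains is to evaluate the $A_n$-periods of $\d w/w$, $\d z/z$ and of the third-kind differentials. The logarithmic differentials contribute $2\pi\i$ times winding numbers, which by \eqref{eq:intro:slope_new_coord} encode the (rational) slope of the $n$-th smooth region. The third-kind differentials contribute, via the automorphy of the Schottky--Klein prime form (Section~\ref{sec:abel_theta}) together with Abel's theorem applied to the meromorphic functions $w,z$ on $\mathcal R$ (whose divisors are supported on the angles, as follows from the geometry recalled after Proposition~\ref{prop:intro:spectral_curve}), a combination that reduces modulo $\ZZ$ to the $n$-th Abel-map coordinates of the angles. Assembling these pieces gives
\begin{equation*}
H(u_n,v_n) \equiv k\sum_{i=1}^\ell u_n(q_{0,i})-\ell\sum_{j=1}^k u_n(p_{0,j}) \pmod{\ZZ},
\end{equation*}
and multiplying by $-N$ yields \eqref{eq:intro:shift_height_function}.

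I expect the main obstacle to lie in the first step: showing rigorously that $\gamma_{\xi,\eta}$ degenerates to the $A_n$-cycle with the correct orientation when the two movable critical points merge on the $n$-th compact oval, and not, say, to that cycle plus a $B$-cycle correction. This needs a careful analysis of how the arc attaches to the collided critical points and of its lift to the universal cover. A secondary challenge is the simultaneous bookkeeping of the $\ZZ$-valued ambiguities arising from branches of the logarithm, from the multivaluedness of the Abel map, and from the normalization of the prime form; these must telescope consistently into the $\ZZ^g$-ambiguity tolerated by the statement of the theorem.
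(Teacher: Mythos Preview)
Your first step contains a genuine error that derails the rest of the argument. The curve $\gamma_{\xi,\eta}$ in the $n$-th smooth region is \emph{not} homologous to the $A_n$-cycle. From Definition~\ref{def:curve_integration}, in a smooth region $\gamma_{\xi,\eta}$ is the closed curve whose half in $\mathcal R_0$ runs from a point on the compact oval $A_n$ to a point on the unbounded arc $A_{0,1}$; the other half is its complex conjugate. This is a curve that crosses $A_n$ once, not one that winds around it. In the paper's proof of Corollary~\ref{cor:height_function_smooth} the contour is explicitly deformed to $-B_n$, the oppositely oriented $B$-cycle through $A_n$. Your description ``both endpoints of the arc get pinned to a single point on that oval'' misreads the definition: only one end lies on $A_n$, the other sits on $A_{0,1}$.

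This misidentification is fatal because the two cycles behave oppositely for the ingredients you need. The $A$-periods of $\d\log E(\tilde p,\cdot)$ vanish (Fact~\ref{fact:prop_prime_form}\eqref{eq:prime_form_lift_a}), so your proposed mechanism (``automorphy of the prime form'') produces nothing on $A$-cycles; it is precisely the $B$-cycle automorphy (Fact~\ref{fact:prop_prime_form}\eqref{eq:prime_form_lift_b}) that yields Abel-map coordinates. The paper instead recognizes $\d\log f$ as the third-kind differential $\omega_{k\sum q_{0,i}-\ell\sum p_{0,j}}$ and applies the reciprocity law
\[
\int_{B_n}\omega_{k\sum q_{0,i}-\ell\sum p_{0,j}}=\int_{\ell\sum p_{0,j}}^{k\sum q_{0,i}}\omega_n,
\]
which immediately gives $H(u_n,v_n)\equiv k\sum_i u_n(q_{0,i})-\ell\sum_j u_n(p_{0,j})\pmod\ZZ$. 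Likewise the integer contributions $-(kn'+\ell n)$ from $\d w/w$ and $\d z/z$ come from counting how many sheets $\mathcal R^{(z)}_j$ and $\mathcal R^{(w)}_i$ the (essentially $B$-cycle) contour traverses, not from winding around an $A$-cycle. Once you replace ``$A_n$-cycle'' by ``$-B_n$'' throughout and invoke the reciprocity law in place of $A$-automorphy, your outline becomes the paper's proof.
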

If we consider the linear flow of the left nullvector instead of the right one, we obtain a similar relation but without the minus sign.

\subsection{The asymptotic analysis}

 \begin{figure}[t]
 \begin{center}
 \begin{tikzpicture}[scale=1]
  \tikzset{->-/.style={decoration={
  markings, mark=at position .5 with {\arrow{stealth}}},postaction={decorate}}}
    \draw (0,0) node {\includegraphics[trim={1cm, 1cm, 1cm, 1cm}, clip, angle=180, scale=.5]{amoeba3x3applicable.png}};
    %\tilde \Gamma_l
   \draw[thick](0.04,-2)--(0.04,-1.15);
   \draw[->-,thick](0.04,0.05)--(0.04,.45);
   \draw[thick](0.04,1.1)--(0.04,1.55);
   %\tilde \Gamma_s
   \draw[dashed,thick] (-.1,-1.6)--(-.1,-.9);
   \draw[->-,dashed,thick] (-.1,-.05)--(-.1,.55);
   \draw[dashed,thick] (-.1,1.1)--(-.1,1.8);
	%angles
	\draw (.1,2.7) node {$q_{\infty,3}$};
	\draw (-.65,2.8) node {$q_{\infty,2}$};
	\draw (-1.15,3.2) node {$q_{\infty,1}$};
	
	\draw (-3.5,1) node {$p_{0,1}$};
	\draw (-3.5,.5) node {$p_{0,2}$};
	\draw (-3.5,0) node {$p_{0,3}$};

	\draw (0,-2.8) node {$q_{0,3}$};
	\draw (.5,-3.2) node {$q_{0,2}$};
	\draw (1.1,-3.2) node {$q_{0,1}$};

	\draw (2.5,.8) node {$p_{\infty,1}$};
	\draw (3.3,-.4) node {$p_{\infty,2}$};
	\draw (3.5,-1.4) node {$p_{\infty,3}$};
  \end{tikzpicture}
  \qquad
   \begin{tikzpicture}[scale=1]
	\draw (0,0) node {\includegraphics[trim={0cm, .1cm, .8cm, .5cm}, clip, scale=.65]{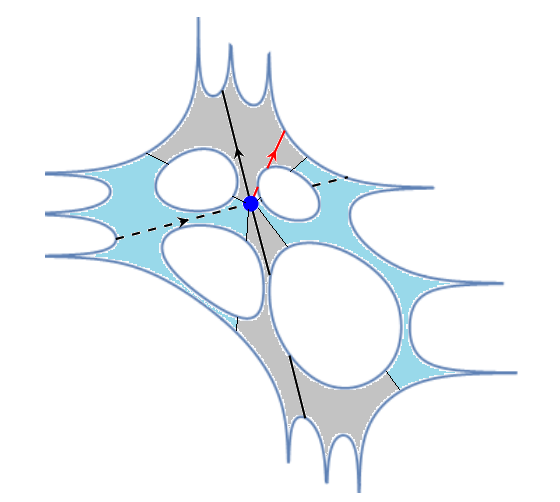}};
	% regions
%	\draw (1.05,.41) node {$C_2$};
%	\draw (-1.7,.7) node {$C_1$};
%	\draw (0,1.8) node {$D_2$};
%	\draw (1.2,-2.1) node {$D_1$};
	
	%angles
	\draw (-1.3,3.0) node {$+\infty$};
	\draw (-2.8,1.4) node {$-\infty$};
	\draw (-.1,-2.7) node {$+\infty$};
	\draw (2.1,1.2) node {$-\infty$};

%	\draw (.1,2.7) node {$+\infty$};
%	\draw (-.65,2.8) node {$+\infty$};
%	\draw (-1.15,3.2) node {$+\infty$};
%	
%	\draw (-3.5,1) node {$-\infty$};
%	\draw (-3.5,.5) node {$-\infty$};
%	\draw (-3.5,0) node {$-\infty$};
%
%	\draw (0,-2.8) node {$+\infty$};
%	\draw (.5,-3.2) node {$+\infty$};
%	\draw (1.1,-3.2) node {$+\infty$};

%	\draw (2.5,.8) node {$-\infty$};
%	\draw (3.3,-.4) node {$-\infty$};
%	\draw (3.5,-1.4) node {$-\infty$};
\end{tikzpicture}
%   \begin{tikzpicture}[scale=1]
%    \tikzset{->-/.style={decoration={
%  markings, mark=at position .5 with {\arrow{stealth}}},postaction={decorate}}}
%    \draw (0,0) node {\includegraphics[trim={1cm, 1cm, 1cm, 1cm}, clip, angle=180, scale=.5]{amoeba3x3applicable.png}};
%    %\gamma_l
%   \draw(0.3,-2.03)--(0.1,-1.25);
%   \draw(-.15,-.25)--(-.39,.64);
%   \draw[->-](-.39,.64)--(-.75,2.05);
%   %\gamma_s
%   \draw[->-,dashed] (-2.1,.2)--(-.25,.68);
%   \draw[dashed] (.39,.86)--(.84,.97);
%   %\gamma_{\xi,\eta}
%   \draw[red](-.39,.64)--(-.3,.85);
%   \draw[->-,red](-.18,1.08)--(0.04,1.55);
%   % Critical point
%   % Rough
%	\draw (-.39,.64) node[blue,circle,fill,inner sep=1.5pt]{};
%  \end{tikzpicture}
 \end{center}
  \caption{Left: The curves~$\Gamma_s$ (dashed) and~$\Gamma_l$ (solid). Right: Curves of steep descent (dashed) and steep ascent (solid). The contribution from the deformation happens along the red curve.  \label{fig:intro:amoeba_curves_rough}}
\end{figure}

In the asymptotic analysis of the correlation kernel~$K_\text{path}$ we express it as a double contour integral on the spectral curve~$\mathcal R$, and to obtain the limit, we perform a steep descent analysis.

In the method of steepest descent, a significant challenge is to understand the curves of steepest descent and ascent and prove that the contours of the integral can be deformed to these curves. This task can become rather technical, especially when dealing with contour integrals on higher genus Riemann surfaces, see, for instance, the previous work on doubly periodic Aztec diamonds~\cite{Ber21, BD22, CJ16, DK21}. However, in our analysis, instead of considering the exact curves of steepest descent and ascent, we focus on regions where curves of steep descent\footnote{The terminology of \emph{steep descent} paths was coined in~\cite{BFPS07} to indicate contours along which the action function was guaranteed to be strictly smaller/bigger than its value at a critical point. We will follow that terminology.} and ascent can be constructed. By combining this approach with the fact that we may think of the spectral curve in terms of the amoeba, we arrive at a relatively non-technical proof. To emphasize this point, we provide here illustrative figures demonstrating the deformation of curves for the rough region. Note, in particular, that the argument is independent of the genus of the Riemann surface.

In Figure~\ref{fig:intro:amoeba_curves_rough}, left panel, we present the image of the curves~$\tilde \Gamma_s=\{(z,w)\in \mathcal R:|z|=r_s\}$ and~$\tilde \Gamma_l=\{(z,w)\in \mathcal R:|z|=r_l\}$ in the amoeba. These correspond to the ones appearing in the double contour integral of~$K_\text{path}$. As curves in the amoeba, they can be freely deformed, except at certain angles. The curve~$\Gamma_s$ cannot be deformed over the angles~$q_{0,i}$ and~$q_{\infty,i}$,~$i=1,\dots,\ell$, and~$\Gamma_l$ cannot be deformed over the angles~$p_{0,j}$ and~$p_{\infty,j}$,~$j=1,\dots,k$. In the right panel of Figure~\ref{fig:intro:amoeba_curves_rough}, we have sketched potential regions where a curve is of steep descent and ascent, respectively. It is apparent from the pictures that the curves in the left panel can be deformed to the respective curves in the right panel. The contribution arising from such deformations occurs only when one curve is deformed across the other. This happens along the red curve in the figure that gives rise to the curve~$\gamma_{\xi,\eta}$ of Proposition~\ref{prop:intro:limit_shape},~\eqref{eq:intro:slope_new_coord} and Theorem~\ref{thm:intro:gibbs_limit}.

\subsection{Connections with previous work}
We conclude the introduction by highlighting a few areas in which our results may be relevant. 

The formula for the inverse Kasteleyn matrix of the two-periodic Aztec diamond, first derived in~\cite{CJ16}, has been utilized to analyze various types of local statistics. It has been applied in the examination of edge fluctuation at the smooth-rough boundary, in the works of Beffara--Chhita--Johansson~\cite{BCJ18, BCJ20}, and dimer-dimer correlations at the smooth-rough boundary, as investigated by Johansson--Mason~\cite{JM22}. Additionally, Johansson--Mason~\cite{JM23} and Bain~\cite{Bai23} have considered limits with weights that vary with the size of the diamond. We believe that the formula obtained in this paper is suitable for other limits beyond those discussed herein. Natural candidates for such limits include the ones mentioned above.

The two viewpoints of~\cite{DK21} and~\cite{BD19} demonstrate that the correlation kernel of the two-periodic Aztec diamond can be expressed in terms of matrix-valued orthogonal polynomials as well as through a Wiener--Hopf factorization. This leads us to the natural question: Can we use the Wiener--Hopf factorization to describe the asymptotic behavior of the orthogonal polynomials? 

Lozenge tilings of a hexagon fall outside the class of models where a Wiener--Hopf factorization yields the dominant behavior of the correlation kernel. Instead, the associated orthogonal polynomials have been employed~\cite{Cha20a, CDKL19}. However, thus far, this approach has only been successful when the underlying spectral curve has genus zero, allowing the use of classical techniques. When the spectral curve has genus greater than zero, and smooth regions are expected, new techniques appear to be necessary. Recently, Bertola--Groot--Kuijlaars~\cite{BGK22} have made significant advancement in this direction. However, despite these advancements, there are still several difficulties to overcome. This raises the question of whether any of the techniques developed in this paper can be applied to the analysis of these orthogonal polynomials.

%\addtocontents{toc}{\protect\setcounter{tocdepth}{1}}
\subsection*{Outline of the paper} 
The paper is organized as follows. In Section~\ref{sec:model}, we introduce the probability measures of interest. We discuss the dimer model for the Aztec diamond in Section~\ref{sec:dimer}, the associated non-intersecting paths model in Section~\ref{sec:paths}, and the ergodic translation-invariant Gibbs measures in Section~\ref{sec:gibbs_measure}. The relation between the correlation kernel and the inverse Kasteleyn matrix is established in Section~\ref{sec:inverse_kasteleyn}. In Section~\ref{sec:characteristic_polynomial}, we discuss the spectral curve, and the proof of Proposition~\ref{prop:intro:spectral_curve} is presented in Section~\ref{sec:spectral_curve}. Sections~\ref{sec:harnack} and~\ref{sec:abel_theta} cover the properties of the spectral curve relevant to our analysis. Our main results are then stated in Section~\ref{sec:main_result}, and the proofs, except for Theorem~\ref{thm:local_limit} and Proposition~\ref{prop:limit_shape}, are provided in the same section. The remaining parts of the paper focus on proving these final two statements. In Section~\ref{sec:wh}, we obtain a suitable expression of the Wiener--Hopf factorization, more precisely, we prove Theorem~\ref{thm:intro:wiener-hopf} in Sections~\ref{sec:switching} and~\ref{sec:wh_eigenvectors}, and describe the integrable flow in Section~\ref{sec:linear_flow}. In the proof of Proposition~\ref{prop:zeros_poles}, we utilize a (previously developed) formalism that is not necessary for the rest of the paper; therefore, we include the proof together with all the relevant definitions and statements in Section~\ref{sec:one_form_thetas}. Finally, the asymptotic analysis is performed in Section~\ref{sec:asymptotic}, with appropriate preparation in Section~\ref{sec:analysis_prep}, and the steep descent analysis is in Section~\ref{sec:steepest_descent}. The final part of the proof of Theorem~\ref{thm:local_limit} is presented in Section~\ref{sec:uniform_limt}, and the proof of Proposition~\ref{prop:limit_shape} is provided in Section~\ref{sec:height_function}.
%\addtocontents{toc}{\protect\setcounter{tocdepth}{2}}

%\addtocontents{toc}{\protect\setcounter{tocdepth}{1}}
\subsection*{Acknowledgements}
We are very grateful to Terrence George for helpful discussions of the spectral transform, to Cédric Boutillier, David Cimasoni, and Béatrice de Tilière for clarifying their work~\cite{BCT22} to us, to Alexander Veselov for giving us confidence in applying the approach of~\cite{MV91} to our problem, to Matthew Nicoletti for important discussions leading to Corollary~\ref{cor:arctic_curve_slopes}, to Nicolai Reshetikhin for drawing our attention to the convexity of the arctic curves (Corollary~\ref{cor:convex}), to Erik Duse and István Prause for explaining possible relations to their work~\cite{ADPZ20} leading to Proposition~\ref{prop:app:maps_relations}, to Semyon Klevtsov for valuable comments on theta functions and prime forms, and to Sunil Chhita and Christophe Charlier for sharing their software implementations of the domino-shuffling sampling algorithm.
TB~was supported by the Knut and Alice Wallenberg Foundation grant KAW~2019.0523. AB~was partially supported by the NSF grant DMS-1853981, and the Simons Investigator program.
%\addtocontents{toc}{\protect\setcounter{tocdepth}{2}}

\section{The model and a first result}\label{sec:model}
In this section we define the probability measures studied in this paper. We begin by introducing the dimer model on the Aztec diamond in Section~\ref{sec:dimer}, and discuss an equivalent non-intersecting paths model in Section~\ref{sec:paths}. The local statistics of the paths model are expressed in terms of a correlation kernel, while the local statistics of the dimer model is described through the inverse Kasteleyn matrix. The relation between these two objects is established in Section~\ref{sec:inverse_kasteleyn}. Finally, in Section~\ref{sec:gibbs_measure} we recall the notion of ergodic translation-invariant Gibbs measures.

\subsection{Transition matrices}\label{sec:transition_matrices}
In this section we define a family of matrices which are used throughout the paper. We also state a few facts about these matrices. For the proofs we refer the reader to~\cite{BD19, LP12, LP13}, but the properties can also be checked by straightforward verification.

Given vectors~$\vec{\alpha},\vec{\beta},\vec{\gamma} \in (\CC^*)^k$, where~$\CC^*=\CC\backslash \{0\}$, with elements~$(\vec \alpha)_i=\alpha_i$,~$(\vec \beta)_i=\beta_i$,~$(\vec \gamma)_i=\gamma_i$, we set
\begin{equation}\label{eq:bernoulli}
\phi^b(z;\vec{\alpha},\vec{\gamma})=
\begin{psmallmatrix}
\gamma_{1} & 0 & \cdots & 0 & \alpha_{k} z^{-1} \\
\alpha_{1} & \gamma_{2} & \cdots & 0 & 0 \\
\vdots & \vdots & \ddots & \vdots & \vdots \\
0 & 0 & \cdots & \gamma_{k-1} & 0 \\
0 & 0 & \cdots & \alpha_{k-1} & \gamma_{k}
\end{psmallmatrix},
\end{equation}and
\begin{equation}\label{eq:geometric}
\phi^g(z;\vec{\beta})=\frac{1}{1-\beta z^{-1}}
\begin{psmallmatrix}
1 & \prod_{j=2}^{k}\beta_{j}z^{-1} & \cdots & \beta_{k}z^{-1}  \\
\beta_{1} & 1 &  \cdots & \beta_{k}\beta_{1}z^{-1}  \\
\vdots & \vdots & \ddots & \cdots \\
\prod_{j=1}^{k-1}\beta_{j} & \prod_{j=2}^{k-1}\beta_{j} & \cdots & 1
\end{psmallmatrix},
\end{equation}
where~$\beta=\prod_{i=1}^k\beta_i$. We also set~$\alpha=\prod_{i=1}^k\alpha_i$,~$\gamma=\prod_{i=1}^k\gamma_i$ and let~$\vec{1}$ denote the vector of length~$k$ consisting of only ones. We will mostly consider vectors in~$\RR_{>0}^k$.
\begin{remark}
In the special case of~$k=1$,
\begin{equation}
\phi^b(z;\vec{\alpha},\vec{\gamma})=\gamma+\alpha z^{-1}, \quad \text{and} \quad \phi^g(z;\vec{\beta})=\frac{1}{1-\beta z^{-1}}.
\end{equation}
\end{remark}
\begin{remark}
In the notation of~\cite{BD19}, 
\begin{equation}
\phi^g(z;\vec{\beta})=\phi^{g \downarrow}(z;\vec{\beta},\vec{1}) \quad \text{and} \quad \phi^b(z;\vec{\alpha},\vec{\gamma})=\phi^{b,\downarrow}(z;\vec{(\alpha/\gamma)},\vec{\gamma}),
\end{equation}
where~$\vec{(\alpha/\gamma)}=(\alpha_1/\gamma_1,\dots,\alpha_k/\gamma_k)$. The notation~$b$ and~$g$ stands for Bernoulli and geometric, and the arrows~$\downarrow$ means that it is the symbol of a transition matrix for a Bernoulli/geometric step down. 
\end{remark}

In the following lemma we collect a number of elementary properties which will be used multiple times throughout the paper.
\begin{lemma}\label{lem:transition_matrices_properties}
With the notation given above, the following properties hold:
\begin{enumerate}[(i)]
\item~$\det \phi^b(z;\vec{\alpha},\vec{\gamma})=\gamma-(-1)^k\alpha z^{-1}$ and~$\det \phi^g(z;\vec{\beta})=(1-\beta z^{-1})^{-1}$, \label{eq:determinant_bern_geo}
\item~$\phi^g(z;\vec{\beta})^{-1}=\phi^b(z;-\vec{\beta},\vec{1})$,\label{eq:inverse_geometric}
\item~$\prod_{j=1}^k\phi^b(z;\vec{\alpha}_{i},\vec{\gamma}_{i})=z^{-1}C'(I+\Ordo(z))$, as~$z\to 0$, for an upper triangular constant matrix~$C'$, \label{eq:bernoulli_zero}
\item~$\phi^g(z;\vec{\beta})=C''(I+\Ordo(z^{-1}))$, as~$z \to \infty$, for a lower triangular constant matrix~$C''$. \label{eq:geometric_infty}
\item~$\phi^b(z;\vec{\alpha},\vec{\gamma})\phi^g(z;\vec{\beta})=C'''(I+\Ordo(z))$, as~$z \to 0$, for an upper triangular constant matrix~$C'''$. \label{eq:prod_zero}
\end{enumerate}
\end{lemma}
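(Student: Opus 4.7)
The plan is to handle the five parts in sequence, treating them as essentially independent algebraic checks. Introduce the common decomposition $\phi^b(z;\vec\alpha,\vec\gamma)=A+z^{-1}B$, where $A$ is the lower bidiagonal matrix with diagonal entries $\gamma_1,\dots,\gamma_k$ and subdiagonal entries $\alpha_1,\dots,\alpha_{k-1}$, and $B=\alpha_k\, e_1 e_k^T$ is rank one. Similarly write $\phi^g(z;\vec\beta)=(1-\beta z^{-1})^{-1}(G_0+z^{-1}G_1')$, where $G_0$ is lower triangular with $1$'s on the diagonal and $G_1'$ is strictly upper triangular. This notation makes the rank and support properties that drive the argument transparent.

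Parts (i), (ii), and (iv) are immediate. For (i), expand $\det\phi^b$ by cofactors along the first row: the diagonal cofactor contributes $\gamma$ and the $(1,k)$ cofactor contributes $(-1)^{k+1}\alpha z^{-1}=-(-1)^k\alpha z^{-1}$. For the determinant of $\phi^g$, it is cheaper to first prove (ii) by direct matrix multiplication $\phi^g(z;\vec\beta)\phi^b(z;-\vec\beta,\vec 1)=I$, and then obtain $\det\phi^g=1/\det\phi^b(z;-\vec\beta,\vec 1)=1/(1-\beta z^{-1})$ using the formula just proved. For (iv), expand $(1-\beta z^{-1})^{-1}=1+\mathcal O(z^{-1})$ and note that the entries of the matrix in the definition of $\phi^g$ are either independent of $z$ (lower triangle, diagonal) or $\mathcal O(z^{-1})$ (upper triangle); the $z^0$ part is exactly the lower triangular matrix $C''$ with entries $\prod_{p=j}^{i-1}\beta_p$, which has unit determinant.

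Part (iii) is the main obstacle and requires the only nontrivial argument. Expand $\prod_{i=1}^k(A_i+z^{-1}B_i)=\sum_{S\subset[k]}z^{-|S|}\prod_i X_i^S$ with $X_i^S=B_i$ if $i\in S$ and $A_i$ otherwise. The task is to show that every term with $|S|\ge 2$ vanishes, so that the leading order is exactly $z^{-1}$. For $|S|\ge 2$, pick two consecutive indices $i_1<i_2$ in $S$; the intervening factor $X=A_{i_1+1}\cdots A_{i_2-1}$ is a product of $m=i_2-i_1-1\le k-2$ lower bidiagonals. Because $B_{i_1}XB_{i_2}=\alpha_k^{(i_1)}\alpha_k^{(i_2)}X_{k,1}\,E_{1,k}$, it suffices to show $X_{k,1}=0$. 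This follows from a simple path-counting: $(L_1\cdots L_m)_{k,1}$ is a sum over sequences $k=p_0,p_1,\dots,p_m=1$ with $p_j\in\{p_{j-1},p_{j-1}-1\}$, which requires $m\ge k-1$; but $m\le k-2$, contradiction. Thus only $|S|\le 1$ terms survive, giving $C'=\sum_{i=1}^k A_1\cdots A_{i-1}B_iA_{i+1}\cdots A_k=\sum_i\alpha_k^{(i)}u_iv_i^T$ with $u_i=A_1\cdots A_{i-1}e_1$ supported on rows $\le i$ and $v_i^T=e_k^T A_{i+1}\cdots A_k$ supported on columns $\ge i$. Each summand is therefore upper triangular, and the diagonal entry $(C')_{rr}$ receives contribution only from $i=r$, yielding a nonzero product of $\alpha$'s.

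Part (v) then follows from a short calculation reusing the rank-one structure. With the decomposition above, $(A+z^{-1}B)(G_0+z^{-1}G_1')=AG_0+z^{-1}(AG_1'+BG_0)+z^{-2}BG_1'$, and the key observation is $BG_1'=\alpha_k e_1(e_k^TG_1')=0$ because the last row of the strictly upper triangular matrix $G_1'$ vanishes. Multiplying by $(1-\beta z^{-1})^{-1}=-z/\beta+\mathcal O(z^2)$ as $z\to 0$ therefore gives $\phi^b\phi^g=-\beta^{-1}(AG_1'+BG_0)+\mathcal O(z)$, so $C'''=-\beta^{-1}(AG_1'+BG_0)$. The matrix $BG_0$ is supported on the first row, and a short check using the bidiagonal structure of $A$ shows $(AG_1')_{ij}=0$ for $i>j$ (both possible contributions $A_{ii}(G_1')_{ij}$ and $A_{i,i-1}(G_1')_{i-1,j}$ vanish by strict upper triangularity), so $C'''$ is upper triangular. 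Finally, computing its diagonal using $(G_1')_{i-1,i}=\beta/\beta_{i-1}$ (the cyclic product of all $\beta$'s except $\beta_{i-1}$) yields $(C''')_{ii}=-\alpha_{i-1}/\beta_{i-1}$ for $i\ge 2$ and $(C''')_{11}=-\alpha_k/\beta_k$, all nonzero, so $C'''$ is invertible. The combinatorial step in (iii) is the only place where one needs an argument beyond routine algebra.
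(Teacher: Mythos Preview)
Your proof is correct. The paper does not actually prove this lemma; it refers to \cite{BD19, LP12, LP13} and remarks that ``the properties can also be checked by straightforward verification.'' What you have written is precisely such a verification, carried out in full.

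The only part requiring more than bookkeeping is (iii), and your combinatorial argument handles it cleanly: the rank-one structure $B_i=\alpha_k^{(i)}e_1e_k^T$ forces $B_{i_1}XB_{i_2}$ to be proportional to $X_{k,1}$, and the path-counting observation that a product of $m\le k-2$ lower bidiagonals cannot connect row $k$ to column $1$ kills all terms with $|S|\ge2$. The upper-triangularity and invertibility of $C'$ then follow from the support analysis of $u_i=A_1\cdots A_{i-1}e_1$ and $v_i^T=e_k^TA_{i+1}\cdots A_k$. This is a clean self-contained argument that the paper does not supply; it is worth noting that the same reasoning works whether the $k$ factors in the product use distinct parameter vectors (as you assume) or identical ones (as the statement, with its apparent index mismatch, might literally suggest). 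Your treatment of (v) via $BG_1'=0$ and the explicit diagonal computation is also correct and efficient.
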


\begin{remark}\label{rem:products}
The product in~\eqref{eq:bernoulli_zero} of Lemma~\ref{lem:transition_matrices_properties}, is a product of matrices, which, generally speaking, do not commute. For matrices and scalars~$A_i$ we use the notation 
\begin{equation}
\prod_{i=a+1}^bA_i=
\begin{cases}
A_{a+1}\cdots A_b, & b>a, \\
I, & b=a, \\
A_{b+1}^{-1}\cdots A_a^{-1}, &b<a.
\end{cases}
\end{equation}
Occasionally we will also use the notation
\begin{equation}
\prod_{\substack{i=a+1 \\ \leftarrow}}^bA_i=A_b\cdots A_{a+1}, \,\, \text{if} \,\, b>a, \quad \text{and} \quad \prod_{\substack{i=a+1 \\ \leftarrow}}^bA_i=I, \,\, \text{if} \,\, b=a.
\end{equation}
\end{remark}

\subsection{Dimer model}\label{sec:dimer}
Let us define the Aztec diamond and introduce the dimer model we are interested in. 

We begin by defining a bipartite graph~$G=(\mathcal B,\mathcal W, \mathcal E)$ embedded in the plane. Let 
\begin{equation}
\mathcal B=\{(2i,2j+1): (i,j)\in \ZZ^2\}
\end{equation}
be the black vertices and 
\begin{equation}
\mathcal W=\{(2i+1,2j): (i,j)\in \ZZ^2\}
\end{equation}
be the white vertices. We divide the edges~$\mathcal E$ into four groups,~$\mathrm N$,~$\mathrm E$,~$\mathrm S$ and~$\mathrm W$, called south, west, east and north edges, 
given by 
\begin{equation}
\mathrm N=\{((2i-1,2j),(2i,2j-1)): (i,j)\in \ZZ^2\},
\end{equation}
\begin{equation}
\mathrm E=\{((2i-1,2j),(2i,2j+1)): (i,j)\in \ZZ^2\},
\end{equation}
\begin{equation}
\mathrm S=\{((2i,2j+1),(2i+1,2j)): (i,j)\in \ZZ^2\},
\end{equation}
and
\begin{equation}
\mathrm W=\{((2i,2j-1),(2i+1,2j)): (i,j)\in \ZZ^2\}.
\end{equation}

The \emph{Aztec diamond} of size~$N$ is the subgraph~$G_\text{Az}=(\mathcal B_\text{Az},\mathcal W_\text{Az}, \mathcal E_\text{Az})$ of~$G$ containing all vertices and edges in the square with corners~$(0,0)$,~$(2N,0)$,~$(2N,2N)$ and~$(0,2N)$, including the vertices on the boundary. See Figure~\ref{fig:aztec_diamond}. In the literature it is common that the Aztec diamond is rotated by~$\frac{\pi}{4}$ compared with what is presented here. A \emph{dimer covering} (or a \emph{perfect matching}) of~$G_\text{Az}$ is a subset~$M$ of~$\mathcal E_\text{Az}$, such that each vertex is adjacent to exactly one edge in~$M$. The edges in~$M$ are called \emph{dimers}. A \emph{dimer model} is a probability measure defined on the space of all dimer coverings. 

\begin{figure}
\begin{center}
\begin{tikzpicture}[scale=.70, rotate=-45]

% Coordinate axes
\foreach \x in {0,1,2,3}
\foreach \y in {0,1,2,3}
{\draw (-3.5+\x+\y,.5-\x+\y) rectangle (-2.5+\x+\y,1.5-\x+\y);
}
% West dominos
\foreach \x/\y in {-4/0,-3/1,-2/2,-3/-1,-1/-1,0/-2}
{ 
\draw (\x+.5,\y+.5) node[circle,fill,inner sep=2pt]{};
\draw (\x+.5,\y+1.5) node[circle,draw=black,fill=white,inner sep=2pt]{};
}

% East dominos
\foreach \x/\y in {-2/-1,1/-2,2/-1,3/0,2/1,1/2}
{
\draw (\x+.5,\y+.5) node[circle,draw=black,fill=white,inner sep=2pt]{};
\draw (\x+.5,\y+1.5) node[circle,fill,inner sep=2pt]{};
}

% South dominos
\foreach \x/\y in {-1/-3,-2/-2,-1/3,0/0}
{
\draw (\x+.5,\y+.5) node[circle,fill,inner sep=2pt]{};
\draw (\x+1.5,\y+.5) node[circle,draw=black,fill=white,inner sep=2pt]{};
}

% North dominos
\foreach \x/\y in {-2/1,0/1,-1/2,-1/4}
{
\draw (\x+.5,\y+.5) node[circle,draw=black,fill=white,inner sep=2pt]{};
\draw (\x+1.5,\y+.5) node[circle,fill,inner sep=2pt]{};
}

%Coordinates
\draw (.5,5.5) node {$(2N,2N)$};
\draw (-.1,-3.1) node{$(0,0)$};
\end{tikzpicture}
\begin{tikzpicture}[scale=.70, rotate=-45]
%Compass
\pic[scale=.2, rotate=-45] at (-4.5,4.5) {compass};
%\draw [ultra thick,->](-4,3.5)--(-4,5);
%\draw [ultra thick,->](-3.5,4)--(-5,4);
%\draw (-4,5.5) node{north};
%\draw (-5.5,4) node {west};

%dimers
%north
\draw[line width = 1mm] (-3,3.5)--(-2,3.5);
\draw (-3,3.5) node[circle,draw=black,fill=white,inner sep=2pt]{};
\draw (-2,3.5) node[circle,fill,inner sep=2pt]{};
\draw (-3,3.5) node[left] {north};
%west
\draw[line width = 1mm] (-3,1.5)--(-3,2.5);
\draw (-3,1.5) node[circle,fill,inner sep=2pt]{};
\draw (-3,2.5) node[circle,draw=black,fill=white,inner sep=2pt]{};
\draw (-3,2) node[right] {west};
%south
\draw[line width = 1mm] (-1.5,2.5)--(-.5,2.5);
\draw (-1.5,2.5) node[circle,fill,inner sep=2pt]{};
\draw (-.5,2.5) node[circle,draw=black,fill=white,inner sep=2pt]{};
\draw (-1.5,2.5) node[left] {south};
%east
\draw[line width = 1mm] (-1.5,.5)--(-1.5,1.5);
\draw (-1.5,.5) node[circle,draw=black,fill=white,inner sep=2pt]{};
\draw (-1.5,1.5) node[circle,fill,inner sep=2pt]{};
\draw (-1.5,1) node[right] {east};

\end{tikzpicture}
\quad
\begin{tikzpicture}[scale=.70, rotate=-45]

% Coordinate axes
\foreach \x in {0,1,2,3}
\foreach \y in {0,...,3}
{\draw (-3.5+\x+\y,.5-\x+\y) rectangle (-2.5+\x+\y,1.5-\x+\y);
}
% West dominos
\foreach \x/\y in {-4/0,-3/1,-2/2,-3/-1,0/-2}
{ 
\draw[line width = 1mm] (\x+.5,\y+.5)--(\x+.5,\y+1.5);
\draw (\x+.5,\y+.5) node[circle,fill,inner sep=2pt]{};
\draw (\x+.5,\y+1.5) node[circle,draw=black,fill=white,inner sep=2pt]{};
}

% East dominos
\foreach \x/\y in {1/-2,2/-1,3/0,2/1,-1/2}
{
\draw[line width = 1mm] (\x+.5,\y+.5)--(\x+.5,\y+1.5);
\draw (\x+.5,\y+.5) node[circle,draw=black,fill=white,inner sep=2pt]{};
\draw (\x+.5,\y+1.5) node[circle,fill,inner sep=2pt]{};
}

% South dominos
\foreach \x/\y in {-1/-3,-2/-2,0/2,0/0,-2/0}
{
\draw[line width = 1mm] (\x+.5,\y+.5)--(\x+1.5,\y+0.5);
\draw (\x+.5,\y+.5) node[circle,fill,inner sep=2pt]{};
\draw (\x+1.5,\y+.5) node[circle,draw=black,fill=white,inner sep=2pt]{};
}

% North dominos
\foreach \x/\y in {-2/1,0/1,0/3,-1/4,-2/-1}
{
\draw[line width = 1mm] (\x+.5,\y+.5)--(\x+1.5,\y+0.5);
\draw (\x+.5,\y+.5) node[circle,draw=black,fill=white,inner sep=2pt]{};
\draw (\x+1.5,\y+.5) node[circle,fill,inner sep=2pt]{};
}

\end{tikzpicture}
\end{center}
\caption{The Aztec diamond of size~$4$. Left: The Aztec diamond graph. Right: An example of a dimer covering. The thick edges are the dimers.  \label{fig:aztec_diamond}}
\end{figure}
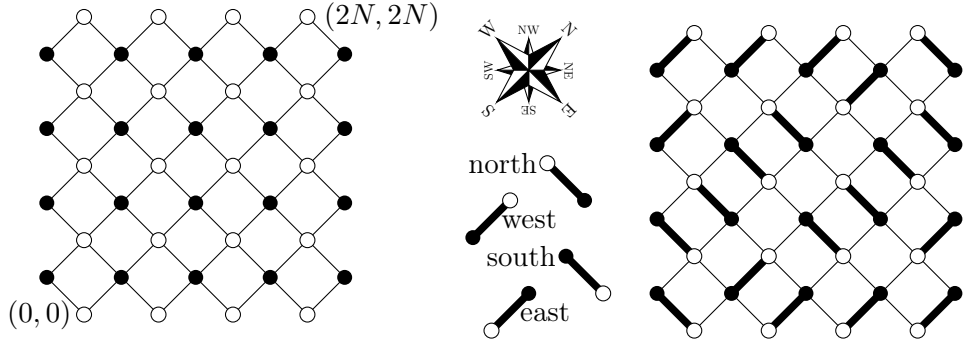

To define the probability measure we introduce weights on the edges -- a function from the edges to the positive real numbers,~$w:\mathcal E\to \RR_{>0}$. The probability measure is defined by
\begin{equation}\label{eq:measure_dimer}
\PP(M)=\frac{1}{Z}\prod_{e\in M}w(e),
\end{equation}
where~$Z=\sum_{M'}\prod_{e\in M'}w(e)$, with the sum running over all possible perfect matchings, is the \emph{partition function}. Let~$\sigma_1$ and~$\sigma_2$ be functions on the white and black vertices, respectively. With the notation~$e=\mathrm w\mathrm b\in \mathcal E$, the weight function~$\tilde w(\mathrm w\mathrm b)=\sigma_1(\mathrm w)w(\mathrm w\mathrm b)\sigma_2(\mathrm b)$ defines the same probability measure as~$w$, and~$\tilde w$ and~$w$ are said to be \emph{gauge equivalent}.

Given a vertex~$(2i-1,2j)\in \mathcal W$, let~$e_{\mathrm S}\in \mathrm S$,~$e_{\mathrm E}\in \mathrm E$,~$e_{\mathrm W}\in \mathrm W$ and~$e_{\mathrm N}\in \mathrm N$, be adjacent to the given vertex. We label the edge weights by
\begin{equation}
\alpha_{j,i}=w(e_{\mathrm S}), \quad \beta_{j,i}=w(e_{\mathrm E}),\quad \gamma_{j,i}=w(e_{\mathrm W}),\quad \text{and}\quad \delta_{j,i}=w(e_{\mathrm N}).
\end{equation} 
In the present paper we consider models for which the edge weights are \emph{doubly periodic}, more precisely, periodic in the vertical direction with period~$k$, and periodic in the horizontal direction with period~$\ell$. That is,~$\alpha_{j,i}=\alpha_{j+k,i+\ell}$ for all~$i,j$, and similarly~$\beta_{j,i}=\beta_{j+k,i+\ell}$ and so on, see Figure~\ref{fig:weigths}. Using a gauge transformation we may, and will, assume, without loss of generality, that~$\delta_{j,i}=1$ for all~$i$,~$j$. For simplicity of notations, we will, throughout the paper, consider the Aztec diamond of size~$k\ell N$ with~$N\in \ZZ_{>0}$. 

Given the edge weights, we define the following quantities:
\begin{equation}\label{eq:edge_weights_product}
\alpha^v_i=\prod_{j=1}^k\alpha_{j,i}, \,\,\, \alpha^h_j=\prod_{i=1}^\ell \alpha_{j,i}, \,\,\, \beta^v_i=\prod_{j=1}^k\beta_{j,i}, \,\,\, \beta^h_j=\prod_{i=1}^\ell \beta_{j,i}, \,\,\,
\gamma^v_i=\prod_{j=1}^k\gamma_{j,i}, \,\,\, \text{and} \,\,\, \gamma^h_j=\prod_{i=1}^\ell\gamma_{j,i}.
\end{equation}
These quantities, or more precisely~$\alpha^h_j/\beta^h_j$,~$\gamma^h_j$,~$\alpha^v_i/\gamma^v_i$,~$\beta^v_i$, are important objects in our analysis, see~\eqref{eq:angles_1} and~\eqref{eq:angles_2}. Moreover, they are closely related to the so-called angles introduced in~\cite{GK13}, see also Section~\ref{sec:one_form_thetas}.

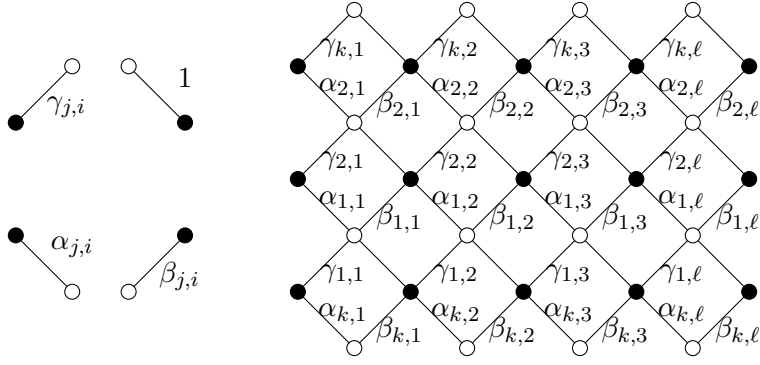
\begin{figure}
\begin{center}
\begin{tikzpicture}[scale=1.5]

% def of edge weights
\draw (-2.5,2-.5)--(-2,2.5-.5);
\draw (-2.5,2-.5) node[circle,draw=black,fill=black,inner sep=2pt]{};
\draw (-2,2.5-.5) node[circle,draw=black,fill=white,inner sep=2pt]{};
\draw (-2.05,2.14-.5) node {$\gamma_{j,i}$};

\draw (-1.5,2.5-.5)--(-1,2-.5);
\draw (-1.5,2.5-.5) node[circle,draw=black,fill=white,inner sep=2pt]{};
\draw (-1,2-.5) node[circle,draw=black,fill=black,inner sep=2pt]{};
\draw (-1,2.4-.5) node {$1$};

\draw (-2.5,1-.5)--(-2,.5-.5);
\draw (-2,.5-.5) node[circle,draw=black,fill=white,inner sep=2pt]{};
\draw (-2.5,1-.5) node[circle,draw=black,fill=black,inner sep=2pt]{};
\draw (-2,.9-.5) node {$\alpha_{j,i}$};

\draw (-1.5,.5-.5)--(-1,1-.5);
\draw (-1,1-.5) node[circle,draw=black,fill=black,inner sep=2pt]{};
\draw (-1.5,.5-.5) node[circle,draw=black,fill=white,inner sep=2pt]{};
\draw (-1.05,.64-.5) node {$\beta_{j,i}$};

% black points
\foreach \x in {0,...,4}
{\foreach \y in {0,...,2}
{\draw (\x,\y) node[circle,draw=black,fill=black,inner sep=2pt]{};
}
}

% white points and lines
\foreach \x in {1,...,4}
{\foreach \y in {1,2}
{\draw (\x-1,\y-1)--(\x,\y);
\draw (\x-1,\y)--(\x,\y-1);
\draw (\x-.5,\y-.5) node[circle,draw=black,fill=white,inner sep=2pt]{};
}
\foreach \y in {0}
{\draw (\x-.5,\y-.5)--(\x,\y);
\draw (\x-1,\y)--(\x-.5,\y-.5);
\draw (\x-.5,\y-.5) node[circle,draw=black,fill=white,inner sep=2pt]{};
}
\foreach \y in {3}
{\draw (\x-1,\y-1)--(\x-.5,\y-.5);
\draw (\x-.5,\y-.5)--(\x,\y-1);
\draw (\x-.5,\y-.5) node[circle,draw=black,fill=white,inner sep=2pt]{};
}
}

\foreach \x in {1,...,3}
{
 \draw (\x+.4-1,0-.2) node {$\alpha_{k,\x}$};
 \draw (\x+.9-1,0-.35) node {$\beta_{k,\x}$};
 \draw (\x+.4-1,3-.85) node {$\gamma_{k,\x}$};
 \draw (\x+.4-1,2-.2) node {$\alpha_{2,\x}$};
 \draw (\x+.9-1,2-.35) node {$\beta_{2,\x}$};
 \draw (\x+.4-1,2-.85) node {$\gamma_{2,\x}$};
 \draw (\x+.4-1,1-.2) node {$\alpha_{1,\x}$};
 \draw (\x+.9-1,1-.35) node {$\beta_{1,\x}$};
 \draw (\x+.4-1,1-.85) node {$\gamma_{1,\x}$};
}

{
 \draw (3.4,-.2) node {$\alpha_{k,\ell}$};
 \draw (3.9,-.35) node {$\beta_{k,\ell}$};
 \draw (3.4,3-.85) node {$\gamma_{k,\ell}$};
 \draw (3.4,2-.2) node {$\alpha_{2,\ell}$};
 \draw (3.9,2-.35) node {$\beta_{2,\ell}$};
 \draw (3.4,2-.85) node {$\gamma_{2,\ell}$};
 \draw (3.4,1-.2) node {$\alpha_{1,\ell}$};
 \draw (3.9,1-.35) node {$\beta_{1,\ell}$};
 \draw (3.4,1-.85) node {$\gamma_{1,\ell}$};
}
\end{tikzpicture}
\end{center}
\caption{The edge weights are~$\alpha_{j,i},\beta_{j,i}, \gamma_{j,i}>0$ for~$i=1,\dots,\ell$,~$j=1,\dots,k$. Here~$k=3$ and~$\ell=5$. \label{fig:weigths}}
\end{figure}

The \emph{Kasteleyn matrix} is a matrix with the rows indexed by the white vertices and the columns indexed by the black vertices. We coordinatize the white vertices by~$(\ell x+i,ky+j)$, where~$\ell x+i=0,\dots,k\ell N-1$, with~$i=0,\dots,\ell-1$, and~$ky+j=-1,0,\dots,k\ell N-1$, with~$j=0,\dots,k-1$. For~$\mathrm w\in \mathcal W_\text{Az}$ we write 
\begin{equation}\label{eq:enumeration_white}
\mathrm{w}_{\ell x+i,ky+j}=\mathrm w=(2\ell x+2i+1,2ky+2j+2) \in \mathcal W_\text{Az}.
\end{equation}
Similarly, for~$\mathrm b\in \mathcal B_\text{Az}$, we write~$\mathrm{b}_{\ell x+i,ky+j}=\mathrm b$ if
\begin{equation}\label{eq:enumeration_black}
\mathrm b=(2\ell x+2i,2ky+2j+1) \in \mathcal B_\text{Az},  
\end{equation}
for~$\ell x+i=0,\dots,k\ell N$, with~$i=0,\dots,\ell-1$, and~$ky+j=0,\dots,k\ell N-1$, with~$j=0,\dots,k-1$. To define the Kasteleyn matrix, we take the \emph{Kasteleyn signs} to be~$-1$ on the north edges, and~$1$ on the other edges. The Kasteleyn matrix~$K_{G_\text{Az}}:\CC^{\mathcal{B}_\text{Az}} \to \CC^{\mathcal{W}_\text{Az}}$ is defined by
\begin{equation}\label{eq:def_kasteleyn_aztec}
(K_{G_\text{Az}})_{\mathrm{w}_{\ell x+i,ky+j}\mathrm{b}_{\ell x'+i',ky'+j'}} \\
=
\begin{cases}
\alpha_{j+1,i+1}, & (\ell x'+i',ky'+j')=(\ell x+i,ky+j+1), \\
\gamma_{j+1,i+1}, & (\ell x'+i',ky'+j')=(\ell x+i,ky+j), \\
\beta_{j+1,i+1}, & (\ell x'+i',ky'+j')=(\ell x+i+1,ky+j+1), \\
-1, & (\ell x'+i',ky'+j')=(\ell x+i+1,ky+j), \\
0, & \text{otherwise.}
\end{cases}
\end{equation}
The matrix is only defined up to permutations of the rows and columns. For definiteness we fix an order of the black and white vertices. We take the order so that~$(\ell x+i,ky+j)<(\ell x'+i',ky'+j')$ if~$\ell x+i<\ell x'+i'$ or if~$\ell x+i=\ell x'+i'$ and~$ky+j<ky'+j'$. 

The Kasteleyn signs are defined in such a way that the partition function of the dimer model is the absolute value of the determinant of~$K_{G_\text{Az}}$. It follows that the edge probabilities can be expressed in terms of the inverse Kasteleyn matrix, see, \emph{e.g.},~\cite{Ken04}.
\begin{theorem}[\cite{Ken97}]\label{thm:kenyon_formula}
Let~$e_m=\mathrm{w}_m\mathrm{b}_m$,~$m=1,\dots,p$, be edges in~$G_\text{Az}$, connecting~$\mathrm{w}_m\in \mathcal W_\text{Az}$ and~$\mathrm{b}_m\in \mathcal B_\text{Az}$, and let~$\PP$ be the measure~\eqref{eq:measure_dimer}. Then 
\begin{equation}\label{eq:kenyon_formula}
\PP[e_1,\dots,e_p\in M]=\prod_{m=1}^p(K_{G_\text{Az}})_{\mathrm{w}_m\mathrm{b}_m}\det\left((K_{G_\text{Az}}^{-1})_{\mathrm{b}_m\mathrm{w}_{m'}}\right)_{1\leq m,m' \leq p}.
\end{equation} 
\end{theorem}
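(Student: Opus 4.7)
The plan is to deduce this from Kasteleyn's theorem together with the Jacobi identity for complementary minors of the inverse matrix. Recall that Kasteleyn's theorem for bipartite planar graphs with the sign convention fixed in~\eqref{eq:def_kasteleyn_aztec} guarantees that, after absorbing the edge weights, the partition function satisfies $Z = \pm \det K_{G_\text{Az}}$ — in fact, all nonzero contributions in the expansion of $\det K_{G_\text{Az}}$ as a sum over perfect matchings carry the same sign.

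The first step is the single-edge case. Writing $\PP[e_1 \in M] = w(e_1) Z'/Z$, where $Z'$ is the partition function of the graph obtained by deleting the vertices $\mathrm{w}_1,\mathrm{b}_1$ and all incident edges, I apply Kasteleyn's theorem to this smaller graph. Since its Kasteleyn matrix is precisely the minor of $K_{G_\text{Az}}$ with row $\mathrm{w}_1$ and column $\mathrm{b}_1$ removed, Cramer's rule expresses $Z'$ as $\pm (\det K_{G_\text{Az}})(K_{G_\text{Az}}^{-1})_{\mathrm{b}_1 \mathrm{w}_1}$. Ratios give $\PP[e_1 \in M] = (K_{G_\text{Az}})_{\mathrm{w}_1 \mathrm{b}_1}(K_{G_\text{Az}}^{-1})_{\mathrm{b}_1 \mathrm{w}_1}$, provided the signs cooperate.

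For the general case, condition on all $p$ edges being present. This reduces $\PP[e_1,\dots,e_p \in M]$ to $\prod_m w(e_m) \cdot Z''/Z$, where $Z''$ is the partition function of $G_\text{Az} \setminus \{\mathrm{w}_m,\mathrm{b}_m\}_{m=1}^p$. Its Kasteleyn matrix is the minor of $K_{G_\text{Az}}$ obtained by deleting rows $I = \{\mathrm{w}_1,\dots,\mathrm{w}_p\}$ and columns $J = \{\mathrm{b}_1,\dots,\mathrm{b}_p\}$. Applying Kasteleyn's theorem to the smaller graph and then the Jacobi identity
\begin{equation}
\det K_{G_\text{Az}}[\hat I,\hat J] = (-1)^{\sum I + \sum J}(\det K_{G_\text{Az}})\det\bigl((K_{G_\text{Az}}^{-1})_{\mathrm{b}_m \mathrm{w}_{m'}}\bigr)_{1 \le m,m' \le p}
\end{equation}
rewrites $Z''/Z$ as a $p \times p$ determinant of entries of $K_{G_\text{Az}}^{-1}$, up to an overall sign.

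The main obstacle is sign bookkeeping: four sources of signs must conspire — the Kasteleyn signs in $K_{G_\text{Az}}$ itself, the sign $\pm$ in $Z = \pm \det K_{G_\text{Az}}$, the analogous sign for $Z''$ on the deleted graph, and the sign $(-1)^{\sum I + \sum J}$ from Jacobi's identity. The crucial point, essentially the content of Kasteleyn's planar sign theorem, is that the Kasteleyn sign convention on $G_\text{Az}$ restricts to a valid Kasteleyn sign convention on $G_\text{Az}\setminus\{\mathrm{w}_m,\mathrm{b}_m\}$ \emph{with the same global orientation}, so that the two ambiguous overall signs for $Z$ and $Z''$ agree. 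One then verifies that this common sign together with the Jacobi sign exactly cancels the difference between $\prod_m w(e_m)$ and $\prod_m (K_{G_\text{Az}})_{\mathrm{w}_m \mathrm{b}_m}$ (the latter differing from the former only through the Kasteleyn signs on the north edges). Once these cancellations are verified — which is where all the delicate combinatorics lies — the identity~\eqref{eq:kenyon_formula} follows.
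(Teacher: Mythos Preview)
The paper does not prove this theorem at all: it is stated as a known result due to Kenyon, with citations to~\cite{Ken97} and~\cite{Ken04}, and no proof is given. Your outline is the standard argument (Kasteleyn's theorem combined with the Jacobi complementary-minor identity), and it is correct in structure. The sign bookkeeping you flag as the delicate point is indeed where the work lies; in Kenyon's original treatment this is handled by working with a fixed Kasteleyn orientation and tracking how it restricts to subgraphs, exactly as you describe. Since the paper simply cites the result, there is nothing to compare against beyond noting that your sketch matches the classical proof.
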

This theorem tells us that the dimer model forms a determinantal point process. See, \emph{e.g.},~\cite{Bor09, Joh17}   for an introduction to determinantal point processes.

In general, it is a difficult problem to find an expression for the matrix~$K_{G_\text{Az}}^{-1}$ suitable for asymptotic analysis. In Section~\ref{sec:inverse_kasteleyn} we give a double integral formula for the inverse Kasteleyn matrix, which we will later show to be suitable for asymptotic analysis. To obtain the formula for the inverse Kasteleyn matrix we employ a non-intersecting paths model.

\begin{figure}
\begin{center}
\begin{tikzpicture}[scale=.75, rotate=-45]

% Coordinate axes
\foreach \x in {0,1,2,3}
\foreach \y in {0,...,3}
{\draw (-3.5+\x+\y,.5-\x+\y) rectangle (-2.5+\x+\y,1.5-\x+\y);
}
% West dominos
\foreach \x/\y in {-4/0,-3/1,-2/2,-3/-1,0/-2}
{ 
\draw[line width = 1mm] (\x+.5,\y+.5)--(\x+.5,\y+1.5);
\draw (\x+.5,\y+.5) node[circle,fill,inner sep=2pt]{};
\draw (\x+.5,\y+1.5) node[circle,draw=black,fill=white,inner sep=2pt]{};
}

% East dominos
\foreach \x/\y in {1/-2,2/-1,3/0,2/1,-1/2}
{
\draw[line width = 1mm] (\x+.5,\y+.5)--(\x+.5,\y+1.5);
\draw (\x+.5,\y+.5) node[circle,draw=black,fill=white,inner sep=2pt]{};
\draw (\x+.5,\y+1.5) node[circle,fill,inner sep=2pt]{};
}

% South dominos
\foreach \x/\y in {-1/-3,-2/-2,0/2,0/0,-2/0}
{
\draw[line width = 1mm] (\x+.5,\y+.5)--(\x+1.5,\y+0.5);
\draw (\x+.5,\y+.5) node[circle,fill,inner sep=2pt]{};
\draw (\x+1.5,\y+.5) node[circle,draw=black,fill=white,inner sep=2pt]{};
}

% North dominos
\foreach \x/\y in {-2/1,0/1,0/3,-1/4,-2/-1}
{
\draw[line width = 1mm] (\x+.5,\y+.5)--(\x+1.5,\y+0.5);
%\draw (\x+.5,\y+.5) node[circle,draw=black,fill=white,inner sep=2pt]{};
%\draw (\x+1.5,\y+.5) node[circle,fill,inner sep=2pt]{};
}

% North edges
\foreach \x/\y in {-1/-2,0/-1,1/0,2/1, -2/-1,-1/0,0/1,1/2, -3/0,-2/1,-1/2,0/3, -4/1,-3/2,-2/3,-1/4}
{
\draw[line width = .4mm, red] (\x+.5,\y+.5)--(\x+1.5,\y+0.5);
\draw (\x+.5,\y+.5) node[circle,draw=black,fill=white,inner sep=2pt]{};
\draw (\x+1.5,\y+.5) node[circle,fill,inner sep=2pt]{};
}

%values
%boundary
\foreach \x in {0,1,2,3,4}
{\draw (\x,\x-3) node {\x};
\draw (-\x,5-\x) node {4};
}
\foreach \y in {1,2,3}
{\draw (-\y,\y-3) node {\y};
\draw (\y,5-\y) node {4};
}
%interior
\draw (0,-2) node {1};
\draw (0,-1) node {2};
\draw (0,0) node {2};
\draw (0,1) node {3};
\draw (0,2) node {3};
\draw (0,3) node {4};
\draw (0,4) node {4};

\draw (-1,-1) node {2};
\draw (-1,0) node {2};
\draw (-1,1) node {3};
\draw (-1,2) node {3};
\draw (-1,3) node {3};

\draw (1,-1) node {1};
\draw (1,0) node {2};
\draw (1,1) node {3};
\draw (1,2) node {3};
\draw (1,3) node {4};

\draw (-2,0) node {2};
\draw (-2,1) node {3};
\draw (-2,2) node {3};

\draw (2,0) node {2};
\draw (2,1) node {3};
\draw (2,2) node {3};

\draw (-3,1) node {3};

\draw (3,1) node {3};
\end{tikzpicture}
\end{center}
\caption{A dimer configuration of the Aztec diamond of size~$4$ together with the values of the height function. The black edges are the dimers in the dimer covering, and the red edges are the reference set~$\mathrm N$ of the north edges. The union of the dimers and the reference set form the level curves of the height function. \label{fig:height_function}}
\end{figure}
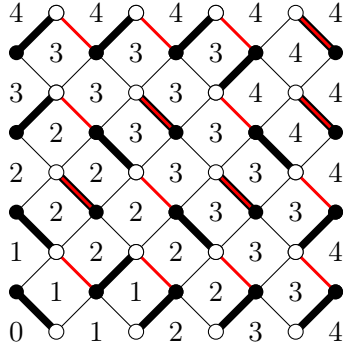

Before we proceed to discuss the non-intersecting paths, we conclude this section by defining the height function. The height function will not be the central object in the present paper, we only introduce it to compare the slope of the limiting Gibbs measures with the slope of the limit shape. The \emph{height function} is defined on the faces of~$G_\text{Az}$, that is, on 
\begin{equation}
\{(2i,2j):i,j=0,\dots,k\ell N\}\cup\{(2i+1,2j+1):i,j=0,\dots,k\ell N-1\}.
\end{equation}
If~$\mathrm f$ and~$\mathrm f'$ are two faces in~$G_\text{Az}$, we define the height function~$h$ for a dimer covering~$M$ so that
\begin{equation}\label{eq:height_difference_aztec}
h(\mathrm f')-h(\mathrm f)=\sum_{e=\mathrm w\mathrm b}(\pm)\left(1_{e\in M}-1_{e\in \mathrm N}\right),
\end{equation}
where the sum runs over the edges intersecting the edges of a dual path of~$G_\text{Az}$ going from~$f$ to~$f'$, and the sign is~$+$ if the path intersects the edge~$e$ with the white vertex on the right, and~$-1$ if it is on the left. Recall that~$\mathrm N$ is the set of north edges. This determines~$h$ up to an additive constant. We fix the constant by setting~$h(0,0)=0$. See Figure~\ref{fig:height_function}. It is not difficult to see that going around a vertex does not change the value of~$h$, which shows that~$h$ is a well-defined function. 

\subsection{Non-intersecting paths model}\label{sec:paths}
In this section we change our viewpoint. Instead of viewing the dimers as particles in a determinantal point process, as in Theorem~\ref{thm:kenyon_formula}, we consider points, or particles, coming from non-intersecting paths on a directed graph. The two point processes are equivalent, in the sense that there exists a measure-preserving bijection between them. The purpose of this change of perspective is that we can then rely on~\cite{BD19}, which gives us a formula for the correlation kernel, see Theorem~\ref{thm:bd_thm} below. The connection between dimer coverings of the Aztec diamond and families of non-intersecting paths has been used many times, see for instance~\cite{Joh02, Joh17}. 

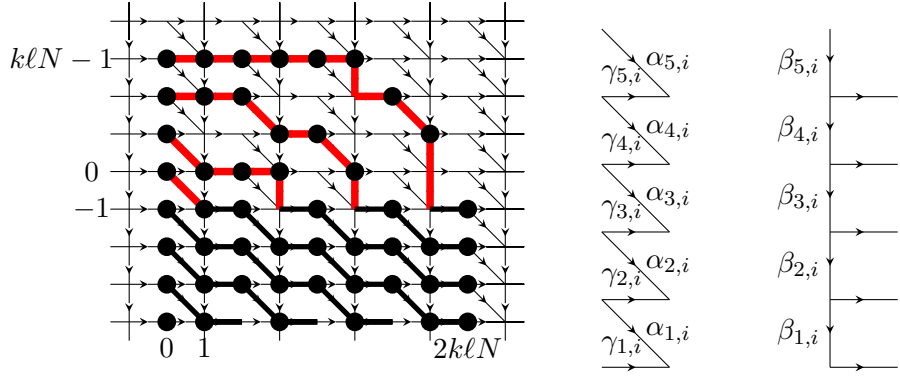
\begin{figure}
\begin{center}
 \begin{tikzpicture}[scale=0.5]
 \tikzset{->-/.style={decoration={
  markings, mark=at position .5 with {\arrow{stealth}}},postaction={decorate}}}
  %Horisontal lines
 \foreach \y in {-3,-2,...,5}
 {
 \foreach \x in {-1,0,...,8}
 {\draw[->,>=stealth] (\x-.5,\y)--(\x+.5,\y);
 \draw (8.5,\y)--(9.5,\y);
 }}
 %Vertical lines
 \foreach \x in {-1,1,...,9}
 {\foreach \y in {-3,-2,...,4}
 {\draw[-<,>=stealth] (\x,\y-.5)--(\x,\y+.5);
 \draw (\x,4.5)--(\x,5.5);}}
 %diagonal lines
 \foreach \y in {-2,-1,...,5}
 {\foreach \x in {-1,1,...,7}
 \draw [->-](\x+1,\y)to(\x+2,\y-1);}

% East dominos
\foreach \x/\y in {5/4,7/2,7/1,5/1,3/1}
{
\draw[line width = 1mm,red] (\x,\y)--(\x,\y-1);
}

% Artificial dominos
\foreach \x/\y in {1/4,3/4,5/3,1/3,3/2,1/1}
{
\draw[line width = 1mm,red] (\x,\y)--(\x+1,\y);
}

% West dominos
\foreach \x/\y in {0/4,2/4,4/4,0/3,2/1}
{
\draw[line width = 1mm,red] (\x,\y)--(\x+1,\y);
%points
\draw (\x,\y) node[circle,fill,inner sep=2.5pt]{};
\draw (\x+1,\y) node[circle,fill,inner sep=2.5pt]{};
}

% South dominos
\foreach \x/\y in {6/3,2/3,4/2,0/2,0/1}
{
\draw[line width = 1mm,red] (\x,\y)--(\x+1,\y-1);
%points
\draw (\x,\y) node[circle,fill,inner sep=2.5pt]{};
\draw (\x+1,\y-1) node[circle,fill,inner sep=2.5pt]{};
}

% lines 
\foreach \y in {-2,-1,0}
{\foreach \x in {1,3,5,7}
{\draw[line width = .7mm] (\x-1,\y)--(\x,\y-1);
\draw[line width = .7mm] (\x,\y)--(\x+1,\y);
\draw[line width = .7mm] (\x,-3)--(\x+1,-3);
%points
\draw (\x-1,\y) node[circle,fill,inner sep=2.5pt]{};
\draw (\x,\y-1) node[circle,fill,inner sep=2.5pt]{};
}
\draw (8,\y) node[circle,fill,inner sep=2.5pt]{};
}

%extra points
\draw (0,-3) node[circle,fill,inner sep=2.5pt]{};
\draw (8,-3) node[circle,fill,inner sep=2.5pt]{};

%coordinates
\draw (0,-3.7) node {$0$};
\draw (1,-3.7) node {$1$};
\draw (8,-3.7) node {$2k\ell N$};
\draw (-2,0) node {$-1$};
\draw (-2,1) node {$0$};
\draw (-2.8,4) node {$k\ell N-1$};
\end{tikzpicture}
\qquad
 \begin{tikzpicture}[scale=.9]
 \tikzset{->-/.style={decoration={
  markings, mark=at position .5 with {\arrow{stealth}}},postaction={decorate}}}
  %Horisontal lines
 \foreach \y in {1,2,...,5}
 {
 \foreach \x in {0}
 {\draw[->-,>=stealth] (\x,\y-1)--(\x+1,\y-1);
 \draw [above](\x+.3,\y-1) node {$\gamma_{\y,i}$};
 }}
 %diagonal lines
 \foreach \y in {1,2,...,5}
 {\foreach \x in {0}
 {\draw [->-](\x,\y)to(\x+1,\y-1);
 \draw [right](\x+.5,\y-.5) node {$\alpha_{\y,i}$};
 }
 }

\end{tikzpicture}
\qquad
\begin{tikzpicture}[scale=.9]
 \tikzset{->-/.style={decoration={
  markings, mark=at position .5 with {\arrow{stealth}}},postaction={decorate}}}
  %Horisontal lines
 \foreach \y in {1,2,...,5}
 {
 \foreach \x in {0}
 {\draw[->-,>=stealth] (\x,\y-1)--(\x+1,\y-1);
 }}
 %Vertical lines
 \foreach \x in {0}
 {\foreach \y in {1,2,...,5}
 {\draw[->-,>=stealth] (\x,\y)--(\x,\y-1);
  \draw [left](\x,\y-.5) node {$\beta_{\y,i}$};
 }}
\end{tikzpicture}

\end{center}
\caption{The directed graph with a family of non-intersecting paths. The weighted directed graph is constructed by gluing together smaller graphs, by alternating the two types of graphs on the right. The black parts of the paths are fixed, due to the non-intersecting property of the paths and the chosen boundary conditions, and the red parts of the paths correspond to a dimer configuration of the Aztec diamond. \label{fig:directed_graph}}
\end{figure}

Consider the following directed graph. We take the vertices as the integer lattice~$\ZZ^2$ and the edges as
\begin{equation}
((2i,j),(2i+1,j)), \,\,\, ((2i,j),(2i+1,j-1)), \,\,\, ((2i+1,j),(2i+2,j)) \,\,\, \text{and} \,\,\, ((2i+1,j),(2i+1,j-1)),
\end{equation}
where the order of the points determines the direction of the edge, see Figure~\ref{fig:directed_graph}. Let~$n\geq \ell N$. We consider~$kn$ paths on this directed graph coordinatized by 
\begin{equation}
\{u_i^j\}_{i=0,\dots,2k\ell N;j=-k(n-\ell N),\dots,k\ell N-1}\subset \left(\ZZ^{2k\ell N}\right)^{kn},
\end{equation}
with starting points~$u_0^j$ and endpoints~$u_{2k \ell N}^j$, where the boundary conditions are taken as
\begin{equation}\label{eq:boundary_condition}
 u_0^j=j \text{ and } u_{2k \ell N}^j=-k\ell N+j, \quad j=-kn+k\ell N,\ldots,k\ell N-1.
\end{equation} 
We require that the paths form a family of \emph{non-intersecting paths}, meaning that no two paths pass through the same vertex. To each such family we associate particles by assigning a particle to~$(2i,j)$ and~$(2i+1,j)$ if the edge connecting the two points belongs to a path in the  family, and to the points~$(2i,j)$ and~$(2i+1,j-1)$ if the edge connecting them belongs to a path in the family.

Due to the non-intersection property, the top~$i$th path,~$1\leq i \leq k\ell N$, has to contain the point~$(2i-1,-1)$. Similarly, the lowest~$i$th path,~$1\leq i\leq k\ell N$, has to contain the point~$(2i-1,k\ell N-kn-1)$. Everything in between these points is fixed. See Figure~\ref{fig:directed_graph}. The part of the non-intersecting paths between the left boundary and the points~$(2i-1,-1)$,~$i=1,\dots,k\ell N$, corresponds to a dimer covering of the Aztec diamond in the following way. If we remove the edges~$((2i+1,j),(2i+2,j))$ in the directed graph, then the non-intersecting paths correspond precisely to the so-called DR-paths\footnote{after Dana Randell, see~\cite[Example 6.49]{Sta97}}. The DR-paths are constructed from a dimer covering as follows. To each dimer we associate a domino, a~$1\times 2$ block, see Figure~\ref{fig:dr-paths}. On the west dominos we draw a horizontal line, on the south dominos we draw a line diagonally down, on the east dominos we draw a vertical line, and we do not draw anything on the north dominos, see Figure~\ref{fig:dr-paths}. The lines form~$k\ell N$ non-intersecting paths. Moreover, if we follow the particles associated with the non-intersecting paths, then we get two particles associated with each west dimer, and two particles associated with each south dimer. We recover the point process where we take any vertex in~$G_\text{Az}$ covered by a west or south dimer as a particle, which is in bijection with the original dimer model.

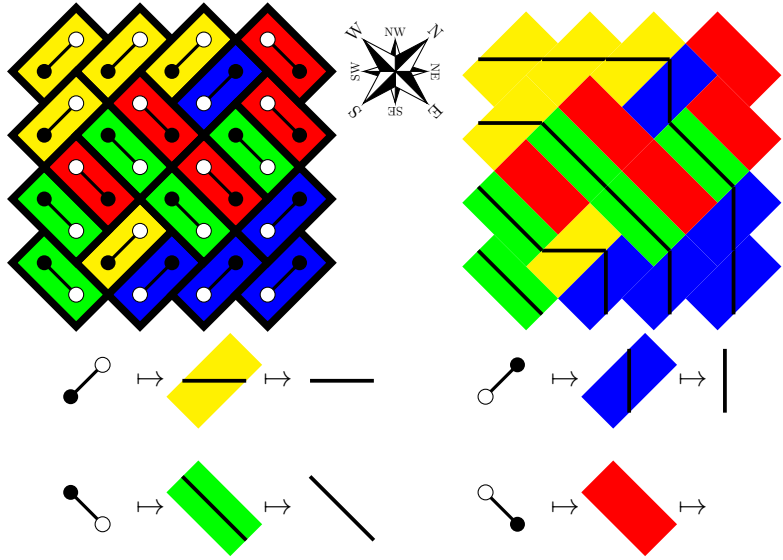
\begin{figure}
\begin{center}
\begin{tikzpicture}[scale=.6, rotate=-45]

% West dominos
\foreach \x/\y in {-4/0,-3/1,-2/2,-3/-1,0/-2}
{ \fill[color=yellow]
(\x,\y) rectangle (\x+1,\y+2); 
\draw [line width = 1mm] (\x,\y) rectangle (\x+1,\y+2);
\draw[very thick,black] (\x+.5,\y+.5)--(\x+.5,\y+1.5);
\draw (\x+.5,\y+.5) node[circle,fill,inner sep=2pt]{};
\draw (\x+.5,\y+1.5) node[circle,draw=black,fill=white,inner sep=2pt]{};
}

% East dominos
\foreach \x/\y in {1/-2,2/-1,3/0,2/1,-1/2}
{  \fill[color=blue]
(\x,\y) rectangle (\x+1,\y+2);
\draw [line width = 1mm] (\x,\y) rectangle (\x+1,\y+2);
\draw[very thick,black] (\x+.5,\y+.5)--(\x+.5,\y+1.5);
\draw (\x+.5,\y+.5) node[circle,draw=black,fill=white,inner sep=2pt]{};
\draw (\x+.5,\y+1.5) node[circle,fill,inner sep=2pt]{};
}

% South dominos
\foreach \x/\y in {-1/-3,-2/-2,0/2,0/0,-2/0}
{  \fill[color=green]
(\x,\y) rectangle (\x+2,\y+1);
\draw [line width = 1mm] (\x,\y) rectangle (\x+2,\y+1);
\draw[very thick,black] (\x+.5,\y+.5)--(\x+1.5,\y+0.5);
\draw (\x+.5,\y+.5) node[circle,fill,inner sep=2pt]{};
\draw (\x+1.5,\y+.5) node[circle,draw=black,fill=white,inner sep=2pt]{};
}

% North dominos
\foreach \x/\y in {-2/1,0/1,0/3,-1/4,-2/-1}
{  \fill[color=red]
(\x,\y) rectangle (\x+2,\y+1);
\draw [line width=1mm] (\x,\y) rectangle (\x+2,\y+1);
\draw[very thick,black] (\x+.5,\y+.5)--(\x+1.5,\y+0.5);
\draw (\x+.5,\y+.5) node[circle,draw=black,fill=white,inner sep=2pt]{};
\draw (\x+1.5,\y+.5) node[circle,fill,inner sep=2pt]{};
}
%Compass
%\pic[scale=.2, rotate=-45] at (-5.25,-1.25) {compass};
\pic[scale=.2, rotate=-45] at (2,6) {compass};
%\draw [ultra thick,->](-4.5,-2)--(-4.5,-.5);
%\draw [ultra thick,->](-4,-1.5)--(-5.5,-1.5);
%\draw (-4.5,-.5) node[above] {north};
%\draw (-5.5,-1.5) node[above] {west};
\end{tikzpicture}
%\qquad
\begin{tikzpicture}[scale=.6, rotate=-45]

% West dominos
\foreach \x/\y in {-4/0,-3/1,-2/2,-3/-1,0/-2}
{ \fill[color=yellow]
(\x,\y) rectangle (\x+1,\y+2); 
%\draw [line width = 1mm] (\x,\y) rectangle (\x+1,\y+2);
\draw[line width = .5 mm, black] (\x,\y+.5)--(\x+1,\y+1.5);
}

% East dominos
\foreach \x/\y in {1/-2,2/-1,3/0,2/1,-1/2}
{  \fill[color=blue]
(\x,\y) rectangle (\x+1,\y+2);
%\draw [line width = 1mm] (\x,\y) rectangle (\x+1,\y+2);
\draw[line width=.5mm,black] (\x,\y+1.5)--(\x+1,\y+.5);
}

% South dominos
\foreach \x/\y in {-1/-3,-2/-2,0/2,0/0,-2/0}
{  \fill[color=green]
(\x,\y) rectangle (\x+2,\y+1);
%\draw [line width = 1mm] (\x,\y) rectangle (\x+2,\y+1);
\draw[line width=.5mm,black] (\x,\y+.5)--(\x+2,\y+0.5);
}

% North dominos
\foreach \x/\y in {-2/1,0/1,0/3,-1/4,-2/-1}
{  \fill[color=red]
(\x,\y) rectangle (\x+2,\y+1);
%\draw [line width=1mm,black] (\x,\y) rectangle (\x+2,\y+1);
}

\end{tikzpicture}
\qquad
\begin{tikzpicture}[scale=.6, rotate=-45]

% West dominos
\foreach \x/\y in {-.5/-.5}
{\draw[very thick,black] (\x+.5,\y+.5)--(\x+.5,\y+1.5);
\draw (\x+.5,\y+.5) node[circle,fill,inner sep=2pt]{};
\draw (\x+.5,\y+1.5) node[circle,draw=black,fill=white,inner sep=2pt]{};
\draw (\x+1.5,\y+2) node {$\mapsto$};
 \fill[color=yellow]
(\x+2,\y+2) rectangle (\x+3,\y+4); 
%\draw [line width = 1mm] (\x,\y) rectangle (\x+1,\y+2);
\draw[line width = .5 mm, black] (\x+2,\y+2.5)--(\x+3,\y+3.5);
\draw (\x+3.5,\y+4) node {$\mapsto$};
\draw[line width = .5 mm, black] (\x+4,\y+4.5)--(\x+5,\y+5.5);
}

% East dominos
\foreach \x/\y in {6/6}
{  \draw[very thick,black] (\x+.5,\y+.5)--(\x+.5,\y+1.5);
\draw (\x+.5,\y+.5) node[circle,draw=black,fill=white,inner sep=2pt]{};
\draw (\x+.5,\y+1.5) node[circle,fill,inner sep=2pt]{};
\draw (\x+1.5,\y+2) node {$\mapsto$};
\fill[color=blue]
(\x+2,\y+2) rectangle (\x+3,\y+4);
%\draw [line width = 1mm] (\x,\y) rectangle (\x+1,\y+2);
\draw[line width=.5mm,black] (\x+2,\y+3.5)--(\x+3,\y+2.5);
\draw (\x+3.5,\y+4) node {$\mapsto$};
\draw[line width=.5mm,black] (\x+3.5,\y+5)--(\x+4.5,\y+4);
}

% South dominos
\foreach \x/\y in {1./-2.}
{ \draw[very thick,black] (\x+.5,\y+.5)--(\x+1.5,\y+0.5);
\draw (\x+.5,\y+.5) node[circle,fill,inner sep=2pt]{};
\draw (\x+1.5,\y+.5) node[circle,draw=black,fill=white,inner sep=2pt]{};
\draw (\x+2,\y+1.5) node {$\mapsto$};
 \fill[color=green]
(\x+2,\y+2) rectangle (\x+4,\y+3);
%\draw [line width = 1mm] (\x,\y) rectangle (\x+2,\y+1);
\draw[line width=.5mm,black] (\x+2,\y+2.5)--(\x+4,\y+2.5);
\draw (\x+4,\y+3.5) node {$\mapsto$};
\draw[line width=.5mm,black] (\x+4,\y+4.5)--(\x+6,\y+4.5);
}

% North dominos
\foreach \x/\y in {7.5/4.5}
{
\draw[very thick,black] (\x+.5,\y+.5)--(\x+1.5,\y+0.5);
\draw (\x+.5,\y+.5) node[circle,draw=black,fill=white,inner sep=2pt]{};
\draw (\x+1.5,\y+.5) node[circle,fill,inner sep=2pt]{};
\draw (\x+2,\y+1.5) node {$\mapsto$};
  \fill[color=red]
(\x+2,\y+2) rectangle (\x+4,\y+3);
%\draw [line width=1mm,black] (\x,\y) rectangle (\x+2,\y+1);
\draw (\x+4,\y+3.5) node {$\mapsto$};
}

\end{tikzpicture}
\end{center}
\caption{The construction of the DR-paths from dimers. Left: A dimer covering together with the corresponding domino tiling. Right: The domino tiling together with the DR-paths. The DR-paths are closely related to the level lines of the height function defined in Section~\ref{sec:dimer}. Indeed, this relation is clearly visible when comparing the above paths with the level lines in Figure~\ref{fig:height_function}, the underlying dimer configuration is the same in the two pictures. \label{fig:dr-paths}}
\end{figure}

We proceed by defining a probability measure on the set of non-intersecting paths which is consistent with the probability measure~\eqref{eq:measure_dimer}. Since an edge~$((2(i-1),j-1),(2i-1,j-1))$ corresponds to a west dimer, we assign the weight~$\gamma_{j,i}$ to such edge. Similarly, an edge~$((2(i-1),j),(2i-1,j-1))$ corresponds to a south dimer, and we assign the weight~$\alpha_{j,i}$ to it, and an edge~$((2i-1,j),(2i-1,j-1))$ corresponds to an east dimer, and we assign the weight~$\beta_{j,i}$ to it. To the last (north) type of edges we assign the weight~$1$. We end up with a weighted directed graph. The weight of a collection of non-intersecting paths is defined as the product of weights of the covered edges. The Lindström--Gessel--Viennot theorem, see~\cite{GV85, Lin73}, implies that this probability measure~$\PP_\text{path}$ has weights proportional to 
\begin{equation}\label{eq:measure_on_points}
 \prod_{m=1}^{2k\ell N} \det \left( T_{\phi_m}(u_{m-1}^i,u_{m}^j)\right)_{i,j=1}^{kn},
\end{equation} 
where the transition matrix~$T_{\phi_m}$,~$m=1\dots,2k\ell N$, is a \emph{block Toeplitz matrix} with \emph{symbol}~$\phi_m$. That is, the~$(i,j)$ block of~$T_{\phi_m}$ is given by
\begin{equation}
\frac{1}{2\pi \i}\int_{|z|=1}\phi_m(z)z^{i-j}\frac{\d z}{z},
\end{equation}
where the symbols are matrices of size~$k\times k$, given by
\begin{equation}\label{eq:def_phi_bernoulli}
\phi_{2i-1}(z)= \phi^b(z;\vec{\alpha_i},\vec{\gamma_i}),
\end{equation}
and
\begin{equation}\label{eq:def_phi_geometric}
\phi_{2i}(z)=\phi^g(z;\vec{\beta_i}),
\end{equation}
for~$i = 1,2,\dots,k\ell N$, see~\eqref{eq:bernoulli} and~\eqref{eq:geometric} for~$\phi^b$ and~$\phi^g$. Here~$\vec{\alpha}_i$,~$\vec{\beta}_i$, and~$\vec{\gamma}_i$ are the vectors with elements~$(\vec \alpha_i)_j=\alpha_{j,i}$,~$(\vec \beta_i)_j=\beta_{j,i}$, and~$(\vec \gamma_i)_j=\gamma_{j,i}$. Given the transition matrices, we define the matrix-valued functions
\begin{equation}\label{eq:matrix_valued_function}
\Phi(z) = \prod_{i=1}^{2\ell}\phi_i(z),
\end{equation}
and~$\phi(z) = \Phi(z)^{kN}$, which we will now use to express the correlation kernel of the point process~$\PP_\text{path}$ defined by~\eqref{eq:measure_on_points}. 

The point process~$\PP_\text{path}$, defined on~$\{(m,u)\}_{m,u} \subset \{0,\dots,2k\ell N\} \times \ZZ$ with~$u_0^j$ and~$u_{2k\ell N}^j$ as in~\eqref{eq:boundary_condition}, is, by the Eynard--Mehta theorem~\cite{EM98}, a \emph{determinantal point process}. That is, there exists a function~$K_\text{path}$, called the \emph{correlation kernel}, such that for all~$(m_i,u_i)\in \{0,\dots,2k \ell N\}\times \ZZ$,~$i=1,\dots,p$,
\begin{equation}\label{eq:dpp}
\PP_\text{path}\left(\text{there are particles at } (m_1,u_1),\dots,(m_p,u_p)\right)=\det (K_\text{path}(m_i,u_i;m_j,u_j))_{j,i=1}^p.
\end{equation}

Recall that we are only interested in the points of the point process that lie above the horizontal line at~$-1$ (the red part of Figure~\ref{fig:directed_graph}), and their distribution is independent of the parameter~$n$ in~\eqref{eq:measure_on_points} if~$n\geq \ell N$. We may therefore take the limit~$n\to \infty$. This setting falls into that of~\cite[Theorem 3.1]{BD19}, which means that we can express the correlation kernel in terms of a (matrix) Wiener--Hopf factorization, if such a factorization exists. 

Assume that the matrix-valued functions~$\phi$ and~$\phi^{-1}$ are analytic in a neighborhood of the unit circle. We say that~$\phi$ admits a \emph{Wiener--Hopf factorization} if
\begin{equation}\label{eq:wh_factorization}
\phi(z)=\phi_+(z)\phi_-(z)=\widetilde \phi_-(z)\widetilde \phi_+(z),
\end{equation}
for~$z$ on the unit circle. Here we require that~$\phi_+^{\pm1}$,~$\widetilde \phi_+^{\pm1}$ are analytic in the closed unit disc, and~$\phi_-^{\pm1}$,~$\widetilde \phi_-^{\pm1}$ are analytic in the complement of the unit disc, with the behavior~$\phi_-, \widetilde \phi_- \sim z^{-\ell N}I$ as~$z\to \infty$. See Section~\ref{sec:wh} for details.

The following theorem is~\cite[Theorem 3.1]{BD19} specialized to our setting. For the arguments of the correlation kernel in~\eqref{eq:dpp} we write~$m=2\ell x+i$,~$x=0,\dots,kN-1$,~$i=0,\dots,2\ell-1$, and~$u=ky+j$,~$y\in \ZZ$,~$j=0,\dots,k-1$. 
\begin{theorem}[\cite{BD19}]\label{thm:bd_thm}
Assume that~$\phi=\Phi^{kN}$ admits a Wiener--Hopf factorization, as described above. Then the correlation kernel~$K_\text{path}$ associated to the~$n\to \infty$ limit of the probability measure~$\PP_\text{path}$ defined by~\eqref{eq:measure_on_points} is given by
\begin{multline}\label{eq:bd_thm}
 \left[K_\text{path}(2\ell x+i,ky+j;2\ell x'+i',ky'+j')\right]_{j',j=0}^{k-1}
 = -\frac{\one_{2\ell x+i>2\ell x'+i'}}{2\pi\i}\int_{\Gamma} \prod_{m=2\ell x'+i'+1}^{2\ell x+i}\phi_m(z)z^{y'-y}\frac{\d z}{z} \\
 + \frac{1}{(2\pi\i)^2}\int_{\Gamma_s}\int_{\Gamma_l} \left(\prod_{m=1}^{i'}\phi_m(z_1)\right)^{-1} \Phi(z_1)^{-x'}\widetilde \phi_-(z_1)\widetilde\phi_+(z_2)\Phi(z_2)^{x-kN}\prod_{m=1}^i\phi_m(z_2)\frac{z_1^{y'}}{z_2^{y}}\frac{\d z_2\d z_1}{z_2(z_2-z_1)},
\end{multline}
for~$x=0,\dots, kN-1$,~$i=0,\dots 2\ell-1$,~$y\in \ZZ$ and~$j=0,\dots,k-1$. The contours are given by~$\Gamma=\{z:|z|=1\}$,~$\Gamma_s=\{z:|z|=r_s\}$ and~$\Gamma_l=\{z:|z|=r_l\}$, oriented in the positive direction, with~$r_s<1<r_l$ such that~$\phi$ and~$\phi^{-1}$ are analytic for~$r_s\leq |z|\leq r_l$.
\end{theorem}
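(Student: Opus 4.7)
The plan is to derive formula~\eqref{eq:bd_thm} by combining the Eynard--Mehta theorem with an explicit inversion of a semi-infinite block Toeplitz matrix via the Wiener--Hopf factorization. This is the strategy of~\cite[Theorem~3.1]{BD19}; the present setting is a direct specialization, so I would verify the hypotheses and invoke that result, but let me sketch how the derivation itself goes.

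First, I would apply the Eynard--Mehta theorem to the product-of-determinants measure~\eqref{eq:measure_on_points}. This produces a determinantal point process with correlation kernel
\begin{equation}
K_{\text{path}}(m,u;m',u') = -\mathbf{1}_{m>m'}\bigl(T_{\phi_{m'+1}}\cdots T_{\phi_m}\bigr)(u',u) + \sum_{a,b}\bigl(T_{\phi_1}\cdots T_{\phi_m}\bigr)(u_0^a,u)\,(G^{-1})_{a,b}\,\bigl(T_{\phi_{m'+1}}\cdots T_{\phi_{2k\ell N}}\bigr)(u',u_{2k\ell N}^b),
\end{equation}
where~$G$ is the Gram matrix of transition weights between initial and terminal boundary points. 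The first term collapses immediately to the single-integral piece in~\eqref{eq:bd_thm} by writing each block Toeplitz entry as~$\frac{1}{2\pi\i}\oint \phi_m(z) z^{\cdot}\,\d z/z$ on the unit circle~$\Gamma$, using that products of block Toeplitz matrices are block Toeplitz with symbol given by the product of the symbols, so that~$T_{\phi_1}\cdots T_{\phi_{2\ell}}=T_\Phi$ and in particular~$T_{\phi_1}\cdots T_{\phi_{2k\ell N}}=T_\phi$ for~$\phi=\Phi^{kN}$.

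The heart of the proof is the analysis of the correction term in the limit~$n\to\infty$. The boundary conditions~\eqref{eq:boundary_condition} are semi-infinite sequences shifted by~$k\ell N$ at the two ends, so, up to re-indexing, the Gram matrix~$G$ has a semi-infinite block Toeplitz structure with symbol proportional to~$z^{k\ell N}\phi(z)$. Inverting such an operator is exactly the content of the Wiener--Hopf factorization: writing~$\phi=\widetilde\phi_-\widetilde\phi_+$ with~$\widetilde\phi_+^{\pm 1}$ analytic inside the disc and~$\widetilde\phi_-^{\pm 1}$ analytic outside with~$\widetilde\phi_-\sim z^{-\ell N}I$ at infinity, one obtains that~$G^{-1}$ acts on the left as~$\widetilde\phi_+^{-1}$ and on the right as~$\widetilde\phi_-^{-1}$, joined by a Cauchy-type kernel reflecting the projection onto the non-negative Fourier modes. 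Substituting this factorization and expressing the remaining sums as contour integrals produces the factor~$1/(z_2-z_1)$ and places~$\widetilde\phi_-(z_1)$ on an inner contour~$\Gamma_s$ and~$\widetilde\phi_+(z_2)$ on an outer contour~$\Gamma_l$, whose nesting~$r_s<1<r_l$ is forced by the geometric series bounding the inverse Toeplitz operator.

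The main obstacle is the rigorous passage to the~$n\to\infty$ limit, i.e.\ showing that the boundary contributions at~$u_0^j$ with~$j\to-\infty$ actually vanish and that the inverse of the finite Gram matrix converges to the Wiener--Hopf inverse described above. This uses the prescribed behavior~$\widetilde\phi_-(z)\sim z^{-\ell N}I$ as~$z\to\infty$, which controls the decay of the relevant matrix entries, combined with the assumed analyticity of~$\phi^{\pm 1}$ in an annulus containing the unit circle so that the contours can be deformed into~$\{r_s\le|z|\le r_l\}$ without crossing singularities. Once these two ingredients are in place, formula~\eqref{eq:bd_thm} follows by assembling the two terms and recognizing~$\Phi(z)^{-x'}$ and~$\Phi(z)^{x-kN}$ as the remnants of the products~$\prod_{m=1}^{2\ell x'}\phi_m$ and~$\prod_{m=2\ell x+1}^{2k\ell N}\phi_m$ moved past the Wiener--Hopf factors.
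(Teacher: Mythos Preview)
The paper does not prove this theorem; it simply states that ``the following theorem is~\cite[Theorem 3.1]{BD19} specialized to our setting'' and invokes the cited result directly. Your approach is therefore aligned with the paper's: you correctly identify the statement as a specialization of~\cite[Theorem 3.1]{BD19} and propose to verify the hypotheses and cite that result, which is exactly what the paper does. Your additional sketch of the Eynard--Mehta/Wiener--Hopf argument is a reasonable summary of the method in~\cite{BD19}, but it goes beyond what the paper itself provides and is not needed here.
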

The function~$\phi$ does not always admit a Wiener--Hopf factorization. See the discussion in Section~\ref{sec:ass} for a condition on the edge weights which is equivalent to the existence of the Wiener--Hopf factorization in our setting.

We are only interested in the above formula when~$(x,y)$ lies in the Aztec diamond, which means that~$y=0,\dots,\ell N-1$.

\begin{remark}
In~\cite{BD19} two different correlation kernels are given in the statement, depending on how the limit~$n\to\infty$ is taken. The form of the second correlation kernel, which is not given here, is similar to the one given above, however, the factorization~$\phi=\phi_+\phi_-$ of~\eqref{eq:wh_factorization} is used instead of the factorization~$\phi=\widetilde \phi_-\widetilde \phi_+$, which is present above. 
\end{remark}

\begin{remark}
The correlation kernel of the previous theorem naturally appears in block form, as presented here. However, if we are interested in, say, the probability of finding a particle at the location~$(2\ell x+i,ky+j)$, then it is given by the~$(j+1,j+1)$ entry of the right hand side of~\eqref{eq:bd_thm}. We emphasize also that the rows are indexed by~$j'$ and the columns are indexed by~$j$.  
\end{remark}

\subsection{The inverse Kasteleyn matrix}\label{sec:inverse_kasteleyn}
At this point we have two equivalent determinantal point processes, with two different correlation kernels, the inverse Kasteleyn matrix~$K_{G_\text{Az}}^{-1}$, and~$K_\text{path}$. We took the road via the non-intersecting paths since it yielded the correlation kernel, while there is no known way to obtain a suitable formula for the inverse Kasteleyn matrix. It turns out, however, that~$K_{G_\text{Az}}^{-1}$ is simply a submatrix of~$K_\text{path}$. More precisely, the kernel~$K_\text{path}$ can be viewed as a matrix with both the rows and columns indexed by the vertices in the Aztec diamond, while the inverse Kasteleyn matrix~$K_{G_\text{Az}}^{-1}$ is a matrix where the rows are indexed by the black vertices and the columns are indexed by the white vertices. If we restrict the matrix~$K_\text{path}$ to the rows which are indexed by the white vertices and the columns which are indexed by the black vertices, and then take the transpose, we obtain~$K_{G_\text{Az}}^{-1}$.

\begin{theorem}\label{thm:inverse_kasteleyn}
Let~$K_{G_\text{Az}}$ be the Kasteleyn matrix defined in~\eqref{eq:def_kasteleyn_aztec} and let~$K_\text{path}$ be the correlation kernel  given in Theorem~\ref{thm:bd_thm}. For~$\mathrm{b}_{\ell x+i,ky+j}\in \mathcal B_\text{Az}$ and~$\mathrm{w}_{\ell x'+i',ky'+j'}\in \mathcal W_\text{Az}$,
\begin{equation}
\left(K_{G_\text{Az}}^{-1}\right)_{\mathrm{b}_{\ell x+i,ky+j}\mathrm{w}_{\ell x'+i',ky'+j'}}=K_\text{path}(2\ell x+2i,ky+j;2\ell x'+2i'+1,ky'+j').
\end{equation}
\end{theorem}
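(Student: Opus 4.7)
The plan is to verify that the matrix $L$ defined by $L_{\mathrm{b},\mathrm{w}} := K_\text{path}(2\ell x + 2i, ky + j;\, 2\ell x' + 2i' + 1, ky' + j')$ (for $\mathrm{b}=\mathrm{b}_{\ell x+i,ky+j}$ and $\mathrm{w}=\mathrm{w}_{\ell x'+i',ky'+j'}$ in the Aztec diamond) satisfies the defining relation $K_{G_\text{Az}}\, L = I$ of $K_{G_\text{Az}}^{-1}$. Since $K_{G_\text{Az}}$ is square and invertible (its determinant equals, up to sign, the nonzero partition function $Z$), this identification will prove the theorem. The matrix identity reduces to checking
\[
\sum_{\mathrm{b}\sim \mathrm{w},\ \mathrm{b}\in \mathcal{B}_\text{Az}} (K_{G_\text{Az}})_{\mathrm{w},\mathrm{b}}\, L_{\mathrm{b},\mathrm{w}'} = \delta_{\mathrm{w},\mathrm{w}'}
\]
at each white vertex $\mathrm{w}\in \mathcal{W}_\text{Az}$. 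At an interior $\mathrm{w}=\mathrm{w}_{\ell x+i,ky+j}$, the four neighbors contribute with Kasteleyn weights $\gamma_{j+1,i+1}$, $\alpha_{j+1,i+1}$, $\beta_{j+1,i+1}$, and $-1$.

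The cornerstone is a vector identity for $B(z):=\phi^b(z;\vec{\alpha}_{i+1},\vec{\gamma}_{i+1})\,\phi^g(z;\vec{\beta}_{i+1}) = \phi_{2\ell x+2i+1}(z)\,\phi_{2\ell x+2i+2}(z)$. Writing $\{e_p\}_{p=0}^{k-1}$ for the standard basis and $\tilde{e}_{j+1}$ for $e_{j+1}$ when $j<k-1$ and for $z^{-1}e_0$ when $j=k-1$ (the latter absorbing the wrap-around $y\mapsto y+1$ in the $u$-coordinate), Lemma~\ref{lem:transition_matrices_properties}(ii), which reads $\phi^g(z;\vec{\beta})^{-1}=\phi^b(z;-\vec{\beta},\vec{1})$, immediately yields
\[
\phi^g(z;\vec{\beta}_{i+1})\bigl(\beta_{j+1,i+1}\,\tilde{e}_{j+1}-e_j\bigr) = -e_j.
\]
Multiplying on the left by $\phi^b$ and using the column action $\phi^b\, e_j = \gamma_{j+1,i+1}\,e_j+\alpha_{j+1,i+1}\,\tilde{e}_{j+1}$ produces
\[
\gamma_{j+1,i+1}\,e_j+\alpha_{j+1,i+1}\,\tilde{e}_{j+1}+B(z)\bigl(\beta_{j+1,i+1}\,\tilde{e}_{j+1}-e_j\bigr)=0,
\]
which is exactly the four-term Kasteleyn cancellation.

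Substituting into formula~\eqref{eq:bd_thm}, the four kernel values $L_1,\ldots,L_4$ share a common row-side prefix $A(z)=\prod_{m''=m'+1}^{2\ell x+2i}\phi_{m''}(z)$, while $L_3,L_4$ carry the extra right factor $B(z)$ whenever $m'<2\ell x+2i+1$. The cancellation identity above kills the double-integral contribution identically, because the integrand factors through the zero combination via the $p$-th row of $\widetilde\phi_+(z_2)\,\Phi(z_2)^{x-kN}\prod_{m=1}^{2i}\phi_m(z_2)$. The single-integral contribution vanishes by the same cancellation except in the window $m'=2\ell x+2i+1$, where the indicators $\one_{m_a>m'}$ jump between the pairs $\{1,2\}$ and $\{3,4\}$; there the surviving product reduces to $\phi^g(z;\vec{\beta}_{i+1})$ alone, the identity again yields $-e_j$, and the residue at $z=0$ produces $\delta_{j,j'}\,\one_{y=y'}$. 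Combined with the pinning $m'=2\ell x+2i+1 \Leftrightarrow (x,i)=(x',i')$, this is exactly $\delta_{\mathrm{w},\mathrm{w}'}$.

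The main technical obstacle I anticipate is the boundary of $G_\text{Az}$: for white vertices $\mathrm{w}$ near the top or bottom edge (with $ky+j\in\{-1,\,k\ell N-1\}$) some of the four neighbors lie outside $\mathcal{B}_\text{Az}$, so the local sum has strictly fewer terms yet must still yield $\delta_{\mathrm{w},\mathrm{w}'}$. This reduces to showing $L_{\mathrm{b},\mathrm{w}'}=0$ whenever $\mathrm{b}$ corresponds to a position outside $\mathcal{B}_\text{Az}$, which should follow from the analyticity of $\widetilde\phi_+(z)\,\Phi(z)^{x}$ and $\Phi(z)^{x-kN}\,\widetilde\phi_-(z)$ in the appropriate annuli together with residue/degree arguments at $z=0$ and $z=\infty$. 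The period wrap-arounds at $i=\ell-1$ and $j=k-1$ are already absorbed into the cornerstone identity through the notation $\tilde{e}_{j+1}$ and cause no further difficulty.
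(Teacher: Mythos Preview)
Your approach is essentially the same as the paper's: both verify $K_{G_\text{Az}}L=I$ via the explicit block structure of the $\phi_m$, using Lemma~\ref{lem:transition_matrices_properties}\eqref{eq:inverse_geometric} as the key algebraic input, and handling the boundary rows via the behavior of the integrand at $z=0$ and $z=\infty$. Your four-term ``cornerstone'' identity is exactly the sum of the two pairwise reductions the paper uses (equations~\eqref{eq:inverse_bulk_1} and~\eqref{eq:inverse_bulk_2}), just packaged more compactly; and your boundary plan---showing that $K_{\text{path}}$ vanishes at the two phantom black positions so that the full four-term identity still applies---is a valid and slightly cleaner reformulation of the paper's direct two-term computations in Cases~2 and~3, relying on the same analyticity of $\widetilde\phi_+\Phi^{x-kN}$ inside the unit disc and $\widetilde\phi_-^{-1}\Phi^x$ outside.
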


\begin{proof}
Let us denote the right hand side of the equality in the statement by~$\widetilde K_{\mathrm{b}_{\ell x+i,ky+j}\mathrm{w}_{\ell x'+i',ky'+j'}}$. We will show that~$K_{G_\text{Az}}\widetilde K=I$. 

We divide the computations into three cases: the case when the white vertex indexing the row is in the interior of the Aztec diamond, when it is on the top boundary, and when it is on the bottom boundary (the two white boundaries in Figure~\ref{fig:aztec_diamond}). Throughout the proof we take~$\mathrm{w}_{\ell x'+i',ky'+j'} \in \mathcal W_\text{Az}$. Recall the labeling of the white and black vertices in~\eqref{eq:enumeration_white} and~\eqref{eq:enumeration_black}.

\paragraph{Case 1)} Let~$\mathrm{w}_{\ell x+i,ky+j}\in \mathcal W_\text{Az}$, with~$-1<ky+j<k\ell N-1$. The matrix~$K_{G_\text{Az}}$ is sparse, and~$(K_{G_\text{Az}}\widetilde K)_{\mathrm{w}_{\ell x+i,ky+j}\mathrm{w}_{\ell x'+i',ky'+j'}}$ consists of four terms, corresponding to the four edges attached to~$\mathrm{w}_{\ell x+i,ky+j}$. 

Using the explicit expressions~\eqref{eq:bernoulli} and~\eqref{eq:bd_thm} for~$\phi_{2i+1}$ and~$K_\text{path}$, it is not difficult to see that for two of the terms one obtains
\begin{multline}\label{eq:inverse_bulk_1}
\gamma_{j+1,i+1}\widetilde K_{\mathrm{b}_{\ell x+i,ky+j}\mathrm{w}_{\ell x'+i',ky'+j'}}
+\alpha_{j+1,i+1}\widetilde K_{\mathrm{b}_{\ell x+i,ky+j+1}\mathrm{w}_{\ell x'+i',ky'+j'}}\\
=-\frac{\one_{\ell x+i\geq \ell x'+(i'+1)}}{2\pi\i}\int_{\Gamma} \left(\prod_{m=2\ell x'+2(i'+1)}^{2\ell x+2i}\phi_{m}(z)\cdot\phi_{2i+1}(z)\right)_{j'+1,j+1}z^{y'-y}\frac{\d z}{z}\\
+\frac{1}{(2\pi\i)^2}\int_{\Gamma_s}\int_{\Gamma_l} \left(\left(\prod_{m=1}^{2i'+1}\phi_m(z_1)\right)^{-1} \Phi(z_1)^{-x'}\widetilde \phi_-(z_1)\right.\\
 \left.\times \widetilde\phi_+(z_2)\Phi(z_2)^{x-kN}\left(\prod_{m=1}^{2i}\phi_m(z_2)\right)\phi_{2i+1}(z_2)\right)_{j'+1,j+1}\frac{z_1^{y'}}{z_2^{y}}\frac{\d z_2\d z_1}{z_2(z_2-z_1)}.
\end{multline}
%The variable~$z$ appears when~$y$ increases with one.
Similarly, recall~\eqref{eq:inverse_geometric} of Lemma~\ref{lem:transition_matrices_properties}, the other two terms give
\begin{multline}\label{eq:inverse_bulk_2}
-\widetilde K_{\mathrm{b}_{\ell x+i+1,ky+j}\mathrm{w}_{\ell x'+i',ky'+j'}}
+\beta_{j+1,i+1}\widetilde K_{\mathrm{b}_{\ell x+i+1,ky+j+1}\mathrm{w}_{\ell x'+i',ky'+j'}} \\
=\frac{\one_{\ell x+i\geq \ell x'+i'}}{2\pi\i}\int_{\Gamma} \left(\prod_{m=2\ell x'+2(i'+1)}^{2\ell x+2(i+1)}\phi_{m}(z)\cdot\phi_{2(i+1)}(z)^{-1}\right)_{j'+1,j+1}z^{y'-y}\frac{\d z}{z} \\
 -\frac{1}{(2\pi\i)^2}\int_{\Gamma_s}\int_{\Gamma_l} \left(\left(\prod_{m=1}^{2i'+1}\phi_m(z_1)\right)^{-1} \Phi(z_1)^{-x'}\widetilde \phi_-(z_1)\right.\\
 \left.\times \widetilde\phi_+(z_2)\Phi(z_2)^{x-kN}\left(\prod_{m=1}^{2(i+1)}\phi_m(z_2)\right)\phi_{2(i+1)}(z_2)^{-1}\right)_{j'+1,j+1}\frac{z_1^{y'}}{z_2^{y}}\frac{\d z_2\d z_1}{z_2(z_2-z_1)}.
\end{multline}
The terms of the left hand sides of~\eqref{eq:inverse_bulk_1} and~\eqref{eq:inverse_bulk_2} are the four nonzero terms in the product, so
\begin{equation}
(K_{G_\text{Az}}\widetilde K)_{\mathrm{w}_{\ell x+i,ky+j}\mathrm{w}_{\ell x'+i',ky'+j'}}=\frac{\one_{\ell x+i=\ell x'+i'}}{2\pi\i}\int_{\Gamma} I_{j'+1,j+1}z^{y'-y}\frac{\d z}{z}=\one_{(\ell x'+i',ky'+j')=(\ell x+i,ky+j)}.
\end{equation}

\paragraph{Case 2)}
Let us proceed by considering the top boundary of the Aztec diamond, that is, considering the white vertices of the form~$\mathrm{w}_{\ell x+i,k\ell N-1}$. There are now only two terms in the matrix product, and they do not sum up as nicely as in the previous case. Instead, we use that~$\mathrm{w}_{\ell x+i,k\ell N-1}$ lies on the boundary of the Aztec diamond, which allows us to compute the sum using residue calculus. The sum is given by 
\begin{multline}\label{eq:inverse_boundary_top}
\gamma_{k,i+1}\widetilde K_{\mathrm{b}_{\ell x+i,k\ell N-1}\mathrm{w}_{\ell x'+i',ky'+j'}}
-\widetilde K_{\mathrm{b}_{\ell x+i+1,k\ell N-1}\mathrm{w}_{\ell x'+i',ky'+j'}}\\
=\frac{\one_{\ell x+i=\ell x'+i'}}{2\pi\i}\int_{\Gamma} \left(\prod_{m=2\ell x'+2(i'+1)}^{2\ell x+2(i+1)}\phi_m(z)\right)_{j'+1,k}z^{y'-\ell N+1}\frac{\d z}{z}\\
-\frac{\one_{\ell x+i\geq \ell x'+(i'+1)}}{2\pi\i}\int_{\Gamma} \left(\prod_{m=2\ell x'+2(i'+1)}^{2\ell x+2i}\phi_m(z)\cdot\left(I\gamma_{k,i+1}-\phi_{2\ell x+2i+1}(z)\phi_{2\ell x+2i+2}(z)\right)\right)_{j'+1,k}z^{y'-\ell N+1}\frac{\d z}{z}\\
+\frac{1}{(2\pi\i)^2}\int_{\Gamma_s}\int_{\Gamma_l} \left(\left(\prod_{m=1}^{2i'+1}\phi_m(z_1)\right)^{-1} \Phi(z_1)^{-x'}\widetilde \phi_-(z_1)\widetilde\phi_+(z_2)\Phi(z_2)^{x-kN}\right.\\
 \left.\times \left(\prod_{m=1}^{2i}\phi_m(z_2)\right)\left(I\gamma_{k,i+1}-\phi_{2\ell x+2i+1}(z_2)\phi_{2\ell x+2i+2}(z_2)\right)\right)_{j'+1,k}\frac{z_1^{y'}}{z_2^{\ell N-1}}\frac{\d z_2\d z_1}{z_2(z_2-z_1)}.
\end{multline}
All but the first term on the right hand side are zero. Indeed, the behavior of~\eqref{eq:bernoulli},~\eqref{eq:geometric}, and~$\widetilde \phi_-$ at infinity implies that 
\begin{equation}
\widetilde\phi_+(z_2)\Phi(z_2)^{x-kN}=\widetilde\phi_-(z_2)^{-1}\Phi(z_2)^{x}=\Ordo(z_2^{\ell N}),
\end{equation}
and 
\begin{equation}
\left(I\gamma_{k,i+1}-\phi_{2\ell x+2i+1}(z_2)\phi_{2\ell x+2i+2}(z_2)\right)_{j'+1,k}=\Ordo(z_2^{-1})
\end{equation} 
as~$z_2\to \infty$, which means tat we may expand~$\Gamma$ and~$\Gamma_l$ to infinity. Hence,~\eqref{eq:inverse_boundary_top} is equal to
\begin{equation}
\frac{\one_{\ell x+i= \ell x'+i'}}{2\pi\i}\int_{\Gamma} \phi_{2\ell x+2(i+1)}(z)_{j'+1,k}z^{y'-\ell N}\d z=\one_{(\ell x'+i',ky'+j')=(\ell x+i,k\ell N-1)}.
\end{equation}
Here we have used that~$\phi_{2\ell x+2(i+1)}(z)_{j'+1,k}=\one_{j'+1=k}+\Ordo(z^{-1})$, as~$z\to \infty$. 

\paragraph{Case 3)}
The remaining part to check is the bottom boundary, that is, when the white vertex is given by~$\mathrm{w}_{\ell x+i,-1}$. This case is similar to the previous one.

The sum is now given by 
\begin{multline}\label{eq:inverse_boundary_bottom}
\alpha_{k,i+1}\widetilde K_{\mathrm{b}_{\ell x+i,0}\mathrm{w}_{\ell x'+i',ky'+j'}}
+\beta_{k,i+1}\widetilde K_{\mathrm{b}_{\ell x+i+1,0}\mathrm{w}_{\ell x'+i',ky'+j'}}\\
=-\beta_{k,i+1}\frac{\one_{\ell x+i= \ell x'+i'}}{2\pi\i}\int_{\Gamma} \left(\prod_{m=2\ell x'+2(i'+1)}^{2\ell x+2(i+1)}\phi_m(z)\right)_{j'+1,1}z^{y'}\frac{\d z}{z}\\
-\frac{\one_{\ell x+i\geq \ell x'+(i'+1)}}{2\pi\i}\int_{\Gamma} \left(\prod_{m=2\ell x'+2(i'+1)}^{2\ell x+2i}\phi_m(z)\cdot\left(I\alpha_{k,i+1}+\beta_{k,i+1}\phi_{2\ell x+2i+1}(z)\phi_{2\ell x+2i+2}(z)\right)\right)_{j'+1,1}z^{y'}\frac{\d z}{z}\\
+\frac{1}{(2\pi\i)^2}\int_{\Gamma_s}\int_{\Gamma_l} \left(\left(\prod_{m=1}^{2i'+1}\phi_m(z_1)\right)^{-1} \Phi(z_1)^{-x'}\widetilde \phi_-(z_1)\widetilde\phi_+(z_2)\Phi(z_2)^{x-kN}\right.\\
 \left.\times \left(\prod_{m=1}^{2i}\phi_m(z_2)\right)\left(I\alpha_{k,i+1}+\beta_{k,i+1}\phi_{2\ell x+2i+1}(z_2)\phi_{2\ell x+2i+2}(z_2)\right)\right)_{j'+1,1}z_1^{y'}\frac{\d z_2\d z_1}{z_2(z_2-z_1)}.
\end{multline}
In a similar way as in the second case, using~\eqref{eq:prod_zero} of Lemma~\ref{lem:transition_matrices_properties} and the behavior of~\eqref{eq:geometric} at zero, we may deform~$\Gamma$ and~$\Gamma_l$ through zero, to see that all terms on the right hand side, except the first one, are zero. In the deformation of~$\Gamma_l$ we pick up a residue at~$z_2=z_1$ along~$\Gamma_s$, however, that integral is also zero. Hence,~\eqref{eq:inverse_boundary_bottom} is equal to
\begin{equation}
-\beta_{k,i+1}\frac{\one_{\ell x+i=\ell x'+i'}}{2\pi\i}\int_{\Gamma} \phi_{2\ell x+2(i+1)}(z)_{j'+1,1}z^{y'}\frac{\d z}{z}=\one_{(\ell x'+i',ky'+j')=(\ell x+i,-1)}.
\end{equation}
We have used that~$-\beta_{k,i+1}\phi_{2\ell x+2(i+1)}(z)_{j'+1,1}=c_{j'+1}z+\Ordo(z^2)$ as~$z\to 0$, where~$c_k=1$.
\end{proof}

\begin{remark}
The proof merely checks the formula, it does not offer any insight into how we obtained it. The observation leading to the guess was that the formulas given in~\cite[Theorem 4.3]{KOS06} turned out to be a submatrix of the limiting bulk correlation kernel which we obtain in Theorem~\ref{thm:local_limit}.
\end{remark}

\subsection{Ergodic translation-invariant Gibbs measures}\label{sec:gibbs_measure}
A complete characterization of ergodic translation-invariant Gibbs measures, with non-extreme slopes, was accomplished in~\cite{KOS06} for planar periodic bipartite graphs with periodic edge weights. These measures constitute a two-parameter family, naturally parametrized by~$(r_1,r_2)\in \RR^2$. As mentioned in the introduction, these measures are conjectured to represent the limiting measure in periodic planar dimer models. In the present section, we proceed to construct this family for our specific graph~$G$.

We follow~\cite{KOS06} and embed the fundamental domain, the smallest non-repeating part of the weighted graph~$G$, in the torus, by identifying~$\mathrm b_{\ell x+i,ky+j}$ with~$\mathrm b_{\ell x'+i,k y'+j}$ for all~$x,x',y,y'\in \ZZ$, and similarly for the white vertices and the edges. Recall that here~$i=0,\dots,\ell-1$ and~$j=0,\dots,k-1$. We denote the resulting bipartite graph on the torus by~$G_1=(\mathcal B_1,\mathcal W_1, \mathcal E_1)$ with the vertices~$\mathrm b_{i,j}$ and~$\mathrm w_{i,j}$, and order them consistently with the order of the vertices in~$G_\text{Az}$. We define a closed loop~$\gamma_u$ on the torus, which intersects the edges between~$\mathrm b_{i,0}$ and~$\mathrm w_{i',k-1}$,~$i,i'=0,\dots,\ell-1$, oriented so it has the black vertices on the right. Similarly, we define~$\gamma_v$ as a closed loop on the torus intersecting the edges between~$\mathrm w_{\ell-1,j}$ and~$\mathrm b_{0,j'}$,~$j,j'=0,\dots,k-1$, oriented so it has the black vertices on the left. See Figure~\ref{fig:magnetic_weights}. For the edges which are intersected by~$\gamma_u$, we multiply the edge weights with~$z^{-1}$, and we multiply the edge weights for the edges which are intersected by~$\gamma_v$ by~$w$. The Kasteleyn signs are defined as in~$G_\text{Az}$, that is, we take them to be~$-1$ for the north edges and~$1$ for the other ones. The matrix~$K_{G_1}(z,w):\CC^{\mathcal B_1}\to \CC^{\mathcal W_1}$, called magnetically altered Kasteleyn matrix, is defined as the Kasteleyn matrix of the weighted graph~$G_1$. Explicitly~$K_{G_1}(z,w)$ is given by
\begin{equation}\label{eq:magnetic_kasteleyn_matrix}
K_{G_1}(z,w) = 
\begin{psmallmatrix}
A_1 & B_1 & \mathbf{0} & \cdots & \mathbf{0} \\
\mathbf{0} & A_2 & B_2 & \cdots & \mathbf{0} \\
\mathbf{0} & \mathbf{0} & A_3 & \cdots & \mathbf{0} \\
 & \vdots & & \ddots & \\
wB_\ell & \mathbf{0} & \mathbf{0} & \cdots & A_\ell 
\end{psmallmatrix},
\end{equation}
where~$A_i= \phi^b(z;\vec{\alpha_i},\vec{\gamma_i})^T$ and~$B_i= -\phi^b(z;-\vec{\beta_i},\vec{1})^T$. In fact, recall item~\eqref{eq:inverse_geometric} of Lemma~\ref{lem:transition_matrices_properties},~$A_i(z)=\phi_{2i-1}(z)^T$ and~$B_i=-(\phi_{2i}(z)^{-1})^T$, where~$\phi_{2i-1}$ and~$\phi_{2i}$ are the matrices in~\eqref{eq:def_phi_bernoulli} and~\eqref{eq:def_phi_geometric}. This relation between the Kasteleyn matrix and the transition matrices is an important relation that helps us to relate the non-intersecting paths picture with the dimer picture.

\begin{figure}
\begin{center}
\begin{tikzpicture}[scale=1.6]

% def of edge weights
\draw (-2.5,2-.5)--(-2,2.5-.5);
\draw (-2.5,2-.5) node[circle,draw=black,fill=black,inner sep=2pt]{};
\draw (-2,2.5-.5) node[circle,draw=black,fill=white,inner sep=2pt]{};
\draw (-2.05,2.14-.5) node {$\gamma_{j,i}$};

\draw (-1.5,2.5-.5)--(-1,2-.5);
\draw (-1.5,2.5-.5) node[circle,draw=black,fill=white,inner sep=2pt]{};
\draw (-1,2-.5) node[circle,draw=black,fill=black,inner sep=2pt]{};
\draw (-1,2.4-.5) node {$-1$};

\draw (-2.5,1-.5)--(-2,.5-.5);
\draw (-2,.5-.5) node[circle,draw=black,fill=white,inner sep=2pt]{};
\draw (-2.5,1-.5) node[circle,draw=black,fill=black,inner sep=2pt]{};
\draw (-2,.9-.5) node {$\alpha_{j,i}$};

\draw (-1.5,.5-.5)--(-1,1-.5);
\draw (-1,1-.5) node[circle,draw=black,fill=black,inner sep=2pt]{};
\draw (-1.5,.5-.5) node[circle,draw=black,fill=white,inner sep=2pt]{};
\draw (-1.05,.64-.5) node {$\beta_{j,i}$};

% black points
\foreach \x in {0,...,4}
{\foreach \y in {0,...,2}
{\draw (\x,\y) node[circle,draw=black,fill=black,inner sep=2pt]{};
}
}

% white points and lines
\foreach \x in {1,...,4}
{\foreach \y in {1,2}
{\draw (\x-1,\y-1)--(\x,\y);
\draw (\x-1,\y)--(\x,\y-1);
\draw (\x-.5,\y-.5) node[circle,draw=black,fill=white,inner sep=2pt]{};
}
\foreach \y in {0}
{\draw (\x-.5,\y-.5)--(\x,\y);
\draw (\x-1,\y)--(\x-.5,\y-.5);
\draw (\x-.5,\y-.5) node[circle,draw=black,fill=white,inner sep=2pt]{};
}
\foreach \y in {3}
{\draw (\x-1,\y-1)--(\x-.5,\y-.5);
\draw (\x-.5,\y-.5)--(\x,\y-1);
\draw (\x-.5,\y-.5) node[circle,draw=black,fill=white,inner sep=2pt]{};
}
}

% vertices
\foreach \y in {0,1,2}
{\foreach \x in {0,1,2,3}
{\draw (\x,\y) node[right] {$\mathrm{b}_{\x,\y}$};
\draw (\x+.5,\y+.5) node [below] {$\mathrm{w}_{\x,\y}$};
}
}

\foreach \y in {0,1,2}
{\draw (4,\y) node [right] {$\mathrm{b}_{0,\y}$};
}

\foreach \x in {0,1,2,3}
{
\draw (\x+.5,-.5) node[below] {$\mathrm{w}_{\x,2}$};
}

\foreach \x in {1,2,3,4}
{
 \draw (\x+.4-1.1,-.35) node {$z^{-1}$};
 \draw (\x+.85-1.1,-.05) node {$z^{-1}$};
}

{
 \draw (0.9+3.,3-.65) node {$w$};
 \draw (0.9+3.,2-.4) node {$w$};
 \draw (0.9+3.,2-.65) node {$w$};
 \draw (0.9+3.,1-.4) node {$w$};
 \draw (0.9+3.,1-.65) node {$w$};
  \draw (.9+3.,-.4) node {$w$};
}

 % loops
 \draw [<-](-.25,-.25)--(4.25,-.25);
 \draw (-.27,-.25) node[below] {$\gamma_u$};
  \draw [<-](3.75,-.75)--(3.75,2.75);
 \draw (3.75,-.75) node[right] {$\gamma_v$};
 \end{tikzpicture}
\end{center}
\caption{The fundamental domain with the magnetic weights, with the periodicity being~$3\times 4$. The lines~$\gamma_u$ and~$\gamma_v$ are the two loops of the torus.\label{fig:magnetic_weights}}
\end{figure}

We define an infinite Kasteleyn matrix~$K_G:\CC^{\mathcal B}\to \CC^{\mathcal W}$ from the infinite graph~$G$, with the Kasteleyn signs as in~$K_{G_\text{Az}}$ and~$K_{G_1}$. Explicitly,~$K_G$ is given by the same formula~\eqref{eq:def_kasteleyn_aztec} as~$K_{G_\text{Az}}$, but not restricted just to~$G_\text{Az}$. In~\cite{KOS06} the ergodic translation-invariant Gibbs measures were expressed in terms of~$K_G$ and~$K_{G_1}^{-1}$. Namely, for~$(r_1,r_2)\in \RR^2$ we define~$K_{G,(r_1,r_2)}^{-1}:\CC^{\mathcal W}\to \CC^{\mathcal B}$ by
\begin{equation}\label{eq:inverse_infinite_kasteleyn}
\left(K_{G,(r_1,r_2)}^{-1}\right)_{\mathrm{b}_{\ell x+i,ky+j}\mathrm{w}_{\ell x'+i',ky'+j'}}=\frac{1}{(2\pi\i)^2}\int_{|z|=\e^{r_1}}\int_{|w|=\e^{r_2}}\left(K_{G_1}(z,w)^{-1}\right)_{\mathrm{b}_{i,j}\mathrm{w}_{i',j'}}\frac{z^{y'-y}}{w^{x'-x}} \frac{\d w}{w}\frac{\d z}{z}.
\end{equation}
\begin{remark}
It is a straightforward check, which we will not pursue here, that 
\begin{equation}
K_GK_{G,(r_1,r_2)}^{-1}=I,
\end{equation}
for all~$(r_1,r_2)\in \RR^2$. A general argument can be found in the proof of~\cite[Theorem 40]{BCT22}
\end{remark}
The ergodic translation-invariant Gibbs measures, indexed by~$(r_1,r_2)\in \RR^2$, are given by   
\begin{equation}\label{eq:gibbs_measure}
\PP_{(r_1,r_2)}[e_1,\dots,e_p\in M]=\prod_{m=1}^p(K_{G})_{\mathrm{w}_m\mathrm{b}_m}\det\left((K_{G,(r_1,r_2)}^{-1})_{\mathrm{b}_m\mathrm{w}_{m'}}\right)_{1\leq m,m' \leq p},
\end{equation} 
where~$e_m=\mathrm{w}_m\mathrm{b}_m$,~$m=1,\dots,p$, are arbitrary edges in~$G$. It is well known, see~\cite{KOS06}, that the Gibbs measures come in different phases, called \emph{rough}, \emph{frozen}, and \emph{smooth}, also known as \emph{liquid}, \emph{solid}, and \emph{gas} in the literature. The phase depends on the location of~$(r_1,r_2)$, whether it lies in the interior of the amoeba, the unbounded complement or the bounded complement of the amoeba. We define the amoeba in Section~\ref{sec:harnack}, and it is closely related to whether~$K_{G_1}(z,w)^{-1}$ is singular on the torus in~\eqref{eq:inverse_infinite_kasteleyn} or not.

Another way to parametrize the ergodic translation-invariant Gibbs measures is via their slopes. We fix a dimer configuration~$M'$ of~$G_1$ and periodically extend it to~$G$. For a given dimer configuration~$M$, the height difference~$H(f')-H(f)$ between two faces~$f$ and~$f'$ of~$G$ is given by
\begin{equation}\label{eq:height_difference_torus}
H(\mathrm f')-H(\mathrm f)=\sum_{e=\mathrm w\mathrm b}(\pm)\left(1_{e\in M}-1_{e\in M'}\right),
\end{equation}
where the sum runs over the edges intersecting a dual path going from~$f$ to~$f'$, and the sign is~$+$ if the path intersects the edge~$e$ with the white vertex on the right, and~$-1$ if it is on the left. The slope~$(s,t)$ of~$\PP_{(r_1,r_2)}$ is defined as
\begin{equation}
s=\EE\left[H(\mathrm f')-H(\mathrm f)\right] \quad \text{and} \quad t=\EE\left[H(\mathrm f'')-H(\mathrm f)\right],
\end{equation}
where~$\mathrm f'$ is equal to~$\mathrm f$ as faces on the torus, but where we have traveled once around the torus along~$\gamma_u$, and similarly for~$\mathrm f''$ but where we have traveled along~$\gamma_v$ instead. 

Let us take~$M'$ as the configuration containing all north dimers. With this choice of reference configuration, the height difference~\eqref{eq:height_difference_torus} coincides, locally, with the height difference of the height function of the Aztec diamond, cf.~\eqref{eq:height_difference_aztec}. The slope~$(s,t)$ can be expressed as a sum of edge probabilities, and by~\eqref{eq:gibbs_measure} we obtain 
\begin{equation}\label{eq:slope_1}
s=-\sum_{e=\mathrm w\mathrm b}(K_{G})_{\mathrm w\mathrm b}\left(K_{G,(r_1,r_2)}^{-1}\right)_{\mathrm b\mathrm w},
\end{equation}
where the sum is taken over all edges intersecting~$\gamma_u$, with the edges projected down to~$G$, and
\begin{equation}\label{eq:slope_2}
t=\sum_{e=\mathrm w\mathrm b}(K_{G})_{\mathrm w\mathrm b}\left(K_{G,(r_1,r_2)}^{-1}\right)_{\mathrm b\mathrm w}-k,
\end{equation}
where the sum now runs over all edges intersecting~$\gamma_v$.

\begin{remark}\label{rem:extremal_slopes}
If~$(r_1,r_2)$ is such that~$\PP_{(r_1,r_2)}[e\in M]=1$ for every west edge~$e$, then, since there is no west edge intersecting~$\gamma_u$ or~$\gamma_v$, the slope is equal to~$(s,t)=(0,-k)$. It is not difficult to see that if~$(s,t)=(0,-k)$, then~$\PP_{(r_1,r_2)}[e\in M]=1$ for any west edge~$e$. Similarly, if~$e$ is a north, east, or south edge, then the same statement holds but with the slope equal to~$(s,t)=(0,0)$,~$(s,t)=(-\ell,0)$, and~$(s,t)=(-\ell,-k)$, respectively. We will see that these slopes form the vertices of the rectangle consisting of all possible slopes, see Remark~\ref{rem:possible_slopes}.
\end{remark}

\section{The spectral curve}\label{sec:characteristic_polynomial}

In the previous section we obtained a double contour integral formula for the correlation kernel~$K_\text{path}$, and hence, via Theorem~\ref{thm:inverse_kasteleyn}, for the inverse Kasteleyn~$K_{G_\text{Az}}^{-1}$. In this section we define a characteristic polynomial which will be central in the asymptotic analysis of the correlation kernel. In Section~\ref{sec:spectral_curve} we show that this characteristic polynomial coincides with the characteristic polynomial defined in~\cite{KOS06}, which is known to be closely related to the Gibbs measures~\eqref{eq:gibbs_measure}. By a result of the same paper, we conclude that the spectral curve defined by such a polynomial is a so-called Harnack curve. The definition of a Harnack curve is rather technical, and the goal of Section~\ref{sec:harnack} is to recall that definition and discuss important properties of the spectral curve. In Section~\ref{sec:abel_theta} we discuss the Abel map, theta functions and prime forms of a Riemann surface. We will use these objects in Sections~\ref{sec:wh} and~\ref{sec:asymptotic} to describe the integrand of the correlation kernel from Theorem~\ref{thm:bd_thm} more explicitly.

\subsection{The characteristic polynomial}\label{sec:spectral_curve}

In the analysis of the correlation kernel in Theorem~\ref{thm:bd_thm} the zero locus of a polynomial plays an important role. More precisely, the steep descent analysis of the double integral will be performed on the Riemann surface defined via this zero set. The polynomial in question is given by
\begin{equation}\label{eq:characteristic_polynomial}
P(z,w) = \prod_{i=1}^\ell(1-\beta^v_iz^{-1})\det\left(\Phi(z)-w I\right)
\end{equation}
with~$\beta^v_i$ and~$\Phi$ given by~\eqref{eq:edge_weights_product} and~\eqref{eq:matrix_valued_function} above.

In~\cite{KOS06} a central object is the characteristic polynomial~$\det K_{G_1}(z,w)$, and in particular the spectral curve -- the zero set of the characteristic polynomial. It was noted in~\cite{Ber21} that in the setting of~\cite{Ber21} the polynomial~\eqref{eq:characteristic_polynomial} coincides with the characteristic polynomial~$\det K_{G_1}(z,w)$. The following proposition tells us that this is also true in the more general setting of the present paper.

\begin{proposition}\label{prop:spectral_curve}
Let~$P$ be the polynomial defined in~\eqref{eq:characteristic_polynomial}, and let~$K_{G_1}$ be the Kasteleyn matrix defined by~\eqref{eq:magnetic_kasteleyn_matrix}. Then
\begin{equation}
P(z,w)=\det K_{G_1}(z,w).
\end{equation}
\end{proposition}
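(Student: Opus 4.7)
The plan is to exploit the fact that the $w$-dependence of $K_{G_1}(z,w)$ is concentrated in the single off-diagonal block $wB_\ell$ at position $(\ell,1)$. Writing $K_{G_1}(z,w)=D(z)+wE$, where $D$ is the block upper bidiagonal matrix obtained by setting $w=0$, the perturbation $E$ has block-rank one: $E=\epsilon_\ell\,B_\ell\,\epsilon_1^T$, with $\epsilon_i$ the column-of-blocks having the $k\times k$ identity at block-row $i$ and zeros elsewhere. The matrix-determinant lemma (equivalently, a Schur-complement computation) then yields
\begin{equation}
\det K_{G_1}(z,w)=\det D\cdot \det\bigl(I+w\,\epsilon_1^T D^{-1}\epsilon_\ell\,B_\ell\bigr),
\end{equation}
which isolates the $w$-dependence into a single $k\times k$ determinant.

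Next, $\det D=\prod_{i=1}^\ell\det A_i=\prod_{i=1}^\ell\det\phi_{2i-1}$ follows from block triangularity, while $\epsilon_1^T D^{-1}\epsilon_\ell$ is the $(1,\ell)$ block of $D^{-1}$. Because $D$ is block upper bidiagonal, one pass of block back-substitution gives $\epsilon_1^T D^{-1}\epsilon_\ell=(-1)^{\ell-1}A_1^{-1}B_1 A_2^{-1}B_2\cdots A_{\ell-1}^{-1}B_{\ell-1}A_\ell^{-1}$. Substituting $A_i=\phi_{2i-1}^T$ and $B_i=-(\phi_{2i}^T)^{-1}$ turns each factor $A_i^{-1}B_i$ into $-\bigl((\phi_{2i-1}\phi_{2i})^T\bigr)^{-1}$, and the key algebraic step is the telescoping
\begin{equation}
\prod_{i=1}^\ell A_i^{-1}B_i=(-1)^\ell\prod_{i=1}^\ell\bigl((\phi_{2i-1}\phi_{2i})^T\bigr)^{-1}=(-1)^\ell(\Phi^T)^{-1},
\end{equation}
where the second equality uses that both inversion and transposition reverse the order of a matrix product, so the two reversals cancel to give back $\Phi=\prod_{i=1}^{2\ell}\phi_i$ in its natural left-to-right order.

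Plugging this identity back in and collecting the $(-1)^{\ell-1}$ and $(-1)^\ell$ signs, the display simplifies to
\begin{equation}
\det K_{G_1}(z,w)=\prod_{i=1}^\ell\det\phi_{2i-1}\cdot\det\bigl(I-w(\Phi^T)^{-1}\bigr)=\frac{\prod_{i=1}^\ell\det\phi_{2i-1}}{\det\Phi}\,\det(\Phi-wI).
\end{equation}
Since $\det\Phi=\prod_{i=1}^\ell(\det\phi_{2i-1})(\det\phi_{2i})$, the prefactor collapses to $\prod_{i=1}^\ell(\det\phi_{2i})^{-1}$, which by Lemma~\ref{lem:transition_matrices_properties}\eqref{eq:determinant_bern_geo} equals $\prod_{i=1}^\ell(1-\beta^v_i z^{-1})$, recovering exactly $P(z,w)$ from~\eqref{eq:characteristic_polynomial}. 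I expect the main obstacle to be purely bookkeeping: the transposes hidden in the definitions of $A_i$ and $B_i$ reverse the order of matrix products and several $(-1)^\ell$ signs appear that must be tracked carefully. Once the $w$-dependence has been separated by the matrix-determinant lemma, however, the appearance of the characteristic polynomial of $\Phi$ is essentially forced by the telescoping identity above.
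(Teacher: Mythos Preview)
Your proof is correct. It differs from the paper's in the choice of decomposition: the paper applies Schur's determinant formula to the $2\times 2$ block splitting that separates the first block row/column (so $A=\phi_1^T$ is $k\times k$ and $D$ is the remaining $(\ell-1)\times(\ell-1)$ block upper bidiagonal matrix), computes $D^{-1}$ explicitly, and obtains $A-BD^{-1}C=\bigl((\Phi-wI)(\prod_{m=2}^{2\ell}\phi_m)^{-1}\bigr)^T$. You instead split off the single $w$-dependent block via the matrix determinant lemma, which directly produces a $k\times k$ determinant containing $I-w(\Phi^T)^{-1}$. Both arguments reduce to the same back-substitution for the inverse of a block upper bidiagonal matrix and the same telescoping of $A_i^{-1}B_i$ into $(\Phi^T)^{-1}$; your route has the slight advantage of isolating the $w$-dependence from the outset, while the paper's route avoids the need to think of the perturbation as block-rank-one. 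One tacit point in both: the auxiliary matrix whose inverse is used is only generically invertible in $z$, so the identity is first derived on a Zariski-open set and then extended by polynomial continuity---neither you nor the paper make this explicit, and it is harmless.
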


\begin{proof}
The proposition follows from Schur's determinant formula, which states that the determinant of a block matrix is given by
\begin{equation}
\det 
\begin{pmatrix}
A & B \\
C & D 
\end{pmatrix}
=\det (D)\det (A-BD^{-1}C),
\end{equation}
if~$D$ is invertible. 

We write~$K_{G_1}(z,w)$ in block form, 
\begin{equation}
K_{G_1}(z,w)=
\begin{pmatrix}
A & B \\
C & D 
\end{pmatrix}
\end{equation}
with 
\begin{equation}
A=\phi_{1}^T \quad B=(-(\phi_2^{-1})^T,\mathbf{0},\dots,\mathbf{0}), \quad 
C=
\begin{psmallmatrix}
\mathbf{0} \\
\vdots \\
\mathbf{0}\\
-w(\phi_{2\ell}^{-1})^T
\end{psmallmatrix},
\quad\text{and} \quad 
D=
\begin{psmallmatrix}
\phi_3^T & -(\phi_4^{-1})^T & \cdots & \mathbf{0} \\
\mathbf{0} & \phi_5^T & \cdots & \mathbf{0} \\
 & \vdots & & \ddots & \\
\mathbf{0} & \mathbf{0} & \cdots & \phi_{2\ell-1}^T 
\end{psmallmatrix}.
\end{equation}
It is straightforward to confirm that
\begin{equation}
D^{-1}=
\begin{psmallmatrix}
(\phi_3^{-1})^T & \prod_{m=3}^5(\phi_m^{-1})^T & \cdots & \prod_{m=3}^{2\ell-1}(\phi_m^{-1})^T \\
\mathbf{0} & (\phi_5^{-1})^T & \cdots & \prod_{m=5}^{2\ell-1}(\phi_m^{-1})^T \\
 & \vdots & & \ddots & \\
\mathbf{0} & \mathbf{0} & \cdots & (\phi_{2\ell-1}^{-1})^T 
\end{psmallmatrix},
\end{equation}
which yields
\begin{equation}
A-BD^{-1}C=\phi_1^T-w\prod_{m=2}^{2\ell}(\phi_m^{-1})^T=\left((\Phi-wI)\left(\prod_{m=2}^{2\ell}\phi_m\right)^{-1}\right)^T.
\end{equation}
Hence, using~\eqref{eq:characteristic_polynomial} and Lemma~\ref{lem:transition_matrices_properties}~\eqref{eq:determinant_bern_geo},
\begin{equation}
\det K_{G_1}(z,w) = \prod_{m=2}^\ell\det \phi_{2m-1}(z)\det(\Phi(z)-wI)\prod_{m=2}^{2\ell}\det \phi_m(z)^{-1} 
=P(z,w),
\end{equation}
which proves the statement.
\end{proof}
\begin{remark}
The above statement is closely related to~\cite[Theorem 1.2]{AGR21} and~\cite{Izo22}.
\end{remark}
In~\cite{KOS06} it was proved that the zero set of~$\det K_{G_1}(z,w)$ defines a so-called Harnack curve. Thus, Proposition~\ref{prop:spectral_curve} shows that this is also true for the surface that appears in the steep descent analysis of the correlation kernel in Theorem~\ref{thm:bd_thm}.

\subsection{Harnack curves and their amoebas}\label{sec:harnack}
In this section we describe the structure of the curve defined by the zero set of the polynomial~$P$. It follows from the previous section and~\cite{KOS06} that it is a so-called Harnack curve, which is a particularly nice type of algebraic curve. The aim of this section is to define and describe three objects: the Newton polygon~$N(P)$, the (closure of the) spectral curve~$\mathcal R$, and the amoeba~$\mathcal A$. We follow~\cite{MR00} and~\cite[Chapter 5 and 6]{GKZ94}, see also~\cite{Mik00}.

Given the Laurent polynomial~$P$ defined by~\eqref{eq:characteristic_polynomial}, we set 
\begin{equation}
A=\left\{(i,j)\in \ZZ^2: a_{ij}\neq 0, \, P(z,w)=\textstyle{\sum_{i,j}}a_{ij}z^iw^j\right\},
\end{equation}
and define the \emph{Newton polygon}~$N(P)\subset \RR^2$ as the convex hull of~$A$. The Newton polygon is the rectangle with vertices~$(-\ell,0)$,~$(0,0)$,~$(0,k)$ and~$(-\ell,k)$, which can be seen from the explicit expression of~$P=\det K_{G_1}$ where~$K_{G_1}$ is given by~\eqref{eq:magnetic_kasteleyn_matrix}. Let~$\delta_1,\dots,\delta_4$ be the sides of~$N(P)$, given in cyclic order with~$\delta_1$ as the line segment from~$(-\ell,0)$ to~$(0,0)$,~$\delta_2$ from~$(0,0)$ to~$(0,k)$, and so on. We let~$d_i$ be the integer length of~$\delta_i$ and~$g$ be the number of integer points in the interior of~$N(P)$. That is,~$d_1=d_3=\ell$,~$d_2=d_4=k$ and~$g=(k-1)(\ell-1)$. 

Given the Newton polygon, we define the space in which we will embed the zero locus of~$P$.  We set
\begin{equation}
\CC T_A^\circ=\{(z^{0}w^{0}:z^{-1}w^0:\dots:z^{-\ell}w^k):(z,w)\in (\CC^*)^2\} \subset \CC P^{|A|-1}, 
\end{equation}
where~$\CC P^{|A|-1}$ is the~$(|A|-1)$-dimensional projective space over~$\CC$, and~$\CC^*=\CC\backslash \{0\}$. We denote the closure of~$\CC T_A^{\circ}$ by~$\CC T_A$. The group~$(\CC^*)^2$ acts on~$\CC T_A^\circ$, where the action is free and transitive, so~$\CC T_A^\circ$ and~$(\CC^*)^2$ are isomorphic.
Hence,~$(\CC^*)^2\subseteq \CC T_A$. Similarly,~$(\RR^*)^2\subseteq \RR T_A$, where~$\RR T_A$ is the real part of~$\CC T_A$. 

The \emph{moment map},
\begin{equation}
\CC T_A\ni (z^{0}w^{0}:z^{-1}w^0:\dots:z^{-\ell}w^k) \mapsto  \frac{\sum_{(i,j)\in A} |z^iw^j|\cdot(i,j)}{\sum_{(i,j)\in A} |z^iw^j|} \in N(P),
\end{equation}
is a surjective map that maps~$(\CC^*)^2$ to the interior of~$N(P)$, and the complement of~$(\CC^*)^2$ to the boundary of~$N(P)$. See~\cite[Theorem 1.11 in chapter 6]{GKZ94}. The complement of~$(\RR^*)^2\subseteq \RR T_A$ is the union of four (the number of sides of~$N(P)$) lines~$l_i$,~$i=1,\dots,4$, called the \emph{axes} of~$\RR T_A$. The moment map sends each axes to a side of the Newton polygon, and we enumerate the axes so that it takes the axis~$l_i$ to~$\delta_i$. Explicitly, the axes corresponds to taking~$z\in \{0,\infty\}$ or~$w\in \{0,\infty\}$. For instance, the line corresponding to~$w=0$ is the closure of 
\begin{equation}\label{eq:axis}
\RR^* \ni z\mapsto (z^0w^0:z^{-1}w^0:\dots:z^{-\ell}w^0:0:\dots:0),
\end{equation} 
and the moment map takes this line to the side~$\delta_1$, connecting~$(0,0)$ and~$(-\ell,0)$, by 
\begin{equation}
\RR^* \ni z \mapsto (z^0w^0:z^{-1}w^0:\dots:z^{-\ell}w^0:0:\dots:0)\mapsto \frac{\sum_{i=0}^{-\ell} |z^i|\cdot(i,0)}{\sum_{i=0}^{-\ell} |z^i|}. 
\end{equation}
Hence,~$l_1$ corresponds to~$w=0$. Similarly we observe that~$l_2$ corresponds to~$z=\infty$,~$l_3$ to~$w=\infty$, and~$l_4$ corresponds to~$z=0$.

We are ready to define a Harnack curve. Let~$Q$ be any real polynomial~$Q(z,w)$ such that~$N(Q)=N(P)$. We define~$\mathcal R_Q^\circ$ as the zero set
\begin{equation}\label{eq:spectral_curve}
\{(z,w)\in (\CC^*)^2: Q(z,w)=0\},
\end{equation}
and denote the closure of~$\mathcal R_Q^\circ$ in~$\CC T_A$ by~$\mathcal R_Q$. If the curve~$\mathcal R_Q^\circ$ is non-singular, we say that the curve is a \emph{Harnack curve} if:
\begin{itemize}
\item the real part of~$\mathcal R_Q$ consists of~$g+1$ connected components,
\item all those components but one do not intersect the lines~$l_1, \dots,l_4$,
\item the component intersecting the lines can be divided into four consecutive arcs~$a_1,\dots, a_4$ so that~$|a_i\cap l_i|=d_i$ and~$|a_i\cap l_j|=0$ if~$j\neq i$.  
\end{itemize}
If~$\mathcal R_Q^\circ$ is singular, it is a \emph{singular Harnack curve} if (at least) one of the components which do not intersect the axes is contracted to a point. See~\cite[Definitions 2 and 3]{MR00}.

We take~$Q=P$ and refer to~$\mathcal R^\circ=\mathcal R_P^\circ$ as the \emph{spectral curve} of the model, denoting the closure by~$\mathcal R=\mathcal R_P$. The fact that~$P=\det K_{G_1}$ implies, by~\cite[Theorem 5.1]{KOS06}, that~$\mathcal R^\circ$ is a (possibly singular) Harnack curve.

 \begin{figure}[t]
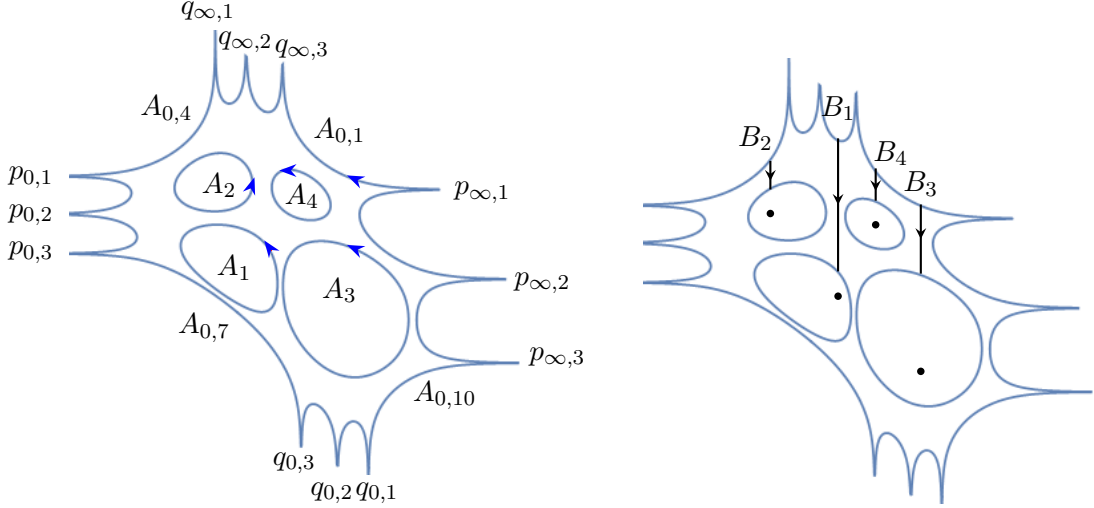

 \begin{center}
 \begin{tikzpicture}[scale=1]
  \tikzset{->-/.style={decoration={
  markings, mark=at position .5 with {\arrow{stealth}}},postaction={decorate}}}
    \draw (0,0) node {\includegraphics[trim={1cm, 1cm, 1cm, 1cm}, clip, angle=180, scale=.5]{amoeba3x3applicable.png}};
	%A_0
	\draw (.6,1.6) node {$A_{0,1}$};
	\draw (-1.7,1.9) node {$A_{0,4}$};    
	\draw (-1.2,-1.) node {$A_{0,7}$};    
	\draw (2.0,-1.9) node {$A_{0,10}$};    
	\draw [red,line width=0,-{Stealth[length=2.3mm,blue]}] (.8,.98)--(.7,1.025);
    %A_i
	\draw (-.8,-.2) node {$A_1$};
	\draw [red,line width=0,-{Stealth[length=2.3mm,blue]}] (-.3,.02)--(-.4,.16);
	\draw (-1,.9) node {$A_2$};	
	\draw [red,line width=0,-{Stealth[length=2.3mm,blue]}] (-.1,1.09)--(-.2,1.09);
	\draw (.6,-.5) node {$A_3$};
	\draw [red,line width=0,-{Stealth[length=2.3mm,blue]}] (.8,.055)--(.7,.1);
	\draw (.1,.75) node {$A_4$};
	\draw [red,line width=0,-{Stealth[length=2.3mm,blue]}] (-.54,.97)--(-.53,1.);
	%angles
	\draw (.1,2.7) node {$q_{\infty,3}$};
	\draw (-.65,2.8) node {$q_{\infty,2}$};
	\draw (-1.15,3.2) node {$q_{\infty,1}$};
	
	\draw (-3.5,1) node {$p_{0,1}$};
	\draw (-3.5,.5) node {$p_{0,2}$};
	\draw (-3.5,0) node {$p_{0,3}$};

	\draw (0,-2.8) node {$q_{0,3}$};
	\draw (.5,-3.2) node {$q_{0,2}$};
	\draw (1.1,-3.2) node {$q_{0,1}$};

	\draw (2.5,.8) node {$p_{\infty,1}$};
	\draw (3.3,-.4) node {$p_{\infty,2}$};
	\draw (3.5,-1.4) node {$p_{\infty,3}$};
	% weights
	% 3\times 3-periodic
	% \alpha_{31}=d_1c_1, \alpha_{22}=d_2c_2, \alpha_{13}=d_3c_3
	% \gamma_{31}=c_1, \gamma_{22}=c_2, \gamma_{13}=c_3
	% \beta_{31}=d_1^{-1}, \beta_{22}=d_2^{-1}, \beta_{13}=d_3^{-1}
	% \alpha_i^v/\gamma_i^v=d_i, \beta_i^v=d_i^{-1}
	% \alpha_i^h/\beta_i^h=d_{4-i}^2c_{4-i}, \gamma_i^h=c_{4-i}	
	%d_1 = 1.5; d_2 = 7.3; d_3 = 28.1;
	%c_1 = .1; c_2 = 1; c_3 = 11.2;
  \end{tikzpicture}
  \quad
   \begin{tikzpicture}[scale=1]
  \tikzset{->-/.style={decoration={
  markings, mark=at position .5 with {\arrow{stealth}}},postaction={decorate}}}
    \tikzset{-->-/.style={decoration={
  markings, mark=at position .7 with {\arrow{stealth}}},postaction={decorate}}}
    \draw (0,0) node {\includegraphics[trim={1cm, 1cm, 1cm, 1cm}, clip, angle=180, scale=.5]{amoeba3x3applicable.png}};
    %(r_1,r_2)
	\draw (-.4,-.2) node[circle,fill,inner sep=1pt]{};
	\draw (.7,-1.2) node[circle,fill,inner sep=1pt]{};
%	\draw (.7,-1.2) node[above] {$(r_1^{(2)},r_2^{(2)})$};
	\draw (-1.3,.9) node[circle,fill,inner sep=1pt]{};
	\draw (.1,.75) node[circle,fill,inner sep=1pt]{};
	%B_i
	\draw (-.4,2.3) node {$B_1$};
%	\draw (-.7,1.5) node {$B_1$};
	\draw[->-,thick](-.4,1.9)--(-.4,.13);
	\draw (.7,1.3) node {$B_3$};
	\draw[->-,thick](.7,1.02)--(.7,.1);
	\draw (-1.5,1.9) node {$B_2$};
	\draw[-->-,thick](-1.3,1.6)--(-1.3,1.22);
	\draw (.3,1.7) node {$B_4$};
	\draw[-->-,thick](.1,1.5)--(.1,1.07);
  \end{tikzpicture}
 \end{center}
  \caption{The amoeba of a dimer model with a~$3\times 3$-periodic fundamental domain. The outer boundary corresponds to~$A_0=\cup_{i=1}^{2(k+\ell)}A_{0,i}$, with tentacles corresponding to each angle. The holes corresponds to~$A_i$,~$i=1,\dots,g$. In the right picture the curves represents the~$B$-cycles, and the dots represent the points~$(r_1^{(i)},r_2^{(i)})$, see Section~\ref{sec:abel_theta}. 
 %\textcolor{blue}{The orientation is so that~$A_i\wedge B_j=\delta_{ij}$, the crossing means (I think)~$x$-axes$\wedge y$-axes$=1$, while~$y$-axes$\wedge x$-axes$=-1$}
 \label{fig:amoeba}}
\end{figure}

We denote the connected components of the real part of~$\mathcal R$ by~$A_0, A_1, \dots, A_g$. If the curve is non-singular then these connected components are disjoint simple loops, and if the curve is singular then~$A_i$ is a point, for some~$i$. We take~$A_0$ as the component intersecting the axes~$l_1,\dots,l_4$ and define the orientation of~$A_0$ to be consistent with the arcs~$a_1,\dots,a_4$. The orientations of~$A_i$, for~$i=1,\dots,g$, are defined so that they are consistent with~$A_0$, in the sense that if we deform a positively oriented curve from~$A_0$ to~$A_i$,~$i=1,\dots,g$, then the resulting curves are still oriented in positive direction, see Figure~\ref{fig:amoeba}. We will refer to~$A_i$,~$i=0,\dots,g$ as \emph{ovals}, and~$A_i$,~$i=1,\dots,g$ as \emph{compact ovals}. The axes~$l_1,\dots,l_4$ intersect~$A_0$ at~$d=\sum_{i=1}^4d_i=2(\ell+k)$ points, so~$A_0\cap \mathcal R^\circ$ consists of~$d$ connected components. We denote the components by~$A_{0,i}$,~$i=1,\dots,d$, so that the enumeration follows the orientation of~$A_0$. We take~$A_{0,1}$ to be the component with boundary points on the arcs~$a_2$ and~$a_3$, that is, the boundary points are given by~$p_{\infty,j}$ and~$q_{\infty,i}$, defined below, for some~$j$ and~$i$. See Figure~\ref{fig:amoeba} for the image of the ovals under the~$\Log$ map defined below.

The intersection points of the component~$A_0$ with the axes play an important role, and we can in fact describe them in terms of the edge weights. Recall that~$l_1$ and~$l_3$ correspond to~$w=0$ and~$w=\infty$, respectively. The intersection points on~$a_1$ and~$a_3$ are therefore obtained by taking~$w\to 0$ and~$w\to \infty$ in~\eqref{eq:spectral_curve} with~$Q=P$, respectively. We denote them by~$q_{0,i}$ and~$q_{\infty,i}$. Similarly we get the points on~$a_2$ and~$a_4$, which we denote by~$p_{\infty,j}$ and~$p_{0,j}$, by taking~$z\to \infty$ and~$z\to0$, respectively. For instance, taking~$w\to 0$ in the equation~$P(z,w)=0$ gives us~$q_{0,i}=(z,0)$ where, using item~\eqref{eq:determinant_bern_geo} of Lemma~\ref{lem:transition_matrices_properties} and~\eqref{eq:spectral_curve},~$z$ solves the equation
\begin{equation}\label{eq:equation_angles}
\prod_{i=1}^\ell(\gamma^v_i-(-1)^k\alpha^v_iz^{-1})=0.
\end{equation}
We obtain
\begin{equation}\label{eq:angles_1}
q_{0,i}=((-1)^k\alpha^v_i/\gamma^v_i,0), \quad q_{\infty,i}=(\beta^v_i,\infty), \quad i=1,\dots,\ell,
\end{equation}
and
\begin{equation}\label{eq:angles_2}
p_{0,j}=(0,(-1)^\ell\alpha^h_j/\beta^h_j), \quad p_{\infty,j}=(\infty,\gamma^h_j), \quad j=1,\dots,k,
\end{equation}
where~$\alpha_i^{h/v}$,~$\beta_i^{h/v}$ and~$\gamma_i^{h/v}$, are given in~\eqref{eq:edge_weights_product}.
We refer to these points as \emph{angles}. In Section~\ref{sec:one_form_thetas} we will see that these points coincide with the angles as used in~\cite{BCT22}. 

We proceed by defining the amoeba of the Laurent polynomial~$P$. Let us define the function~$\Log:(\CC^*)^2\mapsto \RR^2$ given by
\begin{equation}\label{eq:log_map}
\Log(z,w)=(\log|z|,\log|w|).
\end{equation}
We write~$(r_1,r_2)=(\log|z|,\log|w|)$. The \emph{amoeba}~$\mathcal A$ is defined by the image of~$\mathcal R^\circ$ under the map~$\Log$,~$\mathcal A=\Log(\mathcal R^\circ)$. The Harnack curves can be characterized in terms of the amoebas, namely, the curve~$\mathcal R^\circ$ is a (possibly singular) Harnack curve if and only if the map~$\Log|_{\mathcal R^\circ}:\mathcal R^\circ \to \mathcal A$ is at most~$2$-to-$1$,~\cite[Theorem 1]{MR00}. The polynomial~$P$ is a real polynomial, so~$(z,w)\in \mathcal R^\circ$ if and only if~$(\bar z,\bar w)\in \mathcal R^\circ$, which implies that the map is~$2$-to-$1$ away from the real part of~$\mathcal R^\circ$. Moreover, the map is~$1$-to-$1$ on the non-singular part of the real part of~$\mathcal R^\circ$, which is equal to~$\mathcal R^\circ\cap \Log^{-1}(\partial \mathcal A)$, see~\cite[Theorem 1 and Corollary 3]{MR00}. The orientation of~$A_i$ defines an orientation of~$\partial \mathcal A$, namely, the curve~$\Log(A_i)$ is oriented counterclockwise, see Figure~\ref{fig:amoeba}. 

 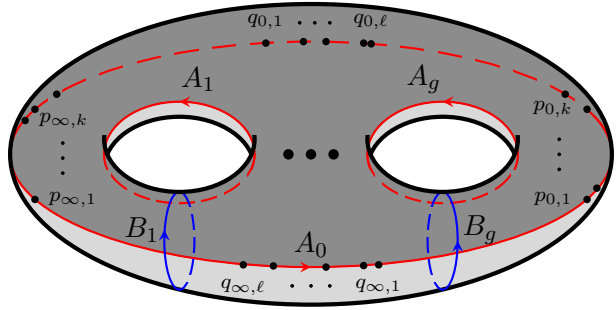
\begin{figure}[t]
 \begin{center}
 \begin{tikzpicture}[scale=1]
   \tikzset{->-/.style={decoration={
  markings, mark=at position .25 with {\arrow{stealth}}},postaction={decorate}}}
     \tikzset{-<-/.style={decoration={
  markings, mark=at position .25 with {\arrow{stealth[reversed]}}},postaction={decorate}}}
     \tikzset{>-/.style={decoration={
  markings, mark=at position .0 with {\arrow{stealth}}},postaction={decorate}}}
\tikzset{-</.style={decoration={
  markings, mark=at position .5 with {\arrow{stealth[reversed]}}},postaction={decorate}}}
 \colorlet{colorTop}{gray!90}
 \colorlet{colorBottom}{gray!30}
 \colorlet{colorAcycles}{red}
 \colorlet{colorBcycles}{blue}
 %Coloring
  \begin{scope}
    \clip (-4,-2) rectangle (4,0);
    \draw[fill=colorBottom] (0,0) ellipse (4cm and 2cm);
\end{scope}
\foreach \y in {-.25}
\foreach \x in {.25}
{{
  \begin{scope}
    \clip (-4,-2) rectangle (4,.2);
    \draw[fill=colorTop] (0,0) ellipse (4cm and 1.5cm);
	\draw[fill=white] (-1.5+\y,.2) ellipse (1cm and .7cm);
	\draw[fill=white] (1.5+\x,.2) ellipse (1cm and .7cm);
\end{scope} 
\begin{scope}
    \clip (-4,2) rectangle (4,0);
    \draw[fill=colorTop] (0,0) ellipse (4cm and 2cm);
	\draw[fill=white] (-1.5+\y,0) ellipse (1cm and .7cm);
	\draw[fill=white] (1.5+\x,0) ellipse (1cm and .7cm);
\end{scope} 
\begin{scope}
    \clip (-4,2) rectangle (4,0);
    \draw[fill=colorBottom] (-1.5+\y,0) ellipse (1cm and .7cm);
    \draw[fill=colorBottom] (1.5+\x,0) ellipse (1cm and .7cm);
	\draw[fill=white] (-1.5+\y,-.2) ellipse (1cm and .7cm);
	\draw[fill=white] (1.5+\x,-.2) ellipse (1cm and .7cm);
\end{scope} 
}}
  %Outer real curve
  \begin{scope}
    \clip (-4,-2) rectangle (4,.5);
    \draw[thick, colorAcycles,->-,rotate=180] (0,0) ellipse (4cm and 1.5cm);
\end{scope} 
\foreach \x in {-3.5,-3,...,4}
{  \begin{scope}
    \clip (\x-.31,0) rectangle (\x,2);
    \draw[thick, colorAcycles] (0,0) ellipse (4cm and 1.5cm);
\end{scope} 
}
 %Outer boundery
  \draw[ultra thick, rotate=180] (0,0) ellipse (4cm and 2cm);
%Left
%Inner real curve left
 \foreach \y in {-.25}
{ \begin{scope}
    \clip (-3+\y,1) rectangle (\y,-.2);
    \draw[thick, colorAcycles, ->-] (-1.5+\y,0) ellipse (1cm and .7cm);
\end{scope} 
\foreach \x in {-2.5,-2.2,...,0}
{
  \begin{scope}
    \clip (\x-.2+\y,0) rectangle (\x+\y,-1);
    \draw[thick, colorAcycles] (-1.5+\y,0) ellipse (1cm and .65cm);
\end{scope} 
}
%B-cycle left
  \begin{scope}
    \clip (-2+\y,-2) rectangle (-1.5+\y,0);
    \draw[thick, colorBcycles, -<] (-1.5+\y,-1.15) ellipse (.2cm and .65cm);
\end{scope} 
\foreach \x in {-2,-1.7,...,0}
{
  \begin{scope}
    \clip (-1+\y,\x-.2) rectangle (-1.5+\y,\x);
    \draw[thick, colorBcycles] (-1.5+\y,-1.15) ellipse (.2cm and .65cm);
\end{scope}
}
%Inner boundary left
  \begin{scope}
    \clip (-3+\y,-1) rectangle (\y,.25);
    \draw[ultra thick] (-1.5+\y,.2) ellipse (1cm and .7cm);
\end{scope} 
  \begin{scope}
    \clip (-3+\y,1) rectangle (\y,0);
    \draw[ultra thick] (-1.5+\y,-.2) ellipse (1cm and .7cm);
\end{scope} 
}
%Right
%Inner real curve right
 \foreach \y in {+.25}
{
  \begin{scope}
    \clip (3+\y,1) rectangle (+\y,-.2);
    \draw[thick, colorAcycles, ->-] (1.5+\y,0) ellipse (1cm and .7cm);
\end{scope} 
\foreach \x in {2.5,2.2,...,0}
{
  \begin{scope}
    \clip (\x+.2+\y,0) rectangle (\x+\y,-1);
    \draw[thick, colorAcycles] (1.5+\y,0) ellipse (1cm and .65cm);
\end{scope} 
}
%B-cycle right
  \begin{scope}
    \clip (2+\y,-2) rectangle (1.5+\y,0);
    \draw[thick, colorBcycles, >-] (1.5+\y,-1.15) ellipse (.2cm and .65cm);
\end{scope}
\foreach \x in {-2,-1.7,...,0}
{
  \begin{scope}
    \clip (1+\y,\x-.2) rectangle (1.5+\y,\x);
    \draw[thick, colorBcycles] (1.5+\y,-1.15) ellipse (.2cm and .65cm);
\end{scope}
}
%Inner boundary right
  \begin{scope}
    \clip (3+\y,-1) rectangle (\y,.25);
    \draw[ultra thick] (1.5+\y,.2) ellipse (1cm and .7cm);
\end{scope} 
  \begin{scope}
    \clip (3+\y,1) rectangle (\y,0);
    \draw[ultra thick] (1.5+\y,-.2) ellipse (1cm and .7cm);
\end{scope} 
}
%dots
%ovals
\foreach \x in {-.3,0,.3}
{\draw (\x,0) node[circle,fill,inner sep=1.4pt]{};}
%q_\infty
\foreach \x in {-.25,0,.25}
{\draw (\x,-1.75) node[circle,fill,inner sep=.5pt]{};}
%p_0
\foreach \y in {-.2,0,.2}
{\draw (3.3,\y) node[circle,fill,inner sep=.5pt]{};}
%q_0
\foreach \x in {-.15,.05,.25}
{\draw (\x,1.75) node[circle,fill,inner sep=.5pt]{};}
%p_\infty
\foreach \y in {-.25,-.05,.15}
{\draw (-3.3,\y) node[circle,fill,inner sep=.5pt]{};}
%ovals and cycles
\draw (0,-1.2) node {$A_0$}; 
\draw (-1.5,1) node {$A_1$}; 
\draw (1.5,1) node {$A_g$}; 
\draw (-2.25,-1.) node {$B_1$}; 
\draw (2.25,-1.) node {$B_g$}; 
% Angles
\draw (-.9,-1.47) node[circle,fill,inner sep=1pt,label=below:$\scriptstyle{q_{\infty,\ell}}$]{};
\draw (-.5,-1.485) node[circle,fill,inner sep=1pt]{};
\draw (.2,-1.5) node[circle,fill,inner sep=1pt]{};
\draw (.7,-1.48) node[circle,fill,inner sep=1pt]{};
\draw (.9,-1.47) node[circle,fill,inner sep=1pt,label=below:$\scriptstyle{q_{\infty,1}}$]{};
\draw (3.67,-.6) node[circle,fill,inner sep=1pt,label=left:$\scriptstyle{p_{0,1}}$]{};
\draw (3.8,-.45) node[circle,fill,inner sep=1pt]{};
\draw (3.38,.8) node[circle,fill,inner sep=1pt]{};
\draw (3.67,.6) node[circle,fill,inner sep=1pt,label=left:$\scriptstyle{p_{0,k}}$]{};
\draw (.8,1.47) node[circle,fill,inner sep=1pt,label=above:$\scriptstyle{q_{0,\ell}}$]{};
\draw (.7,1.48) node[circle,fill,inner sep=1pt]{};
\draw (.2,1.5) node[circle,fill,inner sep=1pt]{};
\draw (-.1,1.5) node[circle,fill,inner sep=1pt]{};
\draw (-.6,1.48) node[circle,fill,inner sep=1pt,label=above:$\scriptstyle{q_{0,1}}$]{};
\draw (-3.8,.45) node[circle,fill,inner sep=1pt,label=right:$\scriptstyle{p_{\infty,k}}$]{};
\draw (-3.67,.6) node[circle,fill,inner sep=1pt]{};
\draw (-3.38,.8) node[circle,fill,inner sep=1pt]{};
\draw (-3.67,-.6) node[circle,fill,inner sep=1pt,label=right:$\scriptstyle{p_{\infty,1}}$]{};
 \end{tikzpicture}
 \end{center}
  \caption{An illustration of the Riemann surface. If we remove the angles from the picture above, the resulting surface can be visualized as the union of two copies of the amoeba joined together along their boundaries. In this depiction, the red curves represent the boundary. The inner red curves correspond to the~$A$-cycles, while the blue curves represent the~$B$-cycles (cf. Figure~\ref{fig:amoeba}).
 \label{fig:riemann_surface}}
\end{figure}

The real part of~$\mathcal R$ divides~$\mathcal R$ into two connected components. Each component is in bijection, under the restriction of the~$\Log$ map, with the interior of the amoeba minus~$\Log(A_i)$ for all~$i$ such that~$A_i$ is reduced to a point. In particular, if~$\mathcal R$ is non-singular, the components are in bijection with the interior of the amoeba. We denote the component with positively oriented boundary~$A_0-(A_1+\dots A_g)$ by~$\mathcal R_0$. We will often visualize the Harnack curve~$\mathcal R$ via the amoeba, see Figure~\ref{fig:riemann_surface}.

Let us describe what the amoeba in our setting looks like. The image of~$A_i$,~$i=1,\dots,g$, under the map~$\Log$, are simple closed disjoint curves. In particular, the curves are bounded and form the inner boundary of the amoeba. If~$A_i$ is reduced to a point, so is the corresponding curve in the amoeba. The image of~$A_0\cap \mathcal R^\circ$ under~$\Log$ consists of simple unbounded curves, with so-called \emph{tentacles} going to infinity, and they form the outer boundary of~$\mathcal A$. Each tentacle corresponds to an angle. Indeed, there are at most~$\ell$ tentacles with the~$r_2$-coordinate going to~$+\infty$ and at most~$\ell$ ones with the~$r_2$-coordinate going to~$-\infty$. The asymptotes are the lines~$r_1=\log\beta^v_i$, and the lines~$r_1=\log(\alpha_i^v/\gamma_i^v)$,~$i=1\dots,\ell$, respectively. Similarly, there are at most~$k$ tentacles with the~$r_1$-coordinate going to~$+\infty$ and~$-\infty$ with asymptotes~$r_2=\log\gamma_j^h$ and~$r_2=\log(\alpha_j^h/\beta_j^h)$,~$j=1,\dots,k$, respectively. In particular, there are a maximal number of tentacles precisely when all angles are distinct. Finally, each connected component of the complement of~$\mathcal A$ is convex,~\cite[Corollary 1.6, Chapter 6]{GKZ94}. See Figure~\ref{fig:amoeba} for an example. For a more general description, see, for instance,~\cite{Mik00}.

 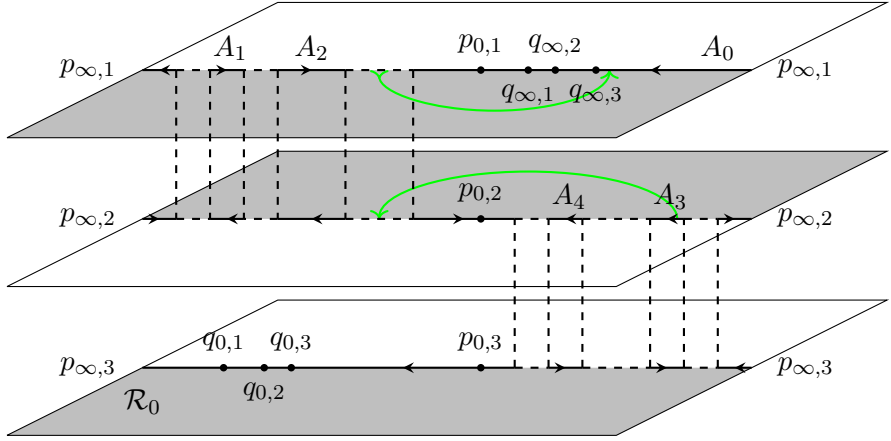
\begin{figure}[t]
 \begin{center}
 \begin{tikzpicture}[scale=.9]
   \tikzset{->--/.style={decoration={
  markings, mark=at position .3 with {\arrow{stealth}}},postaction={decorate}}}
   \tikzset{->-/.style={decoration={
  markings, mark=at position .5 with {\arrow{stealth}}},postaction={decorate}}}
    %first sheet
    % Plane 
   \fill[color=lightgray](-4,0)--(5,0)--(3,-1)--(-6,-1);
   \draw (-6,-1)--(3,-1)--(7,1)--(-2,1)--(-6,-1);
   % curve
   \draw[green,thick,>->] (-.5,0) arc(-180:0:1.7cm and .6cm);
   % Points
   \draw (1,0) node[circle,fill,inner sep=1pt,label=above:$p_{0,1}$]{};
   \draw (5.8,0) node{$p_{\infty,1}$};
   \draw (-4.8,0) node{$p_{\infty,1}$};
   \draw (1.7,0) node[circle,fill,inner sep=1pt,label=below:$q_{\infty,1}$]{};
   \draw (2.1,0) node[circle,fill,inner sep=1pt,label=above:$q_{\infty,2}$]{};
   \draw (2.7,0) node[circle,fill,inner sep=1pt,label=below:$q_{\infty,3}$]{};
   \draw (4.5,0) node[above]{$A_0$};
   \draw (-2.7,0) node[above]{$A_1$};
   \draw (-1.5,0) node[above]{$A_2$};
   % real ovals
   \draw[thick,->--] (5,0)--(0,0);
   \draw[thick,->-] (-2,0)--(-1,0);
   \draw[thick,->-] (-3,0)--(-2.5,0);
   \draw[thick,->-] (-3.5,0)--(-4,0);
   % Cuts
   \draw [thick,dashed] (-1,0)--(0,0);
   \draw [thick,dashed] (-2,0)--(-2.5,0);
   \draw [thick,dashed] (-3.5,0)--(-3,0);
   % Second Sheet
   % Plane 
   \fill[color=lightgray]
   (-2,-1.2)--(7,-1.2)--(5,-2.2)--(-4,-2.2);
   \draw (-6,-3.2)--(3,-3.2)--(7,-1.2)--(-2,-1.2)--(-6,-3.2);
   % curve
   \draw[green,thick,<-] (-.5,-2.2) arc(180:0:2.2cm and .7cm);
   % Points
   \draw (1,-2.2) node[circle,fill,inner sep=1pt,label=above:$p_{0,2}$]{};
   \draw (5.8,-2.2) node{$p_{\infty,2}$};
   \draw (-4.8,-2.2) node{$p_{\infty,2}$};
   \draw (3.8,-2.2) node[above]{$A_3$};
   \draw (2.3,-2.2) node[above]{$A_4$};
   % real ovals
   \draw[thick,->-] (4.5,-2.2)--(5,-2.2);   
   \draw[thick,->-] (4,-2.2)--(3.5,-2.2);
   \draw[thick,->-] (2.5,-2.2)--(2,-2.2);
   \draw[thick,->-] (0,-2.2)--(1.5,-2.2);
   \draw[thick,->-] (-2.5,-2.2)--(-3,-2.2);
   \draw[thick,->-] (-1,-2.2)--(-2,-2.2);
   \draw[thick,->-] (-4,-2.2)--(-3.5,-2.2);
   % Cuts
   \draw [thick,dashed] (-1,-2.2)--(0,-2.2);
   \draw [thick,dashed] (-2,-2.2)--(-2.5,-2.2);
   \draw [thick,dashed] (-3.5,-2.2)--(-3,-2.2);
   \draw [thick,dashed] (1.5,-2.2)--(2,-2.2);
   \draw [thick,dashed] (2.5,-2.2)--(3.5,-2.2);
   \draw [thick,dashed] (4,-2.2)--(4.5,-2.2);
   % Connect
   \draw[thick,dashed] (0,0)--(0,-2.2);
   \draw[thick,dashed] (-1,0)--(-1,-2.2);
   \draw[thick,dashed] (-2,0)--(-2,-2.2);
   \draw[thick,dashed] (-2.5,0)--(-2.5,-2.2);
   \draw[thick,dashed] (-3,0)--(-3,-2.2);
   \draw[thick,dashed] (-3.5,0)--(-3.5,-2.2);
   % third Sheet
   % Plane 
   \fill[color=lightgray]
   (-6,-5.4)--(3,-5.4)--(5,-4.4)--(-4,-4.4);
   \draw (-6,-5.4)--(3,-5.4)--(7,-3.4)--(-2,-3.4)--(-6,-5.4);
   % Points
   \draw (1,-4.4) node[circle,fill,inner sep=1pt,label=above:$p_{0,3}$]{};
   \draw (5.8,-4.4) node{$p_{\infty,3}$};
   \draw (-4.8,-4.4) node{$p_{\infty,3}$};
   \draw (-1.8,-4.4) node[circle,fill,inner sep=1pt,label=above:$q_{0,3}$]{};
   \draw (-2.2,-4.4) node[circle,fill,inner sep=1pt,label=below:$q_{0,2}$]{};
   \draw (-2.8,-4.4) node[circle,fill,inner sep=1pt,label=above:$q_{0,1}$]{};
   \draw (-4,-4.9) node {$\mathcal R_0$};
   % real ovals
   \draw[thick,->-] (5,-4.4)--(4.5,-4.4);   
   \draw[thick,->-] (3.5,-4.4)--(4,-4.4);
   \draw[thick,->-] (2,-4.4)--(2.5,-4.4);
   \draw[thick,->--] (1.5,-4.4)--(-4,-4.4);
   % Cuts
   \draw[thick] [dashed] (1.5,-4.4)--(2,-4.4);
   \draw[thick] [dashed] (2.5,-4.4)--(3.5,-4.4);
   \draw[thick] [dashed] (4,-4.4)--(4.5,-4.4);
   % Connect
   \draw[thick,dashed] (1.5,-4.4)--(1.5,-2.2);
   \draw[thick,dashed] (2,-4.4)--(2,-2.2);
   \draw[thick,dashed] (2.5,-4.4)--(2.5,-2.2);
   \draw[thick,dashed] (3.5,-4.4)--(3.5,-2.2);
   \draw[thick,dashed] (4,-4.4)--(4,-2.2);   
   \draw[thick,dashed] (4.5,-4.4)--(4.5,-2.2);   
  \end{tikzpicture}
 \end{center}
  \caption{The Riemann surface represented in terms of~$k$ sheets. The real ovals and the cuts are located along the real axes. The real ovals are the solid parts and the cuts are the dashed parts. The cuts connecting two consecutive sheets are alternating between the sides of the origin. The grey shaded region is the connected component~$\mathcal R_0$. The green curve is an oriented curve in~$\mathcal R_0$ going from~$A_3$ to~$A_{0,1}$, compare with the green curve in Figure~\ref{fig:amoeba_curves}.\label{fig:rs_sheets}}
\end{figure}

In this paper we will, with the exception of Section~\ref{sec:limit_shape_smooth}, visualize the Riemann surface in terms of the amoeba, as described above. However, it is common to represent a Riemann surface through the sheets defined by the projections to one of the variables. Therefore, for the sake of completeness, we describe~$\mathcal R$ via these sheets as well. See Figure~\ref{fig:rs_sheets} for an example where~$k=\ell=3$. 

The~$k$ sheets~$\mathcal R_j$,~$j=1,\dots,k$, are defined as~$\mathcal R_j=\{(z,w_j(z))\in \mathcal R:z\in \CC\}$, where~$|w_1(z)|>\dots>|w_k(z)|$ for almost all~$z$. The cuts connecting~$\mathcal R_j$ with~$\mathcal R_{j+1}$ are defined as the sets~$\{(z,w_j(z))\in \mathcal R:z\in \CC, \,|w_j(z)|=|w_{j+1}(z)|\}$. The fact that~$\mathcal R$ is a Harnack curve implies that the description through these sheets is particularly easy; If~$(z,w_j)\in \mathcal R_j$, then~$(z,w_j)$ is in a real oval or in a cut if and only if~$z\in \RR$. Indeed, if~$(z,w_j)$ is in a cut so that~$(z,w_{j+1})\in \mathcal R$ and~$|w_j|=|w_{j+1}|$, then, since~$P$ is real,~$(z,w_j), (z,w_{j+1}),(\bar z,\bar w_j),(\bar z,\bar w_{j+1})\in \mathcal R$, and all points are mapped, under the map~$\Log$, to the same point in the amoeba. The~$2$-to-$1$ property of~$\Log$ implies, since~$w_j\neq w_{j+1}$ for almost all~$z$ in the cut, that~$z=\bar z$. Conversely, if~$z\in \RR$, then~$(z,\bar w_j)\in \mathcal R$, and either~$w_j\in \RR$, and~$(z,w_j)$ is in a real oval, or~$w_j\notin \RR$, and~$(z,w_j)$ is in a cut, since~$(z,\bar w_j)\in \mathcal R$, and~$|w_j|=|\bar w_j|$. Furthermore, we know that all angles lie on the non-compact oval, and we know their order along~$A_0$, cf. Figure~\ref{fig:amoeba}. Note that the above implies that if there is a cut with~$z<0$ connecting~$\mathcal R_{j-1}$ with~$\mathcal R_j$, then~$\mathcal R_j$ and~$\mathcal R_{j+1}$ are connected only at points~$(z,w)$ with~$z>0$, and \emph{vice versa}.  

\subsection{The Abel map, theta function and prime form}\label{sec:abel_theta}
The goal of this section is to discuss the theta functions and the prime form on~$\mathcal R$. These objects can be thought of as building blocks for meromorphic functions on~$\mathcal R$ in the same way as linear functions are building blocks of meromorphic functions on the Riemann sphere. Moreover, we will define the Abel map and recall Abel's theorem. Most of the material is standard, and we refer to the first chapter in~\cite{BK11} as a general reference, and as an excellent introduction to Riemann surfaces. We will at some points use the fact that we are dealing with a Harnack curve, which, in particular, is a so-called~$M$-curve with involution~$\sigma(z,w)=(\bar z, \bar w)$, cf.~\cite[Example 10]{BCT22}. The theory of~$M$-curves is less standard, however, the material we need is very well summarized in~\cite{BCT22}. We will assume throughout this section that~$\mathcal R$ is non-singular. In particular, this means that the genus of~$\mathcal R$ is~$g=(k-1)(\ell-1)$.

The first aim is to define the Abel map and recall Abel's theorem. We take the compact ovals~$A_1,\dots, A_g$ as the set of \emph{$A$-cycles}. As the set of \emph{$B$-cycles} we take
\begin{equation}
B_i = \{(z,w)\in \mathcal R:|z|=\e^{r_1^{(i)}}, |w|>\e^{r_2^{(i)}}\},
\end{equation}
for~$i=1,\dots,g$ where~$(r_1^{(i)},r_2^{(i)})\in \RR^2$ is any point in the~$i$th compact component of the complement of~$\mathcal A$. The orientation of the~$B$-cycles is such that the projection of~$B_i$ to the~$z$ variable is oriented in the negative direction. The set of~$A$-cycles and~$B$-cycles form a \emph{canonical basis} of cycles. See Figure~\ref{fig:amoeba} for the image of the~$A$-cycles and~$B$-cycles in the amoeba, see also Figure~\ref{fig:riemann_surface}. We pick a basis of holomorphic differential forms~$\omega_i$,~$i=1,\dots,g$, such that
\begin{equation}\label{eq:basis_differentials}
\int_{A_j} \omega_i = \delta_{ij},
\end{equation}
that is, a \emph{canonical basis of differentials}. We use the notation~$\vec{\omega}=(\omega_1,\dots,\omega_g)$. The fact that we consider an~$M$-curve implies that the integral over~$\omega_i$ along a part of an oval is real, in fact, if~$\Gamma \subset A_j$ is a segment of one of the ovals oriented in positive direction, then
\begin{equation}\label{eq:positive_differentials}
\int_\Gamma\omega_i>0,
\end{equation}
see~\cite[Lemma 14]{BCT22}. We define the \emph{period matrix}~$B \in \CC^{g\times g}$ by
\begin{equation}
B_{ij} = \int_{B_i}\omega_j.
\end{equation}
The imaginary part of the matrix~$B$ is positive definite and since~$\mathcal R$ is an~$M$-curve~$B$ is purely imaginary,~\cite[Lemma 11]{BCT22}.

The \emph{Jacobi variety} is defined as~$J(\mathcal R)=\CC^g/ (\ZZ^g+B\ZZ^g)$, and the \emph{Abel map}~$u:\mathcal R \to J(\mathcal R)$ is given by
\begin{equation}\label{eq:abel_map}
u(q) = \int_{q_0}^q\vec{\omega}\!\! \mod (\ZZ^g+B\ZZ^g),
\end{equation} 
where we pick~$q_0 \in A_0$.

A divisor is a formal linear combination of points in~$\mathcal R$,~$D = \sum_{i}a_iq_i$, with integer coefficients~$a_i$. The \emph{degree} of the divisor is the sum of the coefficients,~$\deg(D)=\sum_i a_i$. We define the \emph{Abel map of divisors} as
\begin{equation}
u(D) = \sum_ia_i u(q_i).
\end{equation}
A divisor~$D=\sum_ia_iq_i-\sum_ib_ip_i$ is called \emph{principal} if there is a meromorphic function~$f$ with zeros precisely at~$q_i$ of order~$a_i$ and poles at~$p_i$ of order~$b_i$. We write~$D=(f)$. Similarly, for a differential form~$\omega$ we write~$(\omega)=D$ where~$D$ is the divisor defined as the linear combination of the zeros and poles of~$\omega$. Recall Abel's theorem: a divisor~$D=\sum_i a_iq_i$ is principal if and only if 
\begin{equation}\label{eq:abels_thm}
\deg(D)=0 \quad \text{and} \quad u(D)=0. 
\end{equation} 
For a differential form~$\omega$ and its divisor~$D=(\omega)$ we instead have 
\begin{equation}\label{eq:abels_thm_diff}
\deg(D)=2g-2 \quad \text{and} \quad u(D)=2\Delta,
\end{equation}
where~$\Delta$ is the \emph{vector of Riemann constants}.
We say that a divisor~$D$ is a \emph{standard divisor} if 
\begin{equation}\label{eq:standard_divisor}
D=\sum_{i=1}^g q_i,
\end{equation}
where~$q_i\in A_i$. 

We proceed to define the theta function. Given the period matrix~$B$, we define the \emph{theta function} by 
\begin{equation}
\theta(z)=\theta(z;B)=\sum_{n\in \ZZ^g}\e^{\i\pi\left(n\cdot B n+2n\cdot z\right)}.
\end{equation}
Since the imaginary part of~$B$ is positive-definite, the series converges uniformly and defines an entire function on~$\CC^g$. The theta function is a quasi-periodic function. Indeed, for~$m,n\in \ZZ^g$
\begin{equation}\label{eq:quasi-periodic}
\theta(z+m+B n)=\e^{-2\i\pi n\cdot (z+\frac{1}{2}B n)}\theta(z).
\end{equation}

Let~$\widetilde{\mathcal R}$ be the universal cover of~$\mathcal R$. For~$e\in \CC^g$ we define the function~$\Theta: \widetilde{\mathcal R} \mapsto \CC$ by
\begin{equation}
\Theta(\tilde q;e)=\theta\left(\int_{\tilde q_0}^{\tilde q}\vec\omega + e\right),
\end{equation}
where~$\tilde q_0$ is an arbitrary lift of~$q_0\in A_0$, chosen in~\eqref{eq:abel_map}, to the universal cover~$\widetilde{\mathcal R}$.\footnote{We use tildes to denote both general points on~$\widetilde{\mathcal R}$ and lifts of points on~$\mathcal R$ given by the same letter without the tilde.} For any~$e\in \CC^g$ such that~$\tilde q\mapsto \Theta(\tilde q;e)$ is not identically zero, there exist~$g$ points~$q_1,\dots,q_g \in \mathcal R$ such that 
\begin{equation}
\Theta(\tilde q_i;e)=0, \quad i=1,\dots,g,
\end{equation}
for any lift~$\tilde q_i$ of~$q_i$, and these points satisfy the equality
\begin{equation}\label{eq:jacobi_inverse}
\sum_{j=1}^g u(q_j)=-e+\Delta,
\end{equation}
in~$J(\mathcal R)$, where~$\Delta$ is, as above, the vector of Riemann constants. Moreover, the divisor~$\sum_{i=1}^gq_i$ is uniquely determined by~\eqref{eq:jacobi_inverse}. The fact that~$\mathcal R$ is an~$M$-curve implies that~$\Delta\in \RR^g+\frac{1}{2}B\mathbf{1}$, where~$\mathbf{1}\in \ZZ^g$ is the vector with only ones,~\cite[Lemma 19]{BCT22}. Moreover, if~$e\in \RR^g$, then the zeros lie on different compact ovals, that is, after possible relabeling,~$q_j\in A_j$. In fact, for~$M$-curves the zero divisor of~$\tilde q\mapsto \Theta(\tilde q;e)$ is a standard divisor if and only if~$e\in \RR^g$.

The theta function can be used to express meromorphic functions on~$\mathcal R$. However, there is actually an even better object for that purpose, namely the \emph{prime form}. The prime form is a differential form defined on~$\widetilde{\mathcal R}\times \widetilde{\mathcal R}$. For a definition we refer the reader to~\cite{BCT22, Mum07b}. For~$\tilde p,\tilde q \in \widetilde{\mathcal R}$ we denote the prime form by~$E(\tilde p,\tilde q)$. The prime form can be used as building blocks of meromorphic functions on~$\mathcal R$. Namely, if~$h$ is a meromorphic function then
\begin{equation}
h(q)=c\prod_{j=1}^n \frac{E(p_j,q)}{E(q_j,q)},
\end{equation}
where~$p_j$ are the zeros and~$q_j$ the poles of~$f$, and~$c$ is some constant. The drawback, compared with the theta function, is that~$E$ is not a function. The quotient
\begin{equation}
\tilde q\mapsto \frac{E(\tilde p_1,\tilde q)}{E(\tilde p_2,\tilde q)},
\end{equation}
however, is a function on~$\widetilde{\mathcal R}$.

We list the properties that will be relevant for us and refer the reader to~\cite[Section 2.5]{BCT22} and~\cite[Section 1 (p.\,\,3.207)]{Mum07b}. 
\begin{fact}\label{fact:prop_prime_form}
Let~$\tilde p,\tilde p_1,\tilde p_2,\tilde q,\tilde q',\tilde q'' \in \widetilde{\mathcal R}$, where~$\tilde q'$ and~$\tilde q''$ are obtained by adding the lift of the cycle~$A_i$ and~$B_i$, respectively, to~$\tilde q$. That is,~$\int_{\tilde q_0}^{\tilde q'}\omega_j=\int_{\tilde q_0}^{\tilde q}\omega_j+\int_{A_i}\omega_j$, and~$\int_{\tilde q_0}^{\tilde q''}\omega_j=\int_{\tilde q_0}^{\tilde q}\omega_j+\int_{B_i}\omega_j$, for all~$j$, where~$\tilde q_0$ is an arbitrary lift of~$q_0\in A_0$ from~\eqref{eq:abel_map}. Then: 
\begin{enumerate}[(1)]
\item~$E(\tilde p,\tilde q)=0$ if and only the projection to~$\mathcal R$ of~$\tilde p$ is equal to the projection of~$\tilde q$, and the zero is simple, \label{eq:prime_form_zero}
\item~$E(\tilde p,\tilde q)=-E(\tilde q,\tilde p)$, \label{eq:prime_form_antisym}
\item~$E(\bar{\tilde p},\bar{\tilde q})=\overline{E(\tilde p,\tilde q)}$, \label{eq:prime_form_conjugation}
\item~$\dfrac{E(\tilde p_1,\tilde q')}{E(\tilde p_2,\tilde q')}=\dfrac{E(\tilde p_1,\tilde q)}{E(\tilde p_2,\tilde q)}$, \label{eq:prime_form_lift_a}
\item~$\dfrac{E(\tilde p_1,\tilde q'')}{E(\tilde p_2,\tilde q'')}=\e^{-2\pi\i\int_{\tilde p_1}^{\tilde p_2}\omega_i}\dfrac{E(\tilde p_1,\tilde q)}{E(\tilde p_2,\tilde q)}$.\label{eq:prime_form_lift_b}
\end{enumerate}
\end{fact}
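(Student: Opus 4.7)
The statement is a collection of standard facts about the prime form on a compact Riemann surface, so the plan is to verify each property starting from the explicit definition
\[
E(\tilde p,\tilde q)=\frac{\theta[\delta]\!\left(\int_{\tilde p}^{\tilde q}\vec\omega\right)}{h_{\delta}(\tilde p)\,h_{\delta}(\tilde q)},
\]
where $[\delta]$ is a fixed odd non-singular half-integer characteristic and $h_\delta$ is the half-differential obtained as a square root of $\sum_i\partial_i\theta[\delta](0)\,\omega_i$. This is the definition used in~\cite[Section~2.5]{BCT22} and~\cite{Mum07b}, and on a non-singular $\mathcal R$ the required characteristic exists.

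For item (1), the factors $h_\delta$ are non-vanishing, so $E(\tilde p,\tilde q)=0$ is equivalent to $\theta[\delta]\!\bigl(\int_{\tilde p}^{\tilde q}\vec\omega\bigr)=0$. Since $[\delta]$ is odd and non-singular, the unique zero of $z\mapsto\theta[\delta](z)$ modulo the period lattice is at $z=0$ and it is simple; by Abel's theorem~\eqref{eq:abels_thm}, $\int_{\tilde p}^{\tilde q}\vec\omega\in\ZZ^g+B\ZZ^g$ if and only if the projections of $\tilde p$ and $\tilde q$ coincide, which gives both the location and the multiplicity. Item (2) follows from the odd-characteristic identity $\theta[\delta](-z)=-\theta[\delta](z)$: swapping $\tilde p$ and $\tilde q$ reverses the Abel integral and the denominator is symmetric. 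Item (3) uses the $M$-curve structure: the anti-holomorphic involution $\sigma(z,w)=(\bar z,\bar w)$ preserves the canonical basis of cycles in the form $\sigma_*A_i=-A_i$, $\sigma_*B_i=B_i$, which forces $\overline{\omega_i}=\sigma^*\omega_i$ and $\bar B=-B$, cf.~\cite[Lemma~11]{BCT22}. With a compatible real choice of $[\delta]$ (possible on an $M$-curve) both $\theta[\delta]$ and $h_\delta$ become equivariant under complex conjugation, and conjugating the defining formula gives item (3).

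For items (4) and (5) I would record the transformation law of $E$ in the variable $\tilde q$ separately and then form the ratio. If $\tilde q'$ is obtained from $\tilde q$ by the lift of $A_i$, then $\int_{\tilde p}^{\tilde q'}\vec\omega=\int_{\tilde p}^{\tilde q}\vec\omega+e_i$ by~\eqref{eq:basis_differentials}, and the shift transformations of $\theta[\delta]$ and of $h_\delta$ under $A$-cycles combine to give $E(\tilde p,\tilde q')=c\,E(\tilde p,\tilde q)$ for a constant $c$ depending only on $\delta$ and $i$, not on $\tilde p$; taking the ratio over $\tilde p_1,\tilde p_2$ cancels $c$ and yields item (4). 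If instead $\tilde q''$ is the lift by $B_i$, then $\int_{\tilde p}^{\tilde q''}\vec\omega=\int_{\tilde p}^{\tilde q}\vec\omega+Be_i$ and the quasi-periodicity~\eqref{eq:quasi-periodic} of $\theta[\delta]$ gives
\[
E(\tilde p,\tilde q'')=\exp\!\left(-2\pi\i\int_{\tilde p}^{\tilde q}\omega_i+c'\right)E(\tilde p,\tilde q),
\]
where $c'$ is independent of $\tilde p$. In the ratio $E(\tilde p_1,\tilde q'')/E(\tilde p_2,\tilde q'')$ the $\tilde p$-independent factors cancel, and the surviving exponent equals $-2\pi\i\bigl(\int_{\tilde p_1}^{\tilde q}\omega_i-\int_{\tilde p_2}^{\tilde q}\omega_i\bigr)=-2\pi\i\int_{\tilde p_1}^{\tilde p_2}\omega_i$, which is item (5).

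The main obstacle is purely bookkeeping: one must select the odd characteristic $[\delta]$ and the branch of $h_\delta$ compatibly with the real structure, so that item (3) becomes an exact conjugation rather than an identity holding only up to a unit modulus factor, and one must track carefully the constants appearing in the $A$-cycle and $B$-cycle shift laws of both $\theta[\delta]$ and $h_\delta$ to see that they combine into the clean formulas above. The construction in~\cite[Section~2.5]{BCT22} and~\cite{Mum07b} already fixes these choices; once they are in place, items (1)--(5) reduce to the listed transformation properties of the theta function with characteristic.
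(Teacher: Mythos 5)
The paper does not prove Fact~3.2 at all; it simply cites \cite[Section~2.5]{BCT22} and \cite{Mum07b}. Your plan — verify the five properties from the explicit definition $E(\tilde p,\tilde q)=\theta[\delta]\bigl(\int_{\tilde p}^{\tilde q}\vec\omega\bigr)/\bigl(h_\delta(\tilde p)h_\delta(\tilde q)\bigr)$ with $[\delta]$ an odd non-singular half-integer characteristic — is the correct strategy, and your treatment of items (2), (4), and (5) is essentially right: oddness of $[\delta]$ gives (2), and tracking the $A$- and $B$-shift laws of $\theta[\delta]$ (quasi-periodicity) and noting that the surviving $\tilde p$-independent factors cancel in ratios gives (4) and (5).

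However, item (1) as you argue it contains a genuine gap. You assert that ``the unique zero of $z\mapsto\theta[\delta](z)$ modulo the period lattice is at $z=0$,'' and then appeal to Abel's theorem to conclude that $E(\tilde p,\tilde q)=0$ forces $\tilde p$ and $\tilde q$ to project to the same point. For $g\geq 2$ that intermediate claim is false: the zero locus of $\theta[\delta]$ in $J(\mathcal R)$ is a divisor of codimension one, not a single point, so $\theta[\delta]\bigl(\int_{\tilde p}^{\tilde q}\vec\omega\bigr)=0$ does not imply $\int_{\tilde p}^{\tilde q}\vec\omega\in\ZZ^g+B\ZZ^g$. What is actually true — and what the cancellation by $h_\delta$ is designed to exploit — is the Riemann vanishing theorem applied to the curve: the function $\tilde q\mapsto\theta[\delta]\bigl(\int_{\tilde p}^{\tilde q}\vec\omega\bigr)$ on $\widetilde{\mathcal R}$ vanishes at $g$ points, namely at $\tilde q=\tilde p$ (simply, because $[\delta]$ is non-singular so $\nabla\theta[\delta](0)\neq 0$) and at the $g-1$ points of the spin divisor associated to $[\delta]$; those $g-1$ points are exactly the zeros of $h_\delta$, and they are removed by the factor $h_\delta(\tilde q)$ in the denominator. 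Without this step the argument simply does not close.

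One further, smaller slip: for item (3) you assert $\sigma_*A_i=-A_i$, $\sigma_*B_i=B_i$. With the paper's choice of $A$-cycles as the compact ovals of the $M$-curve (pointwise fixed by $\sigma$) and the normalization $\int_{A_i}\omega_j=\delta_{ij}$, the correct signs are $\sigma_*A_i=A_i$, $\sigma_*B_i=-B_i$; your convention would force $\int_{A_i}\omega_j=-\delta_{ij}$, contradicting \eqref{eq:basis_differentials}. The conclusion you draw (imaginary period matrix, conjugation-equivariant $\theta[\delta]$ and $h_\delta$ after a real choice of characteristic) survives once the signs are corrected, but the statement as written is internally inconsistent.
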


\section{Main results}\label{sec:main_result}
In this section we present our main results. In Section~\ref{sec:global_limit} we will describe the global structure. In particular, we will define the limiting regions and prove that the global limit is described in terms of the amoeba, proving Theorem~\ref{thm:intro:phase_diagram} along the way. In Section~\ref{sec:local_limit} we will then describe the local limit. The local limit is expressed in terms of the limit of the correlation kernel of Theorem~\ref{thm:bd_thm}. This limit is then used to prove Theorem~\ref{thm:intro:gibbs_limit}.

The assumptions under which these statements are proved are given in Section~\ref{sec:ass}.

\subsection{Assumptions}\label{sec:ass}
We will impose three assumptions on the model. 
\begin{assumption}\label{ass:main_ass}
\begin{enumerate}[(a)]
\item The size of the Aztec diamond is~$k\ell N$ with an integer~$N\geq 1$. \label{ass:size}
\item The edge weights fulfill the inequality~$\beta_i^v<1<\alpha_i^v/\gamma_i^v$ for~$i=1,\dots,\ell$. \label{ass:wh}
\item The associated Harnack curve,~$\mathcal R^\circ$, is a non-singular Harnack curve. In particular this means that the genus of the curve is~$(k-1)(\ell-1)$ and the angles are distinct.\label{ass:non-singularity}
\end{enumerate}
\end{assumption}
These three assumptions will be assumed throughout the paper unless explicitly stated otherwise. Let us comment briefly on these assumptions before we proceed to our results.

Assumption~\eqref{ass:size} is used for convenience. Removing this assumption would mainly make the notation heavier. 

Assumption~\eqref{ass:wh} is equivalent to the existence of the Wiener--Hopf factorization in Theorem~\ref{thm:bd_thm}, which is essential for that theorem. However, by analytic continuation of the parameters in the correlation kernel it is possible to extend the theorem to a larger class of edge weights. This was done, for instance, in~\cite{BD19}. We believe it is possible to extend the result of Theorem~\ref{thm:bd_thm} so that~\eqref{ass:wh} could be removed. However, for the analytic continuation one needs to deform the curves of integration in the correlation kernel. In the form given in Theorem~\ref{thm:bd_thm} it is not possible, at least not when~$k$ is even, to reach all edge weights by deforming the contours. Instead, it is Proposition~\ref{prop:finite_kernel} that should be more suitable for such an analytic continuation. Furthermore, it seems plausible that under such continuation the assumption is no longer necessary in the proof of Lemma~\ref{lem:limit_kernel_rough}.

Assumption~\eqref{ass:non-singularity} consists of two parts. We require the genus to be maximal because we want to avoid working on a singular Riemann surface. If instead the Harnack curve is singular, one can use the amoeba to define the normalization of the curve, see~\cite[Corollary 3]{MR00}. The assumption that the angles are distinct is mainly to simplify some of the arguments, however, at this point we do not claim to know alternative arguments in case the angles are not different. Non-singularity of the Harnack curve holds generically. 

We find it natural to conjecture that all the results in the present paper, after appropriate adjustments, hold without the above assumptions.

\subsection{The global limit}\label{sec:global_limit}
In the asymptotic analysis of the correlation kernel given in Theorem~\ref{thm:bd_thm} we use the method of steep descent. In this section we define an action function that will later be used in Section~\ref{sec:asymptotic}. In the steep descent analysis, the critical points of the action function will play an important role, and it is therefore natural to define the limiting regions of the dimer covering of the Aztec diamond in terms of these critical points, see Definition~\ref{def:regions}. Using this definition and the definition of the action function, we prove that there is a homeomorphism between the rough region and the interior of the amoeba. Moreover, we prove that there is a one-to-one correspondence between the set of smooth regions and the set of compact components of the complement of the amoeba, and a one-to-one correspondence between the set of frozen regions and the unbounded components of the complement of the amoeba.

Given~$x=0,\dots,kN$ and~$y=0,\dots, \ell N$ as in Section~\ref{sec:paths}, we assume that 
\begin{equation}
 \frac{x}{N}\to\frac{k}{2}(\xi+1), \quad \text{and} \quad \frac{y}{N}\to\frac{\ell }{2}(\eta+1),
\end{equation}
as~$N\to \infty$, for some~$(\xi,\eta)\in (-1,1)^2$. We refer to~$(\xi,\eta)$ as \emph{global coordinates}. Recall the definition of the angles,~\eqref{eq:angles_1} and~\eqref{eq:angles_2}.
\begin{definition}\label{def:action_function}
Let~$(\xi,\eta) \in (-1,1)^2$ be the global coordinates of the Aztec diamond, as defined above. For a point~$\tilde q \in \widetilde{\mathcal R}$ in the universal cover of~$\mathcal R$, we denote the projection of~$\tilde q$ to~$\mathcal R$ by~$q=(z,w)$. We define the \emph{action function}~$F:\widetilde {\mathcal R} \to \CC$ as
\begin{equation}\label{eq:def_action_function}
F(\tilde q;\xi,\eta) 
= \frac{k}{2}(1-\xi)\log w-\frac{\ell}{2}(1-\eta) \log z - \log f(\tilde q),
\end{equation}
where 
\begin{equation}\label{eq:def_f}
f(\tilde q)=\frac{\prod_{i=1}^\ell E(\tilde q_{0,i},\tilde q)^k}{\prod_{j=1}^k E(\tilde p_{0,j},\tilde q)^\ell},
\end{equation}
for some arbitrary lift of the angles~$q_{0,i}$ and~$p_{0,j}$ to~$\widetilde{\mathcal R}$, and the branch of the logarithm is the principle one.
\end{definition}

\begin{remark}
The action function~$F$ can also be expressed in terms of theta functions instead of prime forms. Indeed, if~$r\in \CC^g$ with~$\theta(r)=0$, then
\begin{equation}
f(\tilde q)=c\,\frac{\prod_{i=1}^\ell \theta\left(\int_{\tilde q_{0,i}}^{\tilde q}\vec{\omega}+r\right)^k}{\prod_{j=1}^k \theta\left(\int_{\tilde p_{0,j}}^{\tilde q}\vec{\omega}+r\right)^\ell},
\end{equation}
for some nonzero constant~$c$, which would play no role in our analysis below. See~\cite[Chapter~II~\S~$3$]{Mum07a}. Indeed, let~$q_1,\dots,q_{g-1}$ be~$g-1$ generic points in~$\mathcal R$, and set
\begin{equation}
r=-\sum_{j=1}^{g-1} \int_{\tilde q_0}^{\tilde q_j}\vec{\omega}+\Delta,
\end{equation}
where~$q_0$ is the base point chosen in~\eqref{eq:abel_map},~$\tilde q_j$,~$j=0,\dots,g-1$, are arbitrary lifts of~$q_j$ to~$\widetilde{\mathcal R}$, and~$\Delta$ is the vector of Riemann constants as in~\eqref{eq:abels_thm_diff}. It follows from~\eqref{eq:jacobi_inverse} that~$\theta(r)=0$. For~$p,q\in \mathcal R$ with lifts~$\tilde p,\tilde q\in \widetilde{\mathcal R}$ set~$e_p=r-\int_{\tilde q_0}^{\tilde p}\vec{\omega}$. Then~\eqref{eq:jacobi_inverse} implies that the zeros of the function
\begin{equation}
\tilde q\mapsto \theta\left(\int_{\tilde p}^{\tilde q}\vec{\omega}+r\right)=\theta\left(\int_{\tilde q_0}^{\tilde q}\vec{\omega}+e_p\right)
\end{equation} 
are~$\tilde q=\tilde p, \tilde q_1,\dots,\tilde q_{g-1}$. Moreover, if~$p_1,p_2\in \mathcal R$, then, by~\eqref{eq:prime_form_zero},~\eqref{eq:prime_form_lift_a} and~\eqref{eq:prime_form_lift_b} of Fact~\ref{fact:prop_prime_form} and by~\eqref{eq:quasi-periodic}, the function
\begin{equation}
q\mapsto \frac{\theta\left(\int_{p_2}^{q}\vec{\omega}+r\right)}{\theta\left(\int_{p_1}^{q}\vec{\omega}+r\right)}\frac{E(p_1,q)}{E(p_2,q)}
\end{equation}
is meromorphic on~$\mathcal R$ with no zeros or poles, and hence, a nonzero constant.
\end{remark}

In fact, for our purposes it is not the function~$F$ by itself that will be relevant; instead, it is the differential~$\d F$ and the real part~$\re F$ that will be of importance to us. Both these objects are well-defined on~$\mathcal R$, and not just on the universal cover.
\begin{lemma}\label{lem:conjugate_functions}
Let~$F$ be as in Definition~\ref{def:action_function}. The differential form~$\d F$ may be viewed as a meromorphic differential form of~$q=(z,w)\in \mathcal R$, and~$\re F$ may be viewed as a harmonic function of~$q=(z,w)\in \mathcal R$.
%Let~$F$ be as in Definition~\ref{def:action_function}. The differential~$\d F$ defines a well-defined meromorphic differential on~$\mathcal R$, and~$\re F$ defines a well-defined harmonic function on~$\mathcal R$. We will therefore think of them as objects of~$q=(z,w)\in \mathcal R$. 
Moreover, 
\begin{equation}\label{eq:conjugate_functions}
\d F(\bar z,\bar w;\xi,\eta)=\overline{\d F(z,w;\xi,\eta)} \quad \text{and} \quad \re F(\bar z,\bar w;\xi,\eta)=\re F(z,w;\xi,\eta).
\end{equation}
\end{lemma}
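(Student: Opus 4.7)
The plan is to verify that each of the three terms in $F$ behaves well under the deck transformations of the universal cover $\widetilde{\mathcal R}\to\mathcal R$ and under the anti-holomorphic involution $\sigma:(z,w)\mapsto(\bar z,\bar w)$. The two logarithmic terms are easy: $\log z$ and $\log w$ are meromorphic on $\widetilde{\mathcal R}$ and change only by additive constants in $2\pi\i\,\ZZ$ under deck transformations, so $\d\log z=\d z/z$ and $\d\log w=\d w/w$ descend to meromorphic one-forms on $\mathcal R$ (with poles supported on the angles), and $\log|z|$, $\log|w|$ descend to globally defined functions on $\mathcal R$ that are harmonic away from the angles. The corresponding conjugation identities are immediate from $\log\bar z=\overline{\log z}$ modulo $2\pi\i\,\ZZ$.

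The nontrivial piece is $\log f$. Because the numerator and denominator in~\eqref{eq:def_f} each contain $k\ell$ prime-form factors, I rewrite
\begin{equation}
f(\tilde q)=\prod_{i=1}^{\ell}\prod_{j=1}^{k}\frac{E(\tilde q_{0,i},\tilde q)}{E(\tilde p_{0,j},\tilde q)},
\end{equation}
so that $f$ is a product of prime-form ratios with common second argument. By item~\eqref{eq:prime_form_lift_a} of Fact~\ref{fact:prop_prime_form} each ratio is invariant when $\tilde q$ crosses an $A$-cycle, and by item~\eqref{eq:prime_form_lift_b} crossing the $B_r$-cycle multiplies each ratio by $\exp(-2\pi\i\int_{\tilde q_{0,i}}^{\tilde p_{0,j}}\omega_r)$. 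Therefore $f$ transforms by the $\tilde q$-independent constant
\begin{equation}
c_r=\exp\Bigl(-2\pi\i\sum_{i=1}^{\ell}\sum_{j=1}^{k}\int_{\tilde q_{0,i}}^{\tilde p_{0,j}}\omega_r\Bigr),
\end{equation}
and in particular $\d\log f$ descends to a meromorphic one-form on $\mathcal R$ whose polar divisor is again supported on the angles. Together with the previous paragraph this establishes the first claim about $\d F$.

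The main obstacle is the analogous statement for $\re F$, which additionally requires $|c_r|=1$. I would arrange this by choosing all lifts $\tilde q_{0,i}$, $\tilde p_{0,j}$ inside a single lift of the non-compact real oval $A_0$, which is possible because all angles lie on $A_0$ by~\eqref{eq:angles_1}--\eqref{eq:angles_2}. Each path of integration can then be taken as a lift of a path inside $A_0$, along which the reality of $\omega_r$ on the real ovals (the $M$-curve property underlying~\eqref{eq:positive_differentials}) gives $\int_{\tilde q_{0,i}}^{\tilde p_{0,j}}\omega_r\in\RR$; thus $|c_r|=1$, so $\log|f|$ is single-valued on $\mathcal R$ and harmonic away from the angles. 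For the conjugation identities in~\eqref{eq:conjugate_functions} I would additionally take the lifts of the angles to be fixed points of the lift of $\sigma$ (such fixed points exist because $\sigma$ fixes each real angle), so that item~\eqref{eq:prime_form_conjugation} of Fact~\ref{fact:prop_prime_form} gives $E(\tilde q_{0,i},\bar{\tilde q})=\overline{E(\tilde q_{0,i},\tilde q)}$ and likewise for the $\tilde p_{0,j}$. Hence $f(\bar{\tilde q})=\overline{f(\tilde q)}$, which combined with the obvious conjugation behavior of the $\log z$ and $\log w$ terms yields both identities in~\eqref{eq:conjugate_functions}.
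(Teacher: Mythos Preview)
Your argument is correct and follows essentially the same route as the paper's proof: you use items~\eqref{eq:prime_form_lift_a} and~\eqref{eq:prime_form_lift_b} of Fact~\ref{fact:prop_prime_form} to see that deck transformations change $F$ by $\tilde q$-independent purely imaginary constants (using that the $\omega_r$-integrals between angles along $A_0$ are real, cf.~\eqref{eq:positive_differentials}), and item~\eqref{eq:prime_form_conjugation} for the conjugation identities. The paper's proof is terser but identical in substance; your explicit treatment of the lifts of the angles and of the $\log z,\log w$ terms separately is a harmless elaboration (one small wording slip: $\log z$ and $\log w$ are not meromorphic but have logarithmic singularities at the angles---only their differentials $\d z/z$, $\d w/w$ are meromorphic, as you correctly state).
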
 
\begin{proof}
For the first part of the statement we show that~$\d F$ and~$\re F$ are independent of the branch of the logarithm or the lift of~$q$. Indeed, changing the branch of the logarithm adds a purely imaginary constant to~$F$. Such a constant does not effect either~$\d F$ or~$\re F$. If we add cycles~$A_{m'}$ and~$B_m$ to~$q$ in the right hand side of~\eqref{eq:def_action_function}, then items~\eqref{eq:prime_form_lift_a} and~\eqref{eq:prime_form_lift_b} in Fact~\ref{fact:prop_prime_form} imply that we add terms of the form 
\begin{equation}
2\pi\i\int_{\tilde p_{0,j}}^{\tilde q_{0,i}}\omega_m,
\end{equation}
to~$F$. These terms are purely imaginary by~\eqref{eq:positive_differentials} and, moreover, they are independent of~$q$. This proves that~$\d F$ and~$\re F$ are well-defined on~$\mathcal R$. 

The relations in~\eqref{eq:conjugate_functions} follow from item~\eqref{eq:prime_form_conjugation} in Fact~\ref{fact:prop_prime_form}.	\end{proof}

\begin{remark}
To be precise, the differential form~$\d F$ is not defined along discontinuities of~$F$ coming from the branch of the logarithm. The statement should be interpreted that~$\d F$ can be extended to a meromorphic differential on~$\mathcal R$. 
\end{remark}

To determine the location of the zeros of~$\d F$ we analyze~$\re F$ at the angles, where~$\re F$ is unbounded. By part~\eqref{eq:prime_form_zero} of Fact~\ref{fact:prop_prime_form} we know the order of zeros and poles of~$f$ at the angles. Indeed,~$f$ behaves as~$z^{-\ell}$ at~$p_{0,j}$ and as~$w^k$ at~$q_{0,i}$, and is bounded at~$p_{\infty,j}$ and~$q_{\infty,i}$. We find that
\begin{align}
\re F(z,w;\xi,\eta)
&= \frac{\ell}{2}(\eta+1)\log|z|+\Ordo(1) \to -\infty, & \text{as} & \quad (z,w)\to p_{0,j}, \label{eq:limit_f_1} \\
\re F(z,w;\xi,\eta)&= -\frac{\ell}{2}(1-\eta)\log|z|+\Ordo(1) \to -\infty, & \text{as} & \quad (z,w)\to p_{\infty,j}, \label{eq:limit_f_2} \\
\re F(z,w;\xi,\eta)  
&= -\frac{k}{2}(1+\xi)\log|w|+\Ordo(1) \to +\infty, & \text{as} & \quad (z,w)\to q_{0,i}, \label{eq:limit_f_3}\\
\re F(z,w;\xi,\eta) & = \frac{k}{2}(1-\xi)\log|w|+\Ordo(1) \to +\infty, & \text{as} & \quad (z,w) \to q_{\infty,i}. \label{eq:limit_f_4}
\end{align}
Using the above asymptotics, a standard argument now gives us the location of all but two zeros of~$\d F$. See Figure~\ref{fig:critical_points} for a schematic picture of the location of the critical points and the behavior of~$\re F$ at the angles.
\begin{lemma}\label{lem:zeros_poles_df}
The differential~$\d F$ defined above is meromorphic with~$2(k+\ell)$ poles and, hence,~$2g-2+2(k+\ell)$ zeros. There are two distinct zeros on each compact oval~$A_i$,~$i=1,\dots,g$, and a zero on each component~$A_{0,i}$, for~$i\in \{1,\dots,2(k+\ell)\}\backslash \{1,\ell+1,k+\ell+1,k+2\ell+1\}$.
\end{lemma}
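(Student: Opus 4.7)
Differentiating the definition of $F$ gives
$$\d F = \tfrac{k}{2}(1-\xi)\tfrac{dw}{w} - \tfrac{\ell}{2}(1-\eta)\tfrac{dz}{z} - \tfrac{df}{f}.$$
The first step is to show that $\d F$ is meromorphic on $\mathcal R$ with only simple poles, and exactly at the $2(k+\ell)$ angles. The forms $dw/w$ and $dz/z$ contribute simple poles of residue $\pm 1$ at the zeros and poles of $w$ and of $z$, i.e.\ precisely at the angles. By item~\eqref{eq:prime_form_zero} of Fact~\ref{fact:prop_prime_form}, and because Assumption~\ref{ass:main_ass}~\eqref{ass:non-singularity} forces all angles to be distinct, the function $f$ has zeros of order $k$ at the $q_{0,i}$, poles of order $\ell$ at the $p_{0,j}$, and is otherwise holomorphic and nonvanishing; hence $df/f$ contributes simple poles with residues $k$ at $q_{0,i}$ and $-\ell$ at $p_{0,j}$. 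Collecting contributions, $\d F$ is meromorphic with simple poles at all $2(k+\ell)$ angles, of residues
$$-\tfrac{k}{2}(1+\xi),\qquad -\tfrac{k}{2}(1-\xi),\qquad \tfrac{\ell}{2}(1+\eta),\qquad \tfrac{\ell}{2}(1-\eta)$$
at $q_{0,i},q_{\infty,i},p_{0,j},p_{\infty,j}$ respectively, all nonzero and real since $(\xi,\eta)\in(-1,1)^2$. As the degree of any nonzero meromorphic $1$-form on a genus $g$ surface is $2g-2$, the total number of zeros of $\d F$ counted with multiplicity equals $2g-2+2(k+\ell)$.

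To locate the zeros on the real ovals, I would use that the involution $\sigma(z,w)=(\bar z,\bar w)$ fixes each real oval pointwise and acts by complex conjugation on the tangent spaces. Combined with Lemma~\ref{lem:conjugate_functions}, this gives $\d F(v)=\overline{\d F(v)}$ for any real tangent vector $v$ to an oval, so $\d F$ pulls back to a real $1$-form on each real oval, and its zeros there coincide with the critical points of $\re F$ restricted to that oval.

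On each compact oval $A_i$, $\re F$ is smooth on the whole circle $A_i$ and therefore attains a maximum and a minimum; these two extrema must occur at distinct points, for otherwise $\re F$ would be constant on $A_i$, forcing $\d F$ to vanish identically along $A_i$ and contradicting finiteness of the zero set of a nonzero meromorphic $1$-form. This produces the $2g$ advertised zeros on the compact ovals. On $A_0$, the cyclic order of angles (the $\ell$ points $q_{0,i}$ on $a_1$, the $k$ points $p_{\infty,j}$ on $a_2$, the $\ell$ points $q_{\infty,i}$ on $a_3$, the $k$ points $p_{0,j}$ on $a_4$) together with the asymptotics \eqref{eq:limit_f_1}--\eqref{eq:limit_f_4} shows that the four ``corner'' components $A_{0,i}$ with indices $1,\ell+1,k+\ell+1,k+2\ell+1$ (those whose endpoints lie on two different arcs) have $\re F\to+\infty$ at one endpoint and $\re F\to-\infty$ at the other, while each of the remaining $2(k+\ell)-4$ components has the same sign of infinity at both endpoints and hence $\re F$ attains an extremum in its interior, producing at least one zero of $\d F$ there. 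The expected subtlety is only the verification that $\d F$ descends from the universal cover $\widetilde{\mathcal R}$ to a globally defined meromorphic $1$-form on $\mathcal R$ that behaves well under $\sigma$, but this is already contained in Lemma~\ref{lem:conjugate_functions}. The $2g+2(k+\ell)-4$ zeros obtained in this way fall two short of the total degree $2g-2+2(k+\ell)$; the two remaining zeros, which in fact carry the dependence on $(\xi,\eta)$ and will govern the phase diagram in the sequel, are not constrained by this argument and are not addressed in the lemma's statement.
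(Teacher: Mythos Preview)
Your proof is correct and follows essentially the same approach as the paper: identify the simple poles of $\d F$ at the angles, count zeros via the degree formula, and locate zeros on the real ovals by using realness of $\d F$ there together with the asymptotics \eqref{eq:limit_f_1}--\eqref{eq:limit_f_4}. The only minor difference is on the compact ovals: the paper argues via $\re\int_{A_i}\d F=0$ (forcing the real continuous form $\d F$ to change sign twice on the circle), whereas you use the max/min of $\re F$ on $A_i$; both are standard and equivalent here.
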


 \begin{figure}[t]
 \begin{center}
 \begin{tikzpicture}[scale=1]
  \tikzset{->-/.style={decoration={
  markings, mark=at position .5 with {\arrow{stealth}}},postaction={decorate}}}
    \draw (0,0) node {\includegraphics[trim={1cm, 1cm, 1cm, 1cm}, clip, angle=180, scale=.5]{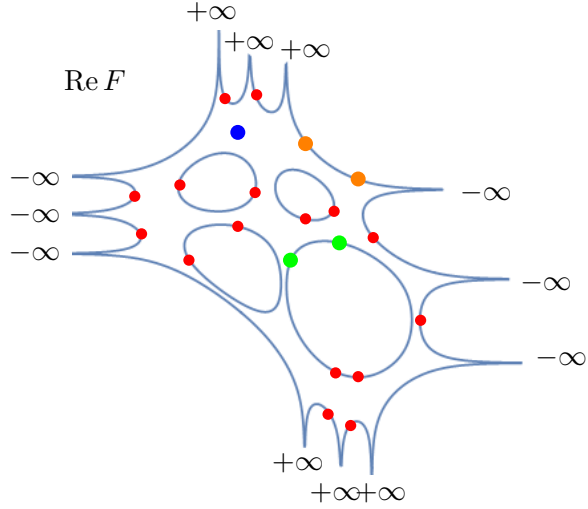}};
	%\re F
	\draw (-2.7,2.3) node {$\re F$};    
    %critical points
    %non-compact oval
	\draw (-.55,2.1) node[circle,fill,red,inner sep=1.5pt]{};
	\draw (-.97,2.05) node[circle,fill,red,inner sep=1.5pt]{};
	
	\draw (-2.17,.75) node[circle,fill,red,inner sep=1.5pt]{};
	\draw (-2.08,.25) node[circle,fill,red,inner sep=1.5pt]{};

	\draw (0.4,-2.15) node[circle,fill,red,inner sep=1.5pt]{};
	\draw (.7,-2.3) node[circle,fill,red,inner sep=1.5pt]{};

	\draw (1.,.2) node[circle,fill,red,inner sep=1.5pt]{};	
	\draw (1.63,-.9) node[circle,fill,red,inner sep=1.5pt]{};

    %compact ovals
	\draw (-1.45,-.1) node[circle,fill,red,inner sep=1.5pt]{};
	\draw (-.8,.35) node[circle,fill,red,inner sep=1.5pt]{};
	\draw (.5,-1.6) node[circle,fill,red,inner sep=1.5pt]{};
	\draw (.8,-1.65) node[circle,fill,red,inner sep=1.5pt]{};	
	\draw (-1.57,.9) node[circle,fill,red,inner sep=1.5pt]{};
	\draw (-.57,.8) node[circle,fill,red,inner sep=1.5pt]{};
	\draw (.1,.45) node[circle,fill,red,inner sep=1.5pt]{};
	\draw (.48,.55) node[circle,fill,red,inner sep=1.5pt]{};
	%special critical points
	%smooth
	\draw (-.1,-.1) node[circle,fill,green,inner sep=2pt]{};
	\draw (.55,.13) node[circle,fill,green,inner sep=2pt]{};
	%rough
	\draw (-.8,1.6) node[circle,fill,blue,inner sep=2pt]{};
	%frozen
	\draw (.1,1.45) node[circle,fill,orange,inner sep=2pt]{};
	\draw (.8,.98) node[circle,fill,orange,inner sep=2pt]{};

	%angles
	\draw (.1,2.7) node {$+\infty$};
	\draw (-.65,2.8) node {$+\infty$};
	\draw (-1.15,3.2) node {$+\infty$};
	
	\draw (-3.5,1) node {$-\infty$};
	\draw (-3.5,.5) node {$-\infty$};
	\draw (-3.5,0) node {$-\infty$};

	\draw (0,-2.8) node {$+\infty$};
	\draw (.5,-3.2) node {$+\infty$};
	\draw (1.1,-3.2) node {$+\infty$};

	\draw (2.5,.8) node {$-\infty$};
	\draw (3.3,-.4) node {$-\infty$};
	\draw (3.5,-1.4) node {$-\infty$};
  \end{tikzpicture}
 \end{center}
  \caption{The picture indicates the behavior of~$\re F$ at the angles, and the locations of the critical points of~$F$, when viewed as a function on the amoeba. The red dots are the critical points that are given in Lemma~\ref{lem:zeros_poles_df}, and the green, blue and orange dots are possible locations of the last two critical points in the smooth, rough and frozen regions, respectively. \label{fig:critical_points}}
\end{figure}

\begin{proof}
Similarly to~\eqref{eq:limit_f_1}-\eqref{eq:limit_f_4}, we see that the differential~$\d F$ is meromorphic with simple poles at the angles. Thus,~$\d F$ has~$2(k+\ell)$ poles. Recall that the degree of~$\d F$ is~$2g-2$, see~\eqref{eq:abels_thm_diff}. Hence,~$\d F$ has~$2g-2+2(k+\ell)$ zeros.

Each compact oval contains two zeros. This follows from a standard argument, see, for instance,~\cite[Proposition 6.4]{DK21}. Indeed, let~$q\in A_i$. Then 
\begin{equation}
\int_{A_i}\d F=F(\tilde q;\xi,\eta)-F(\tilde q';\xi,\eta)+c\i,
\end{equation}
where~$\tilde q$ and~$\tilde q'$ are lifts of~$q$ to the universal cover, and~$c$ is an integer multiple of~$2\pi$ coming from the logarithm. It follows from Lemma~\ref{lem:conjugate_functions}, by taking the real part of the previous equality, that
\begin{equation}
\re \left(\int_{A_i} \d F\right)=0.
\end{equation} 
Since~$\d F$ is real and continuous on~$A_i$ (as~$A_i$ is in the real part of~$\mathcal R$), we conclude that~$\d F$ has at least two distinct zeros on~$A_i$. 

Let us consider~$A_{0,i}$ as in the statement. The imaginary part of~$F$ is constant along~$A_{0,i}$ (cf. Fact~\ref{fact:prop_prime_form}\eqref{eq:prime_form_conjugation}), which means that a critical point of~$\re F$ in~$A_{0,i}$, if it exists, is a zero of~$\d F$. Moreover, by~\eqref{eq:limit_f_1}--\eqref{eq:limit_f_4},~$\re F \to \pm\infty$ at the endpoints, and by definition of~$i$ the sign is the same at both endpoints. This implies that~$\re F$ does have a critical point in~$A_{0,i}$. See Figure~\ref{fig:critical_points}.
\end{proof}

The previous lemma localizes all but two critical points of~$F$. By~\eqref{eq:conjugate_functions} the last two critical points either come as a conjugate pair, or they are both real. If the critical points are real, then they have to lie on the same oval, and if the oval is~$A_0$, then they have to lie on the same component~$A_{0,i}$ for some~$i$. This observation follows by considering the graph of~$\re F$ along the real part of~$\mathcal R$, see Figure~\ref{fig:critical_points}. The location of these two final critical points defines the different limiting regions.
\begin{definition}\label{def:regions}
Let~$(\xi,\eta)\in (-1,1)$ be the global coordinates and let~$\d F$ be as above. We say that~$(\xi,\eta)$ lies: 
\begin{itemize}
\item in the \emph{frozen region} if there are two or three simple zeros of~$\d F$ in~$A_{0,i}$, for some~$i=1,\dots,2(k+\ell)$,
\item in the \emph{smooth disordered region} if there are four simple zeros of~$\d F$ in~$A_i$ for some~$i=1,\dots,g$,
\item in the \emph{rough disordered region} if there is a zero of~$\d F$ in~$\mathcal R_0$. 
\item in the \emph{boundary between the rough and frozen region} if there is a zero of order two or three of~$\d F$ in~$A_0$,
\item in the \emph{boundary between the smooth and rough region} if there is a zero of order two or three of~$\d F$ in~$A_i$ for some~$i=1,\dots,g$.
\end{itemize}
\end{definition}
The boundary of the rough region is known as the \emph{arctic curve}. We will refer to the smooth region corresponding to~$A_i$, as the~$i$th smooth region. Similarly, we will refer to the frozen region corresponding to~$A_{0,i}$ as the~$i$th frozen region. In the special case~$i=1$ we will refer to it as the \emph{north frozen region}, as the \emph{east frozen region} if~$i=\ell+1$, as the \emph{south frozen region} if~$i=\ell+k+1$, and as the \emph{west frozen region} if~$i=2\ell+k+1$. Corollary~\ref{cor:smooth_frozen_regions} below implies that these correspondences are, in fact, one-to-one. We will also see in Remark~\ref{rem:limiting_slopes} that the north region only consists of north dimers in the limit, and similarly for the east, south and west regions. Note that the cyclic order of the regions is reversed compared to the order of the corresponding components of the boundary of the amoeba, see Figures~\ref{fig:intro:tiling_amoeba1} and~\ref{fig:intro:tiling_amoeba2}. This observation is important for Corollaries~\ref{cor:convex} and~\ref{cor:cusps}.

By definition of the rough region there exists a natural map from the rough region to~$\mathcal R_0$, one of the two connected components of~$\mathcal R\backslash \left(\cup_{i=0}^gA_i\right)$, as defined in Section~\ref{sec:harnack}.
\begin{definition}\label{def:omega}
Let~$\mathcal F_R$ be the rough region from Definition~\ref{def:regions}. We define~$\Omega:\mathcal F_R \to \mathcal R_0$ as the unique zero of~$\d F$ in~$\mathcal R_0$.  
\end{definition}
We will see below that the map~$\Omega$ is a homeomorphism. This is in agreement with many models previously studied, see, \emph{e.g.},~\cite{BF14}. Most commonly, the corresponding map is a homeomorphism to the upper half plane. However, in the presence of a smooth region, or if the domain is not simply connected, the map defines a homeomorphism to a multiply connected domain, see~\cite{Ber21, BG19}. 
\begin{theorem}\label{thm:homeomorphism}
The map~$\Omega$ from Definition~\ref{def:omega} is a homeomorphism.
\end{theorem}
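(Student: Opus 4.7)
The plan is to construct an explicit inverse map $\Psi\colon \mathcal R_0 \to \mathcal F_R$ and verify that $\Omega\circ\Psi = \mathrm{id}_{\mathcal R_0}$ and $\Psi\circ\Omega = \mathrm{id}_{\mathcal F_R}$. The key observation is that the condition $\d F(q;\xi,\eta)=0$ is $\RR$-linear in $(\xi,\eta)$. In a local coordinate $t$ at $q\in\mathcal R_0$, writing $\frac{\d w}{w}=a_1(q)\,\d t$, $\frac{\d z}{z}=a_2(q)\,\d t$, and $\d\log f = a_3(q)\,\d t$, the equation $\d F(q;\xi,\eta)=0$ reads
\begin{equation}
\frac{k(1-\xi)}{2}\,a_1(q)-\frac{\ell(1-\eta)}{2}\,a_2(q)=a_3(q),
\end{equation}
which is one complex, i.e.\ two real, linear equations in the two real unknowns $(\xi,\eta)$.

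I would then argue that this $2\times 2$ real system is invertible on the interior $\mathring{\mathcal R}_0$. The coefficient matrix has nonzero determinant precisely when $a_1$ and $a_2$ are $\RR$-linearly independent, equivalently $\im\bigl(a_1/a_2\bigr)\neq 0$, equivalently $\im\bigl(\frac{\d w/w}{\d z/z}\bigr)(q)\neq 0$. Since the $\Log$ map is a diffeomorphism from $\mathcal R_0$ to $\mathring{\mathcal A}$ (by the Harnack property and Section~\ref{sec:harnack}), the two real 1-forms $\d\log|z|=\re(\d z/z)$ and $\d\log|w|=\re(\d w/w)$ are pointwise linearly independent on $\mathring{\mathcal R}_0$, which is exactly the needed non-proportionality. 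Hence $\Psi\colon \mathring{\mathcal R}_0\to\RR^2$ is well-defined and smooth, with $q$ by construction a zero of $\d F(\cdot;\Psi(q))$.

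Next I would show $\Psi(q)\in \mathcal F_R$. Combining the construction with Lemma~\ref{lem:zeros_poles_df} and Lemma~\ref{lem:conjugate_functions}, the two critical points not fixed by Lemma~\ref{lem:zeros_poles_df} come in complex-conjugate pairs; having one of them at $q\in \mathring{\mathcal R}_0$ forces the other at $\bar q$ and thus places $(\xi,\eta)=\Psi(q)$ in the rough region as per Definition~\ref{def:regions}. To check $\Psi(q)\in (-1,1)^2$, I would analyze the boundary behavior of $\Psi$: letting $q$ approach an angle, the pole structure of $\d\log f$ and of $\frac{\d z}{z},\frac{\d w}{w}$ given by \eqref{eq:limit_f_1}--\eqref{eq:limit_f_4} forces $\Psi(q)$ to approach a specific side of $(-1,1)^2$ (e.g.\ $q\to q_{0,i}$ forces $\xi\to 1$, \emph{etc.}), and letting $q$ approach a compact oval $A_i$ forces $\Psi(q)$ to approach the corresponding smooth--rough boundary; continuity and connectedness then pin $\Psi(\mathring{\mathcal R}_0)$ inside $(-1,1)^2$.

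Finally, I would assemble the bijection. Injectivity of $\Omega$: if $\Omega(\xi_1,\eta_1)=\Omega(\xi_2,\eta_2)=q\in\mathring{\mathcal R}_0$, subtracting the two linear relations yields a homogeneous linear equation whose matrix has nonzero determinant by the preceding step, so $(\xi_1,\eta_1)=(\xi_2,\eta_2)$. This also shows $\Psi\circ\Omega=\mathrm{id}$, while $\Omega\circ\Psi=\mathrm{id}$ holds by definition of $\Psi$ combined with uniqueness of the critical point in $\mathcal R_0$ that is built into Definition~\ref{def:regions}. Continuity of $\Psi$ is immediate from the explicit formula; continuity of $\Omega$ follows from the implicit function theorem at each simple zero of $\d F$, or alternatively from invariance of domain applied to the continuous bijection $\Psi$ between two-dimensional manifolds. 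The extension to the closures is then obtained by taking limits, using that both $\Omega$ and $\Psi$ extend continuously at the real ovals (smooth--rough boundary), at arcs $A_{0,i}$ (rough--frozen boundary), and at the angles (turning points on $\partial[-1,1]^2$). The main technical obstacle I anticipate is precisely this extension: at points of $\partial \mathcal R_0$ the linear system becomes degenerate since $\im(a_1/a_2)\to 0$, so one must analyze how the numerators in the solution formulas also vanish at the same rate, as dictated by the appearance of a double critical point at the boundary (corresponding to the cusp/tangency geometry); this is where the Harnack geometry and the explicit pole structure of $\d F$ will be used most heavily.
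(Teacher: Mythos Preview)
Your core strategy coincides with the paper's: both invert $\Omega$ by observing that $\d F(q;\xi,\eta)=0$ is an $\RR$-linear system in $(\xi,\eta)$ and solving explicitly. The paper writes this as $(\xi,\eta)^T=A(z,w)^{-1}B(z,w)$, with $A$ the $2\times2$ matrix of real and imaginary parts of $\tfrac{k}{2}\frac{\d w}{w}$ and $\tfrac{\ell}{2}\frac{\d z}{z}$, and proves $\det A\neq 0$ on $\mathcal R_0$ via Mikhalkin's lemma that the preimage of $\RR P^1$ under the logarithmic Gauss map is exactly the real locus. Your justification that $\Log$ is a diffeomorphism on $\mathcal R_0$ is \emph{equivalent} to $\det A\neq 0$, so it is not an independent input; you still need Mikhalkin's result (or the direct computation in Appendix~\ref{app:alt_proofs}) at this point. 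That is fine, but it should be stated explicitly.

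The genuine gap is in showing $\Psi(q)\in(-1,1)^2$. Your plan is to study $\Psi$ near $\partial\mathcal R_0$ and near the angles, and then invoke ``continuity and connectedness''. As written this is circular: you assert that as $q$ approaches a compact oval $A_i$, $\Psi(q)$ approaches ``the corresponding smooth--rough boundary'', but the location of that boundary inside $(-1,1)^2$ is exactly what the homeomorphism is meant to establish. Even at the angles, a continuous map whose boundary limits lie on $\partial[-1,1]^2$ need not have interior image contained in $(-1,1)^2$ without a properness or degree argument, which you do not supply. The paper's argument is direct and avoids all of this: extend $F$ to $(\xi,\eta)\in\RR^2$ and observe that for $(\xi,\eta)\notin(-1,1)^2$ one of the signs in \eqref{eq:limit_f_1}--\eqref{eq:limit_f_4} flips (or a pole of $\d F$ disappears), after which the counting of Lemma~\ref{lem:zeros_poles_df} forces \emph{all} zeros of $\d F$ onto the real ovals. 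Hence no zero can lie in $\mathcal R_0$, which immediately gives $A^{-1}B\in(-1,1)^2$.

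Finally, your last paragraph on extending to the closures is not part of Theorem~\ref{thm:homeomorphism}; that is the content of Theorem~\ref{thm:arctic_curves}, whose proof indeed requires the second-order analysis you anticipate (the linear system degenerates on $\partial\mathcal A$, and the paper differentiates twice along a parametrization of $\partial\mathcal A$ to produce a new invertible system $\tilde A(t)(\xi,\eta)^T=\tilde B(t)$).
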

\begin{proof}
We prove that~$\Omega$ is a bijection by computing the inverse. 

To find the inverse we extend the definition of~$F$ from~$(\xi,\eta)\in (-1,1)^2$ to~$(\xi,\eta)\in \RR^2$, by the formula~\eqref{eq:def_action_function}. Let~$(\xi,\eta)\in \RR^2$ and~$(z,w)\in \mathcal R_0$. Then~$\d F(z,w;\xi,\eta)=0$ if and only if
\begin{equation}
A(z,w)
\begin{pmatrix}
\xi \\
\eta
\end{pmatrix}
=B(z,w),
\end{equation}
where
\begin{equation}
A(z,w)=
\begin{pmatrix}
\frac{k}{2}\im \frac{\d w}{w} & -\frac{\ell}{2}\im \frac{\d z}{z} \\
\frac{k}{2}\re \frac{\d w}{w} & -\frac{\ell}{2}\re \frac{\d z}{z}
\end{pmatrix}
\end{equation}
and
\begin{equation}
B(z,w)=
\begin{pmatrix}
\im \left(\frac{k}{2}\frac{\d w}{w}-\frac{\ell}{2}\frac{\d z}{z}-\d \log f(z,w)\right)\\
\re \left(\frac{k}{2}\frac{\d w}{w}-\frac{\ell}{2}\frac{\d z}{z}-\d \log f(z,w)\right)
\end{pmatrix}.
\end{equation}

For~$(z,w)\in \mathcal R_0$ the matrix~$A$ is invertible. Indeed,
\begin{equation}\label{eq:non-zero_determinant}
\det A(z,w)=\frac{k\ell}{4}\left|\frac{\d z}{z}\right|^2 \im \left(-\frac{\d w}{\d z}\frac{z}{w}\right),
\end{equation}
and it suffices to show that the right hand side is nonzero. Since~$\frac{\d z}{z}\neq 0$ when~$(z,w)\in \mathcal R_0$, we only need to consider the rightmost factor. By differentiating~$P(z,w)=0$ with respect to~$z$, we obtain~$-\frac{\d w}{\d z}=\frac{P_z}{P_w}$, which means that the rightmost factor is the imaginary part of the \emph{logarithmic Gauss map}. The logarithmic Gauss map~$\gamma:\mathcal R^\circ \mapsto \CC P^1$ is defined by
\begin{equation}
\gamma(z,w)=\left(zP_z(z,w):wP_w(z,w)\right).
\end{equation}
By~\cite[Lemmas 3 and 5]{Mik00} the preimage of~$\RR P^1 \subset \CC P^1$ under the logarithmic Gauss map is the real part of~$\mathcal R^\circ$, that is,~$\gamma^{-1}(\RR P^1)= \cup_{i=0}^g A_i\cap \mathcal R^\circ$. Hence, for~$(z,w)\in \mathcal R_0$, 
\begin{equation}\label{eq:non-zero_determinant_2}
\im \left(-\frac{\d w}{\d z}\frac{z}{w}\right)=\im \left(\frac{zP_z}{wP_w}\right)\neq 0.
\end{equation}
In Appendix~\ref{app:alt_proofs} we include a direct proof that~\eqref{eq:non-zero_determinant} is nonzero without relying on the logarithmic Gauss map.

This tells us that for all~$(z,w)\in \mathcal R_0$ there exists a point~$(\xi,\eta) \in \RR^2$, given by
\begin{equation}
\begin{pmatrix}
\xi \\
\eta
\end{pmatrix}
=A(z,w)^{-1}B(z,w)
\end{equation}
such that~$\d F(z,w;\xi,\eta)=0$. It turns out, as we show below, that if~$(\xi,\eta) \notin (-1,1)^2$, then all zeros of~$\d F$ are real. In particular, this implies that~$A(z,w)^{-1}B(z,w)\in (-1,1)^2$ if~$(z,w)\in \mathcal R_0$. Hence,~$\Omega^{-1}=A^{-1}B$.  

The argument that all zeros of~$\d F$ are real when~$(\xi,\eta)\notin (-1,1)^2$ is similar to the proof of Lemma~\ref{lem:zeros_poles_df}. Assume first that~$\eta>1$ while~$\xi\in (-1,1)$. Then the limit~\eqref{eq:limit_f_2} changes, namely,~$\re F\to \infty$ as~$(z,w)\to p_{\infty,j}$. The other limits,~\eqref{eq:limit_f_1},~\eqref{eq:limit_f_3} and~\eqref{eq:limit_f_4}, however, stay the same. Repeating the argument in Lemma~\ref{lem:zeros_poles_df} tells us that at least all but two critical points are real. But in comparison to the setting of Lemma~\ref{lem:zeros_poles_df}, the final two critical points also have to be real. Namely, one has to be in~$A_{0,1}$ and the other in~$A_{0,k+2\ell+1}$. Hence, there cannot be an additional critical point in~$\mathcal R_0$. If~$\eta=1$,~$\re F$ is bounded at~$p_{\infty,j}$ and~$\d F$ has no poles there. This means that~$\d F$ has~$k$ fewer poles and also~$k$ fewer zeros. All forced zeros of~$\d F$ stay in the same places except for~$(k-1)$ zeros on~$A_{0,i}$'s between~$p_{\infty,j}$'s.  As any potential zeros of~$\d F$ come with its conjugate, we conclude that there is no zero of~$\d F$ in~$\mathcal R_0$. The other situations work similarly, and we omit their detailed descriptions.

The continuity of~$\Omega$ follows from the continuity of zeros of analytic functions. The continuity of~$\Omega^{-1}$ is clear from the explicit expression~$\Omega^{-1}=A^{-1}B$.
\end{proof}

\begin{remark}\label{rem:burgers}
As previously mentioned, similar maps have appeared numerous times before, see, \emph{e.g.},~\cite{Ber21, BF14, Nic22}. These maps are expected to satisfy (a version of) the complex Burgers equation, see~\cite[Theorem 1]{KO07}. This is also the case in our setting. Indeed, with the coordinates~$u=-\frac{\xi+1}{2\ell}$ and~$v=-\frac{\eta+1}{2k}$, we let~$(z(u,v),w(u,v))=\Omega(u,v)=\Omega(\xi,\eta)$. The functions~$z$ and~$w$ satisfy the equation
\begin{equation}
\frac{z_u}{z}+\frac{w_v}{w}=0.
\end{equation}
We include a proof in Appendix~\ref{app:burgers}. The variable change~$(\xi,\eta)\mapsto (u,v)$ is natural from the perspective of the fundamental domain, see the discussion leading up to~\eqref{eq:slope_new_coord}.
\end{remark}

The map~$\Omega$ extends nicely up to the boundary, as we will see below. However, it turns out to be more appropriate, for certain aspects, to view the homeomorphism as a map to the amoeba instead of the spectral curve, that is, to consider the composition of the map~$\Omega$ with the~$\Log$ map defined by~\eqref{eq:log_map}. Recall that the amoeba~$\mathcal A$ is a closed set. 
\begin{theorem}\label{thm:arctic_curves}
Let~$\Omega$ be as in Definition~\ref{def:omega} and let~$\Log$ be as in~\eqref{eq:log_map}. The map~$\Log \circ \, \Omega$ extends to a homeomorphism~$\Log \circ \, \Omega:\overline{\mathcal F_R}\cap(-1,1)^2 \to \mathcal A$ from the closure of the rough region to the amoeba. 
\end{theorem}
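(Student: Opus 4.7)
The strategy is to combine Theorem~\ref{thm:homeomorphism} with the Harnack property of $\mathcal R$ to get a homeomorphism on interiors, then extend continuously to the boundary using the merging of critical points, and finally verify injectivity on the boundary via a local second-order analysis.

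First, by Theorem~\ref{thm:homeomorphism} the map $\Omega\colon \mathcal F_R\to \mathcal R_0$ is a homeomorphism. Since $\mathcal R$ is a non-singular Harnack curve, the restriction $\Log|_{\mathcal R_0}\colon \mathcal R_0\to \operatorname{int}(\mathcal A)$ is a homeomorphism (the two $\Log$-preimages of any interior point of $\mathcal A$ are complex conjugates and lie in $\mathcal R_0$ and $\overline{\mathcal R_0}$, respectively). Composing, $\Log\circ \Omega$ is already a homeomorphism from $\mathcal F_R$ onto $\operatorname{int}(\mathcal A)$.

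For the boundary extension, pick $(\xi_0,\eta_0)\in \partial\mathcal F_R\cap (-1,1)^2$. By Definition~\ref{def:regions}, $\d F(\,\cdot\,;\xi_0,\eta_0)$ has a zero of multiplicity $\ge 2$ either on some compact oval $A_i$ or on a non-corner component $A_{0,j}$; let $\Omega(\xi_0,\eta_0)$ be this merge point. Continuity of the extension on $\overline{\mathcal F_R}\cap (-1,1)^2$ follows from continuity of zeros of the holomorphic family $(\xi,\eta)\mapsto \d F(\,\cdot\,;\xi,\eta)$: as $(\xi,\eta)\to (\xi_0,\eta_0)$ from within $\mathcal F_R$, the complex-conjugate pair of non-real critical points living in $\mathcal R_0\cup \overline{\mathcal R_0}$ collides on $\bigcup_i A_i$. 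Continuity of $\Log$ on $\mathcal R\setminus\{\text{angles}\}$ then gives a continuous extension of $\Log\circ\Omega$ to a map into $\mathcal A$, landing in $\partial \mathcal A$ on the boundary since $\Log$ sends $\bigcup_i A_i$ onto $\partial\mathcal A$.

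For injectivity on the boundary, suppose $(\xi_1,\eta_1)\neq (\xi_2,\eta_2)$ yield the same image. Since $\Log$ is one-to-one on the non-singular real part of $\mathcal R^\circ$, both values of $\Omega$ equal the same point $(z_0,w_0)\in A_i$. Parametrizing $A_i$ by a real $t$ near $t_0\leftrightarrow (z_0,w_0)$ and writing $\d F\big|_{A_i}=h(t;\xi,\eta)\,dt$ with
\begin{equation}
h(t;\xi,\eta)=\tfrac{k}{2}(1-\xi)\tfrac{w'(t)}{w(t)}-\tfrac{\ell}{2}(1-\eta)\tfrac{z'(t)}{z(t)}-\tfrac{d}{dt}\log f(z(t),w(t)),
\end{equation}
the double-zero conditions $h(t_0)=0=\partial_t h(t_0)$ constitute a linear system in $(\xi,\eta)$ whose determinant is proportional to the Wronskian $\hat z\,\hat w'-\hat w\,\hat z'$ with $\hat z=z'/z$, $\hat w=w'/w$. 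Using $P_zz'+P_ww'=0$ along $\mathcal R$, one finds this Wronskian equals $-\hat w^2\,\gamma'/\gamma^2$, where $\gamma=zP_z/(wP_w)$ is the logarithmic Gauss map. The logarithmic Gauss map is non-constant on each oval $A_i$ by~\cite{Mik00} (it wraps around $\mathbb R P^1$), so $\gamma'$ and $\hat w$ vanish only at isolated points. Hence the linear system is non-degenerate at generic merge points, forcing $(\xi_1,\eta_1)=(\xi_2,\eta_2)$; the finitely many points where the Wronskian vanishes are handled by a limiting argument from nearby regular points, using continuity of the already-extended map.

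For surjectivity, each boundary point of $\mathcal A$ lifts uniquely to some $(z_0,w_0)\in \bigcup_i A_i\setminus\{\text{angles}\}$, and inverting the double-zero system produces a candidate $(\xi,\eta)$; one checks it lies in $\overline{\mathcal F_R}\cap(-1,1)^2$ by continuity from interior points. Finally, continuity of the inverse is obtained from the fact that the map is proper (preimages of compacta in $\mathcal A$ that avoid the tentacle-infinities are compact in $\overline{\mathcal F_R}\cap(-1,1)^2$, since escapes to the arctic curve or to turning points on $\partial(-1,1)^2$ correspond to escapes to the boundary of $\mathcal A$ or to its tentacle-infinities). The main technical obstacle is the non-vanishing of the Wronskian above at \emph{all} merge points; verifying this globally (rather than only generically) is the crux and relies on combining the Mikhalkin-type monotonicity of the logarithmic Gauss map on Harnack ovals with the explicit structure of $\d F$ given by Definition~\ref{def:action_function}.
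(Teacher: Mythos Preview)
Your approach is essentially the paper's: set up the double-zero linear system on each real oval and invert it. Your Wronskian $\hat z\hat w'-\hat w\hat z'$ is exactly the paper's $\det\tilde A=r_1'r_2''-r_2'r_1''$, since on the real part $r_1'=z'/z$ and $r_2'=w'/w$; your identification with $-\hat w^2\gamma'/\gamma^2$ is just the paper's rewriting $\det\tilde A=\tfrac{k\ell}{4}(r_1')^2(r_2'/r_1')'$ (the amoeba-boundary slope $r_2'/r_1'$ equals $-\gamma$). So on the level of objects there is no difference.

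The genuine gap is the point you flag yourself: ruling out isolated zeros of the Wronskian. Your ``limiting argument from nearby regular points, using continuity of the already-extended map'' is circular as written, because you are trying to \emph{establish} injectivity. The paper's resolution is short and you should use it: suppose $\det\tilde A(t_0)=0$ at an isolated $t_0$. Taking $t\to t_0$ in the explicit solution $\tilde A(t)^{-1}\tilde B(t)$ shows the inhomogeneous system at $t_0$ has at least one solution, hence (being singular) an entire affine line of solutions $(\xi,\eta)$. But the argument in the proof of Theorem~\ref{thm:homeomorphism} shows that for $(\xi,\eta)\in\RR^2\setminus(-1,1)^2$ all zeros of $\d F$ are real and \emph{simple}, so no $(\xi,\eta)$ outside the open square can solve the double-zero system. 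An affine line cannot sit inside a bounded square; contradiction. Hence $\det\tilde A$ has no zeros at all. This same observation also gives you surjectivity and the containment $\tilde A^{-1}\tilde B\in(-1,1)^2$ directly, and continuity of the inverse follows from the explicit formula $\tilde A^{-1}\tilde B$ rather than from a properness argument. No appeal to monotonicity of the logarithmic Gauss map beyond non-constancy is needed.
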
 
\begin{proof}
Let us first prove that there is a homeomorphism between the boundary of the amoeba and the boundary of the rough region. The proof is similar to the proof of Theorem~\ref{thm:homeomorphism}. Let~$(r_1(t),r_2(t))$ be a parameterization of one of the components of~$\partial \mathcal A$ and let~$(z(t),w(t))=\left(\left.\Log\right|_{\mathcal R^\circ}\right)^{-1}(r_1(t),r_2(t))$. Recall (see Section~\ref{sec:harnack}) that the map~$\Log|_{\mathcal R^\circ}$, the restriction of~$\Log$ to~$\mathcal R^\circ$, is a bijection on the boundary of~$\mathcal A$. Since~$\d F$ is real on the real part of~$\mathcal R$, cf.~\eqref{eq:conjugate_functions}, we get that~$(\xi,\eta)\in \partial \mathcal F_R$ if and only if~$t\mapsto \re F(z(t),w(t);\xi,\eta)$ has a critical point of order at least two. This happens if and only if
\begin{equation}\label{eq:equation_boundary}
\tilde A(t)
\begin{pmatrix}
\xi \\
\eta
\end{pmatrix}
=\tilde B(t),
\end{equation}
where (with~$(r_1,r_2)=(\log|z|,\log|w|)$)
\begin{equation}
\tilde A(t)=
\begin{pmatrix}
\frac{k}{2}r_2'(t) & -\frac{\ell}{2}r_1'(t) \\
\frac{k}{2}r_2''(t) & -\frac{\ell}{2}r_1''(t)
\end{pmatrix}
\end{equation}
and
\begin{equation}
\tilde B(t)=
\begin{pmatrix}
\frac{k}{2}r_2'(t) - \frac{\ell}{2}r_1'(t)-\frac{\d }{\d t}\left(\log |f(z(t),w(t))|\right)\\
\frac{k}{2}r_2''(t) - \frac{\ell}{2}r_1''(t)-\frac{\d^2 }{\d t^2}\left(\log |f(z(t),w(t))|\right)
\end{pmatrix}.
\end{equation}
We claim that this system of equations has a unique solution~$(\xi,\eta)\in \RR^2$. We assume this to be true for the moment, and postpone the proof slightly.

It follows from the argument given in the proof of Theorem~\ref{thm:homeomorphism} that 
\begin{equation}
\tilde A(t)^{-1}\tilde B(t) \in (-1,1)^2.
\end{equation}
Indeed, we saw that if~$(\xi,\eta)\in \RR^2\backslash (-1,1)^2$, then we can determine the location of all zeros of~$\d F$, in particular, we saw that all zeros are real and simple. Hence,~\eqref{eq:equation_boundary} has no solution in~$\RR^2\backslash(-1,1)^2$, and the map~$(\xi,\eta)\mapsto (r_1,r_2)$ is bijective with inverse 
\begin{equation}
\begin{pmatrix}
\xi \\
\eta
\end{pmatrix}
=\tilde A(t)^{-1}\tilde B(t).
\end{equation}
By continuity of zeros of analytic functions it follows that~$(\xi,\eta)\mapsto (r_1,r_2)$ is continuous, and so is the inverse, which can be seen from the explicit expression.

We continue by establishing the postponed claim. The determinant
\begin{equation}\label{eq:det_boundary}
\det \tilde A=\frac{k\ell}{4}(r_1'r_2''-r_2'r_1'')=\frac{k\ell}{4}(r_1')^2\left(\frac{r_1'}{r_2'}\right)'
\end{equation}
is identically zero in some interval~$t\in I$ if and only if~$t\mapsto (r_1(t),r_2(t))$ is a line segment, which we will see cannot happen. Assume, for instance, that~$r_2(t)=cr_1(t)+d$, for some constants~$c$ and~$d$ and for~$t \in I$. Mapping this back to~$\mathcal R$ means that~$w=\pm \e^d(\pm z)^c$, for one choice of the signs depending on the connected component. The map~$z\mapsto P(z,\pm \e^d(\pm z)^c)$ is analytic and zero for all~$z(t)$,~$t\in I$. Hence, it is identical zero. In particular,~$(z,\pm \e^d(\pm z)^c)\in \mathcal R$ for all~$z>0$, and the entire line~$r_2=cr_1+d$ is a subset of~$\partial \mathcal A$, which is not the case. If~\eqref{eq:det_boundary} is zero at an isolated point~$t_0$, then, by taking the limit~$t\to t_0$ in~\eqref{eq:equation_boundary} we obtain a solution of the equation for~$t=t_0$, and therefore a whole line of solutions. However, as noted above,~\eqref{eq:equation_boundary} has no solutions in~$\RR^2\backslash (-1,1)^2$, and we conclude that~\eqref{eq:det_boundary} has no zeros.

The continuity of zeros of analytic functions allows us to establish that the mapping~$(\xi,\eta)\mapsto (r_1,r_2)$ is the limit of~$\Log \circ \, \Omega$ as it approaches the boundary. We denote also the extended map by~$\Log \circ \, \Omega$. Furthermore, since~$\d F$ is both analytic and real on the boundary of~$\mathcal R_0$, that is, the real part of~$\mathcal R$, it is clear that (two conjugate simple zeros of~$\d F$ are colliding)
\begin{equation}
A(z(r_1,r_2),w(r_1,r_2))^{-1}B(z(r_1,r_2),w(r_1,r_2))\to \tilde A(t_0)^{-1}\tilde B(t_0), 
\end{equation}
as~$(r_1,r_2)\to (r_1(t_0),r_2(t_0)) \in \partial \mathcal A$, where~$A$ and~$B$ are as in the proof of Theorem~\ref{thm:homeomorphism} and $(z(r_1,r_2),w(r_1,r_2))$ is the unique preimage in~$\mathcal R_0$ of~$(r_1,r_2)$ under~$\Log$. Hence, the inverse of the extended function~$\Log \circ \, \Omega$ is continuous.
\end{proof}

The previous theorem has several corollaries describing the geometry of the global limit of the Aztec diamond dimers.

\begin{corollary}\label{cor:smooth_frozen_regions}
There is a one-to-one correspondence between the smooth regions and the compact components of the complement of the amoeba. Similarly, there is a one-to-one correspondence between the frozen regions and the unbounded components of the amoeba.
\end{corollary}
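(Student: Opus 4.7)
The plan is to apply Theorem~\ref{thm:arctic_curves}, which gives a homeomorphism $\Psi := \Log \circ \Omega: \overline{\mathcal F_R} \cap (-1,1)^2 \to \mathcal A$, and to transport the topological decomposition of $\mathcal A^c$ back to $(-1,1)^2$. As a first step I would identify the connected components of $\partial \mathcal A$: by the description in Section~\ref{sec:harnack}, in $\RR^2$ these are the $g$ compact loops $\Log(A_i)$, $i=1,\dots,g$, together with the $2(k+\ell)$ unbounded arcs $\Log(A_{0,i})$, $i=1,\dots,2(k+\ell)$ (the arcs $A_{0,i}$ share endpoints only at the angles, which $\Log$ sends to infinity). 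Each compact loop $\Log(A_i)$ bounds exactly one compact component of $\mathcal A^c$, and each arc $\Log(A_{0,i})$ bounds exactly one unbounded component of $\mathcal A^c$, these components being distinct since consecutive tentacles separate adjacent unbounded pockets.

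Next, by Definition~\ref{def:regions}, a smooth region corresponds to the two free zeros of $\mathrm{d}F$ lying on some compact oval $A_i$, with its boundary against the rough region characterized by these zeros colliding on $A_i$; analogously for frozen regions with some $A_{0,i}$. Since $\Omega$ is defined on $\overline{\mathcal F_R}$ as the unique critical point in $\mathcal R_0$, and this critical point approaches $A_i$ (respectively $A_{0,i}$) precisely as $(\xi,\eta)$ approaches the smooth--rough (respectively rough--frozen) boundary component labeled by $A_i$ (respectively $A_{0,i}$), the homeomorphism $\Psi$ maps each such arctic boundary component onto $\Log(A_i)$ (respectively $\Log(A_{0,i})$).

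The correspondences are then built by sending a smooth region labeled by $A_i$ to the compact component of $\mathcal A^c$ enclosed by $\Log(A_i)$, and a frozen region labeled by $A_{0,i}$ to the unbounded component of $\mathcal A^c$ bordered by $\Log(A_{0,i})$. Injectivity is immediate: the preimage under $\Psi$ of a single connected component of $\partial \mathcal A$ is a single connected component of $\partial(\overline{\mathcal F_R} \cap (-1,1)^2)$, and such a component can border at most one smooth or frozen region. Surjectivity uses that $\mathcal F_R$ is connected (since $\mathcal A$ has connected interior and $\Psi$ is a homeomorphism): every hole in $\mathcal A$ transports via $\Psi^{-1}$ applied to its bounding loop to a hole in $\overline{\mathcal F_R} \cap (-1,1)^2$, which by Definition~\ref{def:regions} must be a smooth region with the prescribed label; the analogous argument applied to the unbounded pockets of $\mathcal A^c$, combined with the matching cyclic order of angles on $A_0$ and tentacles of $\mathcal A$, gives surjectivity for frozen regions.

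The main obstacle is the topological bookkeeping: verifying rigorously that the arctic curve decomposes into exactly $g + 2(k+\ell)$ components matching those of $\partial \mathcal A$ under $\Psi$, and that every connected component of $(-1,1)^2 \setminus \overline{\mathcal F_R}$ is correctly identified as either smooth or frozen according to whether the adjacent boundary component of $\partial \mathcal A$ is compact or unbounded. The care required here concerns the behavior at turning points, where the arctic curve meets $\partial((-1,1)^2)$ and where the otherwise-open arcs $\Log(A_{0,i})$ are matched to arctic arcs joining consecutive turning points, but all the necessary information is already packaged in Theorem~\ref{thm:arctic_curves} and the limiting behavior of $\Omega$.
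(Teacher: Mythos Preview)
Your proposal is correct and follows the same approach as the paper: the paper states this as an immediate corollary of Theorem~\ref{thm:arctic_curves} without giving an explicit proof, and your argument spells out precisely the topological bookkeeping (matching boundary components of $\partial\mathcal A$ with boundary components of the rough region via the homeomorphism $\Log\circ\Omega$) that the authors leave implicit. Your identification of the main obstacle is accurate, and the ingredients you cite are exactly those the paper relies on.
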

\begin{remark}
The proof of Theorem~\ref{thm:arctic_curves} also provides us with a parameterization of the arctic curve in terms of the boundary of the amoeba. By differentiating~\eqref{eq:equation_boundary} we get that~$(\xi'(t),\eta'(t))=(0,0)$ if and only if~$F$ has a critical point of order three. 
These points are expected to be the locations where the arctic curve develops cusps, thus, exhibiting Pearcey behavior, see~\cite{DK21, OR07}.
\end{remark}
The following corollary follows from the explicit parameterization mentioned in the previous remark.
\begin{corollary}\label{cor:arctic_curve_slopes}
Let~$(u,v)$ be the coordinates of Remark~\ref{rem:burgers}. If the arctic curve is smooth at~$(u,v)$, then the tangent line at that point is parallel to the tangent line of the amoeba at~$\Log \circ \,\Omega(u,v)$. 
\end{corollary}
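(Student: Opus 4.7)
The plan is to exploit the explicit parameterization of the arctic curve developed in the proof of Theorem~\ref{thm:arctic_curves}, namely the linear system $\tilde A(t)(\xi,\eta)^T=\tilde B(t)$, and to observe a crucial structural feature of that system: the second row of $\tilde A(t)$ is precisely the derivative of its first row, and the second entry of $\tilde B(t)$ is the derivative of its first entry. This built-in redundancy will make a derivative of the defining equation collapse to a one-dimensional linear relation between $(\xi',\eta')$ and $(r_1',r_2')$, from which parallelism of the tangent directions follows immediately.

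More concretely, I would fix a smooth parameterization $t\mapsto(r_1(t),r_2(t))$ of one of the components of $\partial\mathcal A$, and let $(\xi(t),\eta(t))$ be the corresponding point on the arctic curve supplied by $(\Log\circ\,\Omega)^{-1}$. Writing the first row of $\tilde A$ as $R_1(t)=\bigl(\tfrac{k}{2}r_2'(t),\,-\tfrac{\ell}{2}r_1'(t)\bigr)$ and the first entry of $\tilde B$ as $S_1(t)=\tfrac{k}{2}r_2'(t)-\tfrac{\ell}{2}r_1'(t)-\tfrac{d}{dt}\log|f(z(t),w(t))|$, the defining system takes the form
\begin{equation}
R_1(t)\cdot(\xi,\eta)^T=S_1(t),\qquad R_1'(t)\cdot(\xi,\eta)^T=S_1'(t).
\end{equation}
Differentiating the first identity in $t$ yields $R_1'(t)\cdot(\xi,\eta)^T+R_1(t)\cdot(\xi',\eta')^T=S_1'(t)$, and subtracting the second identity gives the clean relation
\begin{equation}
\tfrac{k}{2}r_2'(t)\,\xi'(t)-\tfrac{\ell}{2}r_1'(t)\,\eta'(t)=0,
\end{equation}
so that $k\,r_2'\,\xi'=\ell\,r_1'\,\eta'$ along the arctic curve.

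Finally I would translate to $(u,v)$ coordinates. Since $u=-\tfrac{\xi+1}{2\ell}$ and $v=-\tfrac{\eta+1}{2k}$, the tangent vector to the arctic curve at $(u(t),v(t))$ is $(u'(t),v'(t))=\bigl(-\xi'(t)/(2\ell),-\eta'(t)/(2k)\bigr)$, which is proportional to $(k\xi',\ell\eta')$. The tangent vector to $\partial\mathcal A$ at $(r_1(t),r_2(t))$ is $(r_1'(t),r_2'(t))$. The parallelism condition is $k\xi'\,r_2'=\ell\eta'\,r_1'$, which is exactly the identity just derived. At a smooth point of the arctic curve one has $(\xi'(t),\eta'(t))\neq(0,0)$, and $(r_1'(t),r_2'(t))\neq(0,0)$ holds for any regular parameterization of the smooth boundary $\partial\mathcal A$; hence both tangent vectors are genuine nonzero vectors and the identity gives genuine parallelism.

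The only mildly delicate point is confirming that the characterization of the arctic curve via $\tilde A(t)(\xi,\eta)^T=\tilde B(t)$ used in the proof of Theorem~\ref{thm:arctic_curves} is in fact a smooth parameterization of the arctic curve, so that one may legitimately differentiate $t\mapsto(\xi(t),\eta(t))$ and speak of its tangent line; this is immediate from the continuity of zeros of the analytic function $t\mapsto\d F$ on $\mathcal R$ and the nonvanishing of $\det\tilde A(t)$ established in that same proof (which gives $(\xi,\eta)=\tilde A^{-1}\tilde B$ as a smooth function of $t$). Everything else is a routine derivative computation.
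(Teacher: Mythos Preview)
Your proof is correct and, in fact, cleaner than the paper's. Both arguments start from the same parameterization $\tilde A(t)(\xi,\eta)^T=\tilde B(t)$ established in the proof of Theorem~\ref{thm:arctic_curves}, and both implicitly rely on the structural feature that the second row of the system is the $t$-derivative of the first. The difference is in how this is exploited: you differentiate only the first scalar equation and subtract the second, immediately obtaining $R_1(t)\cdot(\xi',\eta')^T=0$, which is the parallelism relation. The paper instead differentiates the full vector identity $(\xi,\eta)^T=\tilde A^{-1}\tilde B$, computes $\tilde A'\tilde A^{-1}$ explicitly, and after some algebra arrives at $(u',v')^T=-\tfrac{1}{k\ell}(dc^2b+c'b'+cb'')(r_1',r_2')^T$ for certain scalars $b,c,d$. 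Your route is more direct and more transparent; the paper's longer computation has the minor advantage of producing the explicit scalar of proportionality, which is not needed for the corollary itself but could in principle be used to read off where the tangent direction degenerates (the cusp locations).
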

\begin{remark}
The arctic curve is not smooth at cusps. By interpreting the tangent line at these points as the continuation of the tangent lines approaching them, the statement holds true there as well.
\end{remark}
\begin{proof}
From the proof of Theorem~\ref{thm:arctic_curves} we know that given a parameterization~$(r_1(t),r_2(t))$ of one of the components of~$\partial \mathcal A$, a parameterization of the corresponding part of the arctic curve is given by
\begin{equation}\label{eq:arctic_curve_new_coordinates}
\begin{pmatrix}
-(2\ell u(t)+1) \\
-(2kv(t)+1)
\end{pmatrix}
=\tilde A(t)^{-1}\tilde B(t).
\end{equation}
Differentiating this relation yields
\begin{equation}\label{eq:arctic_curve_slope}
\begin{pmatrix}
u'(t) \\
v'(t)
\end{pmatrix}
=-
\begin{psmallmatrix}
\frac{1}{2\ell} & 0 \\
0 & \frac{1}{2k}
\end{psmallmatrix}
\tilde A(t)^{-1}\left(\tilde A'(t)\tilde A(t)^{-1}\tilde B(t)-\tilde  B'(t)\right).
\end{equation}
The proof now follows by simplifying the right hand side.

Let~$b(t)$ be such that~$\tilde B(t)=(b(t),b'(t))^T$. Using that
\begin{equation}
\tilde A'\tilde A^{-1}=
\begin{pmatrix}
0 & 1 \\
\frac{r_1'''r_2''-r_1''r_2'''}{r_1'r_2''-r_1''r_2'} & \frac{r_1'r_2'''-r_1'''r_2'}{r_1'r_2''-r_1''r_2'}
\end{pmatrix},
\end{equation}
we obtain that
\begin{equation}\label{eq:arctic_curve_slope}
\begin{pmatrix}
u' \\
v'
\end{pmatrix}
=-\frac{1}{k\ell}(d c^2b+c'b'+cb'')
\begin{pmatrix}
-r_1'' & r_1' \\
-r_2'' & r_2'
\end{pmatrix}
\begin{pmatrix}
0 \\
1
\end{pmatrix}
=-\frac{1}{k\ell}(d c^2b+c'b'+cb'')
\begin{pmatrix}
r_1' \\
r_2'
\end{pmatrix},
\end{equation}
where
\begin{equation}
c=-\frac{1}{r_1'r_2''-r_1''r_2'}, \quad \text{and} \quad d=r_1'''r_2''-r_1''r_2'''.
\end{equation}
The conclusion of the statement follows since~$(r_1',r_2')$ is the tangent vector of the amoeba.
\end{proof}

 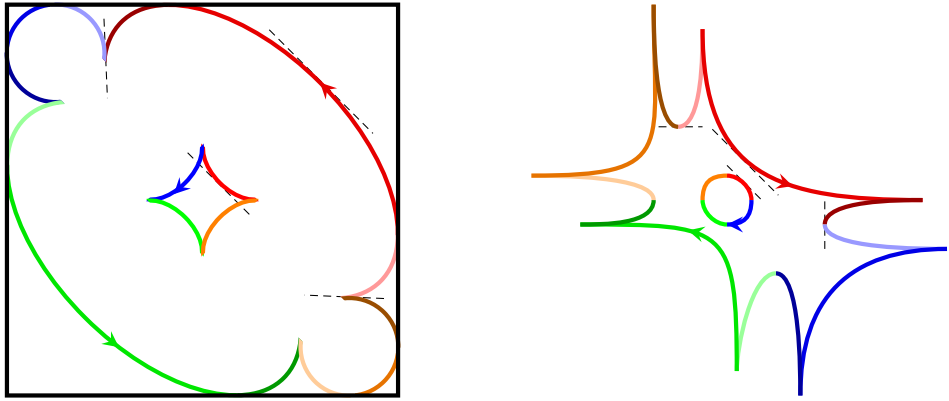
\begin{figure}[t]
  \vspace{-1.5cm}
 \begin{center}
\begin{subfigure}[c]{0.45\textwidth}
\centering
 \begin{tikzpicture}[scale=.65, rotate=-45]
     \tikzset{->-/.style={decoration={
  markings, mark=at position .8 with {\arrow{stealth}}},postaction={decorate}}}
   \tikzset{->--/.style={decoration={
  markings, mark=at position .25 with {\arrow{stealth}}},postaction={decorate}}}
      \tikzset{-<-/.style={decoration={
  markings, mark=at position .0 with {\arrow{stealth[reversed]}}},postaction={decorate}}}
 \colorlet{color1}{red!40}
 \colorlet{color2}{red!90!black}
 \colorlet{color3}{red!60!black}
 \colorlet{color4}{blue!40}
 \colorlet{color5}{blue!90!black}
 \colorlet{color6}{blue!60!black}
 \colorlet{color7}{green!40}
 \colorlet{color8}{green!90!black}
 \colorlet{color9}{green!60!black}
 \colorlet{color10}{orange!40}
 \colorlet{color11}{orange!90!black}
 \colorlet{color12}{orange!60!black}
 \colorlet{color13}{red}
 \colorlet{color14}{blue}
 \colorlet{color15}{green}
 \colorlet{color16}{orange}
 %slopes
    \draw [densely dashed] (4.05,1.2)--(2.85,.1);
    \draw [densely dashed] (-1.5,3.43)--(1.5,3.43);
    \draw [densely dashed] (-4.05,1.2)--(-2.85,.1);
    \draw [densely dashed] (-.9,.47)--(.9,.47);    
%Arctic curve
%top part
\begin{scope}
    \clip (3.3,.6) rectangle (5,4);
    \draw[ultra thick, color1] (0,1.4) ellipse (3.75 and 2);
\end{scope} 
\begin{scope}
    \clip (-3.3,.6) rectangle (3.3,4);
    \draw[ultra thick, color2, ->--] (0,1.4) ellipse (3.75 and 2);
\end{scope} 
\begin{scope}
    \clip (-5,.6) rectangle (-3.3,4);
    \draw[ultra thick, color3] (0,1.4) ellipse (3.75 and 2);
\end{scope}
%left part
\begin{scope}
    \clip (-5,.6) rectangle (-3,2);
    \draw[ultra thick, color4] (-4.25,0) ellipse (1 and 1);
\end{scope} 
\begin{scope}
    \clip (-6,-.6) rectangle (-5,.6);
    \draw[ultra thick, color5] (-4.25,0) ellipse (1 and 1);
\end{scope} 
\begin{scope}
    \clip (-5,-.6) rectangle (-3,-2);
    \draw[ultra thick, color6] (-4.25,0) ellipse (1 and 1);
\end{scope} 
%bottom part
\begin{scope}
    \clip (-5,-.6) rectangle (-3.3,-4);
    \draw[ultra thick, color7] (0,-1.4) ellipse (3.75 and 2);
\end{scope}
\begin{scope}
    \clip (-3.3,-.6) rectangle (3.3,-4);
    \draw[ultra thick, color8, ->-] (0,-1.4) ellipse (3.75 and 2);
\end{scope} 
\begin{scope}
    \clip (3.3,-.6) rectangle (5,-4);
    \draw[ultra thick, color9] (0,-1.4) ellipse (3.75 and 2);
\end{scope} 
%right part
\begin{scope}
    \clip (5,-.6) rectangle (3,-2);
    \draw[ultra thick, color10] (4.25,0) ellipse (1 and 1);
\end{scope} 
\begin{scope}
    \clip (6,-.6) rectangle (5,.6);
    \draw[ultra thick, color11] (4.25,0) ellipse (1 and 1);
\end{scope} 
\begin{scope}
    \clip (5,.6) rectangle (3,2);
    \draw[ultra thick, color12] (4.25,0) ellipse (1 and 1);
\end{scope}
%smooth
\begin{scope}
    \clip (-2,0) rectangle (3,.8);
    \draw[ultra thick, color13] (0,1) ellipse (.85 and .5);
\end{scope} 
\begin{scope}
    \clip (-.8,-2) rectangle (0,2);
    \draw[ultra thick, color14,-<-] (-1,0) ellipse (.5 and .85);
\end{scope} 
\begin{scope}
    \clip (-2,0) rectangle (3,-.8);
    \draw[ultra thick, color15] (0,-1) ellipse (.85 and .5);
\end{scope} 
\begin{scope}
    \clip (.8,-2) rectangle (0,2);
    \draw[ultra thick, color16] (1,0) ellipse (.5 and .85);
\end{scope} 
%Boundary
    \draw[ultra thick, rotate=45](-4,-4) rectangle (4,4);
 \end{tikzpicture}
 \end{subfigure}
\begin{subfigure}[c]{0.45\textwidth}
\centering
 \begin{tikzpicture}[scale=.65]
    \tikzset{->-/.style={decoration={
  markings, mark=at position .6 with {\arrow{stealth}}},postaction={decorate}}}
      \tikzset{-->/.style={decoration={
  markings, mark=at position 1 with {\arrow{stealth}}},postaction={decorate}}}
 \colorlet{color1}{red!40}
 \colorlet{color2}{red!90!black}
 \colorlet{color3}{red!60!black}
 \colorlet{color4}{blue!40}
 \colorlet{color5}{blue!90!black}
 \colorlet{color6}{blue!60!black}
 \colorlet{color7}{green!40}
 \colorlet{color8}{green!90!black}
 \colorlet{color9}{green!60!black}
 \colorlet{color10}{orange!40}
 \colorlet{color11}{orange!90!black}
 \colorlet{color12}{orange!60!black}
 \colorlet{color13}{red}
 \colorlet{color14}{blue}
 \colorlet{color15}{green}
 \colorlet{color16}{orange}
%slopes
    \draw [densely dashed] (-1.4,1.5)--(-.5,1.5);
    \draw [densely dashed] (-.3,1.45)--(1.05,.1);
    \draw [densely dashed] (2,-1)--(2,0);
    \draw [densely dashed] (0,.71)--(.71,0);    
%top part
\draw [ultra thick, color1]   (-1,1.5) to[out=0,in=-90, distance=.5cm] (-.5,3.5);
\draw [ultra thick, color2,->-]   (-.5,3.5) to[out=-90,in=180, distance=3cm] (4,0);
\draw [ultra thick, color3]   (4,0) to[out=180,in=90, distance=.5cm] (2,-.5);
%left part
\draw [ultra thick, color4]   (2,-.5) to[out=-90,in=180, distance=.5cm] (4.5,-1);
\draw [ultra thick, color5]   (4.5,-1) to[out=180,in=90, distance=2cm] (1.5,-4);
\draw [ultra thick, color6]   (1.5,-4) to[out=90,in=0, distance=.5cm] (1,-1.5);
%bottom part
\draw [ultra thick, color7]   (1,-1.5) to[out=180,in=90, distance=.5cm] (0.2,-3.5);
\draw [ultra thick, color8,->-]   (.2,-3.5) to[out=90,in=0, distance=3cm] (-3,-.5);
\draw [ultra thick, color9]   (-3,-.5) to[out=0,in=-90, distance=.5cm] (-1.5,0);
%right part
\draw [ultra thick, color10]   (-1.5,0) to[out=90,in=0, distance=.5cm] (-4,.5);
\draw [ultra thick, color11]   (-4,.5) to[out=0,in=-90, distance=3cm] (-1.5,4);
\draw [ultra thick, color12]   (-1.5,4) to[out=-90,in=180, distance=.5cm] (-1,1.5);
%smooth
\draw [ultra thick, color13]   (0,.5) to[out=0,in=90, distance=.25cm] (.5,0);
\draw [ultra thick, color14,-->]   (.5,0) to[out=-90,in=0, distance=.35cm] (0,-.5);
\draw [ultra thick, color15]   (0,-.5) to[out=180,in=-90, distance=.25cm] (-.5,0);
\draw [ultra thick, color16]   (-.5,0) to[out=90,in=180, distance=.35cm] (0,.5);
 \end{tikzpicture}
  \end{subfigure}
 \end{center}
 \vspace{-1.2cm}
\caption{A schematic illustration depicting the arctic curve and the boundary of the amoeba. The coloring represents the homeomorphism from Theorem~\ref{thm:homeomorphism}. Note that the slopes at the cusps in this illustration are depicted as parallel to the boundary, although we do not possess precise information regarding their orientation.
\label{fig:arctic_cruce_amoeba}}
\end{figure}

The homeomorphism~$\Log \circ \, \Omega$ reverses the orientation of the boundary, that is, if the arctic curve is oriented so that the rough region lies to the left, then under the inherited orientation, the interior of the amoeba lies to the right. See Figures~\ref{fig:intro:tiling_amoeba1},~\ref{fig:intro:tiling_amoeba2} and~\ref{fig:arctic_cruce_amoeba} and the discussion following Definition~\ref{def:regions}. Since the map preserves the slope of the tangent line along the boundary, this means that if the tangent line lies to, say, the right of the boundary of the amoeba, it lies to the left of the boundary of the rough region. Since the components of the complement of the amoeba are convex, see Section~\ref{sec:harnack}, we conclude that the rough region is locally convex.
\begin{corollary}\label{cor:convex}
The rough region is locally convex at all smooth points of the arctic curve.
\end{corollary}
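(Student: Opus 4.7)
The plan is to combine three facts already established in the excerpt: the homeomorphism $\Log \circ \Omega$ between the closure of the rough region and the amoeba (Theorem~\ref{thm:arctic_curves}), the parallelism of corresponding tangent lines at smooth boundary points (Corollary~\ref{cor:arctic_curve_slopes}), and the convexity of each connected component of $\mathbb{R}^2 \setminus \mathcal A$ recalled in Section~\ref{sec:harnack}.

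First, at a smooth point $(u_0, v_0)$ of the arctic curve I would set $(r_1^0, r_2^0) := \Log \circ \Omega(u_0, v_0) \in \partial \mathcal A$ and let $K$ be the connected component of $\mathbb{R}^2 \setminus \mathcal A$ whose boundary contains $(r_1^0, r_2^0)$. Since $K$ is convex, $\partial \mathcal A$ lies on one side of the tangent line $T'$ to $\partial \mathcal A$ at $(r_1^0, r_2^0)$ in a neighborhood of that point, with $K$ on one side and $\mathcal A$ locally on the other. The tangent line $T$ to the arctic curve at $(u_0, v_0)$ is parallel to $T'$ by Corollary~\ref{cor:arctic_curve_slopes}, so it suffices to show that the rough region lies on a definite side of $T$ in a neighborhood of $(u_0, v_0)$.

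The crucial additional input is that $\Log \circ \Omega$ reverses orientation: if $\partial \mathcal A$ is parameterized locally as $(r_1(t), r_2(t))$ with $\mathcal A$ on the left, then the induced parameterization $(u(t), v(t))$ of the arctic curve has the rough region on the right. This is visible in the formulas~\eqref{eq:arctic_curve_new_coordinates}--\eqref{eq:arctic_curve_slope}: the relation $(u'(t), v'(t)) = \lambda(t)(r_1'(t), r_2'(t))$ holds with $\lambda(t) = -\frac{1}{k\ell}(dc^2 b + c'b' + cb'')$, and a short computation (differentiating once more) gives the signed-curvature identity
\begin{equation*}
u'(t)v''(t) - v'(t)u''(t) \;=\; \lambda(t)^{2}\bigl(r_1'(t)r_2''(t) - r_2'(t)r_1''(t)\bigr).
\end{equation*}
Thus the signed curvatures of $(u(t), v(t))$ and $(r_1(t), r_2(t))$ always carry the same sign, so the one-sidedness provided by the convexity of $K$ transfers verbatim to the arctic curve, and the rough region locally lies on a single side of $T$.

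The main (mild) obstacle is the bookkeeping of orientations, namely verifying that $\lambda$ has a fixed sign consistent with the orientation-reversal claimed in the text, and then translating "which side of $\partial \mathcal A$ is $K$ on'' into "which side of $T$ is the rough region on.'' All of this can be extracted directly from the explicit formulas in the proof of Theorem~\ref{thm:arctic_curves}, or cross-checked against a concrete picture such as a boundary of a frozen region, where the geometry is already understood from Corollary~\ref{cor:smooth_frozen_regions}. Once the orientation sign is pinned down, the convexity statement follows immediately from the signed-curvature identity above.
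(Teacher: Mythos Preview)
Your proposal is correct and follows essentially the same route as the paper: both combine the orientation reversal of $\Log\circ\Omega$, the parallelism of tangents from Corollary~\ref{cor:arctic_curve_slopes}, and the convexity of the components of $\RR^2\setminus\mathcal A$. The paper states the conclusion in a purely geometric sentence (orientation reversal plus parallel tangents forces the tangent line to switch sides, so convexity of the amoeba complement becomes local convexity of the rough region), whereas you make the same step quantitative via the signed–curvature identity $u'v''-v'u''=\lambda^2(r_1'r_2''-r_2'r_1'')$; this is a helpful sharpening but not a different argument. One small clarification: your sentence ``the rough region locally lies on a single side of $T$'' is weaker than what you need—what your identity together with orientation reversal actually gives is that the arctic curve curves \emph{toward} the rough region (same curvature sign, opposite side), which is precisely local convexity.
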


The fact that the homeomorphism preserves the slope of the tangent line while reversing the orientation has additional implications.
\begin{corollary}\label{cor:cusps}
The arctic curve has four cusps at each smooth region, and one cusp at each frozen region, except the north, east, south and west frozen regions.
\end{corollary}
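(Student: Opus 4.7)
Proof plan:

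The plan is to combine the homeomorphism of Theorem~\ref{thm:arctic_curves} with the tangent-slope formula from the proof of Corollary~\ref{cor:arctic_curve_slopes} and carry out a turning-number count on each boundary component of the arctic curve. For each component of $\partial \mathcal{A}$ with parameter $t$ chosen so that the corresponding piece of the arctic curve has rough on the left, that proof gives
\begin{equation}
(u'(t),v'(t)) = \lambda(t)\,(r_1'(t),r_2'(t))
\end{equation}
for a continuous scalar $\lambda$, whose simple zeros are exactly the cusps. Setting $\phi(t)=\arg(u',v')(t)$ and $\psi(t)=\arg(r_1',r_2')(t)$, between cusps $\phi(t) - \psi(t)$ is constant modulo $\pi$, so $\phi'(t) = \psi'(t)$; hence on any traversal
\begin{equation}
\Delta\phi \;=\; \Delta\psi \;+\; \sum\nolimits_{\text{cusps}}\bigl(\phi(t_0^+)-\phi(t_0^-)\bigr),
\end{equation}
with each jump equal to $\pm\pi$.

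The key step, which I expect to be the main obstacle, is to establish that every such jump equals $-\pi$. Taylor expansion at a cusp $t_0$ gives $\vec r_2 := (u'',v'')(t_0) = \lambda'(t_0)(r_1',r_2')(t_0)$, and the perpendicular component of $\vec r_3 := (u''',v''')(t_0)$ with respect to $\vec r_2$ equals $2\lambda'(t_0)\psi'(t_0)|(r_1',r_2')(t_0)|\,\hat n$, where $\hat n$ denotes the counterclockwise unit normal to $(r_1',r_2')(t_0)$. Since every component of the complement of $\mathcal{A}$ is convex, $\psi'(t)>0$ throughout in our chosen orientation. Combined with $\lambda'(t_0)\neq 0$ at a generic simple zero, in local coordinates aligned with $(r_1',r_2')(t_0)$ the curve takes the form $(a\tau^2, b\tau^3)+O(\tau^4)$ with $\mathrm{sign}(a)=\mathrm{sign}(b)=\mathrm{sign}(\lambda'(t_0))$, i.e., a standard cusp shape. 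A direct calculation of the unit tangent just before and after $t_0$ shows that in either sign case the tangent rotates clockwise through~$\pi$, and the jump is $-\pi$.

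Given the per-cusp jump, the cusp counts follow by elementary geometry on each type of component. For a smooth region, the arctic is a simple closed loop; the orientation-reversal of Theorem~\ref{thm:arctic_curves} identifies the rough-on-left traversal with counterclockwise traversal of the compact oval of $\partial\mathcal{A}$ around the corresponding bounded complement component. By Whitney's theorem (for simple closed curves with cusps) $\Delta\phi=-2\pi$, and convexity of the compact complement component gives $\Delta\psi=+2\pi$; hence $(\Delta\psi-\Delta\phi)/\pi = 4$ cusps.

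For a non-NESW frozen region, the amoeba arc $A_{0,i}$ connects two angles with parallel asymptotes, so the corresponding arctic arc has both turning points on the same side of the Aztec diamond and the two endpoint tangents (in the rough-on-left orientation) coincide in direction, giving $\Delta\phi=0$; on the amoeba side the tangent rotates by $\pi$ between the two parallel tentacles, so $\Delta\psi=\pi$, forcing exactly one cusp. For an NESW frozen region, the two flanking angles have perpendicular asymptote directions and the arctic arc spans two adjacent (perpendicular) sides of the Aztec diamond; both $\Delta\phi$ and $\Delta\psi$ equal $\pi/2$ with matching signs via the orientation-reversal, giving no cusps. This yields the stated count.
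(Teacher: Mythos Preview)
Your approach is essentially the paper's: both carry out a turning-number balance, using Corollary~\ref{cor:arctic_curve_slopes} to identify the smooth-part rotation of the arctic tangent with that of the (convex) amoeba boundary, and then compare against the global turning of the corresponding arctic component to extract the cusp count. Your Taylor-expansion analysis showing each cusp jump is exactly $-\pi$ (via $\psi'>0$) makes explicit a sign that the paper asserts without justification, while your appeal to ``Whitney for cusped curves'' for the closed case and to endpoint tangent directions alone for the open-arc $\Delta\phi$ mirrors the paper's equally brief treatment of the same step.
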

\begin{proof}
A normal vector of the image of~$A_i$,~$i=1,\dots,g$, in the amoeba, rotates by~$2\pi$, while going around the curve once, recall the orientation which is indicated in Figure~\ref{fig:amoeba}. By Corollary~\ref{cor:arctic_curve_slopes} the change of direction of a normal vector of the arctic curve bounding the~$i$th smooth region along all smooth points is also~$2\pi$. However, since the orientation of the arctic curve is reversed under the map~$\Log \circ \, \Omega$, the total change accumulated by going around a component of the arctic curve is~$-2\pi$. This means that the change coming from the singular points is~$-4\pi$, and at each singular point it can only change by~$\pi$. We conclude that there are exactly four cusps. See Figure~\ref{fig:amoeba_curves_rough}.

Similarly, when moving from one tentacle of the amoeba to a consecutive tentacle of the same type, the normal vector undergoes a change of~$-\pi$. For the corresponding frozen region, the change of the normal vector is~$0$, as it remains orthogonal to the boundary of the Aztec diamond both at the start and end points. Therefore, the change of the normal at the possible singular points is~$-\pi$, which proves that there is exactly one cusp.
\end{proof}

\subsection{Local statistics and the slope of the height function}\label{sec:local_limit}
In the previous section we considered the global regions. In this section we will focus on local statistics. In particular, we state our main local result Theorem~\ref{thm:intro:gibbs_limit}. We first state it as the limit of the correlation kernel given in Theorem~\ref{thm:bd_thm}. Using Theorem~\ref{thm:inverse_kasteleyn} we then deduce that the probability measure~\eqref{eq:measure_dimer} converges to the ergodic translation-invariant Gibbs measures. We also compute the limit shape of the Aztec diamond and prove that the slope is equal to the slope of the limiting Gibbs measure.
 
Given the global coordinates~$(\xi,\eta)\in (-1,1)^2$, we introduce the \emph{local coordinates}~$(\kappa,\zeta),(\kappa',\zeta') \in \ZZ^2$. We assume that~$(-1,1)^2\ni (\xi_N,\eta_N)\to (\xi,\eta)$ as~$N\to \infty$, and that~$x,x'=0,\dots,kN$ and~$y,y'=0,\dots,\ell N$ where 
\begin{equation}\label{eq:local_coordinates_x}
 x=\frac{kN}{2}(\xi_N+1)+\kappa, \quad x'=\frac{kN}{2}(\xi_N+1)+\kappa',
\end{equation}
and
\begin{equation}\label{eq:local_coordinates_y}
 y=\frac{\ell N}{2}(\eta_N+1)+\zeta, \quad y'=\frac{\ell N}{2}(\eta_N+1)+\zeta'.
\end{equation}
We emphasize that~$x$ and~$y$ depend on~$N$ while~$\xi$,~$\eta$,~$\kappa$ and~$\zeta$ do not.

 \begin{figure}[t]
 \begin{center}
 \begin{tikzpicture}[scale=1]
  \tikzset{->-/.style={decoration={
  markings, mark=at position .5 with {\arrow{stealth}}},postaction={decorate}}}
  \tikzset{-->-/.style={decoration={
  markings, mark=at position .7 with {\arrow{stealth}}},postaction={decorate}}}
    \draw (0,0) node {\includegraphics[trim={1cm, 1cm, 1cm, 1cm}, clip, angle=180, scale=.5]{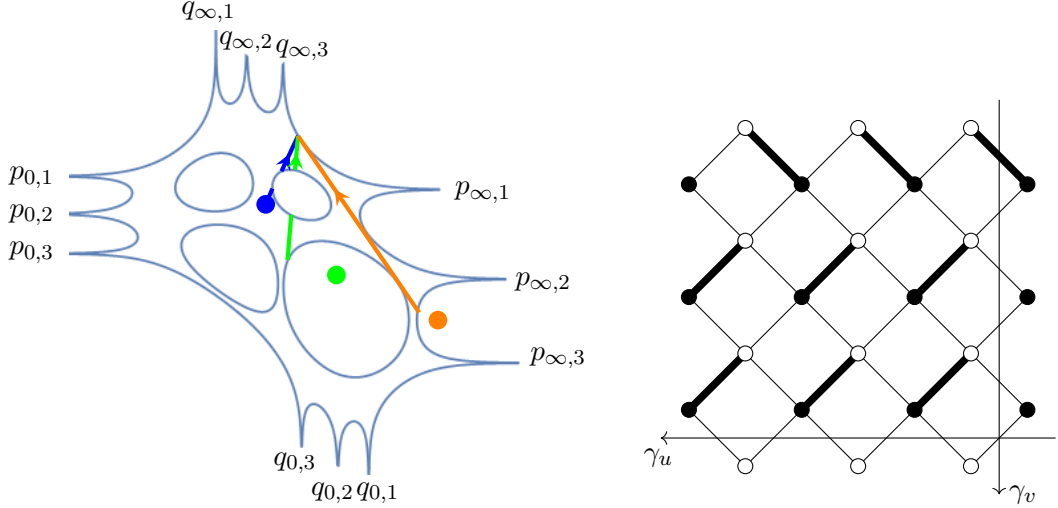}};
   %\gamma_{\xi,\eta}
   %Rough
   \draw[blue,ultra thick](-.39,.64)--(-.3,.85);
   \draw[->-,blue,ultra thick](-.18,1.08)--(0.04,1.55);
   %Smooth
   \draw[green,ultra thick](-.1,-.1)--(-.05,.5);
   \draw[->-,green,ultra thick](.0,1.1)--(0.04,1.55);   
   %Frozen
%   \draw[->-,orange] (0.85,.55)--(0.04,1.55);
   \draw[-->-,orange,ultra thick] (1.65,-.8)--(0.04,1.55);
%   %Curve at angle 
%   \draw[-->-,red,ultra thick] (.9,.6)--(.9,.95);
%   \draw[-->-,red,ultra thick] (1.8,-.5)--(1.65,-.2);
   
   % Points in the amoeba
   % Rough
	\draw (-.39,.64) node[blue,circle,fill,inner sep=2.5pt]{};
	%smooth
	\draw (.55,-.3) node[circle,fill,green,inner sep=2.5pt]{};
   % Frozen
   	\draw (1.9,-.9) node[circle,fill,orange,inner sep=2.5pt]{};

	%angles
	\draw (.1,2.7) node {$q_{\infty,3}$};
	\draw (-.65,2.8) node {$q_{\infty,2}$};
	\draw (-1.15,3.2) node {$q_{\infty,1}$};
	
	\draw (-3.5,1) node {$p_{0,1}$};
	\draw (-3.5,.5) node {$p_{0,2}$};
	\draw (-3.5,0) node {$p_{0,3}$};

	\draw (0,-2.8) node {$q_{0,3}$};
	\draw (.5,-3.2) node {$q_{0,2}$};
	\draw (1.1,-3.2) node {$q_{0,1}$};

	\draw (2.5,.8) node {$p_{\infty,1}$};
	\draw (3.3,-.4) node {$p_{\infty,2}$};
	\draw (3.5,-1.4) node {$p_{\infty,3}$};
  \end{tikzpicture}
  \quad
  \begin{tikzpicture}[scale=1.5]
% West dominos
\foreach \y in {1,2}
\foreach \x in {2,3,4}
{ 
\draw[line width = 1mm] (\x-1,\y-1)--(\x-.5,\y-.5);
}

% North dominos
\foreach \y in {3}
\foreach \x in {2,3,4}
{
\draw[line width = 1mm] (\x,\y-1)--(\x-.5,\y-.5);
}

% black points
\foreach \x in {1,...,4}
{\foreach \y in {0,...,2}
{\draw (\x,\y) node[circle,draw=black,fill=black,inner sep=2pt]{};
}
}

% white points and lines
\foreach \x in {2,...,4}
{\foreach \y in {1,2}
{\draw (\x-1,\y-1)--(\x,\y);
\draw (\x-1,\y)--(\x,\y-1);
\draw (\x-.5,\y-.5) node[circle,draw=black,fill=white,inner sep=2pt]{};
}
\foreach \y in {0}
{\draw (\x-.5,\y-.5)--(\x,\y);
\draw (\x-1,\y)--(\x-.5,\y-.5);
\draw (\x-.5,\y-.5) node[circle,draw=black,fill=white,inner sep=2pt]{};
}
\foreach \y in {3}
{\draw (\x-1,\y-1)--(\x-.5,\y-.5);
\draw (\x-.5,\y-.5)--(\x,\y-1);
\draw (\x-.5,\y-.5) node[circle,draw=black,fill=white,inner sep=2pt]{};
}
}

 % loops
 \draw [<-](.75,-.25)--(4.25,-.25);
 \draw (.73,-.25) node[below] {$\gamma_u$};
  \draw [<-](3.75,-.75)--(3.75,2.75);
 \draw (3.75,-.75) node[right] {$\gamma_v$};
 \end{tikzpicture}
 \end{center}
  \caption{Left picture: An example of the curves and points from Definitions~\ref{def:curve_integration} and~\ref{def:point_integration}. The curves and points correspond to the coordinates~$(\xi,\eta)$ in different regions, the rough (blue), smooth (green) and frozen region (orange). Adding a segment along the boundary of the amoeba to any of the curves does not change the integral, since we integrate along that segment in both directions. Right picture: The fundamental domain with the limiting dimer configuration in the frozen region corresponds to the orange curve in the left picture. The slope in this example is~$(0,-2)$. \label{fig:amoeba_curves}}
\end{figure}

In Theorem~\ref{thm:local_limit} below, we will describe the limiting kernel both as an integral over contours in~$\mathcal R$ and as a double integral over a torus. The curves of integration and the radius of the circles depend on~$(\xi,\eta)$, as described in the following definitions. Recall the definitions in Section~\ref{sec:harnack} and~$\Omega$ from Definition~\ref{def:omega}.
\begin{definition}\label{def:curve_integration}
Given~$(\xi,\eta)\in(-1,1)^2$, we define a curve~$\gamma_{\xi,\eta}$ in~$\mathcal R$. The curve is, up to orientation, invariant under complex conjugation. By the invariance it is enough to specify the part of the curve that lies in~$\mathcal R_0$, which is done as follows. If~$(\xi,\eta)$ is in the~$i$th frozen region, then we take a curve in~$\mathcal R_0$ going from~$A_{0,i}$ to~$A_{0,1}$, if~$(\xi,\eta)$ is in the~$i$th smooth region, then we take a curve going from~$A_i$ to~$A_{0,1}$, and if~$(\xi,\eta)$ is in the rough region, then we take a curve going from~$\Omega(\xi,\eta)$ to~$A_{0,1}$. See Figure~\ref{fig:amoeba_curves} for a sketch of the projection of the curves to the amoeba. 
\end{definition}

\begin{definition}\label{def:point_integration}
Given~$(\xi,\eta)\in (-1,1)^2$, we define~$(r_1,r_2)=(r_1(\xi,\eta),r_2(\xi,\eta))\in \RR^2$ as follows. If~$(\xi,\eta)$ is in the~$i$th frozen region, then we take~$(r_1,r_2)$ as a point in the connected component of the complement of the amoeba~$\mathcal A$ with boundary~$\Log A_{0,i}$. If~$(\xi,\eta)$ is in the~$i$th smooth region, then we take~$(r_1,r_2)$ as a point in the connected component of the complement of~$\mathcal A$ with boundary~$\Log A_{i}$. Finally, if~$(\xi,\eta)$ is in the rough region, then we take~$(r_1,r_2)=\Log(\Omega(\xi,\eta))$. See Figure~\ref{fig:amoeba_curves}.
\end{definition}

Note that if~$(\xi,\eta)$ is in the rough region, then~$(r_1,r_2)$ is the image under~$\Log$ of the endpoints of~$\gamma_{\xi,\eta}$, and if~$(\xi,\eta)$ is in the frozen or smooth regions, then~$(r_1,r_2)$ is in the interior of the component of the complement of the amoeba, with boundary which intersects the (image of the) curve~$\gamma_{\xi,\eta}$.

We are ready to state one of our main results. Recall the matrix functions~$\phi_m$ and~$\Phi$ from Section~\ref{sec:paths}.
\begin{theorem}\label{thm:local_limit}
Let~$x,x',y,y',\kappa,\kappa',\zeta,\zeta'$ be as in~\eqref{eq:local_coordinates_x} and~\eqref{eq:local_coordinates_y}, and assume that~$(\xi_N,\eta_N)\to(\xi,\eta)\in (-1,1)\backslash \partial \mathcal F_R$, as~$N\to \infty$, where~$\partial \mathcal F_R$ is the boundary of the frozen region. Let~$K_\text{path}$ be the correlation kernel associated with the probability measure~$\PP_\text{path}$~\eqref{eq:measure_on_points} given in Theorem~\ref{thm:bd_thm}. Then
\begin{multline}
\lim_{N\to\infty}\left[K_\text{path}(2\ell x+i,ky+j;2\ell x'+i',ky'+j')\right]_{j',j=0}^{k-1} \\
= \left[K_{(\xi,\eta)}(2\ell\kappa+i,k\zeta+j;2\ell\kappa'+i',k\zeta'+j')\right]_{j',j=0}^{k-1},
\end{multline}
for~$i,i' = 0,1,\dots,2\ell-1$, where
\begin{align}
&\left[K_{(\xi,\eta)}(2\ell\kappa+i,k\zeta+j;2\ell\kappa'+i',k\zeta'+j')\right]_{j',j=0}^{k-1} \label{eq:limiting_kernel_lhs}\\
& = -\frac{\one_{2\ell\kappa+i>2\ell\kappa'+i'}}{2\pi\i}\int_{|z|=1} \left(\prod_{m=1}^{i'}\phi_m(z)\right)^{-1}\Phi(z)^{\kappa-\kappa'}\prod_{m=1}^{i}\phi_m(z)z^{\zeta'-\zeta}\frac{\d z}{z} \\
&  - \frac{1}{2\pi\i}\int_{\gamma_{\xi,\eta}}\left(\prod_{m=1}^{i'}\phi_m(z)\right)^{-1}\frac{\adj (\Phi(z)-wI)}{\partial_{w}\det (\Phi(z)-wI)}\prod_{m=1}^{i}\phi_m(z)\frac{z^{\zeta'-\zeta}}{w^{\kappa'-\kappa}} \frac{\d z}{z} \label{eq:limiting_kernel}\\
&=\frac{1}{(2\pi\i)^2}\int_{|z|=\e^{r_1}}\int_{|w|=\e^{r_2}}\left(\prod_{m=1}^{i'}\phi_m(z)\right)^{-1} \\
& \times (\Phi(z)-wI)^{-1}\Phi(z)^{\one_{i'\geq i}}w^{\one_{i'< i}}
\prod_{m=1}^{i}\phi_m(z)\frac{z^{\zeta'-\zeta}}{w^{\kappa'-\kappa}} \frac{\d w}{w}\frac{\d z}{z},\label{eq:local_limit}
\end{align}
and the contours~$|z|=1$,~$|z|=\e^{r_1}$, and~$|w|=\e^{r_2}$ are positively oriented. The convergence is uniform on compact subsets of the respective region. 
\end{theorem}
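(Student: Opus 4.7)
The plan is to start from the double contour integral expression for~$K_\text{path}$ given in Theorem~\ref{thm:bd_thm} and execute a steepest descent analysis on the spectral curve~$\mathcal R$ using the amoeba picture outlined in Section~\ref{sec:intro:linear_flow} and the introduction. First I would use Theorem~\ref{thm:intro:wiener-hopf} to replace the Wiener--Hopf factors~$\widetilde\phi_\pm$ of~$\phi=\Phi^{kN}$ by expressions built from the left and right null-vectors~$\psi_{kN,\pm}$ of~$\Phi-wI$ on~$\mathcal R$. Combined with the description of the zeros and poles of~$\psi_{0,\pm}$ and the linear flow on the Jacobian obtained in Section~\ref{sec:linear_flow}, this lifts the integrand to a meromorphic object on~$\mathcal R\times\mathcal R$ whose~$N$-dependent part is of the form~$\e^{N(F(\tilde q_2)-F(\tilde q_1))}$ times a non-oscillatory prefactor, where~$F$ is the action function of Definition~\ref{def:action_function}. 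The remaining exponentially small discrepancy coming from the non-periodicity of the linear flow is controlled uniformly using the quasi-periodicity of the theta function.

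Next, I would perform the steepest descent analysis through the amoeba, following the informal description in Section~\ref{sec:asymptotic} of the introduction. The contours~$\Gamma_s=\{|z|=r_s\}$ and~$\Gamma_l=\{|z|=r_l\}$ lift to two curves~$\tilde\Gamma_s,\tilde\Gamma_l\subset\mathcal R$, and via the~$\Log$ map they project to families of curves in the amoeba which are free to be deformed except across the angles~$q_{0,i},q_{\infty,i}$ (for~$\tilde\Gamma_s$) and~$p_{0,j},p_{\infty,j}$ (for~$\tilde\Gamma_l$). Using Lemma~\ref{lem:zeros_poles_df} and the behavior~\eqref{eq:limit_f_1}--\eqref{eq:limit_f_4}, I would construct explicit regions in~$\mathcal A$ in which~$\re F$ is strictly smaller (respectively bigger) along~$\tilde\Gamma_s$ (respectively~$\tilde\Gamma_l$) than at the relevant critical point, and deform each contour into such a region. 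The curves~$\tilde\Gamma_s$ and~$\tilde\Gamma_l$ can be taken to avoid each other except along a curve which, in the three phases, is precisely the curve~$\gamma_{\xi,\eta}$ of Definition~\ref{def:curve_integration}: in the rough phase the two contours must cross at the conjugate pair of interior critical points~$\Omega(\xi,\eta),\overline{\Omega(\xi,\eta)}$; in the smooth phase they cross on a compact oval~$A_i$; and in the frozen phase on a boundary arc~$A_{0,i}$. The resulting residue along~$\gamma_{\xi,\eta}$, after using that~$(\Phi-wI)^{-1}=\adj(\Phi-wI)/\det(\Phi-wI)$ and that the residue at a simple zero of~$\det(\Phi(z)-wI)$ in~$w$ reproduces the factor~$\partial_w\det(\Phi-wI)$, yields the integral over~$\gamma_{\xi,\eta}$ in~\eqref{eq:limiting_kernel}. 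The contribution of the first (single-integral) term in~\eqref{eq:bd_thm} is already~$N$-independent and equals the first summand in~\eqref{eq:limiting_kernel_lhs}.

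The deformations pick up no additional contributions because the removable obstructions lie precisely at the angles, across which the relevant factors~$\prod\phi_m$ have zero/pole structure matching the zeros and poles of the lifted integrand, ensuring holomorphy. The double-integral representation~\eqref{eq:local_limit} is obtained from~\eqref{eq:limiting_kernel} by moving back to the base: one checks that the integrand viewed as a function of~$w$ on~$|w|=\e^{r_2}$ has simple poles exactly at the sheets of~$\mathcal R$ above~$|z|=\e^{r_1}$, with~$(r_1,r_2)$ as in Definition~\ref{def:point_integration}, and that summing the residues over those sheets reproduces~\eqref{eq:limiting_kernel} via~$\sum_q\adj(\Phi(z)-w(q)I)/\partial_w\det(\Phi(z)-w(q)I)=$ partial fractions of~$(\Phi(z)-wI)^{-1}$ in~$w$; the choice of~$r_1,r_2$ and the indicator~$\one_{i'\geq i}$ are what guarantee the absence of spurious contributions at~$z=0,\infty$ or~$w=0,\infty$, in line with~\eqref{eq:bernoulli_zero}--\eqref{eq:prod_zero} of Lemma~\ref{lem:transition_matrices_properties}. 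Uniformity on compacts follows because all the above estimates depend continuously on~$(\xi,\eta)$ and the critical points of~$F$ depend continuously (indeed holomorphically in a suitable chart) on~$(\xi,\eta)$ away from~$\partial\mathcal F_R$.

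The main obstacle I expect is the steep descent step, that is, exhibiting the concrete regions of the amoeba in which~$\re F$ has the correct sign behavior in all three phases simultaneously, and justifying that the lifted integrand (with its Jacobian-flow factors from Section~\ref{sec:linear_flow}) is indeed dominated by~$\e^{N\re F}$ up to subexponential error uniformly in~$N$. The assumption that~$\mathcal R$ is a Harnack~$M$-curve (Assumption~\ref{ass:main_ass}\eqref{ass:non-singularity}) and the resulting~$2$-to-$1$ property of the~$\Log$ map, together with the global geometric picture of Theorem~\ref{thm:arctic_curves}, are what make this step manageable and keep it independent of the genus~$g=(k-1)(\ell-1)$.
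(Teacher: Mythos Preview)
Your proposal is correct and follows essentially the same approach as the paper: lift the double integral to~$\mathcal R\times\mathcal R$ via the nullvector identities (Proposition~\ref{prop:finite_kernel} and Lemma~\ref{lem:adjoint}), perform a steep descent analysis through the amoeba region by region (Lemmas~\ref{lem:limit_kernel_rough}--\ref{lem:limit_kernel_frozen}) to obtain the~$\gamma_{\xi,\eta}$-integral~\eqref{eq:limiting_kernel} from the residue at~$z_1=z_2$, and then recover the torus form~\eqref{eq:local_limit} by a residue computation in~$w$ (Section~\ref{sec:uniform_limt}). The one place where the paper's argument is more delicate than your sketch suggests is the uniform boundedness of the non-oscillatory prefactor~$G$ (Lemma~\ref{lem:behavior_one-forms}): controlling the constants~$c_{j,\pm}^{(kN)}$ requires pairing the left and right nullvector flows and evaluating at the angles~$p_{\infty,j+1}$ as in~\eqref{eq:bound_cn}, which is precisely why both flows are carried through the analysis.
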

All the following sections, except Section~\ref{sec:height_function}, are devoted to the proof of this theorem. In Section~\ref{sec:wh} we obtain an expression for the Wiener--Hopf factorization in Theorem~\ref{thm:bd_thm} which is suitable for asymptotic analysis. In Section~\ref{sec:asymptotic} we then rewrite the correlation kernel as a double integral on~$\mathcal R$. After that we proceed to perform a steep descent analysis of the double integral on the Riemann surface. 
\begin{remark}
In Definitions~\ref{def:curve_integration} and~\ref{def:point_integration} there is some freedom in the choice of~$\gamma_{\xi,\eta}$ and~$(r_1,r_2)$. However, the integrals~\eqref{eq:limiting_kernel} and~\eqref{eq:local_limit} are independent of such choices.
\end{remark}

Since Theorem~\ref{thm:inverse_kasteleyn} expresses the inverse Kasteleyn matrix in terms of the correlation kernel of the non-intersecting paths model, we may formulate Theorem~\ref{thm:local_limit} in terms of the dimer model instead of the paths model. Before doing so we change our focus slightly and consider the limit shape of the dimer model. The expected value of the height function can naturally be expressed in terms of~$K_\text{path}$, in fact, the choice of height function in~\eqref{eq:height_difference_aztec} was made so this would be true. It is therefore enough to adjust the proof of the previous theorem slightly to obtain the limit of the expectation of the normalized height function, see Section~\ref{sec:height_function}. 
\begin{proposition}\label{prop:limit_shape}
Let~$x$ and~$y$ be as in~\eqref{eq:local_coordinates_x} and~\eqref{eq:local_coordinates_y}, and assume that~$(\xi_N,\eta_N)\to(\xi,\eta)\in (-1,1)\backslash \partial \mathcal F_R$, as~$N\to \infty$. Set
\begin{equation}\label{eq:limit_shape}
\bar h(\xi,\eta)=\frac{1}{k\ell}\frac{1}{2\pi\i}\int_{\gamma_{\xi,\eta}}\d F+1,
\end{equation}
where~$F$ is given in Definition~\ref{def:action_function} and~$\gamma_{\xi,\eta}$ in Definition~\ref{def:curve_integration}. Let~$h$ be the height function defined by~\eqref{eq:height_difference_aztec}. Then
\begin{equation}
\lim_{N\to \infty}\frac{1}{k\ell N}\EE\left[h(2\ell x+i,2k y+j)\right]=\bar h(\xi,\eta),
\end{equation}
for~$i=0,\dots,2\ell-1$,~$j=0,\dots,2k-1$ with~$i=j\!\! \mod 2$, and
\begin{equation}
\partial_\xi \bar h(\xi,\eta)=-\frac{1}{2\ell}\frac{1}{2\pi\i}\int_{\gamma_{\xi,\eta}}\frac{\d w}{w}, \quad \text{and} \quad \partial_\eta \bar h(\xi,\eta)=\frac{1}{2k}\frac{1}{2\pi\i}\int_{\gamma_{\xi,\eta}}\frac{\d z}{z}.
\end{equation}
\end{proposition}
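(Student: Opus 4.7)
The plan is to (i) establish the slope formulas by differentiating the integral representation of $\bar h$ under the integral sign, and (ii) deduce the pointwise convergence by combining these with Theorem~\ref{thm:local_limit} and a matching of boundary conditions.

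For (i), since $F$ depends on $(\xi,\eta)$ only through the two linear terms in~\eqref{eq:def_action_function}, one has $\partial_\xi \d F = -\frac{k}{2}\frac{\d w}{w}$ and $\partial_\eta \d F = \frac{\ell}{2}\frac{\d z}{z}$. The boundary contributions from the $(\xi,\eta)$-dependent endpoints of $\gamma_{\xi,\eta}$ vanish: in the rough region the moving endpoint is $\Omega(\xi,\eta)$, where $\d F$ vanishes by Definition~\ref{def:omega}; in the smooth and frozen regions the endpoint can be chosen on the relevant real oval independently of $(\xi,\eta)$, since $\d F$ is real on the real ovals by Lemma~\ref{lem:conjugate_functions} and $\gamma_{\xi,\eta}$ is conjugation-symmetric, so that $\int_{\gamma_{\xi,\eta}} \d F$ is purely imaginary and unchanged by displacements of the endpoint along the oval. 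This yields the two displayed slope formulas.

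For (ii), I use $h(0,0) = 0$ and~\eqref{eq:height_difference_aztec} applied along a dual path from $(0,0)$ to $\mathrm f_N = (2\ell x+i,2ky+j)$ to write $\EE[h(\mathrm f_N)]$ as a signed sum of $\PP[e\in M] - \one_{e\in \mathrm N}$ over the crossed edges; by Theorems~\ref{thm:kenyon_formula} and~\ref{thm:inverse_kasteleyn} each probability is a diagonal entry of $K_\text{path}$. Taking the path to be a straight line in the scaled coordinates, Theorem~\ref{thm:local_limit} provides uniform convergence on compact subsets (away from the arctic curve) of each per-edge contribution, so $\frac{1}{k\ell N}\EE[h(\mathrm f_N)]$ converges to a line integral along the corresponding curve in $(-1,1)^2$; the short portions of the path passing near the arctic curve contribute $o(1)$. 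The boundary value at the south corner is $0$, matching $\bar h(-1,-1) = 0$, which is built into the constant $+1$ in~\eqref{eq:limit_shape}, so it suffices to match the gradient of the limiting line integral with $\nabla \bar h$ from (i).

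This gradient matching is the main obstacle. Concretely, the limiting per-fundamental-period sum of edge probabilities at $(\xi,\eta)$ must equal the contour integrals $(2\pi\i)^{-1}\int_{\gamma_{\xi,\eta}}\d w/w$ and $\int \d z/z$. I will carry this out by exploiting the two equivalent representations of $K_{(\xi,\eta)}$ in Theorem~\ref{thm:local_limit}: the double-integral form~\eqref{eq:local_limit} reproduces the Gibbs-measure inverse Kasteleyn~\eqref{eq:inverse_infinite_kasteleyn} with $(r_1,r_2) = \Log\Omega(\xi,\eta)$, so the relevant sum collapses, via Jacobi's formula applied to $\det K_{G_1}(z,w) = P(z,w)$ (Proposition~\ref{prop:spectral_curve}), to a double contour integral of a logarithmic derivative of $P$; contour deformation on the Harnack curve $\mathcal R$ then reduces this to the single contour integral over $\gamma_{\xi,\eta}$ exposed by the single-integral form~\eqref{eq:limiting_kernel}.
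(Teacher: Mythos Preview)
Your derivation of the slope formulas in (i) matches the paper's argument exactly.

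For (ii), however, you take a genuinely different route. The paper does \emph{not} invoke Theorem~\ref{thm:local_limit}; in fact it explicitly notes that the proposition does not follow from that theorem. Instead, the paper takes the specific path $(0,0)\to(2\ell x,0)\to(2\ell x,2ky)$: the horizontal leg along the bottom boundary is deterministic ($h(2\ell x,0)=\ell x$), and the vertical leg gives $\sum_{0\le y_0<y}\Tr[K_{\text{path}}(2\ell x,ky_0+\cdot;2\ell x,ky_0+\cdot)]$. The key move is to perform the geometric sum over $y_0$ \emph{inside} the double contour integral from Theorem~\ref{thm:bd_thm}, turning the simple pole $(z_2-z_1)^{-1}$ into a double pole $(z_2-z_1)^{-2}$. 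One then reruns the steep descent deformation of Section~\ref{sec:steepest_descent} verbatim; the residue at the double pole produces a $z_2$-derivative, whose leading term is $N\int_{\gamma_{\xi,\eta}}\d F$ plus a bounded remainder (controlled via Lemma~\ref{lem:behavior_one-forms}). A separate application of the argument principle handles the $y$-independent part. This yields the formula for $\bar h$ directly, without any gradient matching.

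Your approach---use Theorem~\ref{thm:local_limit} edge-by-edge, pass to a Riemann sum, and identify the integrand with $\nabla\bar h$ via the Jacobi-formula computation from the proof of Corollary~\ref{cor:convergence_gibbs_measure}---should also work, and has the appeal of reusing existing statements rather than reopening the steep descent machinery. The cost is that you must (a) justify the Riemann-sum limit along a path that crosses the arctic curves (your boundedness argument is fine provided the crossings are transversal), and (b) carry out the slope identification, which is essentially the entire proof of Corollary~\ref{cor:convergence_gibbs_measure}. The paper's route is shorter and more self-contained precisely because summing first produces the action function $F$ directly in the exponent, so that no separate slope computation is needed.
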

\begin{remark}
It is known that the scaled height function converges in probability to a deterministic limit, cf.~\cite{CKP00, Gor21, Kuc17}. The function~$\bar h$ in the previous proposition is therefore the limit shape. 
\end{remark}
\begin{remark}\label{rem:height_function_frozen}
Using residue calculus we may simplify the expression of~$\bar h$ when~$(\xi,\eta)$ is in any of the frozen regions. In particular,
\begin{equation}
\bar h(\xi,\eta)=
\begin{cases}
1, & \text{for } \, (\xi,\eta) \, \text{ in the north frozen region,} \\
\frac{1}{2}(1+\xi), & \text{for } \, (\xi,\eta) \, \text{ in the east frozen region,} \\
1+\frac{1}{2}(\xi+\eta), & \text{for } \, (\xi,\eta) \, \text{ in the south frozen region,}\\
\frac{1}{2}(1+\eta), & \text{for } \, (\xi,\eta) \, \text{ in the west frozen region.}
\end{cases}
\end{equation}
This is consistent with having only north dimers in the north frozen region, only east dimers in the east frozen region and so on. 
\end{remark}

The main reason for us to discuss the limit shape is to compare its slope with the slope of the limiting Gibbs measure. To that end, we write the height function in the coordinate system with the projections of~$\gamma_u$ and~$\gamma_v$, defined in Section~\ref{sec:gibbs_measure}, to~$\RR^2$ as the unit vectors. In other words, we divide~$x$ and~$y$ by the size of the Aztec diamond and change their sign. The new coordinates are denoted by~$(u,v)$ and are given by~$u=-\frac{\xi+1}{2\ell}$ and~$v=-\frac{\eta+1}{2k}$, as in Remark~\ref{rem:burgers}. We write~$\bar h(u,v)=\bar h(\xi,\eta)$. The slope of~$\bar h$ in the new coordinate system is
\begin{equation}\label{eq:slope_new_coord}
\nabla \bar h(u,v)=\left(\frac{1}{2\pi\i}\int_{\gamma_{\xi,\eta}}\frac{\d w}{w}, -\frac{1}{2\pi\i}\int_{\gamma_{\xi,\eta}}\frac{\d z}{z}\right).
\end{equation}

The limits in Theorem~\ref{thm:local_limit} are given in a form that resembles the integral formula for the ergodic translation-invariant Gibbs measures discussed in Section~\ref{sec:gibbs_measure}. We will make this relation clearer by relating the integrands in~\eqref{eq:limiting_kernel} and~\eqref{eq:local_limit} with the Kasteleyn matrix on the torus~$K_{G_1}$. The following lemma will also be useful later on. 
\begin{lemma}\label{lem:phi_kasteleyn}
Let~$K_{G_1}$ be the magnetically altered Kasteleyn matrix of the graph~$G_1$ defined in~\eqref{eq:magnetic_kasteleyn_matrix}. Then, for~$(z,w)\in \CC^2$, its inverse is given by 
\begin{equation}
\left(K_{G_1}(z,w)^{-1}\right)_{\mathrm b_{i,j}\mathrm w_{i',j'}}
=\left(\left(\prod_{m=1}^{2i'+1}\phi_m(z)\right)^{-1}(\Phi(z)-wI)^{-1}\Phi(z)^{\one_{i'\geq i}}w^{\one_{i'< i}}\prod_{m=1}^{2i}\phi_m(z)\right)^T_{j+1,j'+1}.
\end{equation}
Consequently, for~$(z,w)\in \mathcal R$,
\begin{equation}
\frac{\adj K_{G_1}(z,w)_{\mathrm b_{i,j}\mathrm w_{i',j'}}\d z}{zw\partial_w P(z,w)}
=\frac{\left(\left(\prod_{m=1}^{2i'+1}\phi_m(z)\right)^{-1}\adj(\Phi(z)-wI)\prod_{m=1}^{2i}\phi_m(z)\d z\right)^T_{j+1,j'+1}}{z\partial_w \det (\Phi(z)-wI)}.
\end{equation}
\end{lemma}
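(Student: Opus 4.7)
My plan is to prove the two equalities in sequence, both by direct verification.

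For the first equality, I will check that the claimed matrix, call it $M$, satisfies $K_{G_1}(z,w)\,M = I$ by computing the product block by block. Using the sparse block structure~\eqref{eq:magnetic_kasteleyn_matrix} together with the identifications $A_i = \phi_{2i-1}^T$ and $B_i = -(\phi_{2i}^{-1})^T$, each block $(i', i'')$ of $K_{G_1}\,M$ is a sum of exactly two contributions, coming from $A_{i'+1}$ and either $B_{i'+1}$ (when $i' < \ell-1$) or $wB_\ell$ (when $i' = \ell-1$). After transposing and pulling common factors outside, the computation reduces to evaluating
\begin{equation}
\left(\prod_{m=1}^{2i''+1}\phi_m\right)^{\!-1}\!(\Phi-wI)^{-1}\Bigl[\Phi^{\one_{i''\geq i'}}w^{\one_{i''< i'}} - \Phi^{\one_{i''\geq i'+1}}w^{\one_{i''< i'+1}}\Bigr]\prod_{m=1}^{2i'+1}\phi_m.
\end{equation}
A straightforward case analysis on the three possibilities $i'' > i'$, $i'' = i'$, $i'' < i'$ shows that the bracket vanishes except when $i'' = i'$, where it equals $\Phi - wI$ and cancels the adjacent $(\Phi-wI)^{-1}$ to yield $\delta_{i', i''} I$. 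The wrap-around case $i' = \ell - 1$ is handled in the same way after using $\Phi\,\phi_{2\ell}^{-1} = \prod_{m=1}^{2\ell-1}\phi_m$ to recognize the same bracket pattern.

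For the second equality, I will use $\adj K_{G_1} = (\det K_{G_1})\, K_{G_1}^{-1}$ combined with Proposition~\ref{prop:spectral_curve}, which gives $\det K_{G_1}(z, w) = \prod_{i=1}^\ell (1 - \beta_i^v z^{-1})\, \det(\Phi(z) - wI)$. The crucial observation is that for $(z, w) \in \mathcal R$ the identity $\adj(\Phi - wI)(\Phi - wI) = \det(\Phi - wI)\,I = 0$ shows that each row of $\adj(\Phi - wI)$ is a left eigenvector of $\Phi$ with eigenvalue $w$. Consequently, on $\mathcal R$,
\begin{equation}
\adj(\Phi - wI)\, \Phi^{\one_{i' \geq i}} w^{\one_{i' < i}} = w\, \adj(\Phi - wI),
\end{equation}
regardless of $i$ and $i'$. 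The extra factor $w$ cancels the $w$ in the denominator $zw\, \partial_w P$, and the factor $\prod_{i=1}^\ell (1 - \beta_i^v z^{-1})$ cancels between the numerator and the denominator, producing the claimed identity.

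The manipulations are essentially elementary algebraic verifications; the only points requiring care are keeping track of transposes between the block form of $K_{G_1}^{-1}$ and the form displayed in the lemma, the noncommutativity of the $\phi_m$'s (so that the order in the products is important), and the wrap-around block at $i' = \ell - 1$. None of these presents a substantive obstacle.
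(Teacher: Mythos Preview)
Your proof is correct and follows essentially the same approach as the paper's: a direct block-by-block verification of $K_{G_1}M=I$ for the first equality (which the paper leaves as ``a verification using the explicit formula~\eqref{eq:magnetic_kasteleyn_matrix}''), and the same eigenvector identity $\adj(\Phi-wI)\Phi=w\,\adj(\Phi-wI)$ on~$\mathcal R$ together with the relation $P/\partial_wP=\det(\Phi-wI)/\partial_w\det(\Phi-wI)$ for the second. Your write-up simply makes the verification step more explicit than the paper does.
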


\begin{proof}
The first statement follows by a verification using the explicit formula~\eqref{eq:magnetic_kasteleyn_matrix}. The second statement then follows since
\begin{equation}
\frac{P(z,w)}{\partial_w P(z,w)}=\frac{\det (\Phi(z)-wI)}{\partial_w \det (\Phi(z)-wI)},
\end{equation}
and
\begin{equation}
\adj(\Phi(z)-wI)\Phi(z)=\det(\Phi(z)-wI)I+\adj(\Phi(z)-wI)w=\adj(\Phi(z)-wI)w,
\end{equation}
for~$(z,w)\in \mathcal R$.
\end{proof}

Theorem~\ref{thm:intro:gibbs_limit} now follows as a corollary to Theorem~\ref{thm:local_limit} together with Theorem~\ref{thm:inverse_kasteleyn} and Lemma~\ref{lem:phi_kasteleyn}.
\begin{corollary}\label{cor:convergence_gibbs_measure}
Let~$(\xi,\eta)\in (-1,1)$ and let~$(r_1,r_2)$ be as in Definition~\ref{def:point_integration}. Let 
\begin{equation}
e_m^{(N)}=\mathrm{b}_{\ell x_m+i_m,ky_m+j_m}\mathrm{w}_{\ell x_m'+i_m',ky_m'+j_m'}, \quad m=1,\dots,p,
\end{equation}
be edges in~$G_\text{Az}$, with
\begin{equation}
 x_m=\frac{kN}{2}(\xi_N+1)+\kappa_m, \quad x_m'=\frac{kN}{2}(\xi_N+1)+\kappa_m',
\quad y_m=\frac{\ell N}{2}(\eta_N+1)+\zeta_m, \quad y_m'=\frac{\ell N}{2}(\eta_N+1)+\zeta_m',
\end{equation}
and set~$e_m=\mathrm{b}_{\ell \kappa_m+i_m,k\zeta_m+j_m}\mathrm{w}_{\ell \kappa_m'+i_m',k\zeta_m'+j_m'}$. Assume that~$(\xi_N,\eta_N)\to(\xi,\eta)\in (-1,1)\backslash \partial \mathcal F_R$, as~$N\to \infty$. Then
\begin{equation}\label{eq:limit_to_gibbs_measure}
\lim_{N\to\infty}\PP\left[e_1^{(N)},\dots,e_p^{(N)}\in M_N\right]=\PP_{(r_1,r_2)}\left[e_1,\dots,e_p\in M\right],
\end{equation}
where the probability measures on the left and right hand side are defined by~\eqref{eq:measure_dimer} and~\eqref{eq:gibbs_measure}, respectively. 

Moreover, the slope~$(s,t)$ of the limiting measure, cf.~\eqref{eq:slope_1} and~\eqref{eq:slope_2}, is given by
\begin{equation}\label{eq:equal_slopes}
(s,t)=\nabla \bar h(u,v),
\end{equation}  
where the right hand side is the gradient of the limit shape in scaled coordinates as in~\eqref{eq:slope_new_coord}.
\end{corollary}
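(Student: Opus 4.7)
The first identity~\eqref{eq:limit_to_gibbs_measure} will follow by assembling previously established results. I would start by applying Kenyon's formula (Theorem~\ref{thm:kenyon_formula}) to the Aztec dimer probability, obtaining a product of Kasteleyn entries times a determinant of inverse Kasteleyn entries. For $N$ large enough that the edges $e_m^{(N)}$ lie in the bulk, the weights $(K_{G_\text{Az}})_{\mathrm{w}_m\mathrm{b}_m}$ coincide with the doubly periodic weights $(K_G)_{\mathrm{w}_m\mathrm{b}_m}$. By Theorem~\ref{thm:inverse_kasteleyn}, each entry of $K_{G_\text{Az}}^{-1}$ equals a specific entry of $K_\text{path}$, and Theorem~\ref{thm:local_limit} gives their limit. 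Using the form~\eqref{eq:local_limit} of $K_{(\xi,\eta)}$ together with Lemma~\ref{lem:phi_kasteleyn}, I would then identify the limit with the double integral~\eqref{eq:inverse_infinite_kasteleyn} defining $(K_{G,(r_1,r_2)}^{-1})_{\mathrm{b}_m\mathrm{w}_{m'}}$; comparison with~\eqref{eq:gibbs_measure} yields~\eqref{eq:limit_to_gibbs_measure}.

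For~\eqref{eq:equal_slopes}, the plan is to combine the first part with Proposition~\ref{prop:limit_shape}. Choosing the reference configuration $M'$ on $G_1$ to be the all-north perfect matching (which is consistent with the reference set $\mathrm{N}$ of~\eqref{eq:height_difference_aztec}), the expected height difference $\EE_{(r_1,r_2)}[H(\mathrm{f}')-H(\mathrm{f})]$ over one period of $\gamma_u$ becomes a fixed, finite linear combination of Gibbs edge probabilities minus reference indicators. This sum has the same functional form as the corresponding Aztec quantity $\EE[h(\mathrm{f}_N^{\gamma_u})-h(\mathrm{f}_N)]$, where $\mathrm{f}_N^{\gamma_u}$ is the face shifted from $\mathrm{f}_N$ by one period of $\gamma_u$ in the plane. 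Applying the first part edge-by-edge then gives
\begin{equation*}
s=\EE_{(r_1,r_2)}[H(\mathrm{f}')-H(\mathrm{f})]=\lim_{N\to\infty}\EE\bigl[h(\mathrm{f}_N^{\gamma_u})-h(\mathrm{f}_N)\bigr],
\end{equation*}
and analogously with $\gamma_v$ for $t$.

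To match the right-hand side with $\nabla\bar h$, note that one period of $\gamma_u$ corresponds to $\Delta u=1/(k\ell N)$ in the coordinates of Remark~\ref{rem:burgers} (the sign being fixed by the orientation of $\gamma_u$ in Figure~\ref{fig:magnetic_weights}). Combining this with Proposition~\ref{prop:limit_shape}, which provides $\bar h$ as an explicit smooth function satisfying $\EE[h]/(k\ell N)\to\bar h$ locally, a discrete-to-continuous derivative argument will give
\begin{equation*}
\EE\bigl[h(\mathrm{f}_N^{\gamma_u})-h(\mathrm{f}_N)\bigr] = k\ell N\cdot\Delta u\cdot\partial_u\bar h(u,v)+o(1) = \partial_u\bar h(u,v)+o(1),
\end{equation*}
so $s=\partial_u\bar h(u,v)$; similarly $t=\partial_v\bar h(u,v)$. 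Together with~\eqref{eq:slope_new_coord} this gives~\eqref{eq:equal_slopes}. The main obstacle I anticipate is bookkeeping of sign conventions (orientations of $\gamma_u,\gamma_v$ in Figure~\ref{fig:magnetic_weights}, the signs appearing in~\eqref{eq:slope_1}--\eqref{eq:slope_2} and the reference configuration), together with rigorously justifying the discrete-to-continuous derivative step: pointwise convergence $\EE[h]/(k\ell N)\to\bar h$ is not obviously enough to take the limit of a finite difference divided by a shift of order $1/N$, and one must appeal to some uniform control in a neighborhood of $(u,v)$ --- either via standard concentration of the height function or by observing that the $O(1)$-quantity $\EE[h(\mathrm{f}_N^{\gamma_u})-h(\mathrm{f}_N)]$ itself converges by Part~1, whence equality with $\partial_u\bar h(u,v)$ is forced by the known smoothness of $\bar h$.
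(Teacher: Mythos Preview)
Your argument for~\eqref{eq:limit_to_gibbs_measure} is correct and coincides with the paper's: Kenyon's formula, Theorem~\ref{thm:inverse_kasteleyn}, Theorem~\ref{thm:local_limit}, and Lemma~\ref{lem:phi_kasteleyn} combine exactly as you describe.

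For~\eqref{eq:equal_slopes} your route is genuinely different. The paper never passes back through the Aztec diamond: it works directly with the Gibbs formula~\eqref{eq:slope_1}, rewrites $(K_{G,(r_1,r_2)}^{-1})_{\mathrm b\mathrm w}$ as an integral along $\gamma_{\xi,\eta}$ using the equality of~\eqref{eq:limiting_kernel} and~\eqref{eq:local_limit} together with Lemma~\ref{lem:phi_kasteleyn}, and then observes that the edge sum over $\gamma_u$ assembles into $-z\,\Tr\!\big(\adj K_{G_1}\,\partial_z K_{G_1}\big)=-z\,\partial_z P$ by Jacobi's formula, so the integrand on $\mathcal R$ collapses to $\frac{\d w}{w}$ and one reads off the first component of~\eqref{eq:slope_new_coord} directly. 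This is a pure algebraic identity on the spectral curve and needs no limiting argument beyond Theorem~\ref{thm:local_limit}; it also does not use Proposition~\ref{prop:limit_shape}.

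Your approach can be made to work, but the closing sentence is not yet a proof. Knowing that the $O(1)$ finite difference $\EE[h(\mathrm f_N^{\gamma_u})-h(\mathrm f_N)]$ converges (to $s$) and that $\bar h$ is smooth does not by itself force $s=\partial_u\bar h$: pointwise convergence of functions plus convergence of a discrete derivative at scale $1/N$ does not in general identify the limit of the latter with the derivative of the former. What closes the gap is telescoping. The uniform convergence in Theorem~\ref{thm:local_limit} makes $\EE[h(\cdot+\text{one period})-h(\cdot)]\to s(\cdot)$ uniform on compact subsets of the region, so summing $\sim k\ell N(u_2-u_1)$ consecutive one-period differences and dividing by $k\ell N$ yields a Riemann sum converging to $\int_{u_1}^{u_2} s(u',v)\,\d u'$; comparing with $\bar h(u_2,v)-\bar h(u_1,v)$ from Proposition~\ref{prop:limit_shape} and differentiating gives $s=\partial_u\bar h$. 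You should make this step explicit rather than invoke ``forced by smoothness''.
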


\begin{proof}
The convergence of the inverse Kasteleyn matrix of~$G_{Az}$ to that of the infinite graph~$G$, cf.~\eqref{eq:inverse_infinite_kasteleyn},
\begin{equation}\label{eq:limit_inverse_kasteleyn}
\lim_{N\to \infty}(K_{G_\text{Az}}^{-1})_{\mathrm{b}_{\ell x_m+i_m,ky_m+j_m}\mathrm{w}_{\ell x_m'+i_m',ky_m'+j_m'}}=(K_{G,(r_1,r_2)}^{-1})_{\mathrm{b}_{\ell \kappa_m+i_m,k\zeta_m+j_m}\mathrm{w}_{\ell \kappa_m'+i_m',k\zeta_m'+j_m'}},
\end{equation}
is a direct consequence of Theorems~\ref{thm:inverse_kasteleyn} and~\ref{thm:local_limit} together with Lemma~\ref{lem:phi_kasteleyn}. We know by~\eqref{eq:kenyon_formula} and~\eqref{eq:gibbs_measure} that the left and right hand side of~\eqref{eq:limit_to_gibbs_measure} can be expressed in terms of respective inverse Kasteleyn matrices. Hence, the limit~\eqref{eq:limit_to_gibbs_measure} follows from~\eqref{eq:limit_inverse_kasteleyn}.

What remains is to match the two slopes in~\eqref{eq:equal_slopes}. The slopes of the ergodic translation-invariant Gibbs measures have been computed both in~\cite{KOS06} and~\cite{BCT22}. To make sure to match our convention, we include a short computation here as well. Recall the expression of the slope~$(s,t)$ in terms of the sum of edge probabilities~\eqref{eq:slope_1} and~\eqref{eq:slope_2},
\begin{equation}
s=-\sum_{e=\mathrm w\mathrm b}(K_{G})_{\mathrm w\mathrm b}\left(K_{G,(r_1,r_2)}^{-1}\right)_{\mathrm b\mathrm w}, \quad \text{and} \quad 
t=\sum_{e=\mathrm w\mathrm b}(K_{G})_{\mathrm w\mathrm b}\left(K_{G,(r_1,r_2)}^{-1}\right)_{\mathrm b\mathrm w}-k,
\end{equation}
where the sum is taken over all edges intersecting~$\gamma_u$ and~$\gamma_v$, respectively, which are the two loops on the torus, see Section~\ref{sec:gibbs_measure}. Let us focus on the expression for~$s$, the expression for~$t$ follows in a similar way. 

To match the two slopes, we want to write~$K_{G,(r_1,r_2)}$ from~\eqref{eq:gibbs_measure} as an integral on the Riemann surface~$\mathcal R$. We obtain such an expression by using the equality of~\eqref{eq:limiting_kernel} and~\eqref{eq:local_limit}. Indeed, we use the first equality in Lemma~\ref{lem:phi_kasteleyn}, then the equality of~\eqref{eq:limiting_kernel} and~\eqref{eq:local_limit}, and finally the second equality in Lemma~\ref{lem:phi_kasteleyn}. We obtain an expression for~$K_{G,(r_1,r_2)}$ in terms of an integral along~$\gamma_{\xi,\eta}$, and an integral along the unit circle. 

If~$e\in G_1$ crosses~$\gamma_u$, then~$e=\mathrm w_{i',k-1}\mathrm b_{i,0}$, for~$i'=0,\dots,\ell-1$ and~$i=i'$ or~$i=i'+1$. For such edges, 
\begin{equation}\label{eq:kasteleyn_torus_plane}
K_{G_1}(z,w)_{\mathrm w_{i,k-1}\mathrm b_{i',0}}=
\begin{cases}
z^{-1}w(K_{G})_{\mathrm w_{\ell-1,-1}\mathrm b_{\ell,0}}, & i=i'+1=\ell, \\
z^{-1}(K_{G})_{\mathrm w_{i,-1}\mathrm b_{i',0}}, & \text{otherwise}.
\end{cases}
\end{equation}
We obtain
\begin{multline}\label{eq:sum_edge_weights_gamma_x}
s=-\sum_{i'=0}^{\ell-1}\sum_{i=i'}^{i'+1}(K_{G})_{\mathrm w_{i',-1}\mathrm b_{i,0}}\left(K_{G,(r_1,r_2)}^{-1}\right)_{\mathrm b_{i,0}\mathrm w_{i',-1}} 
=\sum_{i=1}^\ell \int_{|z|=1}\phi_{2i}(z)_{k,1}\beta_{k,i}z^{-1}\frac{\d z}{z} \\
+\frac{1}{2\pi\i}\int_{\gamma_{\xi,\eta}}\sum_{i'=0}^{\ell-1}\sum_{i=i'}^{i'+1}K_{G_1}(z,w)_{\mathrm w_{i',k-1}\mathrm b_{i,0}}\frac{\adj K_{G_1}(z,w)_{\mathrm b_{i,0}\mathrm w_{i',k-1}}}{zw\partial_{w}P(z,w)}\d z,
\end{multline}
where the second term in the right hand side comes from the second term in~\eqref{eq:limiting_kernel} and the first term comes from the respective first term, which is only present when~$e$ is an east edge, due to the indicator function. We may simplify the integrands in both integrals in~\eqref{eq:sum_edge_weights_gamma_x}. Indeed, the integrand in the first integral is equal to~$\frac{\beta_i^v\d z}{z(z-\beta_i^v)}$ by~\eqref{eq:geometric} and~\eqref{eq:def_phi_geometric}, so the integral is zero (as~$\beta_i^v<1$ by Assumption~\ref{ass:main_ass}\eqref{ass:wh}), and we will see below that the integrand in the other integral is equal to~$\frac{\d w}{w}$. 

We first observe that
\begin{equation}\label{eq:tr_derivative}
\Tr \left(\adj K_{G_1}(z,w)\partial_zK_{G_1}(z,w)\right)=-z^{-1}\sum_{i'=0}^{\ell-1}\sum_{i=i'}^{i'+1}\adj K_{G_1}(z,w)_{\mathrm b_{i,0}\mathrm w_{i',k-1}}K_{G_1}(z,w)_{\mathrm w_{i',k-1}\mathrm b_{i,0}}.
\end{equation}
This follows from the fact that
\begin{equation}
\partial_zK_{G_1}(z,w)_{\mathrm w_{i',k-1}\mathrm b_{i,0}}=-z^{-1}K_{G_1}(z,w)_{\mathrm w_{i',k-1}\mathrm b_{i,0}},
\end{equation}
if~$i=0,\dots,\ell-1$ and~$i'=i$ or~$i'=i-1$, and the derivative is zero otherwise. Recall Jacobi's formula: for a differentiable square matrix~$A(z)$ the equality
\begin{equation}\label{eq:jacobis_formula}
\frac{\d}{\d z}\det A(z)=\Tr\left(\adj A(z)A'(z)\right)
\end{equation}
holds. Jacobi's formula implies that the left hand side of~\eqref{eq:tr_derivative} is equal to~$\partial_z \det K_{G_1}(z,w)$. Hence, the second integral in~\eqref{eq:sum_edge_weights_gamma_x} is equal to
\begin{equation}
-\frac{1}{2\pi\i}\int_{\gamma_{\xi,\eta}}\frac{\partial_z \det K_{G_1}(z,w)}{w\partial_{w}P(z,w)}\d z
=\frac{1}{2\pi\i}\int_{\gamma_{\xi,\eta}}\frac{\d w}{w},
\end{equation}
where we used that~$P(z,w)=\det K_{G_1}(z,w)$ and~$\partial_wP(z,w)\d w+\partial_zP(z,w)\d z=0$. Hence,~$s$ is equal to the first component of~$\nabla \bar h(u,v)$.

The computation for~$t$ is similar. The main difference is that the contribution from the first term in~\eqref{eq:limiting_kernel} is now nonzero. The sum, corresponding to the first sum on the right hand side of~\eqref{eq:sum_edge_weights_gamma_x}, is instead given by
\begin{equation}
-\sum_{j'=0}^{k-1}\frac{1}{2\pi\i}\int_{|z|=1}\left(\left(\prod_{m=1}^{2\ell-1}\phi_m(z)\right)^{-1}\Phi(z)\left(-\phi_{2\ell}(z)\right)^{-1}\right)_{j'+1,j'+1} \frac{\d z}{z}=k.
\end{equation} 
\end{proof}

\begin{remark}\label{rem:limiting_slopes}
If~$(\xi,\eta)$ is in the north, east, south or west frozen region, then Corollary~\ref{cor:convergence_gibbs_measure} implies that the slope~$(s,t)$ of the limiting Gibbs measure is equal to~$(0,0)$,~$(-\ell,0)$,~$(-\ell,-k)$ and~$(0,-k)$, respectively. These slopes are precisely the slopes in Remark~\ref{rem:extremal_slopes}. Hence, in the limit the north frozen region consists of only north edges, and similarly for the east, south and west frozen regions.
\end{remark}

\begin{remark}\label{rem:possible_slopes}
The expression of~$(s,t)$ in Corollary~\ref{cor:convergence_gibbs_measure} implies that~$-\ell\leq s\leq 0$ and~$-k\leq t\leq 0$ for all~$(\xi,\eta)\in (-1,1)^2$ (see the proof of Corollary~\ref{cor:height_function_smooth} below for a proof of a similar implication), that is,~$(s,t)+(0,k)\in N(P)$, where~$N(P)$ is the Newton polygon. If we would choose the dimer configuration consisting of only west edges instead of only north edges as the reference dimer configuration in the definition of the height function~\eqref{eq:height_difference_aztec}, then the set of slopes would coincide with~$N(P)$.
\end{remark}

\subsubsection{Frozen regions}\label{sec:frozen_region}
We have seen that the north, east, south and west frozen regions consist of only north, east, south and west edges, respectively. Here we will discuss the other frozen regions. Let us focus our discussion on the~$(2\ell+k+1+m)$th frozen region with~$m=1,\dots,k-2$, that is, the frozen region corresponding to~$A_{0,2\ell+k+1+m}$. In this region the slope is~$(0,m-k)$, and the configuration there consists of only north and west dimers. It is not difficult to see that if~$e$ is a west edge with~$e=\mathrm b_{i,j}\mathrm w_{i,j}\in M$, then~$\mathrm b_{i+1,j}\mathrm w_{i+1,j}\in M$, and similarly for a north edge. That is, each row consists of only west edges or only north edges. If the slope is~$(0,m-k)$ then the number of rows in the fundamental domain consisting of north edges is~$m$, and hence the number of rows consisting of west edges is~$k-m$, cf.~\eqref{eq:slope_1} and~\eqref{eq:slope_2}. In fact, any choice of~$k-m$ rows consisting of west edges gives us an ergodic translation-invariant Gibbs measure with slope~$(0,m-k)$. A natural question is which of these appear in the limit of the Aztec diamond. 

Let~$w_1$ be the weight of a dimer configuration on~$G_1$ where the~$(j+1)$st row,~$j=0,\dots,k-1$, consists of west dimers and the~$(j'+1)$st row,~$j'\neq j$, consists of north dimers. Let~$w_2$ be the weight of the same configuration but with north dimers in the~$(j+1)$st row and east dimers in the~$(j'+1)$st. Then~$w_1=w_2\gamma_{j+1}^h/\gamma_{j'+1}^h$, where~$\gamma_{j+1}^h$ is defined in~\eqref{eq:edge_weights_product}. Recall that the edge weights on the north edges are~$1$. (If they instead were~$\delta_{j+1,i}$, then the relation would have been~$w_1=w_2(\gamma_{j+1}^h/\delta_{j+1}^h)(\delta_{j'+1}^h/\gamma_{j'+1}^h)$.) Let us order these factors by magnitude, 
\begin{equation}\label{eq:frozen:order_angles}
\gamma_{j_1+1}^h>\dots>\gamma_{j_k+1}^h.
\end{equation}
It seems reasonable that the~$k-m$ rows consisting of west dimers should be the rows numbered~$i_n$ for~$n=1,\dots,k-m$. Using Corollary~\ref{cor:convergence_gibbs_measure} we show that this indeed is correct.

Consider the same setting as in Corollary~\ref{cor:convergence_gibbs_measure}, with the west edges~$e^{(N)}=\mathrm{b}_{\ell x+i,ky+j}\mathrm{w}_{\ell x+i,ky+j}$ and~$e=\mathrm{b}_{\ell \kappa+i,k\zeta+j}\mathrm{w}_{\ell \kappa+i,k\zeta+j}$,~$i=0,\dots,\ell-1$,~$j=0,\dots,k-1$, and~$(\xi,\eta)$ in the~$2\ell+k+1+m$ frozen region. Then
\begin{equation}\label{eq:limit_west_edge}
\lim_{N\to \infty}\PP\left[e^{(N)}\in M_N\right]=\PP_{(r_1,r_2)}\left[e\in M\right].
\end{equation}
We claim that the right hand side is equal to~$1$ if~$p_{\infty,j+1}$ is in the interior of~$\gamma_{\xi,\eta}$ and zero otherwise. This means that with probability one, the~$(j+1)$st row contains only west edges, in the limit, if~$p_{\infty,j+1}$ is contained in the interior of~$\gamma_{\xi,\eta}$, and with probability zero if it is not. The angles lying in the interior of~$\gamma_{\xi,\eta}$ are the angles between~$A_{0,2\ell+k+1+m}$ and~$A_{0,1}$. Recall that~$p_{\infty,j'+1}=(\infty,\gamma_{j'+1}^h)$. Hence, the angles that are contained in~$\gamma_{\xi,\eta}$ are~$p_{\infty,j_n+1}$, for~$n=1,\dots,k-m$, where~$j_n$ follows the ordering of~\eqref{eq:frozen:order_angles}, which proves the above description. See Figure~\ref{fig:amoeba_curves}.

Let us end this section by proving the claim. Adjusted to the setting of~\cite{BCT22}, the claim is given in Remark 53 of that paper. We compute the right hand side of~\eqref{eq:limit_west_edge} using~\eqref{eq:gibbs_measure}, which can be written as an integral on~$\mathcal R$ using Theorem~\ref{thm:local_limit} and Lemma~\ref{lem:phi_kasteleyn}. The probability is then equal to
\begin{equation}\label{eq:frozen:edge_prob_integral}
-\gamma_{j+1,i+1}\left(\frac{1}{2\pi\i}\int_{\gamma_{\xi,\eta}}\left(\prod_{m=1}^{2i+1}\phi_m(z)\right)^{-1}\frac{\adj (\Phi(z)-wI)}{\partial_{w}\det (\Phi(z)-wI)}\prod_{m=1}^{2i}\phi_m(z)\frac{\d z}{z}\right)_{j+1,j+1}.
\end{equation}
Lemma~\ref{lem:kasteleyn_thetas} below determines the zeros and poles of the integrand above, and it shows that the integrand has simple poles at~$p_{\infty,j+1}$ and~$q_{0,i+1}$. Hence, if~$p_{\infty,j+1}$ is in the exterior of~$\gamma_{\xi,\eta}$, then~\eqref{eq:frozen:edge_prob_integral} is zero, see Figure~\ref{fig:amoeba_curves}. If~$p_{\infty,j+1}$ is in the interior of~$\gamma_{\xi,\eta}$, then, by the residue theorem,~\eqref{eq:frozen:edge_prob_integral} is equal to
\begin{multline}
-\gamma_{j+1,i+1}\left.\left(\left(\prod_{m=1}^{2i+1}\phi_m(z)\right)^{-1}\frac{\adj (\Phi(z)-wI)}{\partial_{w}\det (\Phi(z)-wI)}\prod_{m=1}^{2i}\phi_m(z)\right)_{j+1,j+1}\right|_{(z,w)=p_{\infty,j+1}} \\
=\gamma_{j+1,i+1}\left(\left(\prod_{m=1}^{2i+1}\phi_m(\infty)\right)^{-1}\prod_{m=1}^{2i}\phi_m(\infty)\right)_{j+1,j+1}=\gamma_{j+1,i+1}\gamma_{j+1,i+1}^{-1}=1.
\end{multline}
In the first equality we have used the fact that all matrices in the product evaluated at~$p_{\infty,j+1}$ are lower triangular, and that
\begin{equation}
\partial_w \det(\Phi(z)-wI)|_{p_{\infty,j+1}}=-\Tr\adj(\Phi(z)-wI)|_{p_{\infty,j+1}}=-\adj(\Phi(z)-wI)_{j+1,j+1}|_{p_{\infty,j+1}}.
\end{equation}
which follows from Jacobi's formula~\eqref{eq:jacobis_formula}, as well as relations~$\Phi(\infty)_{j+1,j+1}=\gamma^h_{j+1}$ and~$p_{\infty,j+1}=(\infty,\gamma^h_{j+1})$.

\subsection{The limit shape in a smooth region}\label{sec:limit_shape_smooth}

The pieces of the limit shape in the smooth regions are flat. In this section we express the gradient in terms of the location of the corresponding compact oval, and we express the additive shift as an integral between some of the angles. Later in Section~\ref{sec:wh} we will see that this shift is closely related to the linear flow in the computation of the Wiener--Hopf factorization. 

Let us, as in the end of Section~\ref{sec:harnack}, view~$\mathcal R$ as a~$k$-sheeted Riemann surface with sheets~$\mathcal R^{(z)}_j=\{(z,w_j(z))\in \mathcal R:z\in \CC\}$,~$j=1,\dots,k$, where~$|w_1(z)|>\dots>|w_k(z)|$ for almost all~$z$. Similarly, we can view it as an~$\ell$-sheeted Riemann surface and denote the sheets by~$\mathcal R^{(w)}_i=\{(z_i(w),w)\in \mathcal R:w\in \CC\}$,~$i=1,\dots,\ell$, where~$|z_1(w)|>\dots>|z_\ell(w)|$ for almost all~$w$. Given~$m=1,\dots,g$, we define~$n$ and~$n'$ so that the compact oval~$A_m$ intersects the two sheets~$\mathcal R^{(z)}_n$ and~$\mathcal R^{(z)}_{n+1}$, and the two sheets~$\mathcal R^{(w)}_{n'}$ and~$\mathcal R^{(w)}_{n'+1}$.

\begin{corollary}\label{cor:height_function_smooth}
Let~$\bar h(u,v)$ be the limit shape defined in Proposition~\ref{prop:limit_shape} with coordinates as in~\eqref{eq:slope_new_coord}. If~$(u,v)$ is in the~$m$th smooth region, that is, the one corresponding to the compact oval~$A_m$, then 
\begin{equation}
\nabla \bar h(u,v)=(-n',-n).
\end{equation}
Let~$H$ be the difference between the limit shape and the parallel plane passing through the origin defined by 
\begin{equation}
H(u,v)=k\ell \left(\bar h(u,v)+(n' u+nv)\right).
\end{equation}
Then 
\begin{equation}
H(u,v)=\int_{\ell\sum_{j=1}^{k}p_{0,j}}^{k\sum_{i=1}^{\ell}q_{0,i}}\omega_m-kn'-\ell n+k\ell,
\end{equation}
where~$\omega_m$ is the~$m$th element of the canonical basis of differentials defined in Section~\ref{sec:abel_theta}.
\end{corollary}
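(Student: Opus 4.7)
The plan is to compute the three integrals $\int_{\gamma_{\xi,\eta}}\tfrac{dw}{w}$, $\int_{\gamma_{\xi,\eta}}\tfrac{dz}{z}$ and $\int_{\gamma_{\xi,\eta}}d\log f$ by identifying the homology class of $\gamma_{\xi,\eta}$ on $\mathcal{R}$ when $(\xi,\eta)$ lies in the $m$th smooth region. By Definition~\ref{def:curve_integration}, in this region $\gamma_{\xi,\eta}$ together with its complex conjugate half defines a closed curve on $\mathcal R$ going from $A_m$ up to $A_{0,1}$ and back through the second copy of the amoeba. In the amoeba picture this is homotopic relative to the angles to $-B_m$ (with the orientation convention from Section~\ref{sec:abel_theta}), up to $A$-cycles and small loops at the two angles of $A_{0,1}$ picked up when closing the curve. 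The $A$-cycles contribute zero to all three integrals, since $z$ and $w$ are real and nonzero on compact ovals, and $f$ is $A$-invariant by Fact~\ref{fact:prop_prime_form}~\eqref{eq:prime_form_lift_a}; the small endpoint loops will be arranged to contribute zero by placing the endpoints consistently and invoking conjugation invariance.

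For the slope, Proposition~\ref{prop:limit_shape} gives
\[
\nabla \bar h(u,v) = \left(\tfrac{1}{2\pi\i}\int_{\gamma_{\xi,\eta}}\tfrac{dw}{w},\; -\tfrac{1}{2\pi\i}\int_{\gamma_{\xi,\eta}}\tfrac{dz}{z}\right).
\]
Using the description $B_m = \{(z,w)\in\mathcal R : |z|=\e^{r_1^{(m)}},\, |w|>\e^{r_2^{(m)}}\}$ with the $z$-projection oriented negatively, the $z$-projection of $B_m$ traces the circle $|z|=\e^{r_1^{(m)}}$ once on each of the $n$ topmost $z$-sheets (since $A_m$ lies between sheets $n$ and $n+1$), so $\tfrac{1}{2\pi\i}\int_{B_m}\tfrac{dz}{z}=-n$. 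The analogous analysis in the $w$-sheet picture yields $\tfrac{1}{2\pi\i}\int_{B_m}\tfrac{dw}{w}=n'$. Combining with $\gamma_{\xi,\eta}\sim -B_m$ gives $\nabla \bar h(u,v)=(-n',-n)$.

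For $H(u,v)$, substitute $dF = \tfrac{k}{2}(1-\xi)\tfrac{dw}{w} - \tfrac{\ell}{2}(1-\eta)\tfrac{dz}{z} - d\log f$ into~\eqref{eq:limit_shape}. The contribution of the first two terms is controlled by the slope computation. For $d\log f$, I rewrite
\[
f(\tilde q)=\prod_{i=1}^\ell \prod_{j=1}^k \frac{E(\tilde q_{0,i},\tilde q)}{E(\tilde p_{0,j},\tilde q)},
\]
and apply Fact~\ref{fact:prop_prime_form}~\eqref{eq:prime_form_lift_a} and~\eqref{eq:prime_form_lift_b} across $A$- and $B$-cycles to obtain $\int_{A_m}d\log f = 0$ and $\int_{B_m}d\log f = -2\pi\i\int_{k\sum_i q_{0,i}}^{\ell\sum_j p_{0,j}}\omega_m$. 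Hence $\int_{\gamma_{\xi,\eta}}d\log f = -2\pi\i\int_{\ell\sum_j p_{0,j}}^{k\sum_i q_{0,i}}\omega_m$. Assembling the three integrals in~\eqref{eq:limit_shape}, passing to $(u,v)$-coordinates via Remark~\ref{rem:intro:coordinates}, and extracting $H(u,v) = k\ell\bigl(\bar h(u,v)+n'u+nv\bigr)$ yields the claimed identity.

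The main obstacle is the rigorous identification $\gamma_{\xi,\eta}\sim -B_m$ in $H_1(\mathcal R\setminus\{\text{angles}\})$, together with the careful tracking of residue contributions at the angles of $A_{0,1}$ encountered when closing the curve. The most delicate piece will be the sign in $\tfrac{1}{2\pi\i}\int_{B_m}\tfrac{dw}{w}=n'$, which requires switching between the $z$-sheet description (where $B_m$ is defined) and the $w$-sheet description (where the sheet count $n'$ is visible) while respecting the orientation convention of Section~\ref{sec:abel_theta}.
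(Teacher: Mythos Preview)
Your approach is essentially the same as the paper's, with one organizational difference worth noting. The paper computes $\int_{\gamma_{\xi,\eta}}\tfrac{dz}{z}$ and $\int_{\gamma_{\xi,\eta}}\tfrac{dw}{w}$ \emph{directly} on $\gamma_{\xi,\eta}$ by counting how many $z$-sheets (respectively $w$-sheets) the curve visits, exactly as you describe for $B_m$; this sidesteps the homological identification $\gamma_{\xi,\eta}\sim -B_m$ for those two integrals and the residue bookkeeping you flag as the main obstacle. The deformation to $-B_m$ is invoked only for the $d\log f$ integral, where it is safe because $d\log f$ has poles only at the $q_{0,i}$ and $p_{0,j}$, none of which lie in the region swept out by the deformation.

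For $\int_{-B_m}d\log f$, the paper recognizes $d\log f$ as the third-kind differential $\omega_{k\sum_i q_{0,i}-\ell\sum_j p_{0,j}}$ and quotes the reciprocity law $\int_{B_m}\omega_{D}=2\pi\i\int_{-D}\omega_m$ from Fay. Your computation via Fact~\ref{fact:prop_prime_form}\,\eqref{eq:prime_form_lift_b} is exactly a direct proof of that reciprocity law in this special case, so the two arguments are equivalent. In short: your plan is correct, and the only simplification the paper makes relative to your outline is to avoid routing the slope integrals through $B_m$, which removes the need to worry about residues at $q_{\infty,i}$ when deforming.
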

\begin{remark}
The integral in the statement should be interpreted as 
\begin{equation}
\int_{\ell\sum_{j=1}^{k}p_{0,j}}^{k\sum_{i=1}^{\ell}q_{0,i}}\omega_m=k\sum_{i=1}^{\ell}\int_{q_0}^{q_{0,i}}\omega_m-\ell\sum_{j=1}^{k}\int_{q_0}^{p_{0,j}}\omega_m,
\end{equation}
for some point~$q_0\in \mathcal R$, and the curves of integration are taken so they do not cross any of the~$A$-cycles or~$B$-cycles.
\end{remark} 
\begin{proof}[Proof of Corollary~\ref{cor:height_function_smooth}]
Recall that the second entry of~$\nabla \bar h(u,v)$ is equal to the integral 
\begin{equation}
-\frac{1}{2\pi\i}\int_{\gamma_{\xi,\eta}}\frac{\d z}{z},
\end{equation}
see~\eqref{eq:slope_new_coord}. The curve~$\gamma_{\xi,\eta}$ intersects~$\mathcal R^{(z)}_j$ for~$j=1,\dots n$, see Figure~\ref{fig:rs_sheets}. Moreover, the projection of~$\gamma_{\xi,\eta}\cap \mathcal R^{(z)}_j$ to the~$z$ variable is a simple closed loop around zero oriented in the positive direction.\footnote{The green curve in Figure~\ref{fig:rs_sheets} shows only half of~$\gamma_{\xi,\eta}$, the other half is the conjugate of the pictured one.} This follows since two consecutive sheets are glued together along either (part of) the positive part of the real line, or (part of) the negative part of the real line, and which of the two alternates as~$j$ increases, see the discussion in Section~\ref{sec:harnack}. Hence, the second entry of~$\nabla \bar h(u,v)$ is~$-n$. Similarly, one shows that the first entry is~$-n'$. The minus sign appears since the curve~$\gamma_{\xi,\eta}\cap \mathcal R^{(w)}_i$ projected to the~$w$ coordinate is oriented in the negative direction. This proves the first statement. 

The limit shape~$\bar h$ given in Proposition~\ref{prop:limit_shape} is expressed in terms of~$\d F$, where~$F$ is as in Definition~\ref{def:action_function}. Using the coordinates~$(u,v)=(-\frac{\xi+1}{2\ell},-\frac{\eta+1}{2k})$ in the definition~\eqref{eq:def_action_function} of~$F$ we obtain
\begin{equation}
H(u,v)=\frac{k}{2\pi\i}\int_{\gamma_{\xi,\eta}}\frac{\d w}{w}-\frac{\ell}{2\pi\i}\int_{\gamma_{\xi,\eta}}\frac{\d z}{z}-\frac{1}{2\pi\i}\int_{\gamma_{\xi,\eta}}\d \log f +k\ell.
\end{equation}
The sum of the first two terms of the right hand side is equal to~$-(kn'+\ell n)$. The contour in the third integral can be deformed to~$-B_m$, the oppositely oriented~$B$-cycle that crosses~$A_m$, without changing the integral, see Figures~\ref{fig:amoeba} and~\ref{fig:amoeba_curves}. Moreover, since~$f$ is a quotient of prime forms, we have~$\d \log f=\omega_{k\sum_{i=1}^{\ell}q_{0,i}-\ell\sum_{j=1}^{k}p_{0,j}}$, where the right hand side is the unique meromorphic differential~$1$-form with zero period along the~$A$-cycles, and simple poles at all~$q_{0,i}$ with residue~$k$ and simple poles at all~$p_{0,j}$ with residue~$-\ell$, see~\cite[Equation (21)]{Fay73} and~\cite[Section 8.2]{BK11}. The integral of this form along the~$B$-cycle~$B_m$ can be expressed in terms of an integral of~$\omega_m$, as follows:
\begin{equation}
\int_{B_m}\omega_{k\sum_{i=1}^{\ell}q_{0,i}-\ell\sum_{j=1}^{k}p_{0,j}}=\int_{\ell\sum_{j=1}^{k}p_{0,j}}^{k\sum_{i=1}^{\ell}q_{0,i}}\omega_m,
\end{equation}
see~\cite[Equation (7)]{Fay73} and~\cite[Equation (2)]{BCT22}. This is known as a \emph{reciprocity law}, and it concludes our proof.
\end{proof}

If we take~$(u_m,v_m)$ in the~$m$th smooth region, for each~$m=1,\dots,g$, then Corollary~\ref{cor:height_function_smooth} implies that
\begin{equation}
(H(u_1,v_1),\dots,H(u_g,v_g))\!\!\!\mod \ZZ^g=k\sum_{i=1}^\ell u(q_{0,i})-\ell\sum_{j=1}^ku(p_{0,j}),
\end{equation}
where~$u$ is the Abel map defined in Section~\ref{sec:abel_theta}. The right hand side will also appear as a linear flow on the Jacobi variety in our calculations, which makes this equality rather interesting. See Remarks~\ref{rem:linear_flow} and~\ref{rem:linear_flow_discussion}.

\section{The Wiener--Hopf factorization}\label{sec:wh}
In this section we obtain an expression for the Wiener--Hopf factorization in the correlation kernel of Theorem~\ref{thm:bd_thm} which is suitable for asymptotic analysis. 

We say that a matrix-valued function~$\phi$ admits a Wiener--Hopf factorization if it factorizes on the unit circle as
\begin{equation}\label{eq:wh}
\phi(z)=\phi_+(z)\phi_-(z)=\widetilde \phi_-(z)\widetilde \phi_+(z),
\end{equation}
where~$\phi_+^{\pm1}$,~$\widetilde \phi_+^{\pm1}$ are analytic in the unit disc and continuous up to the boundary, and~$\phi_-^{\pm1}$,~$\widetilde \phi_-^{\pm1}$ are analytic in the complement of the closed unit disc and continuous up to the boundary, with the behavior~$\phi_-, \widetilde \phi_- \sim z^{-\ell N}I$ as~$z\to \infty$.\footnote{In different settings the normalization at infinity may vary. We use the one that is convenient for the present work} The problem of obtaining a suitable representation of such a factorization, if it exists, is a classical problem and it has been studied extensively. For an overview we refer to~\cite{GMS03} and references therein. See also, e.g.,~\cite{IMM08, KT12} for treatments of Wiener--Hopf factorizations on higher genus Riemann surfaces. In our setting with~$\phi$ given in Section~\ref{sec:paths}, the question of existence of a Wiener--Hopf factorization is answered in~\cite[Theorem 4.8]{BD19}. Namely, if~$\alpha^v_i/\gamma^v_i, \beta^v_i\neq 1$ for all~$i$, where~$\alpha^v_i$,~$\beta^v_i$ and~$\gamma^v_i$ are defined in~\eqref{eq:edge_weights_product}, then~$\phi(z)$ admits a Wiener--Hopf factorization if and only if the winding number of~$\det (\phi(z)z^{\ell N})$ with respect to the unit circle is equal to~$0$. Vanishing of the winding number is equivalent to the condition~$\beta^v_i<1<\alpha^v_i/\gamma^v_i$ for all~$i$, which is exactly~\eqref{ass:wh} in Assumption~\ref{ass:main_ass}. It is also easy to see that the factorization is unique. Indeed, if there are two solutions~$\phi_{i,\pm}$,~$i=1,2$, then~$\phi_{1,+}^{-1}\phi_{2,+}=\phi_{1,-}\phi_{2,-}^{-1}$ on the unit circle, and Liouville's theorem implies that~$\phi_{1,\pm}=\phi_{2,\pm}$. However, to find such a factorization, and, in particular, to find a factorization suitable for asymptotic analysis is, in general, a difficult problem.

\subsection{The iterative re-factorization procedure}\label{sec:switching}

We begin by constructing the Wiener--Hopf factorization recursively. A similar discussion can be found in~\cite[Section 4.4]{BD19} and also, for a biased~$2\times 2$-periodic Aztec diamond, in~\cite[Section 2.4]{BD22}.

We will focus only on the factorization~$\phi(z)=\widetilde \phi_-(z)\widetilde \phi_+(z)$ in~\eqref{eq:wh}, since the other factorization is not present in the formula for the correlation kernel in Theorem~\ref{thm:bd_thm}. Recall that~$\phi=\Phi^{k N}$ where
\begin{equation}\label{eq:phi_recall}
\Phi=\prod_{i=1}^{2\ell} \phi_i,
\end{equation}
with~$\phi_i$ given in~\eqref{eq:def_phi_bernoulli} and~\eqref{eq:def_phi_geometric}. Recall also our convention of the product notation explained in Remark~\ref{rem:products}. First we construct a factorization with the factors being analytic and non-singular in the desired regions, except at zero and infinity. When we have such a factorization, it will be straightforward to obtain the correct normalization. 

Set~$\Phi_0=\Phi$. Let~$\Phi_0=(\Phi_0)_-(\Phi_0)_+$ be a factorization on the unit circle, where~$(\Phi_0)_-^{\pm 1}$ and~$(\Phi_0)_+^{\pm 1}$ are analytic in the exterior of the unit disc and in the closed unit disc, except, possibly, at zero and infinity. We will later specify this factorization. Then
\begin{equation}
\phi=\Phi_0^{kN}=(\Phi_0)_-\left((\Phi_0)_+(\Phi_0)_-\right)^{kN-1} (\Phi_0)_+=(\Phi_0)_-\left(\Phi_1\right)^{kN-1} (\Phi_0)_+,
\end{equation}
where~$\Phi_1=(\Phi_0)_+(\Phi_0)_-$. The idea is to proceed by obtaining a similar factorization of~$\Phi_1=(\Phi_1)_-(\Phi_1)_+$, use these factors to define~$\Phi_2$ and so on. More precisely, given~$\Phi_j$ and a factorization~$\Phi_{j}=(\Phi_j)_-(\Phi_j)_+$, we define~$\Phi_{j+1}=(\Phi_j)_+(\Phi_j)_-$. Inductively, we obtain
\begin{equation}
\phi=\left(\prod_{i=0}^{j-1}(\Phi_i)_-\right)\Phi_j^{kN-j}\left(\prod_{\substack{i=0 \\ \leftarrow}}^{j-1}(\Phi_i)_+\right)=\left(\prod_{i=0}^{j}(\Phi_i)_-\right)\Phi_{j+1}^{kN-(j+1)}\left(\prod_{\substack{i=0 \\ \leftarrow}}^j(\Phi_i)_+\right),
\end{equation}
and hence,
\begin{equation}
\phi=\left(\prod_{i=0}^{kN-1}(\Phi_i)_-\right)\left(\prod_{\substack{i=0 \\ \leftarrow}}^{kN-1}(\Phi_i)_+\right).
\end{equation}
The factors in this factorization of~$\phi$ have the right properties, except at zero and infinity. Before we normalize these factors to have the correct behavior also at zero and infinity, we describe the factorization of~$\Phi_j$,~$j=0,1,\dots$.

In the setting of~\cite{BD19}, there is a particularly concrete way of obtaining the factorization~$\Phi_j$. We describe this procedure when specialized to our setting. Recall the definitions of~$\phi^b$ and~$\phi^g$ in~\eqref{eq:bernoulli} and~\eqref{eq:geometric}. Let~$\vec{\alpha},\vec{\beta},\vec{\gamma} \in \RR_{>0}^k$ be vectors with elements~$(\vec \alpha)_i=\alpha_i$,~$(\vec \beta)_i=\beta_i$,~$(\vec \gamma)_i=\gamma_i$. The factorization relies on the algebraic equation
\begin{equation}\label{eq:switching_rule}
\phi^b(z;\vec{\alpha},\vec{\gamma})\phi^g(z;\vec{\beta})=\phi^g(z;\vec{\beta}')\phi^b(z;\vec{\alpha}',\vec{\gamma})
\end{equation}
where
\begin{equation}\label{eq:update_rule}
\alpha_i'=\alpha_{i-1}\frac{\alpha_i+\gamma_{i+1}\beta_i}{\alpha_{i-1}+\gamma_i\beta_{i-1}} \quad \text{and} \quad \beta_i'=\beta_{i-1}\frac{\alpha_i+\gamma_{i+1}\beta_i}{\alpha_{i-1}+\gamma_i\beta_{i-1}},
\end{equation}
for~$i=1,\dots,k$ and the subscript is taken modulo~$k$. The equality is given in~\cite{BD19, LP12} and is immediate to verify. Moreover,~$\prod_{i=1}^k\alpha'_i=\prod_{i=1}^k\alpha_i$ and~$\prod_{i=1}^k\beta'_i=\prod_{i=1}^k\beta_i$, which implies, using part~\eqref{eq:determinant_bern_geo} of Lemma~\ref{lem:transition_matrices_properties}, that
\begin{equation}\label{eq:invariant_determinant}
\det \phi^b(z;\vec{\alpha},\vec{\gamma})=\det \phi^b(z;\vec{\alpha}',\vec{\gamma}') \quad \text{and} \quad \det \phi^g(z;\vec{\beta})=\det \phi^g(z;\vec{\beta}').
\end{equation}
In particular,~$\phi^b(z;\vec{\alpha},\vec{\gamma})$ is analytic and non-singular if and only if~$\phi^b(z;\vec{\alpha}',\vec{\gamma}')$ is, and~$\phi^g(z;\vec{\beta})$ is analytic and non-singular if and only if~$\phi^g(z;\vec{\beta}')$ is.

Using~\eqref{eq:switching_rule} we obtain the factorization~$\Phi_j=(\Phi_j)_-(\Phi_j)_+$,~$j=0,1,\dots$. By definition,~\eqref{eq:def_phi_bernoulli} and~\eqref{eq:def_phi_geometric}, the factors~$\phi_i$ of~$\Phi_0=\prod_{i=1}^{2\ell} \phi_i$ are equal to~$\phi^b(z;\vec{\alpha}_{i},\vec{\gamma}_{i})$ if~$i$ is odd and to~$\phi^g(z;\vec{\beta}_{i})$ if~$i$ is even, where~$\vec{\alpha}_i$,~$\vec{\beta}_i$ and~$\vec{\gamma}_i$ are vectors constructed from the edge weights. Moreover,~$\phi^b(z;\vec{\alpha}_{i},\vec{\gamma}_{i})$ is analytic and non-singular in the closed unit disk, excepts at zero, and~$\phi^g(z;\vec{\beta}_{i})$ is analytic and non-singular in the complement of the unit disk. This follows from the assumption~$\beta^v_i<1<\alpha^v_i/\gamma^v_i$. We use~\eqref{eq:switching_rule} to rearrange the factors of~$\Phi_0$ so that all factors~$\phi^b$ are moved to the right in the product and all factors~$\phi^g$ are moved to the left. We get that~$\Phi_0=(\Phi_0)_-(\Phi_0)_+$ with
\begin{equation}
(\Phi_0)_-(z)=\prod_{i=1}^\ell \phi^g(z;\vec{\beta}_{0,i}) \quad \text{and} \quad (\Phi_0)_+(z)=\prod_{\substack{i=1 \\ \leftarrow}}^\ell \phi^b(z;\vec{\alpha}_{0,i},\vec{\gamma}_{0,i})
\end{equation}
for some vectors~$\vec{\alpha}_{0,i}, \vec{\beta}_{0,i}, \vec{\gamma}_{0,i}\in \RR^k_{>0}$. We emphasize that the subscript~$(0,i)$ labels the vector and is not a reference to the entries of the vector. Then we repeat the above procedure. Given
\begin{equation}
\Phi_j(z)=\prod_{\substack{i=1 \\ \leftarrow}}^\ell \phi^b(z;\vec{\alpha}_{j-1,i},\vec{\gamma}_{j-1,i})\prod_{i=1}^\ell \phi^g(z;\vec{\beta}_{j-1,i}),
\end{equation}
for some vectors~$\vec{\alpha}_{j-1,i}, \vec{\beta}_{j-1,i}, \vec{\gamma}_{j-1,i}\in \RR^k_{>0}$, we rearrange the terms in the product by moving all factors~$\phi^g$ to the left and all factors~$\phi^b$ to the right using~\eqref{eq:switching_rule}. We obtain the factorization~$\Phi_j=(\Phi_j)_-(\Phi_j)_+$, with
\begin{equation}\label{eq:phi_+_phi_-}
(\Phi_j)_-(z)=\prod_{i=1}^\ell \phi^g(z;\vec{\beta}_{j,i}), \quad \text{and} \quad (\Phi_j)_+(z)=\prod_{\substack{i=1 \\ \leftarrow}}^\ell \phi^b(z;\vec{\alpha}_{j,i},\vec{\gamma}_{j,i}),
\end{equation}
for some vectors~$\vec{\alpha}_{j,i}$,~$\vec{\beta}_{j,i}$ and~$\vec{\gamma}_{j,i}$. 

In principle, one can compute the new constants,~$\vec{\alpha}_{j,i}$,~$\vec{\beta}_{j,i}$ and~$\vec{\gamma}_{j,i}$ using~\eqref{eq:update_rule}. However, in general it seems difficult to obtain a suitable expression as~$j$ tends to infinity, and that is not the path we will take. As we will show in the subsequent sections, it is enough to know the structure of the matrices in~$(\Phi_j)_+$ and~$(\Phi_j)_-$.

To summarize, we have the factorization
\begin{equation}\label{eq:prefactorization}
\phi=\left(\prod_{j=0}^{kN-1}(\Phi_j)_-\right)\left(\prod_{\substack{j=0 \\ \leftarrow}}^{kN-1}(\Phi_j)_+\right),
\end{equation}
where~$(\Phi_j)_+$ and~$(\Phi_j)_-$ are given by~\eqref{eq:phi_+_phi_-}. The factor~$\prod_{j=0}^{kN-1}(\Phi_j)_-$ is analytic and non-singular in the complement of the unit disk, and~$\prod_{\substack{j=0 \\ \leftarrow}}^{kN-1}(\Phi_j)_+$ is analytic and non-singular in the closed unit disk away from zero. What remains is to normalize the factors at zero and infinity.

Lemma~\ref{lem:transition_matrices_properties}~\eqref{eq:bernoulli_zero} implies that~$z^{\ell N}\prod_{\substack{j=0 \\ \leftarrow}}^{kN-1}(\Phi_j)_+$ is analytic and non-singular at zero, and item~\eqref{eq:geometric_infty} of the same lemma implies that there exists a constant invertible matrix~$C$ such that
\begin{equation}
z^{-\ell N}\left(\prod_{j=0}^{kN-1}(\Phi_j)_-\right)C^{-1}=z^{-\ell N}(I+\Ordo(z^{-1})), \quad z \to \infty.
\end{equation}
Hence, the factors of the desired Wiener--Hopf factorization~$\phi=\widetilde \phi_-\widetilde \phi_+$ are given by
\begin{equation}\label{eq:wiener_+}
\widetilde \phi_+(z)=z^{\ell N}C\left(\prod_{\substack{j=0 \\ \leftarrow}}^{kN-1}\Phi_j(z)_+\right),
\end{equation}
and
\begin{equation}\label{eq:wiener_-}
\widetilde \phi_-(z)=z^{-\ell N}\left(\prod_{j=0}^{kN-1}\Phi_j(z)_-\right)C^{-1}.
\end{equation}

\subsection{Eigenvectors and the Wiener--Hopf factorization}\label{sec:wh_eigenvectors}

As mentioned above, keeping track of the constants~$\vec{\alpha}_{j,i}$,~$\vec{\beta}_{j,i}$ and~$\vec{\gamma}_{j,i}$ defined by iterating~\eqref{eq:update_rule} seems, in general, to be a very difficult problem. Instead, we use the construction of the previous section to express the Wiener--Hopf factorization in terms of the eigenvectors of~$\Phi_j$, viewed as functions on the spectral curve. In the following section we will then describe the zeros and poles of the eigenvectors, which is sufficient to recover the Wiener--Hopf factorization.

Recall that~$\Phi_j=(\Phi_j)_-(\Phi_j)_+$ and, by definition,~$\Phi_{j+1}=(\Phi_j)_+(\Phi_j)_-$. Hence,
\begin{equation}
\Phi_{j+1}=(\Phi_j)_+\Phi_j(\Phi_j)_+^{-1}=(\Phi_j)_-^{-1}\Phi_j(\Phi_j)_-.
\end{equation}
These equalities have a direct, yet still essential consequence. Namely, if~$\psi_{j,+}(z,w)$ and~$\psi_{j,-}(z,w)$ are right and left nullvectors of~$\Phi_j(z)-wI$, respectively, then 
\begin{equation}\label{eq:eigenvector_j}
\psi_{j+1,+}(z,w)=(\Phi_j)_+(z)\psi_{j,+}(z,w) \quad \text{and} \quad \psi_{j+1,-}(z,w)=\psi_{j,-}(z,w)(\Phi_j)_-(z)
\end{equation}
are right and left nullvectors of~$\Phi_{j+1}(z)-wI$. We emphasize that~$\psi_{j,+}$ is a column vector while~$\psi_{j,-}$ is a row vector. The definitions in~\eqref{eq:eigenvector_j} lead us to the equalities 
\begin{equation}\label{eq:eigenvector_right_n}
\psi_{k N,+}(z,w)=\left(\prod_{\substack{j=0 \\ \leftarrow}}^{k N-1} \Phi_j(z)_+\right)\psi_{0,+}(z,w),
\end{equation}
and
\begin{equation}\label{eq:eigenvector_left_n}
\psi_{k N,-}(z,w)=\psi_{0,-}(z,w)\left(\prod_{j=0}^{k N-1} \Phi_j(z)_-\right).
\end{equation}
The products on the right hand sides are the factors in~\eqref{eq:prefactorization}, which we know are, up to a factor, equal to~$\widetilde \phi_\pm$. Hence, by multiplying~\eqref{eq:eigenvector_right_n} by~$z^{\ell N}C$ from the left and~\eqref{eq:eigenvector_left_n} by~$z^{-\ell N}C^{-1}$ from the right, and comparing with~\eqref{eq:wiener_+} and~\eqref{eq:wiener_-}, we have shown the following.
\begin{theorem}\label{thm:wiener-hopf}
Let~$\phi=\Phi^{kN}$ with~$\Phi$ given in~\eqref{eq:phi_recall}. Assume~$\psi_{0,+}$ and~$\psi_{0,-}$ are right and left nullvectors of~$\Phi(z)-wI$, respectively, that is,
\begin{equation}
\left(\Phi(z)-wI\right)\psi_{0,+}(z,w)=0, \quad \text{and} \quad \psi_{0,-}(z,w)\left(\Phi(z)-wI\right)=0.
\end{equation}
If~$\psi_{kN,+}$ and~$\psi_{kN,-}$ are obtained via the recursion~\eqref{eq:eigenvector_j}, then
\begin{equation}
z^{\ell N}C\psi_{k N,+}(z,w)=\widetilde \phi_+(z)\psi_{0,+}(z,w),
\end{equation}
and
\begin{equation}
\psi_{k N,-}(z,w)z^{-\ell N}C^{-1}=\psi_{0,-}(z,w)\widetilde \phi_-(z),
\end{equation}
where~$C$ is an invertible constant matrix.
\end{theorem}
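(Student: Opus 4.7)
The plan is to iterate the recursion (\ref{eq:eigenvector_j}) and identify the resulting products with the Wiener--Hopf factors that were already built in (\ref{eq:wiener_+}) and (\ref{eq:wiener_-}) of Section~\ref{sec:switching}. This is a short argument; the substance of the theorem lies in the iterative re-factorization that precedes it.

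First I would check by induction on $j$ that $\psi_{j,+}$ and $\psi_{j,-}$ are, respectively, right and left nullvectors of $\Phi_j(z)-wI$. The base case $j=0$ is the hypothesis. For the inductive step, the key algebraic identities $\Phi_{j+1}=(\Phi_j)_+\Phi_j(\Phi_j)_+^{-1}=(\Phi_j)_-^{-1}\Phi_j(\Phi_j)_-$ (noted in Section~\ref{sec:wh_eigenvectors}) give
\begin{equation*}
(\Phi_{j+1}(z)-wI)\psi_{j+1,+}(z,w)=(\Phi_j)_+(z)\bigl(\Phi_j(z)-wI\bigr)\psi_{j,+}(z,w)=0,
\end{equation*}
and an analogous computation on the left using the second identity.

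Next I would telescope the recursion to obtain
\begin{equation*}
\psi_{kN,+}(z,w)=\Bigl(\prod_{\substack{j=0 \\ \leftarrow}}^{kN-1}(\Phi_j)_+(z)\Bigr)\psi_{0,+}(z,w), \qquad \psi_{kN,-}(z,w)=\psi_{0,-}(z,w)\Bigl(\prod_{j=0}^{kN-1}(\Phi_j)_-(z)\Bigr),
\end{equation*}
with the ordering of the products governed by the convention in Remark~\ref{rem:products}. Comparing with the explicit expressions in (\ref{eq:wiener_+}) and (\ref{eq:wiener_-}), one reads off that $z^{\ell N}C$ times the product on the right of the first formula equals $\widetilde\phi_+(z)$, while the product on the right of the second formula, multiplied on the right by $z^{-\ell N}C^{-1}$, equals $\widetilde\phi_-(z)$. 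Multiplying the two displayed equalities by these normalising factors yields the theorem.

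There is no serious obstacle in this argument; the theorem is a repackaging of the iterative procedure of Section~\ref{sec:switching} in terms of eigenvectors on the spectral curve. The reason to state it in this form is that the nullvectors $\psi_{j,\pm}$, regarded as meromorphic sections on $\mathcal R$, are considerably more tractable than the individual factors $(\Phi_j)_\pm$: their divisors evolve by a linear flow on the Jacobian under $j\mapsto j+1$, cf.~Abel's theorem. This is what will permit effective asymptotic control of the Wiener--Hopf factorization even when the flow is ergodic (in contrast to the torsion-point restriction of~\cite{BD22}), and is taken up in Section~\ref{sec:linear_flow}.
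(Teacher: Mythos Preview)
Your proposal is correct and follows essentially the same approach as the paper: the paper's argument in Section~\ref{sec:wh_eigenvectors} verifies the nullvector property via the conjugation identities, telescopes the recursion to obtain \eqref{eq:eigenvector_right_n} and \eqref{eq:eigenvector_left_n}, and then multiplies by $z^{\pm\ell N}C^{\pm1}$ and compares with \eqref{eq:wiener_+} and \eqref{eq:wiener_-}. Your closing remarks on the linear flow and the contrast with the torsion-point restriction of \cite{BD22} also match the paper's narrative.
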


The equalities in the previous theorem are naturally equalities on the spectral curve~$\mathcal R$, otherwise~$\psi_{j,\pm}$ must be trivial. Knowing~$\psi_{0,\pm}$ and~$\psi_{kN,\pm}$ as meromorphic vectors on~$\mathcal R$ is enough to recover~$\widetilde \phi_\pm$. We will see in Section~\ref{sec:asymptotic} that when writing the integral in Theorem~\ref{thm:bd_thm} as an integral on~$\mathcal R$, it is the products~$\widetilde \phi_+\psi_{0,+}$ and~$\psi_{0,-}\widetilde \phi_-$ that will be relevant for us. Understanding~$\psi_{0,\pm}$ and~$\psi_{kN,\pm}$ is the content of Sections~\ref{sec:linear_flow} and~\ref{sec:one_form_thetas}.

\subsection{The linear flow}\label{sec:linear_flow}
The first goal of this section is to define the vectors~$\psi_{0,+}$ and~$\psi_{0,-}$ and to determine their zeros and poles on~$\mathcal R$. These vectors will be defined in terms of the adjugate~$\adj (\Phi(z)-wI)$, which we will then express in terms of~$\adj K_{G_1}(z,w)$ using Lemma~\ref{lem:phi_kasteleyn}. The zeros and poles of~$\adj K_{G_1}(z,w)$ can then be computed using results of~\cite{BCT22}. The second goal is to determine the zeros and poles of~$\psi_{kN,+}$ and~$\psi_{kN,-}$. This is done using the relation~\eqref{eq:eigenvector_j}.

For~$(z,w)\in \mathcal R$ we define the column and row vectors of differential~$1$-forms~$\psi_{0,+}$ and~$\psi_{0,-}$ by
\begin{equation}\label{eq:eigenvector_right}
\psi_{0,+}(z,w)=\frac{\adj \left(\Phi(z)-wI\right)J_1^T\d z}{z\partial_w \det (\Phi(z)-wI)}
\end{equation}
and
\begin{equation}\label{eq:eigenvector_left}
\psi_{0,-}(z,w)=\frac{J_1\phi_1(z)^{-1}\adj \left(\Phi(z)-wI\right)\d z}{z\partial_w \det (\Phi(z)-wI)},
\end{equation}
where~$J_1=(1,0,\dots,0)$. By definition of the adjugate matrix
\begin{equation}
\adj(\Phi(z)-wI)(\Phi(z)-wI)=(\Phi(z)-wI)\adj(\Phi(z)-wI)=\det(\Phi(z)-wI)I=0,
\end{equation}
if~$(z,w)\in \mathcal R$. This shows that~$\psi_{0,+}$ and~$\psi_{0,-}$ are right and left nullvectors of~$(\Phi(z)-wI)$, respectively. The vectors are meromorphic differentials. The reason for this particular form is that, using the relation in Lemma~\ref{lem:phi_kasteleyn}, the vectors fit into the framework of~\cite{BCT22}.

Recall the definition of~$\Theta$ and the prime form~$E$ in Section~\ref{sec:abel_theta}, and also the definition of the angles in~\eqref{eq:angles_1} and~\eqref{eq:angles_2}.
\begin{lemma}\label{lem:kasteleyn_thetas}
Let~$K_{G_1}$ be the Kasteleyn matrix defined by~\eqref{eq:magnetic_kasteleyn_matrix} and let~$q=(z,w)\in \mathcal R$. For~$i,i'=0,\dots,\ell-1$ and~$j,j'=0,\dots,k-1$, there exist~$e_{\mathrm{b}_{i,j}},e_{\mathrm{w}_{i',j'}}\in \RR^g$ such that
\begin{equation}
\frac{\adj K_{G_1}(z,w)_{\mathrm b_{i,j}\mathrm w_{i',j'}}\d z}{wzP_w(z,w)}
=c_{iji'j'}\Theta\left(q;e_{\mathrm{b}_{i,j}}\right)\Theta\left(q;e_{\mathrm w_{i',j'}}\right)
\frac{\prod_{m=i+1}^{i'}E(q_{\infty,m},q)}{\prod_{m=i+1}^{i'+1}E(q_{0,m},q)}
\frac{\prod_{m=j+1}^{j'}E(p_{0,m},q)}{\prod_{m=j+1}^{j'+1}E(p_{\infty,m},q)},
\end{equation}
for some constant~$c_{iji'j'}\in \CC^*$.
\end{lemma}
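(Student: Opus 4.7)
The plan is to recognise both sides as meromorphic $1$-forms on $\mathcal R$ and match their divisors via Abel's theorem, relying on the M-curve formalism developed in~\cite{BCT22}.

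First, I would apply Lemma~\ref{lem:phi_kasteleyn} to rewrite the left-hand side as
\begin{equation*}
\frac{\left(\left(\textstyle\prod_{m=1}^{2i'+1}\phi_m(z)\right)^{-1}\adj(\Phi(z)-wI)\prod_{m=1}^{2i}\phi_m(z)\right)^T_{j+1,j'+1}\d z}{z\,\partial_w\det(\Phi(z)-wI)}.
\end{equation*}
This makes manifest that the LHS is a meromorphic $1$-form on $\mathcal R$, since $\adj(\Phi-wI)/\partial_w\det(\Phi-wI)$ is well-defined on the spectral curve. The base form $\frac{\adj(\Phi-wI)\d z}{z\,\partial_w \det(\Phi-wI)}$ should be analysed first, using items (\ref{eq:bernoulli_zero}), (\ref{eq:geometric_infty}), (\ref{eq:prod_zero}) of Lemma~\ref{lem:transition_matrices_properties}, to read off the orders of its zeros and poles at the angles from the behaviour of $\Phi$ as $z\to 0,\infty$ and $w\to 0,\infty$.

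Next, multiplication by $\prod_{m=1}^{2i}\phi_m$ on the right and the inverse of $\prod_{m=1}^{2i'+1}\phi_m$ on the left shifts the zero/pole orders at each angle by amounts depending on $i,i',j,j'$. A careful angle-by-angle bookkeeping should produce precisely the angle divisor of the prime-form ratio on the RHS: zeros at $q_{\infty,m}$ for $i{+}1\le m\le i'$ and at $p_{0,m}$ for $j{+}1\le m\le j'$, and poles at $q_{0,m}$ for $i{+}1\le m\le i'{+}1$ and at $p_{\infty,m}$ for $j{+}1\le m\le j'{+}1$ (with the convention of Remark~\ref{rem:products} handling the cases $i'<i$ or $j'<j$). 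For the remaining bulk zeros, I would invoke the fact from~\cite{BCT22} that each entry of $\adj K_{G_1}$, viewed as a section on $\mathcal R$, carries exactly $2g$ zeros on the compact ovals, split into two standard divisors $D_{b_{i,j}}$ and $D_{w_{i',j'}}$ depending separately on the row and column index — this separation is the multiplicative analogue of $\adj K=(\det K)\,K^{-1}$ combined with the factorisation of $K_{G_1}^{-1}$ into row and column data. Setting $e_{b_{i,j}}$ and $e_{w_{i',j'}}$ to be the vectors produced by Jacobi inversion~\eqref{eq:jacobi_inverse} applied to these two standard divisors, the functions $\tilde q\mapsto\Theta(\tilde q;e_{b_{i,j}})$ and $\tilde q\mapsto\Theta(\tilde q;e_{w_{i',j'}})$ have precisely these zero divisors.

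Finally, I would conclude by Abel's theorem: the ratio of the LHS and the proposed RHS is a meromorphic function on $\mathcal R$ with trivial divisor. One must verify (i) that the degree count matches, i.e. the canonical degree $2g-2$ of $\d z/(z\,\partial_w\det)$ combined with the angle shifts balances against $2g$ zeros of the two theta factors plus the angle divisor of the prime-form ratios, and (ii) that the change in both sides when $\tilde q$ is translated by a $B$-cycle agrees, so that the RHS indeed descends to $\mathcal R$; the latter follows by combining the quasi-periodicity~\eqref{eq:quasi-periodic} of $\Theta$ with Fact~\ref{fact:prop_prime_form}(\ref{eq:prime_form_lift_b}). The resulting trivial-divisor function must then be a nonzero constant $c_{iji'j'}\in\CC^*$, since neither side vanishes identically. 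The main obstacle will be the angle-by-angle bookkeeping in the second step, together with verifying that $e_{b_{i,j}},e_{w_{i',j'}}\in\RR^g$ — a consequence of the M-curve property — so that the $\Theta$-zeros land on the compact ovals as required by the structure of $\adj K_{G_1}$.
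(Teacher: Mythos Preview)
Your approach is the direct divisor-matching route, and it differs from the paper's main proof. The paper does not analyze the zeros and poles of the left-hand side at the angles by hand; instead it constructs Fock's adjacency operator $K_F$ from the spectral data (M-curve, angle map, standard divisor), invokes \cite[Lemma~46]{BCT22} to get the prime-form/theta formula for $\adj K_F$ directly, proves that $z^*=w^{-1}$ and $w^*=z$ (Lemma~\ref{lem:parametrization_curve}), and then transfers the result to $K_{G_1}$ via the gauge equivalence $K_F=D_1K_{G_1}D_2$ guaranteed by the spectral transform.

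Your route is closer in spirit to the alternative proof the paper gives in Appendix~\ref{app:alt_proofs}, but that argument is carried out only for $k=2$, and the paper explicitly flags why: the angle-by-angle bookkeeping at $p_{0,n}$ and $p_{\infty,n}$ requires, for general $k$, the \emph{second-order} behaviour of the entries of $\adj(\Phi(z)-wI)$ as $z\to\infty$ or $z\to 0$ (to detect zeros, not just exclude poles), and this computation becomes genuinely involved for $k\ge 3$. Your sentence ``a careful angle-by-angle bookkeeping should produce precisely the angle divisor'' hides exactly this obstacle. The $q_{0,n}$ and $q_{\infty,n}$ angles are fine for all $k$ via the substitution trick (replacing $\prod_{m\le 2i'+1}\phi_m^{-1}$ by $w^{-1}\prod_{m\ge 2i'+2}\phi_m$, etc.), but the $p$-angles are not.

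Separately, your step~3 leans on \cite{BCT22} for the splitting of the $2g$ bulk zeros into two standard divisors indexed by $\mathrm b_{i,j}$ and $\mathrm w_{i',j'}$. The result you need here is precisely the content of the $g_{\mathrm b,\mathrm w}$ factorisation underlying \cite[Lemma~46]{BCT22}; the more elementary input from \cite{KO06} only gives you \emph{one} standard divisor $D_{\mathrm w}$ (the common zeros of a column of $\adj K_{G_1}$), with the second recovered a posteriori by Abel's theorem once all angle orders are known. So either you are implicitly using the same Fock-weight machinery the paper uses --- in which case you may as well cite Lemma~46 and skip the bookkeeping --- or you are trying to avoid it, in which case the $p$-angle analysis is a real gap for $k\ge 3$.
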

Note that the factors in the right hand side of the equality in the previous lemma are not well-defined functions on~$\mathcal R$. The entire product, however, is a well-defined~$1$-form on~$\mathcal R$\footnote{This will be shown in Remark~\ref{rem:meromorphic_differential} below independently of the left side; an explicit form of~$e_{\mathrm{b}_{i,j}}$ and~$e_{\mathrm{w}_{i',j'}}$ will play a role.} and we therefore use~$q=(z,w)$ in the arguments. We will adopt this convention going forward, that is, to use~$q=(z,w)\in \mathcal R$ in the argument when the entire expression is a well-defined~$1$-form or function on~$\mathcal R$.

The lemma follows from~\cite{BCT22} and the proof is given in the following section. We also include an independent argument for~$k=2$ in Appendix~\ref{app:alt_proofs}. 

We are now ready to determine the zeros and poles of the nullvectors~$\psi_{0,+}$ and~$\psi_{0,-}$.
\begin{lemma}\label{lem:eigenvector_zeros_poles}
Let~$\psi_{0,+}$ and~$\psi_{0,-}$ be defined by~\eqref{eq:eigenvector_right} and~\eqref{eq:eigenvector_left}, and denote their entries by~$\psi_{0,\pm}^{(j+1)}$. For~$j=0,\dots,k-1$, there exist~$e_{\mathrm b}, e_{\mathrm w}, e_{\mathrm b_{0,j}}^{(0)}, e_{\mathrm w_{0,j}}^{(0)}\in\RR^g$ such that
\begin{equation}
\psi_{0,+}^{(j+1)}(z,w)=
c_{j,+}^{(0)}\Theta\left(q;e_{\mathrm{b}}\right)\Theta\left(q;e_{\mathrm w_{0,j}}^{(0)}\right)
\frac{\prod_{m=1}^{j-1}E(p_{0,m},q)}{\prod_{m=1}^{j+1}E(p_{\infty,m},q)},
\end{equation}
and
\begin{equation}
\psi_{0,-}^{(j+1)}(z,w)
=c_{j,-}^{(0)}\Theta\left(q;e_{\mathrm{b}_{0,j}}^{(0)}\right)\Theta\left(q;e_{\mathrm w}\right)
\frac{1}{E(q_{0,1},q)}
\frac{\prod_{m=2}^jE(p_{\infty,m},q)}{\prod_{m=1}^jE(p_{0,m},q)},
\end{equation}
for some constants~$c_{j,+}^{(0)},c_{j,-}^{(0)} \in \CC^*$.
\end{lemma}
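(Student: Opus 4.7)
We treat the two nullvectors separately. For $\psi_{0,-}^{(j+1)}$ the identification with Kasteleyn entries is direct: applying Lemma~\ref{lem:phi_kasteleyn} with row index $\mathrm{b}_{0,j}$ and column index $\mathrm{w}_{0,0}$ (so $i=i'=0$ and $j'=0$), the products $\prod_{m=1}^{2i'+1}\phi_m=\phi_1$ and $\prod_{m=1}^{2i}\phi_m=I$ collapse the formula to
$$
\psi_{0,-}^{(j+1)}(z,w) \;=\; \frac{\adj K_{G_1}(z,w)_{\mathrm{b}_{0,j}\mathrm{w}_{0,0}}\,\d z}{zw\,\partial_w P(z,w)}.
$$
Lemma~\ref{lem:kasteleyn_thetas} applied at the same indices then yields the theta-function form, and the ratio of prime-form products simplifies to that in the statement via the conventions of Remark~\ref{rem:products} (decreasing-index products become products of reciprocals). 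This identifies $e_{\mathrm{w}}:=e_{\mathrm{w}_{0,0}}$, $e_{\mathrm{b}_{0,j}}^{(0)}:=e_{\mathrm{b}_{0,j}}$, and $c_{j,-}^{(0)}:=c_{0j00}$.

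For $\psi_{0,+}^{(j+1)} = \adj(\Phi(z)-wI)_{j+1,1}\,\d z/(z\,\partial_w\det(\Phi-wI))$ the situation is harder, because no choice of indices in Lemma~\ref{lem:phi_kasteleyn} produces $\adj(\Phi-wI)_{j+1,1}$ itself; only entries of $(\prod_{m=1}^{2i'+1}\phi_m)^{-1}\adj(\Phi-wI)$ are directly accessible. My plan is to exploit the rank-one factorization $\adj(\Phi(z)-wI)=v(z,w)\,u(z,w)^T$ valid on $\mathcal{R}$, where $v$ and $u$ are the (unique up to scalar) right and left nullvectors of $\Phi-wI$, so that $\psi_{0,+}^{(j+1)} = v_{j+1}\,u_1\,\d z/(z\,\partial_w\det)$; the $j$-dependence is concentrated in $v_{j+1}$, while the $j$-independent scalar $u_1\,\d z/(z\,\partial_w\det)$ is pinned down by specializing $j=0$ and comparing with the already-obtained $\psi_{0,-}^{(1)}$, producing the factor $\Theta(q;e_{\mathrm{b}})$. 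The divisor of $v_{j+1}$ on $\mathcal{R}$ is then analyzed locally at each angle: at $p_{\infty,m}$ the lower-triangular structure of $\Phi(\infty)$ from~\eqref{eq:geometric_infty}, together with the distinctness of the $\gamma_j^h$ ensured by Assumption~\ref{ass:main_ass}\eqref{ass:non-singularity}, forces $\adj(\Phi-wI)_{j+1,1}$ to be nonzero exactly when $j+1\ge m$; at $p_{0,m}$ the upper-triangular limit $\Phi(0)=\prod_{i=1}^{\ell}C_i'''$ from Lemma~\ref{lem:transition_matrices_properties}\eqref{eq:prod_zero} determines the orders of vanishing, after tracking the cyclic permutation relating the diagonal of $\Phi(0)$ to the $w$-coordinates of the $p_{0,m}$.

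Combining this local divisor analysis with the simple pole of $\d z/(z\,\partial_w\det)$ at every angle produces exactly the prime-form factors in the statement. The remaining $2g$ zeros of $v_{j+1}$ lie on the compact ovals, and their sum in the Jacobian is determined via Abel's theorem~\eqref{eq:abels_thm}, pinning down $e_{\mathrm{w}_{0,j}}^{(0)}\in\RR^g$; uniqueness (up to multiplicative constant) of a meromorphic $1$-form on $\mathcal{R}$ with prescribed divisor class then forces the stated product form with constant $c_{j,+}^{(0)}$. The principal obstacle is the bookkeeping at the $p_{0,m}$ angles, where the cyclic shift relating positions of the diagonal entries of $\Phi(0)$ to the angle labeling introduces a subtle index shift that is precisely what produces the numerator product $\prod_{m=1}^{j-1}E(p_{0,m},q)$ rather than $\prod_{m=1}^{j}E(p_{0,m},q)$. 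An alternative and more robust route would invoke the general formalism of Section~\ref{sec:one_form_thetas} (following~\cite{BCT22}), in which Lemma~\ref{lem:eigenvector_zeros_poles} emerges as the $m=0$ case of the more general divisor-tracking framework used to prove Proposition~\ref{prop:zeros_poles}.
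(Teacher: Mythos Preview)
Your treatment of $\psi_{0,-}$ is correct and matches the paper verbatim: Lemma~\ref{lem:phi_kasteleyn} at $(i,j')=(0,0)$ followed by Lemma~\ref{lem:kasteleyn_thetas} at $(\mathrm b_{0,j},\mathrm w_{0,0})$.

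Your route for $\psi_{0,+}$ is genuinely different from the paper's, and as written it has gaps. The paper does \emph{not} attempt to analyse $\adj(\Phi-wI)_{j+1,1}$ directly at the angles. Instead it introduces the auxiliary vector
\[
\psi_{-1,+}^{(j+1)}(z,w)=\frac{\adj K_{G_1}(z,w)_{\mathrm b_{0,0}\mathrm w_{0,j}}\,\d z}{wz\,P_w(z,w)},
\]
which \emph{is} a Kasteleyn entry and hence is covered by Lemma~\ref{lem:kasteleyn_thetas}. One then checks $\psi_{0,+}=\phi_1\psi_{-1,+}$ (again by Lemma~\ref{lem:phi_kasteleyn}) and tracks the effect of the bidiagonal matrix $\phi_1$: a short nullvector argument shows $\phi_1$ kills the pole at $q_{0,1}$, the explicit shape~\eqref{eq:bernoulli} shows it introduces a pole at $p_{0,j}$, and Abel's theorem locates the shift of the $g$ moving zeros. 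This bypasses all the local angle analysis you propose.

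The specific problems with your sketch are as follows. First, the ``comparison with $\psi_{0,-}^{(1)}$'' does not pin down $u_1\,\d z/(z\,\partial_w\det)$: since $\psi_{0,-}^{(1)}=(\phi_1^{-1}v)_1\,u_1\,\d z/(z\,\partial_w\det)$ carries the extra factor $(\phi_1^{-1}v)_1$ rather than $v_1$, the ratio $\psi_{0,+}^{(1)}/\psi_{0,-}^{(1)}=v_1/(\phi_1^{-1}v)_1$ is an unknown meromorphic function, and you are back to analysing the action of $\phi_1$---which is exactly the paper's computation. Second, you never address the angles $q_{0,n}$ and $q_{\infty,n}$: the target formula for $\psi_{0,+}^{(j+1)}$ has no prime-form factors at these points, so you must show the $1$-form is regular there, and this is precisely what the paper's $q_{0,1}$ argument (the nullvector computation around~\eqref{eq:eigenvector_zeros_angle}) establishes. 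Third, speaking of ``the $2g$ zeros of $v_{j+1}$ on the compact ovals'' is ill-posed, since $v_{j+1}$ is only defined up to a meromorphic scalar; the assertion that the $2g$ residual zeros of the full $1$-form split as two standard divisors (equivalently $e_{\mathrm b},e_{\mathrm w_{0,j}}^{(0)}\in\RR^g$) requires an argument, and the paper gets it essentially for free from the corresponding statement for $\psi_{-1,+}$ together with the reality of the Abel images of the angles.
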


\begin{proof}
Observe that the definition of~$\psi_{0,-}$~\eqref{eq:eigenvector_left} together with Lemma~\ref{lem:phi_kasteleyn} implies that
\begin{equation}\label{eq:eigenvector_as_kasteleyn}
\psi_{0,-}^{(j+1)}(z,w)=\frac{\left(\phi_1(z)^{-1}\adj(\Phi(z)-wI)\right)_{1,j+1}\d z}{z\partial_w\det(\Phi(z)-wI)}=\frac{\adj K_{G_1}(z,w)_{\mathrm b_{0,j}\mathrm w_{0,0}}\d z}{wzP_w(z,w)}.
\end{equation}
Hence, the desired formula for~$\psi_{0,-}$ follows from Lemma~\ref{lem:kasteleyn_thetas}, with~$c_{j,-}^{(0)}=c_{0j00}$,~$e_{\mathrm b_{0,j}}^{(0)}=e_{\mathrm b_{0,j}}$, and~$e_{\mathrm w}=e_{\mathrm w_{0,0}}$. Recall also the product convention from Remark~\ref{rem:products}.

To prove the expression for~$\psi_{0,+}$ we cannot rely on Lemma~\ref{lem:kasteleyn_thetas} directly. Instead, we introduce the column vector 
\begin{equation}
\psi_{-1,+}(z,w)=\frac{(\adj K_{G_1}(z,w)_{\mathrm b_{0,0}\mathrm w_{0,j}})_{j=0}^{k-1}\d z}{wzP_w(z,w)}.
\end{equation}
The zeros and poles of~$\psi_{-1,+}$ are determined in Lemma~\ref{lem:kasteleyn_thetas}, indeed, the~$(j+1)$st entry is given by
\begin{equation}\label{eq:eigenvalue_-1}
\psi_{-1,+}^{(j+1)}(z,w)
=c_{000j}\Theta\left(q;e_{\mathrm{b}_{0,0}}\right)\Theta\left(q;e_{\mathrm w_{0,j}}\right)
\frac{1}{E(q_{0,1},q)}
\frac{\prod_{i=1}^{j}E(p_{0,i},q)}{\prod_{i=1}^{j+1}E(p_{\infty,i},q)}.
\end{equation}
Similarly to~\eqref{eq:eigenvector_as_kasteleyn}, it follows from Lemma~\ref{lem:phi_kasteleyn} that~$\psi_{0,+}=\phi_1\psi_{-1,+}$ where~$\phi_1$ is defined by~\eqref{eq:def_phi_bernoulli}. This implies that
\begin{equation}
\Phi_{-1}(z)\psi_{-1,+}(z,w)=w\psi_{-1,+}(z,w),
\end{equation}
where~$\Phi_{-1}=\phi_1^{-1}\Phi_0\phi_1$, and~$\Phi_0=\Phi$. We will see that multiplying~$\psi_{-1,+}$ with~$\phi_1$ introduces a zero at~$q_{0,1}$ and a pole of the~$(j+1)$st entry at~$p_{0,j}$, or in other words, it removes the pole of~\eqref{eq:eigenvalue_-1} at~$q_{0,1}$ and the zero at~$p_{0,j}$. Moreover, the~$g$ zeros coming from the factor~$\Theta\left(q;e_{\mathrm w_{0,j}}\right)$ will move according to a relation explained below. 

We first show that the pole at~$q_{0,1}$ is removed. Let~$z_1=(-1)^k\alpha^v_1/\gamma^v_1$, where~$\alpha^v_1$ and~$\gamma^v_1$ are defined by~\eqref{eq:edge_weights_product}, so that~$q_{0,1}=(z_1,0)$. We also let~$\psi_{-1,+}^*=a \psi_{-1,+}$, where~$a$ is a meromorphic (scalar) function in a neighborhood of~$q_{0,1}$ which cancels out the pole of~$\psi_{-1,+}$ at~$q_{0,1}$. Since~$\psi_{-1,+}^*$ is a right null vector of~$\Phi_{-1}(z)-wI$, we get
\begin{equation}\label{eq:eigenvector_zeros_angle}
\Phi_{-1}(z_1)\psi_{-1,+}^*(q_{0,1})
=\left.\left(\Phi_{-1}(z)-wI\right)\right|_{(z,w)=q_{0,1}}\psi_{-1,+}^*(q_{0,1})=0.
\end{equation}
Recall Lemma~\ref{lem:transition_matrices_properties}~\eqref{eq:determinant_bern_geo} which implies that~$\det \phi_1(z_1)=0$ while~$\det \phi_i(z_1)\neq 0$ for~$i=2,\dots,\ell$, since the angles are distinct due to Assumption~\ref{ass:main_ass}\eqref{ass:non-singularity}. Thus,~$\Phi_{-1}\phi_1^{-1}=\prod_{i=2}^{2\ell}\phi_i$ is invertible at~$z_1$, and by multiplying~\eqref{eq:eigenvector_zeros_angle} by~$(\Phi_{-1}\phi_1^{-1})^{-1}$ we obtain that
\begin{equation}
\phi_1(z_1)\psi_{-1,+}^*(q_{0,1})=0.
\end{equation}
Since, by definition,~$\psi_{-1,+}^*(q_{0,1})\neq 0$, this means that multiplying~$\psi_{-1,+}$ by~$\phi_1$ removes the pole of~$\psi_{-1,+}$ at~$q_{0,1}$.

The final form of~$\psi_{0,+}$ now follows by an explicit computation, where we keep track of the number of zeros and poles. Recall that~$\phi_1$ is given by~\eqref{eq:def_phi_bernoulli} and has the form~\eqref{eq:bernoulli}. Thus, for~$j=1,\dots,k-1$, 
\begin{multline}\label{eq:eigenvector_poles_infty}
\psi_{0,+}^{(j+1)}(z,w)=\left(\phi_1\psi_{-1,+}\right)^{(j+1)}(z,w)=\alpha_{j,1}\psi_{-1,+}^{(j)}(z,w)+\gamma_{j+1,1}\psi_{-1,+}^{(j+1)}(z,w)\\
=\left(\alpha_{j,1}c_{000(j-1)}\Theta\left(q;e_{\mathrm w_{0,j-1}}\right)E(p_{\infty,j+1},q)
+\gamma_{j+1,1}c_{000j}\Theta\left(q;e_{\mathrm w_{0,j}}\right)E(p_{0,j},q)\right) \\
\times \frac{\Theta\left(q;e_{\mathrm{b}}\right)}{E(q_{0,1},q)}
\frac{\prod_{i=1}^{j-1}E(p_{0,i},q)}{\prod_{i=1}^{j+1}E(p_{\infty,i},q)},
\end{multline}
where~$e_{\mathrm b}=e_{\mathrm b_{0,0}}$. Recall~\eqref{eq:abels_thm_diff}; that is, the degree of a meromorphic differential~$1$-form, the number of zeros minus the number of poles, is~$2g-2$. The degree of the factor in the last row of~\eqref{eq:eigenvector_poles_infty} is~$g-3$, which tells us that the degree of the factor on the second row is~$g+1$. Since this factor has no poles we conclude that it has~$g+1$ zeros. We already know that~$q_{0,1}$ is one of these~$g+1$ zeros. Hence, 
\begin{equation}\label{eq:eigenvector_zeros}
\psi_{0,+}^{(j+1)}(z,w)
=c_{j,+}^{(0)}\Theta(q;e_{\mathrm w_{0,j}}^{(0)})E(q_{0,1},q)
\frac{\Theta\left(q;e_{\mathrm{b}}\right)}{E(q_{0,1},q)}
\frac{\prod_{i=1}^{j-1}E(p_{0,i},q)}{\prod_{i=1}^{j+1}E(p_{\infty,i},q)},
\end{equation}
for some constants~$c_{j+}^{(0)}$ and~$e_{\mathrm w_{0,j}}^{(0)}$. As explained in Section~\ref{sec:abel_theta}, the constant~$e_{\mathrm w_{0,j}}^{(0)}$ is defined from the final~$g$ zeros of the left hand side of~\eqref{eq:eigenvector_zeros} through~\eqref{eq:jacobi_inverse}. A similar computation also holds when~$j=0$.

What remains is to show that~$e_{\mathrm w_{0,j}}^{(0)}\in \RR^g$. By~\eqref{eq:abels_thm_diff}, the divisors~$(\psi_{0,+}^{(j+1)})$ and~$(\psi_{-1,+}^{(j+1)})$ are mapped by the Abel map to the same point, that is,~$u\left((\psi_{0,+}^{(j+1)})\right)=2\Delta=u\left((\psi_{-1,+}^{(j+1)})\right)$. In particular, this implies that 
\begin{equation}
e_{\mathrm w_{0,j}}^{(0)}+(u(q_{0,1})-u(p_{0,j}))=e_{\mathrm w_{0,j}}
\end{equation} 
in the Jacobi variety~$J(\mathcal R$), which, together with~\eqref{eq:positive_differentials}, shows that we can take~$e_{\mathrm w_{0,j}}^{(0)}\in \RR^g$.
\end{proof}

The previous lemma gives us an expression of~$\psi_{0,\pm}$ in terms of theta functions and prime forms. We proceed with a similar statement for~$\psi_{m,\pm}$, for all~$m$. In Section~\ref{sec:asymptotic} we will see that this expression is suitable for asymptotic analysis. Recall that~$\psi_{m,+}$ and~$\psi_{m,-}$ are defined recursively via~\eqref{eq:eigenvector_j}. 

\begin{proposition}\label{prop:zeros_poles}
For each~$m=0,1\dots$, and~$j=0,\dots,k-1$, there exist~$e_{\mathrm b}, e_{\mathrm w}, e_{\mathrm b_{0,j}}^{(m)}, e_{\mathrm w_{0,j}}^{(m)}\in\RR^g$ such that the~$(j+1)$st entry~$\psi_{m,\pm}^{(j+1)}$ of~$\psi_{m,\pm}$ are given by
\begin{equation}
\psi_{m,+}^{(j+1)}(z,w)=
c_{j,+}^{(m)}\Theta\left(q;e_{\mathrm{b}}\right)\Theta\left(q;e_{\mathrm w_{0,j}}^{(m)}\right)
\frac{\prod_{i=1}^{\ell m} E(q_{0,i},q)}{\prod_{i=1}^{\ell m} E(p_{0,j-i},q)}
\frac{\prod_{i=1}^{j-1}E(p_{0,i},q)}{\prod_{i=1}^{j+1}E(p_{\infty,i},q)},
\end{equation}
and
\begin{equation}
\psi_{m,-}^{(j+1)}(z,w)
=c_{j,-}^{(m)}
\frac{\Theta\left(q;e_{\mathrm{b}_{0,j}}^{(m)}\right)\Theta\left(q;e_{\mathrm w}\right)}{E(q_{0,1},q)}
\frac{\prod_{i=1}^{\ell m} E(p_{0,j+1-i},q)}{\prod_{i=1}^{\ell m} E(q_{\infty,i},q)}
\frac{\prod_{i=2}^jE(p_{\infty,i},q)}{\prod_{i=1}^jE(p_{0,i},q)}.
\end{equation}
for some constants~$c_{j,+}^{(m)}, c_{j,-}^{(m)}\in \CC^*$. The indices of the angles~$p_{0/\infty,i}$ and~$q_{0/\infty,i}$ are taken modulo~$k$ and~$\ell$, respectively.
\end{proposition}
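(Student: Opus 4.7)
\textbf{Proof proposal for Proposition~\ref{prop:zeros_poles}.} The plan is to proceed by induction on~$m$, with the base case~$m=0$ already established in Lemma~\ref{lem:eigenvector_zeros_poles}. For the inductive step I would rely on the recursion~\eqref{eq:eigenvector_j}, which expresses~$\psi_{m+1,+}=(\Phi_m)_+\psi_{m,+}$ and~$\psi_{m+1,-}=\psi_{m,-}(\Phi_m)_-$ with~$(\Phi_m)_\pm$ given by~\eqref{eq:phi_+_phi_-}. Since the argument for~$\psi_{m,-}$ is entirely symmetric, I will only describe~$\psi_{m,+}$.

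First I would track the ``fixed'' theta factor. Since each entry~$\psi_{m+1,+}^{(j+1)}$ is a~$z$-rational linear combination of~$\psi_{m,+}^{(0)},\dots,\psi_{m,+}^{(k-1)}$ (the coefficients being entries of the matrix~$(\Phi_m)_+$), and since by induction every~$\psi_{m,+}^{(j'+1)}$ contains the common factor~$\Theta(\,\cdot\,;e_{\mathrm{b}})$, this factor persists in~$\psi_{m+1,+}^{(j+1)}$. Thus the~$g$ zeros of~$\Theta(\,\cdot\,;e_{\mathrm{b}})$ on the compact ovals are common zeros at every level of the iteration, and the ``$e_{\mathrm{b}}$'' part of the formula is manifestly~$m$-independent.

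Next I would track the contribution at the angles. Each factor~$\phi^b(z;\vec\alpha_{m,i},\vec\gamma_{m,i})$ of~$(\Phi_m)_+$ has a simple pole at~$z=0$ in exactly one entry (producing possible poles at the angles~$p_{0,\cdot}$), while the invariance of the determinant~\eqref{eq:invariant_determinant} combined with item~\eqref{eq:determinant_bern_geo} of Lemma~\ref{lem:transition_matrices_properties} guarantees that~$\det\phi^b(z;\vec\alpha_{m,i},\vec\gamma_{m,i})$ has a simple zero at~$z=(-1)^k\alpha^v_i/\gamma^v_i$, the~$z$-coordinate of the angle~$q_{0,i}$. Using that~$\psi_{m+1,+}(z,w)$ remains a right nullvector of~$\Phi_{m+1}(z)-wI$, a local analysis at each~$q_{0,i}$ (as in the proof of Lemma~\ref{lem:eigenvector_zeros_poles} around~\eqref{eq:eigenvector_zeros_angle}) shows that~$\psi_{m+1,+}^{(j+1)}$ acquires one additional zero at each~$q_{0,i}$,~$i=1,\dots,\ell$, and one additional pole at the corresponding~$p_{0,\cdot}$ dictated by the nonzero entry of~$\phi^b$ at~$z=0$. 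This matches the angle factors in the claimed formula when passing from~$m$ to~$m+1$.

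The remaining~$g$ zeros are the ``moving'' ones. To locate them, I would apply Abel's theorem~\eqref{eq:abels_thm} to the meromorphic function
\begin{equation}
f(q)=\frac{\psi_{m+1,+}^{(j+1)}(z,w)}{\psi_{m,+}^{(j+1)}(z,w)}
\end{equation}
on~$\mathcal R$. Using~\eqref{eq:jacobi_inverse} to translate the shift of the~$\Theta$-zeros into a shift of the argument, the equation~$u((f))=0$ yields
\begin{equation}
e_{\mathrm{w}_{0,j}}^{(m+1)}-e_{\mathrm{w}_{0,j}}^{(m)}=\sum_{i=1}^{\ell}u(q_{0,i})-\sum_{i=\ell m+1}^{\ell(m+1)}u(p_{0,j-i}) \mod (\ZZ^g+B\ZZ^g),
\end{equation}
where the indices of~$p_{0,\cdot}$ are taken modulo~$k$. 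Since the angles lie on~$A_0$ and the Abel map of any point on~$A_0$ lies in~$\RR^g$ modulo the lattice (cf.~\eqref{eq:positive_differentials}), and~$e_{\mathrm{w}_{0,j}}^{(m)}\in\RR^g$ by induction, we conclude~$e_{\mathrm{w}_{0,j}}^{(m+1)}\in\RR^g$, so by the discussion after~\eqref{eq:jacobi_inverse} the corresponding zeros of~$\tilde q\mapsto\Theta(\tilde q;e_{\mathrm{w}_{0,j}}^{(m+1)})$ form a standard divisor, with exactly one zero on each compact oval. A total-degree check (every term is a meromorphic~$1$-form of degree~$2g-2$) confirms no further zeros or poles appear, closing the induction.

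\textbf{Main obstacle.} The most delicate point is the angle bookkeeping in the second step: even though the individual factors~$\phi^b(z;\vec\alpha_{m,i},\vec\gamma_{m,i})$ have only a single pole-bearing entry and a single determinantal zero, their product~$(\Phi_m)_+$ intertwines these, and one must verify that no spurious cancellations occur and that the multiplicities at each~$p_{0,\cdot}$ and~$q_{0,i}$ combine to produce exactly the shift of the angle factors claimed in the proposition. The most efficient way to organize this is through the adjugate-based formalism of~\cite{BCT22} already invoked in Lemma~\ref{lem:kasteleyn_thetas}, which realizes~$\psi_{m,\pm}$ as sections of a line bundle on~$\mathcal R$ whose degree and behavior at the angles are controlled intrinsically; this is presumably why the authors defer the proof to Section~\ref{sec:one_form_thetas}.
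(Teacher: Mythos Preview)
Your approach is essentially the paper's own: induction from Lemma~\ref{lem:eigenvector_zeros_poles}, tracking the common factor~$\Theta(\,\cdot\,;e_{\mathrm b})$, determining the new angle zeros/poles via the nullvector relation at the determinantal zeros of~$(\Phi_m)_+$, and pinning down the remaining~$g$ moving zeros via Abel's theorem. Two corrections, however.

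First, the~$\psi_{m,-}$ case is \emph{not} ``entirely symmetric''. The factors of~$(\Phi_m)_-$ are of type~$\phi^g$ and have \emph{poles} at~$z=\beta^v_i$, so a priori~$\psi_{m,-}(\Phi_m)_-$ could acquire poles at every point of~$\mathcal R$ lying over~$z=\beta^v_i$, not just at~$q_{\infty,i}$. The paper rules this out via the identity~$\psi_{m,-}(\Phi_m)_-=w\,\psi_{m,-}(\Phi_m)_+^{-1}$, valid for a left nullvector since~$\Phi_m=(\Phi_m)_-(\Phi_m)_+$; as~$(\Phi_m)_+^{-1}$ is analytic at~$z=\beta^v_i$, the only new poles are at~$q_{\infty,i}$. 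The zeros at~$p_{0,j+1-i}$ then come from an explicit computation with the leading upper-triangular part of~$\phi^g$ at~$z\to 0$, parallel to~\eqref{eq:eigenvector_poles_infty}. This asymmetry is the one genuinely new ingredient in the inductive step.

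Second, your ``main obstacle'' paragraph mislocates the difficulty. The proof of Proposition~\ref{prop:zeros_poles} is \emph{not} deferred to Section~\ref{sec:one_form_thetas}; only the proof of Lemma~\ref{lem:kasteleyn_thetas} is deferred there, and that lemma feeds into the \emph{base case} (Lemma~\ref{lem:eigenvector_zeros_poles}), not the inductive step. The inductive step requires no further input from~\cite{BCT22}: the angle bookkeeping is handled by direct explicit computations with the bidiagonal structure of~$\phi^b$ and~$\phi^g$ (exactly as in~\eqref{eq:eigenvector_poles_infty}), together with the degree count you already sketched. Your worry about ``spurious cancellations'' is resolved elementarily, not via line-bundle machinery.
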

\begin{remark}\label{rem:f_eigenvector}
For~$m=kN$, which is relevant for our purposes, we have, for~$\tilde q \in \widetilde{\mathcal R}$, 
\begin{equation}
\frac{\prod_{i=1}^{\ell kN} E(\tilde q_{0,i},\tilde q)}{\prod_{i=1}^{\ell kN} E(\tilde p_{0,j-i},\tilde q)}=f(\tilde q)^N,
\end{equation}
where~$f$ is as in Definition~\ref{def:action_function}, and it follows from Lemma~\ref{lem:parametrization_curve} bellow that
\begin{equation}
\frac{\prod_{i=1}^{\ell kN} E(\tilde p_{0,j+1-i},\tilde q)}{\prod_{i=1}^{\ell kN} E(\tilde q_{\infty,i},\tilde q)}=f(\tilde q)^{-N}w^{kN}.
\end{equation}
\end{remark}

\begin{proof}[Proof of Proposition~\ref{prop:zeros_poles}]
The proof goes by induction with Lemma~\ref{lem:eigenvector_zeros_poles} as the base case. The proof of the induction step is similar to the proof of the mentioned lemma, so we will be brief with details.

Recall that~$\psi_{m+1,+}=(\Phi_m)_+\psi_{m,+}$ and~$\Phi_m=(\Phi_m)_-(\Phi_m)_+$, where~$(\Phi_m)_+$ and~$(\Phi_m)_-$ are defined in~\eqref{eq:phi_+_phi_-}. Using the definitions of~$(\Phi_m)_+$ and~$(\Phi_m)_-$ together with~\eqref{eq:invariant_determinant} and~\eqref{eq:determinant_bern_geo} of Lemma~\ref{lem:transition_matrices_properties} we obtain
\begin{equation}
\det \left((\Phi_m)_+(z)\right)=\prod_{i=1}^\ell (\gamma^v_i-(-1)^k\alpha^v_i z^{-1})\quad \text{and} \quad 
\det \left((\Phi_m)_-(z)\right)=\prod_{i=1}^\ell \frac{1}{1-\beta^v_i z^{-1}},
\end{equation}
where~$\alpha^v_i$,~$\beta^v_i$ and~$\gamma^v_i$ are defined in~\eqref{eq:edge_weights_product}. In particular, if~$z_i=(-1)^k\alpha^v_i/\gamma^v_i$ (recall that~$\beta_i^v<1<\alpha_i^v/\gamma_i^v$) then~$\det \left((\Phi_m)_-(z_i)\right) \neq 0$ while~$\det\left((\Phi_m)_+(z_i)\right)=0$. A similar argument as in the proof of Lemma~\ref{lem:eigenvector_zeros_poles}, see the discussion around~\eqref{eq:eigenvector_zeros_angle}, now shows that multiplying~$\psi_{m,+}$ by~$(\Phi_m)_+$ increases the multiplicity of the zero at the angle~$q_{0,i}=(z_i,0)$ by one, for all~$i=1,\dots,\ell$. By an explicit computation, similar to~\eqref{eq:eigenvector_poles_infty} in the proof of Lemma~\ref{lem:eigenvector_zeros_poles}, using that each factor in~$(\Phi_m)_+$ has the same form as~\eqref{eq:bernoulli}, we obtain the extra poles of~$\psi_{m+1,+}^{(j+1)}$, as compared to~$\psi_{m,+}^{(j+1)}$, at~$p_{0,j-i}$ for~$i=\ell m+1,\dots,\ell (m+1)$. Finally, we obtain how the zeros from the factor~$\Theta\left(q;e_{\mathrm w_{0,j}}^{(m)}\right)$ are moving, using the fact that the Abel map takes the divisors~$(\psi_{m+1,+}^{(j+1)})$ and~$(\psi_{m,+}^{(j+1)})$ to the same point. It implies that
\begin{equation}\label{eq:linear_flow}
e_{\mathrm w_{0,j}}^{(m+1)}=e_{\mathrm w_{0,j}}^{(m)}-\left(\sum_{i=1}^\ell u(q_{0,i})-\sum_{i=\ell m+1}^{\ell (m+1)}u(p_{0,j-i})\right),
\end{equation} 
as vectors in~$J(\mathcal R)$, and, hence,~$e_{\mathrm w_{0,j}}^{(m+1)}\in \RR^g$.

Similarly, we obtain the zeros and poles of~$\psi_{m+1,-}$ by using the relation~$\psi_{m+1,-}=\psi_{m,-}(\Phi_m)_-$. Recall that the factors of~$(\Phi_m)_-$ are of type~\eqref{eq:geometric} and have poles at~$z_i=\beta^v_i$,~$i=1,\dots,\ell$. Multiplying~$\psi_{m,-}$ by~$(\Phi_m)_-$ may therefore introduce poles at~$(z_i,w_i)$ for all~$w_i\in \CC$ such that~$(z_i,w_i)\in \mathcal R$, with~$z_i=\beta^v_i$. However, since~$\psi_{m,-}(z,w)$ is a left nullvector of~$\Phi_m(z)-wI$ and~$\Phi_m=(\Phi_m)_-(\Phi_m)_+$, we obtain
\begin{equation}
\psi_{m,-}(z,w)(\Phi_m)_-(z)=\psi_{m,-}(z,w)w(\Phi_m)_+(z)^{-1},
\end{equation}
where~$(\Phi_m)_+^{-1}$ is analytic at~$z_i$. Hence, the only poles that are introduced are at~$q_{\infty,i}$ for~$i=1,\dots,\ell$. Recall that~$(\Phi_m)_-$ is a product of~$\ell$ factors and each is of the form~\eqref{eq:geometric}, see~\eqref{eq:phi_+_phi_-}. Using the explicit expression of~\eqref{eq:geometric}, in particular, that the leading term as~$z\to 0$ is upper triangular with zeros along the diagonal, we obtain, in the spirit of~\eqref{eq:eigenvector_poles_infty}, that multiplying~$\psi_{m,-}$ with~$n=1,\dots,\ell$ such factors from the right, the product as~$q\to p_{0,i'}$ for some~$i'$ behaves as
\begin{equation}
\left(\prod_{i=1}^{\ell m+n} E(p_{0,1-i},q),\frac{\prod_{i=1}^{\ell m+n} E(p_{0,2-i},q)}{E(p_{0,1},q)},\dots,\frac{\prod_{i=1}^{\ell m+n} E(p_{0,k-i},q)}{\prod_{i=1}^{k-1}E(p_{0,i},q)}\right).
\end{equation}
For example, for~$n=1$ and~$j=1,\dots,k-1$ the behavior of the product is
\begin{equation}
\Ordo\left(\frac{\prod_{i=1}^{\ell m} E(p_{0,j-i},q)}{\prod_{i=1}^{j-1}E(p_{0,i},q)}\right)=\Ordo\left(\frac{\prod_{i=1}^{\ell m+1} E(p_{0,j+1-i},q)}{\prod_{i=1}^jE(p_{0,i},q)}\right),
\end{equation}
and if~$j=0$ it is
\begin{equation}
\Ordo\left(z\frac{\prod_{i=1}^{\ell m} E(p_{0,k-i},q)}{\prod_{i=1}^{k-1}E(p_{0,i},q)}\right)=\Ordo\left(\prod_{i=0}^{\ell m} E(p_{0,k-i},q)\right)=\Ordo\left(\prod_{i=1}^{\ell m+1} E(p_{0,1-i},q)\right).
\end{equation}
In particular, with~$n=\ell$ we obtain the extra zeros of~$\psi_{m+1,-}^{(j+1)}$, as compared to~$\psi_{m,-}^{(j+1)}$, at~$p_{0,j+1-i}$ for~$i=\ell m+1,\dots, \ell(m+1)$. Moreover, the zeros coming from the factor~$\Theta\left(q;e_{\mathrm{b}_{0,j}}^{(m)}\right)$ are changing according to 
\begin{equation}
e_{\mathrm b_{0,j}}^{(m+1)}=e_{\mathrm b_{0,j}}^{(m)}-\left(\sum_{i=\ell m+1}^{\ell (m+1)}u(p_{0,j+1-i})-\sum_{i=1}^\ell u(q_{\infty,i})\right)
\end{equation} 
in~$J(\mathcal R)$, which shows that~$e_{\mathrm b_{0,j}}^{(m+1)}\in \RR^g$.
\end{proof}
\begin{remark}\label{rem:linear_flow}
In the proof of Proposition~\ref{prop:zeros_poles} we also obtained how~$e_{\mathrm w_{0,j}}^{(m)}$ evolves with~$m$, see~\eqref{eq:linear_flow}. In particular, the difference over~$k$ steps is 
\begin{equation}
e_{\mathrm w_{0,j}}^{(m+k)}-e_{\mathrm w_{0,j}}^{(m)}=\ell\sum_{i=1}^ku(p_{0,i})-k\sum_{i=1}^\ell u(q_{0,i})
\end{equation} 
as an equality in~$J(\mathcal R)$, and the total flow is given by
\begin{equation}
e_{\mathrm w_{0,j}}^{(kN)}-e_{\mathrm w_{0,j}}^{(0)}=N\left(\ell\sum_{i=1}^ku(p_{0,i})-k\sum_{i=1}^\ell u(q_{0,i})\right).
\end{equation} 
We recognize this flow from Section~\ref{sec:limit_shape_smooth}. Indeed, with~$H$ as in Corollary~\ref{cor:height_function_smooth} and~$(u_n,v_n)$ in the~$n$th smooth region,~$n=1,\dots,g$, 
\begin{equation}\label{eq:shift_height_function}
e_{\mathrm w_{0,j}}^{(kN)}-e_{\mathrm w_{0,j}}^{(0)}=-N\big(H(u_1,v_1),\dots,H(u_g,v_g)\big) \!\!\mod \ZZ^g.
\end{equation} 
In a similar way we obtain that
\begin{equation}
e_{\mathrm b_{0,j}}^{(kN)}-e_{\mathrm b_{0,j}}^{(0)}=N\big(H(u_1,v_1),\dots,H(u_g,v_g)\big) \!\!\mod \ZZ^g.
\end{equation} 
Recall that the right hand side is the leading order term of the unnormalized height function minus the linear term.
\end{remark}
\begin{remark}\label{rem:linear_flow_discussion}
The relation~\eqref{eq:shift_height_function} is rather surprising to us, and we will not attempt to fully explain it. However, we point out that both the left and right hand sides have relations to the domino-shuffling algorithm.
 
By the construction, the left hand side of~\eqref{eq:shift_height_function} is related to the switching described in Section~\ref{sec:switching}. In~\cite{CD22} the update rule of the weights~\eqref{eq:update_rule} was identified with the change of the weights under the domino-shuffling algorithm.

In~\cite{CT21} the stationary speed of growth of the stochastic process defined by the domino-shuffling algorithm was expressed in terms of the limit shape of the Aztec diamond, see~\cite[Equation (2.16)]{CT21}. In fact, up to matching the different conventions,~$H$ is the stationary speed of growth. However, their setting is slightly different from ours, their fundamental domain is rotated by~$\pi/4$ compared with the fundamental domain in this paper, and we will therefore not make this relation precise. 
\end{remark}

\subsection{Proof of Lemma~\ref{lem:kasteleyn_thetas}}\label{sec:one_form_thetas}

The proof relies on the results in~\cite{BCT22} (which, in their turn, partially rely on those of~\cite{Foc15, GK13, KO06}). We therefore begin by recalling a number of definitions from~\cite{BCT22}, see, in particular, Sections 3 and 4 therein. These definitions will only be used in this section.

Recall the definition of~$G$ in Section~\ref{sec:dimer}. Let~$G^{\quadgraph}$ be the \emph{quad-graph}, the graph consisting of the disjoint union of the vertices from~$G$ and from the dual graph of~$G$, and edges between a primary vertex~$\mathrm{v}$ and a dual vertex~$\mathrm{f}$ such that~$\mathrm{v}$ lies on the boundary of the face corresponding to~$\mathrm{f}$. A \emph{train-track} of~$G$ is a dual path, a path that joins adjacent faces, of~$G^{\quadgraph}$ with no endpoints, which enters and exits a face at opposite edges. We introduce an orientation of the train-tracks so that they have the black vertices of~$G$ to the right and white vertices to the left. We denote the set of all oriented train-tracks by~$\mathcal T$. See Figure~\ref{fig:quad_graph}.

Recall from Section~\ref{sec:spectral_curve} that~$G_1$ denotes the fundamental domain of~$G$ embedded in the torus and that~$\gamma_u$ and~$\gamma_v$ are two closed loops on the torus. We denote the homology classes of~$\gamma_u$ and~$\gamma_v$ by~$[\gamma_u]$ and~$[\gamma_v]$. Let~$\mathcal T_1$ be the set of train-tracks of~$G$ mapped to~$G_1$. Given a train-track~$T\in \mathcal T_1$, we define~$h_T$ and~$v_T$ as the coordinates of the homology class of~$T$ in the basis~$\{[\gamma_u],[\gamma_v]\}$,~$[T]=(h_T,v_T)$. For all~$T\in \mathcal T_1$ we have~$[T]\in \{(0,\pm 1),(\pm 1,0)\}$, see Figure~\ref{fig:quad_graph}. We proceed by enumerating the train-tracks and associating an angle to each train-track. 

Recall the definition of the product of the edge weights in~\eqref{eq:edge_weights_product} and the definition of the angles in~\eqref{eq:angles_1} and~\eqref{eq:angles_2}. Let~$T\in \mathcal T_1$ with~$[T]=(1,0)$. The alternating product of the signed edge weights along~$T$, that is, the product of the edge weights on the south edges along~$T$ divided by the edge weights on the west edges along~$T$, is equal to~$\alpha^h_i/\beta^h_i$, for some~$i=1,\dots,k$. We denote~$T$ by~$T_i$ and assign the angle~$p_{0,i}$ to~$T_i$. Recall that~$p_{0,i}=(0,(-1)^\ell\alpha^h_i/\beta^h_i)$. In a similar way we enumerate the train-tracks with homology class~$(0,1)$ by~$T_{k+i}$,~$i=1,\dots,\ell$, and associate the angles~$q_{0,i}$ to them, the train-tracks with homology class~$(-1,0)$ by~$T_{k+\ell+i}$,~$i=1,\dots,k$, with angles~$p_{\infty,i}$, and finally we enumerate the train-tracks with homology class~$(0,-1)$ by~$T_{2k+\ell+i}$,~$i=1,\dots,\ell$ and associate the angles~$q_{\infty,i}$ to those. See Figure~\ref{fig:quad_graph}. We will occasionally use the notation~$a_{T_i}$,~$i=1,\dots,2(k+\ell)$, for the angle associated to the train-track~$T_i$. We denote a train-track in~$\mathcal T$ which maps to~$T \in \mathcal T_1$ also by~$T$ and associate to it the angle~$a_{T}$.

The \emph{discrete Abel map}~$\mathbf d$ with values in the Jacobian of~$\mathcal R$ is defined on the vertices of~$G^{\quadgraph}$ by setting~$\mathbf d(\mathrm f_0)=0$ for some vertex of the dual graph of~$G$, and then defining it recursively. Let~$\mathrm v$ and~$\mathrm v'$ be two adjacent vertices in~$G^{\quadgraph}$ and assume the edge~$\mathrm v\mathrm v'$ intersects the train-track~$T$ so that~$\mathrm v$ is on the left of~$T$ and~$\mathrm v'$ is on the right. Then~$\mathbf d(\mathrm v')=\mathbf d(\mathrm v)+u(a_T)$, where~$u$ is the Abel map defined in Section~\ref{sec:abel_theta}. The only property of the discrete Abel map which will be relevant for us in the asymptotic analysis is that~$\mathbf d(\mathrm v)\in (\RR/\ZZ)^g$ for all~$\mathrm v\in G^{\quadgraph}$. 

Given an~$M$-curve, an angle map (a monotone map from the train-tracks to~$A_0$, the non-compact oval of the~$M$-map) and a parameter~$t\in (\RR/\ZZ)^g$ of the Jacobi variety, the authors of~\cite{BCT22} defined Fock's adjacency operator~$K_F$, see also~\cite{Foc15}. This is the data that is given to us from the dimer model on~$G_1$ via the so-called \emph{spectral transform}, see~\cite{GK13, KO06}. Recall that~$\mathcal R$ is an~$M$-curve~\cite[Definition 7]{BCT22}, and define the angle map as~$T_i\mapsto a_{T_i}$. We fix a white vertex~$\mathrm{w_0}\in G_1$; the spectral transform then gives us a divisor~$D$ on~$\mathcal R$ as follows. The divisor is the sum of the common zeros of
\begin{equation}
\adj K_{G_1}(z,w)_{\mathrm b\mathrm w_0}=0, \quad \mathrm b\in \mathcal B_1.
\end{equation}
It is proved in~\cite[Theorem 1]{KO06} that the divisor~$D$ is a standard divisor, that is, it is a sum of~$g$ points, one on each compact oval, see~\eqref{eq:standard_divisor}. As mentioned in Section~\ref{sec:abel_theta} after~\eqref{eq:jacobi_inverse}, this means that~$u(D)=-e_0+\Delta$, for some~$e_0\in (\RR/ \ZZ)^g$. We set~$t=e_0-\mathbf d(\mathrm w_0)\in (\RR/\ZZ)^g$. Let~$\mathrm{w} \in \mathcal W$,~$\mathrm{b} \in \mathcal B$ be two vertices in~$G$ connected with an edge~$\mathrm w\mathrm b$, and let~$\mathrm f_l$ and~$\mathrm f_r$ be the faces on the left and right, respectively, when we orient the edge from~$\mathrm w$ to~$\mathrm b$. Assume the train-tracks which cross the edge~$\mathrm w\mathrm b$ are~$T_i$ and~$T_j$, with angles~$a_{T_i}$ and~$a_{T_j}$. Then Fock's adjacency operator is given by
\begin{equation}\label{eq:fock_weight}
(K_F)_{\mathrm w \mathrm b}=\frac{E(\tilde a_{T_i},\tilde a_{T_j})}{\theta(\tilde t+\tilde{\textbf{d}}(\mathrm f_l))\theta(\tilde t+\tilde{\textbf{d}}(\mathrm f_r))},
\end{equation}
where~$\tilde a_{T_i}$ is a lift of~$a_{T_i}$ to the universal cover, and similarly for the other entries. The right hand side depends on the choice of the lift, however, any choices are in the same gauge class, which means that they define the same dimer model, see~\cite[Proposition 2 and Remark 30]{BCT22}.

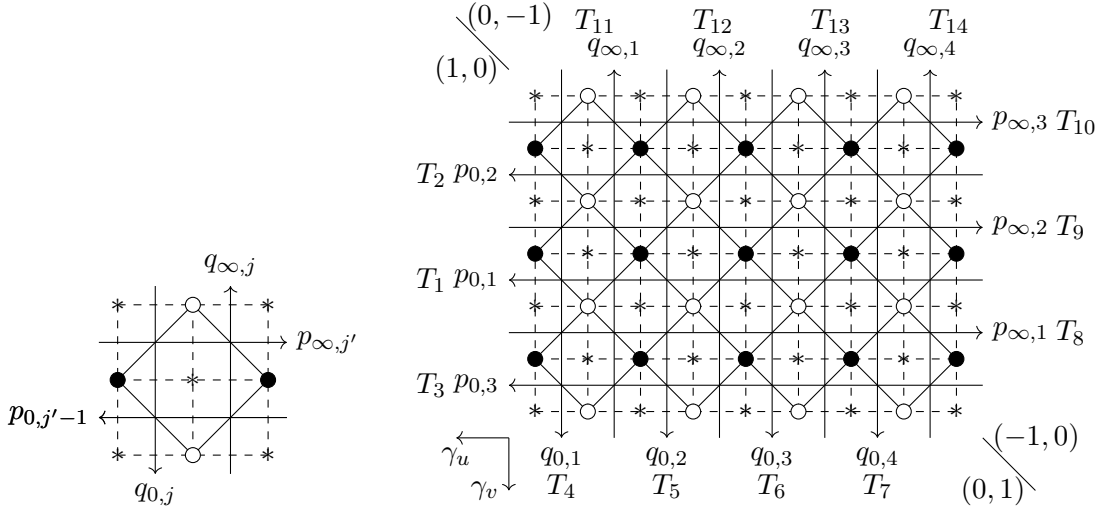
\begin{figure}
\begin{center}
\begin{tikzpicture}[scale=1]

% quad-graph
% horizontal
\foreach \y in {0,...,2}
{\draw [dashed](0,\y)--(2,\y);
}

% vertical
\foreach \x in {0,...,2}
{\draw [dashed](\x,0)--(\x,2);
}

% edges
\foreach \x/\y in {0/1,1/2}
{\draw (\x,\y)--(\x+1,\y-1);
}
\foreach \x/\y in {0/1,1/0}
{\draw (\x,\y)--(\x+1,\y+1);
}

% black points
\foreach \x in {0,2}
{\draw (\x,1) node[circle,draw=black,fill=black,inner sep=2pt]{};
}

% white points
\foreach \y in {0,2}
{\draw (1,\y) node[circle,draw=black,fill=white,inner sep=2pt]{};
}

%dual points
\foreach \x in {0,2}
{\foreach \y in {0,2}
{\draw (\x,\y) node{$*$};
}
}
\draw (1,1) node{$*$};

% train-tracks
% angles $q_{0,j}$
\foreach \x in {0}
{\draw [->](\x+.5,2.25)--(\x+.5,-.25);
\draw (\x+.5,-.25) node[below] {$q_{0,j}$};
}
% angles $q_{\infty,j}$
\foreach \x in {1}
{\draw [->](\x+.5,-.25)--(\x+.5,2.25);
\draw (\x+.5,2.25) node[above] {$q_{\infty,j}$};
}
% angles $p_{\infty,j}$
\foreach \y in {1}
{\draw [->](-.25,\y+.5)--(2.25,\y+.5);
\draw (2.25,\y+.5) node[right] {$p_{\infty,j'}$};
}
% angles $p_{0,j}$
\foreach \y in {0,}
{\draw [->](2.25,\y+.5)--(-.25,\y+.5);
\draw (-.25,\y.5) node[left]{$p_{0,j'-1}$};
}
\end{tikzpicture}
\quad 
\begin{tikzpicture}[scale=1.4]

% quad-graph
% horizontal
\foreach \y in {0,...,2}
{\draw [dashed](0,\y)--(4,\y);
}
\foreach \y in {0,...,3}
{\draw [dashed](0,\y-.5)--(4,\y-.5);
}
% vertical
\foreach \x in {0,...,4}
{\draw [dashed](\x,-.5)--(\x,2.5);
}
\foreach \x in {0,...,3}
{\draw [dashed](\x+.5,-.5)--(\x+.5,2.5);
}

% black points
\foreach \x in {0,...,4}
{\foreach \y in {0,...,2}
{\draw (\x,\y) node[circle,draw=black,fill=black,inner sep=2pt]{};
}
}

% white points and edges
\foreach \x in {1,...,4}
{\foreach \y in {1,2}
{\draw (\x-1,\y-1)--(\x,\y);
\draw (\x-1,\y)--(\x,\y-1);
\draw (\x-.5,\y-.5) node[circle,draw=black,fill=white,inner sep=2pt]{};
}
\foreach \y in {0}
{\draw (\x-.5,\y-.5)--(\x,\y);
\draw (\x-1,\y)--(\x-.5,\y-.5);
\draw (\x-.5,\y-.5) node[circle,draw=black,fill=white,inner sep=2pt]{};
}
\foreach \y in {3}
{\draw (\x-1,\y-1)--(\x-.5,\y-.5);
\draw (\x-.5,\y-.5)--(\x,\y-1);
\draw (\x-.5,\y-.5) node[circle,draw=black,fill=white,inner sep=2pt]{};
}
}

% Dual vertices
\foreach \x in {0,...,4}
{\foreach \y in {0,...,3}
{\draw (\x,\y-.5) node{$*$};
}
}
\foreach \x in {0,...,3}
{\foreach \y in {0,...,2}
{\draw (\x+.5,\y) node{$*$};
}
}

% train-tracks

% angles $p_{0,j}$
\foreach \y in {0,...,2}
{\draw [->](4.25,\y-.25)--(-.25,\y-.25);
}
\foreach \y in {1,2}
{\draw (-.25,\y-.25) node[left]{$p_{0,\y}$};
}
\draw (-.25,-.25) node[left]{$p_{0,3}$};
\foreach \y in {1,2}
{\draw (-.75,\y-.25) node[left]{$T_\y$};
}
\draw (-.75,-.25) node[left]{$T_3$};

% angles $q_{0,j}$
\foreach \x in {0,...,3}
{\draw [->](\x+.25,2.75)--(\x+.25,-.75);
}
\foreach \x in {1,...,4}
{\draw (\x-.75,-.75) node[below] {$q_{0,\x}$};
}
\foreach \x in {4,...,7}
{\draw (\x-3.75,-1.0) node[below] {$T_{\x}$};
}

% angles $p_{\infty,j}$
\foreach \y in {0,...,2}
{\draw [->](-.25,\y+.25)--(4.25,\y+.25);
}
\foreach \y in {1,...,3}
{\draw (4.25,\y-.75) node[right] {$p_{\infty,\y}$};
}
\foreach \y in {8,...,10}
{\draw (4.85,\y-7-.75) node[right] {$T_{\y}$};
}

%angles $q_{\infty,j}$
\foreach \x in {0,...,3}
{\draw [->](\x+.75,-.75)--(\x+.75,2.75);
}
\foreach \x in {1,...,4}
{\draw (\x-.25,2.75) node[above] {$q_{\infty,\x}$};
}
\foreach \x in {11,...,14}
{\draw (1.12*\x-11.5-.25,3.0) node[above] {$T_{\x}$};
}

% homology classes
% up
\draw (-.25,2.75)--(-.75,3.25);
\draw (-.75,3.25) node[right]{$(0,-1)$};
\draw (-.25,2.75) node[left]{$(1,0)$};
% down
\draw (4.25,-.75)--(4.75,-1.25);
\draw (4.25,-.75) node[right]{$(-1,0)$};
\draw (4.75,-1.25) node[left]{$(0,1)$};

% basis
% \gamma_u
\draw [->](-.25,-0.75)--(-.75,-.75);
\draw (-.75,-.75) node[below] {$\gamma_u$};
% \gamma_v
\draw [->](-.25,-0.75)--(-.25,-1.25);
\draw (-.25,-1.25) node[left] {$\gamma_v$};

\end{tikzpicture}
\end{center}
\caption{The picture shows~$G_1$ (solid edges) on top of the quad graph (dashed edges). The dual vertices are marked by a~$*$. The arrows represent the train-tracks, and at the tip of the arrows the corresponding angles are indicated. We also indicate the homology classes of the train tracks. \label{fig:quad_graph}}
\end{figure}

Finally, we define the \emph{geometric Newton polygon}, which is a rectangle with side lengths~$k$ and~$\ell$, by drawing the vector representations of the homology classes~$[T_{i}]$ starting from the tip of~$[T_{i-1}]$ for~$i=1,\dots,2(k+\ell)$. We denote the integer points of the boundary of the geometric Newton polygon by~$P_i$,~$i=1,\dots,2(k+\ell)$, so that~$P_{i+1}-P_i=[T_i]$.
\begin{lemma}
Fock's operator~$K_F$ defined in~\eqref{eq:fock_weight} is periodic with the same periods as the edge weights. 
\end{lemma}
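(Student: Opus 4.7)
The plan is to show that for any edge $\mathrm w\mathrm b$ of $G$ and its translate $\mathrm w'\mathrm b'$ obtained by applying a translation in the horizontal period or the vertical period, the values $(K_F)_{\mathrm w\mathrm b}$ and $(K_F)_{\mathrm w'\mathrm b'}$ coincide. The set $\mathcal T$ of train tracks of $G$ is itself periodic under such translations: a train track $T \in \mathcal T$ and its translate descend to the same train track $T'\in\mathcal T_1$, hence carry the same angle $a_{T'}\in\mathcal R$. The pair of train tracks crossing $\mathrm w\mathrm b$ and the pair crossing $\mathrm w'\mathrm b'$ therefore define the same two angles, and the only way in which the formula \eqref{eq:fock_weight} can see the translation is through the choice of lifts $\tilde a_{T_i},\tilde a_{T_j}$ in $\widetilde{\mathcal R}$ and through the values of $\mathbf d$ on the two faces flanking the edge.

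First I would compute the shift in the discrete Abel map under translation. A dual path in $G^{\quadgraph}$ from a face $\mathrm f$ to its translate $\mathrm f+T$ crosses a fixed finite sequence of train tracks (read off from the homology classes $(h_{T_i},v_{T_i})$ listed after Figure~\ref{fig:quad_graph}), and the recursive definition of $\mathbf d$ yields
\begin{equation}
\mathbf d(\mathrm f+T)-\mathbf d(\mathrm f)=\Delta_T \in (\RR/\ZZ)^g,
\end{equation}
with $\Delta_T$ a signed sum of Abel images $u(a_{T'})$ that depends only on the translation $T$, not on $\mathrm f$. Second, when the edge $\mathrm w\mathrm b$ is translated by $T$, the natural lifts $\tilde a_{T_i},\tilde a_{T_j}$ are modified by adding certain combinations of $A$- and $B$-cycles in $\widetilde{\mathcal R}$; the cycles that appear are exactly those whose Abel images sum to the same vector $\Delta_T$ (up to signs prescribed by orientation), because both shifts are driven by the same list of crossed train tracks. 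Third, I would invoke the quasi-periodicity \eqref{eq:quasi-periodic} of $\theta$ for the denominator and items \eqref{eq:prime_form_lift_a}, \eqref{eq:prime_form_lift_b} of Fact~\ref{fact:prop_prime_form} for the numerator. Each produces an exponential prefactor depending on a period shift, and one checks that the factor coming from the two theta quotients and the factor coming from the prime form cancel exactly, so that $(K_F)_{\mathrm w'\mathrm b'}=(K_F)_{\mathrm w\mathrm b}$.

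The main obstacle is the bookkeeping: one must choose lifts in $\widetilde{\mathcal R}$ consistently for an entire fundamental domain of $G$, and then verify that the shift $\Delta_T$ computed from $\mathbf d$ agrees, on the nose, with the $A$- and $B$-cycle contributions to the lifts of $\tilde a_{T_i},\tilde a_{T_j}$. This is really a compatibility statement between the definitions of $\mathbf d$ and of the lifts, and is the content of the gauge invariance of $K_F$ already recorded in \cite{BCT22}. Once this compatibility is in hand, the cancellation of the exponential prefactors from \eqref{eq:quasi-periodic} and Fact~\ref{fact:prop_prime_form} is automatic, and the periodicity of $K_F$ with the same periods as the edge weights follows.
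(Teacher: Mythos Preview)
The paper's proof is much shorter: it invokes \cite[Proposition 43]{BCT22}, which says $K_F$ is periodic iff
\[
\sum_i P_i\bigl(u(a_{T_i})-u(a_{T_{i-1}})\bigr)=0,
\]
and then verifies this by an Abel summation, arriving at the vector
\[
\Bigl(\textstyle\sum_i u(p_{\infty,i})-\sum_i u(p_{0,i}),\ \sum_i u(q_{\infty,i})-\sum_i u(q_{0,i})\Bigr),
\]
each component of which is $u$ applied to the divisor of $z^{-1}$ or $w^{-1}$ and hence vanishes by Abel's theorem~\eqref{eq:abels_thm}.

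Your direct-verification route ends at the same place (Abel's theorem for $z$ and $w$), but the mechanism you describe is not the one that actually operates. In the definition~\eqref{eq:fock_weight} the lifts $\tilde a_{T_i}$ are indexed by $\mathcal T_1$, not by $\mathcal T$, and are fixed once and for all; they do \emph{not} move when the edge is translated. Hence the prime form numerator $E(\tilde a_{T_i},\tilde a_{T_j})$ is literally the same for the edge and its translate, and there is no exponential factor from Fact~\ref{fact:prop_prime_form} to cancel against. The only quantity that shifts is $\tilde{\mathbf d}$, and what one needs is that its shift $\Delta_T$ lies in $\ZZ^g$ (not merely in $\ZZ^g+B\ZZ^g$), so that the theta values are unchanged by the plain $\ZZ^g$-periodicity in~\eqref{eq:quasi-periodic} with $n=0$. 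That $\Delta_T\in\ZZ^g$ follows from Abel's theorem together with the fact that every angle lies on $A_0$, so each $u(a_T)$ is real and the $B$-component of the period must vanish. This is exactly the paper's computation, phrased without the BCT22 criterion. Your appeal to gauge invariance does not close the gap: gauge equivalence only tells you the translated and untranslated values define the same dimer measure, not that they are equal as numbers.
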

\begin{proof}
By~\cite[Proposition 43]{BCT22} the operator~$K_F$ is periodic if and only if 
\begin{equation}
\sum_{i=1}^{2(k+\ell)}P_i(u(a_{T_i})-u(a_{T_{i-1}}))=0,
\end{equation}
where~$a_{T_i}$ is the angle of~$T_i$, and~$u$ is the Abel map defined in Section~\ref{sec:abel_theta}. The left hand side is equal to
\begin{multline}
\sum_{i=1}^{2(k+\ell)}(P_i-P_{i+1})u(a_{T_i})=-\sum_{i=1}^{2(k+\ell)}[T_i]u(a_{T_i}) \\
=\left(-\sum_{i=1}^ku(p_{0,i})+\sum_{i=1}^{k}u(p_{\infty,i}),-\sum_{i=1}^{\ell}u(q_{0,i})+\sum_{i=1}^{\ell}u(q_{\infty,i})\right) \in \left(\RR^g/\ZZ^g\right)^2.
\end{multline}
The entries of the vector on the right hand side are the images under the Abel map of the divisors of the functions~$(z,w)\mapsto z^{-1}$ and~$(z,w)\mapsto w^{-1}$. Thus by Abel's theorem~\eqref{eq:abels_thm}, the right hand side vanishes.
\end{proof}

The fact that~$K_F$ is periodic means that~\cite[Section 4]{BCT22} applies. Following the mentioned section, we define meromorphic functions~$z^*,w^*:\widetilde{\mathcal R}\to \CC$ by
\begin{equation}\label{eq:z}
z^*(\tilde q)=\prod_{T_i\in \mathcal T_1}E(\tilde a_{T_i},\tilde q)^{-v_{T_i}}=\prod_{i=1}^\ell \frac{E(\tilde q_{\infty,i},\tilde q)}{E(\tilde q_{0,i},\tilde q)},
\end{equation}
and
\begin{equation}\label{eq:w}
w^*(\tilde q)=\prod_{T_i\in \mathcal T_1}E(\tilde a_{T_i},\tilde q)^{h_{T_i}}=\prod_{i=1}^k \frac{E(\tilde p_{0,i},\tilde q)}{E(\tilde p_{\infty,i},\tilde q)}.
\end{equation}
The~$*$-notation is to distinguish the functions from~$z$ and~$w$. As explained in~\cite{BCT22}, the functions~$z^*$ and~$w^*$ project to well-defined functions on~$\mathcal R$, and we write~$z^*(q)$ and~$w^*(q)$. We define the magnetically altered Kasteleyn matrix~$K_F(z^*,w^*)$, as in Section~\ref{sec:gibbs_measure}, and the characteristic polynomial~$P_F(z^*,w^*)=\det K_F(z^*,w^*)$. Lemma~\ref{lem:kasteleyn_thetas} for~$K_F$ instead of~$K_{G_1}$ now follows from~\cite{BCT22}.

\begin{lemma}\label{lem:fock_thetas}
For~$i,i'=0,\dots,\ell-1$ and~$j,j'=0,\dots,k-1$, 
\begin{multline}
\frac{\adj K_F(z^*,w^*)_{\mathrm b_{i,j}\mathrm w_{i',j'}}\d z^*}{z^* w^* \partial_{w^*}P_F(z^*,w^*)} \\
=\Theta\left(q;-\tilde t-\tilde{\textbf{d}}(\mathrm b_{i,j})\right)\Theta\left(q;\tilde t+\tilde{\textbf{d}}(\mathrm w_{i',j'})\right)\frac{\prod_{m=i+1}^{i'}E(q_{\infty,m},q)}{\prod_{m=i+1}^{i'+1}E(q_{0,m},q)}
\frac{\prod_{m=j+1}^{j'}E(p_{0,m},q)}{\prod_{m=j+1}^{j'+1}E(p_{\infty,m},q)},
\end{multline}
where~$-\tilde t-\tilde{\textbf{d}}(\mathrm b_{i,j}), \tilde t+\tilde{\textbf{d}}(\mathrm w_{i',j'})\in \RR^g$.
\end{lemma}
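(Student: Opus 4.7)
The plan is to obtain Lemma~\ref{lem:fock_thetas} as a direct application of the main theorem of Boutillier--Cimasoni--de Tilière~\cite{BCT22} concerning the spectral transform of Fock's adjacency operator. By construction, $K_F(z^*,w^*)$ is \emph{precisely} the magnetically altered Fock operator associated with the Harnack curve $\mathcal R$, the angle map $T_i\mapsto a_{T_i}$, and the parameter $t\in(\RR/\ZZ)^g$ that we produced from the spectral transform of $K_{G_1}$. The framework of~\cite{BCT22} therefore applies verbatim, and what remains is to translate their formula into the notation used in the present paper.

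First, I would invoke the explicit formula for the inverse of the magnetically altered Fock operator from~\cite{BCT22} (essentially their Theorem~40 and the computations of Section~4). That result expresses $(K_F(z^*,w^*)^{-1})_{\mathrm b\mathrm w}$ as an integral over $\mathcal R$ whose integrand, viewed as a meromorphic $1$-form, is exactly the LHS of Lemma~\ref{lem:fock_thetas} rewritten via $\adj K_F=K_F^{-1}\det K_F$ together with $P_F=\det K_F$ and the standard substitution $\mathrm dw^*/\mathrm dz^*=-\partial_{z^*}P_F/\partial_{w^*}P_F$ on $\mathcal R$. The theta-function factors $\Theta(q;-\tilde t-\tilde{\mathbf d}(\mathrm b_{i,j}))$ and $\Theta(q;\tilde t+\tilde{\mathbf d}(\mathrm w_{i',j'}))$ arise directly from the definition~\eqref{eq:fock_weight} of $K_F$ and the Cauchy-type kernel on $\mathcal R$ that underlies~\cite{BCT22}'s construction. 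Second, I would identify the ratio of prime forms. By the definition of $\mathbf d$, passing from the reference face $\mathrm f_0$ to the white face adjacent to $\mathrm w_{i',j'}$ requires crossing the train-tracks $T_{k+i+1},\dots,T_{k+i'}$ in the horizontal direction (with angles $q_{\infty,m}$, $m=i+1,\dots,i'$) and $T_1,\dots,T_{j'}$ in the vertical direction (with angles $p_{0,m}$, $m=1,\dots,j'$), and analogously for $\mathrm b_{i,j}$; the combinatorial rule of~\cite{BCT22} translates these train-track crossings into the product of prime forms exactly as stated, once one accounts for the two additional ``endpoint'' tracks associated with the edge convention, which yield the extra factors $E(q_{0,i'+1},q)$ and $E(p_{\infty,j'+1},q)$ in the denominator. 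The definitions~\eqref{eq:z} and~\eqref{eq:w} of $z^*$ and $w^*$ are precisely the rearrangement that makes this bookkeeping match.

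Third, the reality statement $-\tilde t-\tilde{\mathbf d}(\mathrm b_{i,j}),\tilde t+\tilde{\mathbf d}(\mathrm w_{i',j'})\in\RR^g$ follows automatically: $t\in(\RR/\ZZ)^g$ by the $M$-curve property (the spectral divisor produced by the spectral transform of~\cite{KO06} is standard, cf.\ Section~\ref{sec:abel_theta}), and $\mathbf d$ takes values in $(\RR/\ZZ)^g$ since all train-track angles lie on the non-compact oval $A_0$, so the Abel map of each step is real. The main obstacle will be merely bookkeeping: matching sign and orientation conventions between~\cite{BCT22} and the present paper (the direction in which train-tracks are traversed relative to black/white vertices, the choice of canonical basis of cycles, and the normalization of prime forms). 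Once these are aligned, no further analytic argument is needed: the equality is an equality of two meromorphic $1$-forms on $\mathcal R$ that are verified to coincide exactly—there is no unknown multiplicative constant since~\cite{BCT22}'s formula is sharply normalized for Fock's specific choice of weights, which is why the statement of Lemma~\ref{lem:fock_thetas} (unlike Lemma~\ref{lem:kasteleyn_thetas}, where an arbitrary gauge introduces $c_{iji'j'}$) contains no free constant.
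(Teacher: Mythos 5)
Your strategy is essentially the same as the paper's: invoke the machinery of~\cite{BCT22} that expresses the adjugate of the (magnetically altered) Fock operator in terms of theta functions and prime forms, then unwind the notation. The paper cites~\cite[Lemma~46]{BCT22} directly, which gives the pointwise identity between the left-hand side (a meromorphic $1$-form on $\mathcal R$) and a combinatorially defined function $g_{\mathrm b\mathrm w}$, and then computes $g_{\mathrm b_{i,j}\mathrm w_{i',j'}}$ as a telescoping product of ``elementary'' $g$'s along a lattice path from $\mathrm b_{i,j}$ to $\mathrm w_{i',j'}$. You instead propose to start from the inverse formula (``Theorem~40'') and then extract the integrand via a residue; this lands in the same place but is a detour — the sharper tool in~\cite{BCT22} is already the $1$-form identity, so you can skip the extra reduction. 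A couple of small imprecisions worth noting: the theta-function factors in the formula come from the elementary $g_{\mathrm f,\mathrm w}$ and $g_{\mathrm b,\mathrm f}$ building blocks (which carry $\Theta(\cdot;\tilde t+\tilde{\mathbf d}(\cdot))$ in the numerator and a single prime form $E(a_T,\cdot)$ in the denominator), not from a separate Cauchy kernel; and your train-track labeling is off — with the paper's conventions, $T_{k+m}$ carries angle $q_{0,m}$, while $q_{\infty,m}$ belongs to $T_{2k+\ell+m}$ — so the crossings you describe when passing from one black vertex to the next horizontally actually cross one track of each type, producing the $E(q_{\infty,m},q)/E(q_{0,m},q)$ factors, and similarly in the vertical direction. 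Your explanation of the ``extra'' denominator factors $E(q_{0,i'+1},q)$ and $E(p_{\infty,j'+1},q)$, of the reality of $-\tilde t-\tilde{\mathbf d}(\mathrm b_{i,j})$ and $\tilde t+\tilde{\mathbf d}(\mathrm w_{i',j'})$, and of why no free multiplicative constant appears (unlike Lemma~\ref{lem:kasteleyn_thetas}, which only holds up to a gauge) are all correct and match the paper.
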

\begin{proof}
This is a direct application of~\cite[Lemma 46]{BCT22}. The lemma states that
\begin{equation}\label{eq:fock_zeros_poles}
\frac{\adj K_F(z^*,w^*)_{\mathrm b_{i,j}\mathrm w_{i',j'}}\d z^*}{z^*w^* \partial_{w^*}P_F(z^*,w^*)} = g_{\mathrm b_{i,j}\mathrm w_{i',j'}}(q),
\end{equation}
where the right hand side is defined below.

For two adjacent vertices~$\mathrm{x}, \mathrm{y} \in G^{\quadgraph}$, we define~$g_{\mathrm{x},\mathrm{y}}$ depending on whether~$\mathrm x$ and~$\mathrm y$ correspond to a white vertex~$\mathrm w$, black vertex~$\mathrm b$, or a face~$\mathrm f$. It is defined as a function on the universal cover~$\widetilde{\mathcal R}$ through the following cases:
\begin{equation}
g_{\mathrm{f},\mathrm{w}}(\tilde q)=g_{\mathrm{w},\mathrm{f}}(\tilde q)^{-1}=\frac{\Theta\left(\tilde q;\tilde t+\tilde{\textbf{d}}(\mathrm w)\right)}{E(\tilde a_{T_i},\tilde q)}
\quad \text{and} \quad g_{\mathrm{b},\mathrm{f}}(\tilde q)=g_{\mathrm{f},\mathrm{b}}(\tilde q)^{-1}
=\frac{\Theta\left(\tilde q;-\tilde t-\tilde{\textbf{d}}(\mathrm b)\right)}{E(\tilde a_{T_i},\tilde q)}, 
\end{equation}
where~$T_i$ is the train-track crossing the edge in~$G^{\quadgraph}$ connecting~$\mathrm w$ with~$\mathrm f$, and~$\mathrm b$ with~$\mathrm f$, respectively. We define~$g_{\mathrm{x},\mathrm{y}}$ for all pairs of vertices in~$G^{\quadgraph}$ inductively, so that if~$\mathrm z\in G^{\quadgraph}$ then 
\begin{equation}
g_{\mathrm{x},\mathrm{z}}g_{\mathrm{z},\mathrm{y}}=g_{\mathrm{x},\mathrm{y}}.
\end{equation}
This is a well-defined function, see~\cite[Section 3.4]{BCT22}. The values~$g_{\mathrm x,\mathrm y}$ are defined on the universal cover, however, the right hand side of~\eqref{eq:fock_zeros_poles} is the projection to~$\mathcal R$, which is a well-defined differential~$1$-form.

We use the above definition to derive a formula for the right hand side of~\eqref{eq:fock_zeros_poles}. For~$i=0,\dots,\ell-1$ and~$j=1,\dots,k-1$, 
\begin{equation}
g_{\mathrm b_{i,j-1}\mathrm b_{i,j}}(\tilde q)
=\frac{\Theta\left(\tilde q;-\tilde t-\tilde{\textbf{d}}(\mathrm b_{i,j-1})\right)}{E(\tilde p_{\infty,j},\tilde q)}\frac{E(\tilde p_{0,j},\tilde q)}{\Theta\left(\tilde q;-\tilde t-\tilde{\textbf{d}}(\mathrm b_{i,j})\right)},
\end{equation}
for~$i=1,\dots,\ell-1$ and~$j=0,\dots,k-1$,
\begin{equation}
g_{\mathrm b_{i-1,j}\mathrm b_{i,j}}(\tilde q)
=\frac{\Theta\left(\tilde q;-\tilde t-\tilde{\textbf{d}}(\mathrm b_{i-1,j})\right)}{E(\tilde q_{0,i},\tilde q)}\frac{E(\tilde q_{\infty,i},\tilde q)}{\Theta\left(\tilde q;-\tilde t-\tilde{\textbf{d}}(\mathrm b_{i,j})\right)},
\end{equation}
and for~$i=0,\dots,\ell-1$ and~$j=0,\dots,k-1$, 
\begin{equation}
g_{\mathrm b_{i,j}\mathrm w_{i,j}}(\tilde q)
=\frac{\Theta\left(\tilde q;-\tilde t-\tilde{\textbf{d}}(\mathrm b_{i,j})\right)}{E(\tilde p_{\infty,j+1},\tilde q)}
\frac{\Theta\left(\tilde q;\tilde t+\tilde{\textbf{d}}(\mathrm w_{i,j})\right)}{E(\tilde q_{0,i+1},\tilde q)}.
\end{equation}
Hence, for~$i,i'=0,\dots,\ell-1$ and~$j,j'=0,\dots,k-1$, 
\begin{multline}
g_{\mathrm b_{i,j}\mathrm w_{i',j'}}(\tilde q)=\left(\prod_{m=j+1}^{j'}g_{\mathrm b_{i,m-1}\mathrm b_{i,m}}(\tilde q)\right)
\left(\prod_{m=i+1}^{i'}g_{\mathrm b_{m-1,j'}\mathrm b_{m,j'}}(\tilde q)\right)
g_{\mathrm b_{i',j'}\mathrm w_{i',j'}}(\tilde q) \\
=\Theta\left(\tilde q;-\tilde t-\tilde{\textbf{d}}(\mathrm b_{i,j})\right)\Theta\left(\tilde q;\tilde t+\tilde{\textbf{d}}(\mathrm w_{i',j'})\right)\frac{\prod_{m=i+1}^{i'}E(\tilde q_{\infty,m},\tilde q)}{\prod_{m=i+1}^{i'+1}E(\tilde q_{0,m},\tilde q)}
\frac{\prod_{m=j+1}^{j'}E(\tilde p_{0,m},\tilde q)}{\prod_{m=j+1}^{j'+1}E(\tilde p_{\infty,m},\tilde q)}.
\end{multline}
The statement now follows from~\eqref{eq:fock_zeros_poles}. 
\end{proof}

As explained in~\cite[parts~$2$ and~$3$ of Remark 50]{BCT22}, see also~\cite{GK13}, two periodic dimer models defined on the same graph are gauge equivalent (see explanation under~\eqref{eq:measure_dimer}) if they are associated with the same~$M$-curve, angle map and standard divisor. By construction of~$K_F$, this implies that~$K_F$ is gauge equivalent with~$K_{G_1}$, that is,~$K_F(z^*,w^*)=D_1K_{G_1}(z,w)D_2$, for some diagonal matrices with nonzero diagonal entries~$D_1$ and~$D_2$. Lemma~\ref{lem:kasteleyn_thetas} now almost follows from Lemma~\ref{lem:fock_thetas}. What remains is to relate~$z^*$ and~$w^*$ to~$z$ and~$w$.

\begin{lemma}\label{lem:parametrization_curve}
The functions defined by~\eqref{eq:z} and~\eqref{eq:w} are related to~$z(q)$ and~$w(q)$ by
\begin{equation}
z^*(q)=w(q)^{-1} \quad \text{and} \quad w^*(q)=z(q).
\end{equation}
\end{lemma}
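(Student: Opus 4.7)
The plan is to prove both equalities by identifying each side as a meromorphic function on~$\mathcal R$, matching their divisors, and then fixing the resulting multiplicative constants.

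First, I would read off the divisors of all four functions. By Fact~\ref{fact:prop_prime_form}~\eqref{eq:prime_form_zero}, the definition~\eqref{eq:z} shows that on the universal cover~$z^*$ has a simple zero at each~$\tilde q_{\infty,i}$ and a simple pole at each~$\tilde q_{0,i}$. To see that the product descends to a well-defined meromorphic function on~$\mathcal R$, I would use Fact~\ref{fact:prop_prime_form}~\eqref{eq:prime_form_lift_a}--\eqref{eq:prime_form_lift_b}: invariance under the~$A$-cycles is automatic since the~$\ell$ prime forms in the numerator can be paired with those in the denominator, and invariance under the~$B_m$-cycles reduces to the condition~$\sum_i \int_{\tilde q_{\infty,i}}^{\tilde q_{0,i}} \omega_m \in \ZZ$, which is Abel's theorem~\eqref{eq:abels_thm} applied to the function~$w$ (whose divisor on~$\mathcal R$ is precisely~$\sum_i(q_{0,i} - q_{\infty,i})$, with simple zeros at the angles~$q_{0,i} = (\cdot,0)$ and simple poles at~$q_{\infty,i} = (\beta^v_i,\infty)$, as dictated by the~$\ell$-fold structure of the Newton polygon in the~$w$-direction). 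Hence~$z^*$ and~$w^{-1}$ are two meromorphic functions on the compact surface~$\mathcal R$ with identical divisors, and the product~$z^*\cdot w$ is a nonzero constant~$c_1$. The entirely parallel argument for~\eqref{eq:w} and~$z$, using that~$z$ has simple zeros at~$p_{0,j}$ and simple poles at~$p_{\infty,j}$, yields~$w^*/z \equiv c_2$ for a nonzero constant~$c_2$.

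The remaining task, showing~$c_1 = c_2 = 1$, is the main obstacle since the prime-form side is defined only up to a multiplicative constant coming from the choice of lifts of the angles to~$\widetilde{\mathcal R}$. To pin down the constants I would invoke the gauge equivalence~$K_F(z^*,w^*) = D_1\, K_{G_1}(z,w)\, D_2$ recorded just before the statement of the lemma. In~$K_{G_1}$ the magnetic parameter~$z^{-1}$ is attached exactly to the edges crossing~$\gamma_u$, while~$w$ is attached to those crossing~$\gamma_v$, cf.~\eqref{eq:magnetic_kasteleyn_matrix}. For Fock's matrix, the corresponding role is played, via~\eqref{eq:fock_weight} and the transformation properties of prime forms and theta functions under lifts, by~$w^*$ for train-tracks of homology class~$(\pm 1,0)$ (horizontal loop) and by~$z^*$ for those of class~$(0,\pm 1)$ (vertical loop). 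Matching the two structural descriptions through the diagonal gauge~$(D_1,D_2)$---which affects only the diagonal scaling of rows and columns and therefore cannot alter the magnetic monomials---forces the identification~$w^* = z$ and~$z^* = w^{-1}$ on the nose, giving~$c_1 = c_2 = 1$.

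As a cleaner alternative that bypasses the bookkeeping in~\cite{BCT22}, one may simply fix the constants by direct evaluation: plug any non-angle point~$q\in \mathcal R$ into the explicit formulas~\eqref{eq:z}--\eqref{eq:w}, using lifts of the angles normalized so that~$\sum_i[\tilde u(q_{0,i}) - \tilde u(q_{\infty,i})] = 0$ and~$\sum_j[\tilde u(p_{0,j}) - \tilde u(p_{\infty,j})] = 0$ in~$\CC^g$ (which is possible precisely because these sums vanish in~$J(\mathcal R)$), and verify the constants against the values of~$w$ and~$z$ at that point. I expect the structural argument to be the slickest, but either route completes the proof.
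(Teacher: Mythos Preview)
Your divisor-matching step is correct and coincides with the paper's first step; the paper simply writes ``by comparing the zeros and poles we find that $z^*(q)=c_w w(q)^{-1}$ and $w^*(q)=c_z z(q)$'', and your more explicit verification that the prime-form products descend to $\mathcal R$ via Abel's theorem is fine.

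The gap is in pinning down the constants. In option~(a) you correctly sense that the swap $z^* \leftrightarrow w^{\pm 1}$, $w^* \leftrightarrow z^{\pm 1}$ reflects the different directional conventions encoded in the exponents $-v_T$, $h_T$ of \eqref{eq:z}--\eqref{eq:w} versus the assignment in \eqref{eq:magnetic_kasteleyn_matrix}, but the assertion that the diagonal gauge ``cannot alter the magnetic monomials'' and therefore forces $c_1=c_2=1$ is not justified. To make it rigorous you would have to compute exactly how the Fock entries \eqref{eq:fock_weight} pick up factors of $z^*$ and $w^*$ under the two torus translations, and match those against the explicit $z^{-1}$, $w$ factors in $K_{G_1}$; you do not carry this out, and the gauge relation alone (which is a statement about the \emph{unaltered} edge weights) does not supply the scalar. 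In option~(b) the normalization $\sum_i[\tilde u(q_{0,i}) - \tilde u(q_{\infty,i})]=0$ neither determines the lifts uniquely nor provides any reason the resulting constant should be $1$; ``verify the constants by plugging in a point'' would require evaluating a product of prime forms against a value of $w$, which is not something one can do by inspection.

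The paper's device is to compare a \emph{gauge-invariant} scalar on both sides: the alternating product of edge weights along a train-track. Along the horizontal train-track $T_j$ (with angle $p_{0,j}$), this product is $\alpha^h_j/\beta^h_j$ in the original weights and, computed directly from \eqref{eq:fock_weight} with the theta factors cancelling, equals $\prod_i E(p_{0,j},q_{0,i})/E(q_{\infty,i},p_{0,j})=(-1)^\ell z^*(p_{0,j})^{-1}$ by antisymmetry of the prime form. Gauge-invariance equates these two expressions, and since $w(p_{0,j})=(-1)^\ell\alpha^h_j/\beta^h_j$ by \eqref{eq:angles_2}, one reads off $c_w=1$. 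The same argument along a vertical train-track gives $c_z=1$. This use of a gauge-invariant train-track quantity is the missing idea.
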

\begin{proof}
By comparing the zeros and poles we find that~$z^*(q)=c_ww(q)^{-1}$ and~$w^*(q)=c_zz(q)$ for some constants~$c_z$ and~$c_w$. 

To compute the constants we consider the alternating product of the edge weights along train-tracks. As noted earlier, the alternating product of the edge weights along~$T_j$,~$j=1\dots,k$, is equal to~$\alpha^h_j/\beta^h_j$. The alternating product of the edge weights along a train-track is unchanged under a gauge transformation. This means that
\begin{equation}
\frac{\alpha^h_j}{\beta^h_j}=\frac{\prod_{i=1}^\ell E(p_{0,j},q_{0,i})}{\prod_{i=1}^\ell E(q_{\infty,i},p_{0,j})},
\end{equation}
where the right hand side is the alternating product of edge weights defined by~\eqref{eq:fock_weight} along~$T_j$. By definition of~$z^*$, and by the antisymmetry of the prime form, part~\eqref{eq:prime_form_antisym} of Fact~\ref{fact:prop_prime_form}, the previous equality yields
\begin{equation}
\frac{\alpha^h_j}{\beta^h_j}=(-1)^\ell z^*(p_{0,j})^{-1}.
\end{equation}
On the other hand, by the definition of the angle~$p_{0,j}$,
\begin{equation}
w(p_{0,j})=(-1)^\ell \frac{\alpha^h_j}{\beta^h_j}.
\end{equation}
Hence~$c_w=1$.

A similar computation shows that~$(-1)^kz(q_{0,i})=\alpha^v_i/\gamma^v_i=(-1)^kw^*(q_{0,i})$, yielding~$c_z=1$.
\end{proof}

Note that the previous lemma implies the second point in Remark~\ref{rem:f_eigenvector}. 

We are finally ready to prove Lemma~\ref{lem:kasteleyn_thetas}.
\begin{proof}[Proof of Lemma~\ref{lem:kasteleyn_thetas}]
As discussed just before Lemma~\ref{lem:parametrization_curve},~$K_F(z^*,w^*)=D_1K_{G_1}(z,w)D_2$ for some diagonal matrices~$D_1$ and~$D_2$ with nonzero entries. Lemma~\ref{lem:parametrization_curve} then implies that
\begin{equation}
\frac{\adj K_{G_1}(z,w)_{\mathrm b_{i,j}\mathrm w_{i',j'}}\d z}{z w \partial_{w}P(z,w)}=c_{iji'j'}\frac{\adj K_F(z^*,w^*)_{\mathrm b_{i,j}\mathrm w_{i',j'}}\d z^*}{z^* w^* \partial_{w^*}P_F(z^*,w^*)},
\end{equation}
for some constants~$c_{iji'j'}$. Here we have used that
\begin{equation}
\frac{\d z^*}{z^*w^*\partial_{w^*}P_F(z^*,w^*)}=-\frac{1}{\det D_1D_2}\frac{\d w}{wz\partial_zP(z,w)}=\frac{1}{\det D_1D_2}\frac{\d z}{wz\partial_wP(z,w)}.
\end{equation}
Lemma~\ref{lem:kasteleyn_thetas} now follows from Lemma~\ref{lem:fock_thetas}, with~$e_{\mathrm{b}_{i,j}}=-\tilde t-\tilde{\textbf{d}}(\mathrm b_{i,j})$ and~$e_{\mathrm{w}_{i',j'}}=\tilde t+\tilde{\textbf{d}}(\mathrm w_{i',j'})$.
\end{proof}

\begin{remark}\label{rem:meromorphic_differential}
Let~$D$ be the divisor of the zeros and poles of the~$1$-form of Lemma~\ref{lem:kasteleyn_thetas}. It follows from~\eqref{eq:abels_thm_diff} and~\eqref{eq:jacobi_inverse} that~$u(D)=2\Delta$ if and only if the~$1$-form is well defined on~$\mathcal R$. While we already know its well-definedness from the definition of the left hand side, we can also demonstrate that~$u(D)=2\Delta$ using the right hand side. Indeed, it follows from~\eqref{eq:jacobi_inverse} and the definition of~$e_{\mathrm{b}_{i,j}}=-\tilde t-\tilde{\textbf{d}}(\mathrm b_{i,j})$ and~$e_{\mathrm{w}_{i',j'}}=\tilde t+\tilde{\textbf{d}}(\mathrm w_{i',j'})$ that
\begin{equation}
u(D)=2\Delta+\tilde{\textbf{d}}(\mathrm b_{i,j})-\tilde{\textbf{d}}(\mathrm w_{i',j'})
+\sum_{m=i+1}^{i'}u(q_{\infty,m})-\sum_{m=i+1}^{i'+1}u(q_{0,m})+\sum_{m=j+1}^{j'}u(p_{0,m})-\sum_{m=j+1}^{j'+1}u(p_{\infty,m}).
\end{equation} 
We use a similar convention for the summation above as for the products in Remark~\ref{rem:products}. It follows from the definition of the discrete Abel map that
\begin{equation}
\tilde{\textbf{d}}(\mathrm w_{i',j'})-\tilde{\textbf{d}}(\mathrm b_{i,j})
=\sum_{m=i+1}^{i'}u(q_{\infty,m})-\sum_{m=i+1}^{i'+1}u(q_{0,m})+\sum_{m=j+1}^{j'}u(p_{0,m})-\sum_{m=j+1}^{j'+1}u(p_{\infty,m}),
\end{equation}
and hence,~$u(D)=2\Delta$. A general argument can be found in the proof of~\cite[Lemma 32]{BCT22}.
\end{remark}

\section{Asymptotic analysis} \label{sec:asymptotic}
This section contains the steep descent analysis of the correlation kernel in Theorem~\ref{thm:bd_thm}. In Section~\ref{sec:analysis_prep} we write the correlation kernel as an integral on~$\mathcal R$. In Section~\ref{sec:steepest_descent} we define the contours of steep descent and ascent, in the different regions, and prove the first equality of Theorem~\ref{thm:local_limit}. The second equality of that theorem is then proved in Section~\ref{sec:uniform_limt}. Finally, in Section~\ref{sec:height_function} we prove Proposition~\ref{prop:limit_shape}.

\subsection{Preparation for the steep descent analysis}\label{sec:analysis_prep}
In this section we use the result from the previous section to write the correlation kernel of Theorem~\ref{thm:bd_thm} as an integral on the Riemann surface~$\mathcal R$. We also make appropriate preparations for the steep descent analysis of that integral which will be performed in the subsequent sections. 

Recall~$\Phi$, defined by~\eqref{eq:phi_recall}, and the nullvectors~$\psi_{0,\pm}$ and~$\psi_{kN,\pm}$ defined on the Riemann surface~$\mathcal R$ and given by~\eqref{eq:eigenvector_right},~\eqref{eq:eigenvector_left} and~\eqref{eq:eigenvector_j}, respectively.

\begin{lemma}\label{lem:adjoint}
For all~$(z,w)\in \mathcal R$ the equality
\begin{equation}\label{eq:adjoint}
\frac{\psi_{0,+}(z,w)\psi_{0,-}(z,w)}{\psi_{0,-}(z,w)\psi_{0,+}(z,w)}=\frac{\adj(wI-\Phi(z))}{\partial_w\det(wI-\Phi(z))}
\end{equation}
holds. Moreover, for all~$z\in \CC^*$, 
\begin{equation}\label{eq:I_on_surface}
\sum_{w:(z,w)\in \mathcal R} \frac{\psi_{0,+}(z,w)\psi_{0,-}(z,w)}{\psi_{0,-}(z,w)\psi_{0,+}(z,w)}=I.
\end{equation}
\end{lemma}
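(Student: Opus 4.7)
My plan is to reduce both identities to the rank-one structure of the adjugate at a simple eigenvalue, together with completeness of eigenvectors.

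For~\eqref{eq:adjoint}, I would begin by noting that on the open dense part of~$\mathcal R$ where~$w$ is a simple eigenvalue of~$\Phi(z)$, the matrix~$\adj(\Phi(z)-wI)$ has rank one, so it factorizes as~$\adj(\Phi(z)-wI)=v(z,w)\,u(z,w)^T$ with~$v$ a right and~$u^T$ a left nullvector of~$\Phi(z)-wI$. Substituting this into the definitions~\eqref{eq:eigenvector_right} and~\eqref{eq:eigenvector_left} gives
\begin{equation}
\psi_{0,+}=\frac{u_1\,v\,\d z}{z\,\partial_w\det(\Phi(z)-wI)},\qquad
\psi_{0,-}=\frac{(J_1\phi_1(z)^{-1}v)\,u^T\,\d z}{z\,\partial_w\det(\Phi(z)-wI)},
\end{equation}
where~$u_1=u^TJ_1^T$ is the first entry of~$u$. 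The column-times-row product~$\psi_{0,+}\psi_{0,-}$ and the scalar row-times-column product~$\psi_{0,-}\psi_{0,+}$ share the prefactor~$u_1(J_1\phi_1^{-1}v)\cdot(\d z/(z\partial_w\det))^2$, so the ratio collapses to~$vu^T/u^Tv=\adj(\Phi-wI)/u^Tv$. By Jacobi's formula applied to~$\det(\Phi(z)-wI)$, one has~$u^Tv=\Tr\adj(\Phi-wI)=-\partial_w\det(\Phi-wI)$. Passing from~$\Phi-wI$ to~$wI-\Phi$ via the signs~$(-1)^{k-1}$ in the adjugate and~$(-1)^k$ in the determinant produces exactly the right-hand side of~\eqref{eq:adjoint}.

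For~\eqref{eq:I_on_surface}, I would fix a generic~$z\in\CC^*$ so that the~$k$ values~$w_1,\dots,w_k$ with~$(z,w_j)\in\mathcal R$ are the~$k$ distinct eigenvalues of~$\Phi(z)$. The first identity then rewrites the sum as
\begin{equation}
\sum_{j=1}^k\frac{\adj(w_jI-\Phi(z))}{\partial_w\det(wI-\Phi(z))\big|_{w=w_j}},
\end{equation}
which is exactly the sum of residues of the rational resolvent~$(wI-\Phi(z))^{-1}=\adj(wI-\Phi(z))/\det(wI-\Phi(z))$ at its finite simple poles. Since~$(wI-\Phi(z))^{-1}=w^{-1}I+\Ordo(w^{-2})$ as~$w\to\infty$, its residue at infinity equals~$-I$, and the fact that all residues of a rational matrix on~$\CC P^1$ sum to zero gives~$\sum_j\mathrm{Res}_{w_j}=I$.

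The only step requiring care is extending~\eqref{eq:I_on_surface} to the exceptional values of~$z$ where two eigenvalues of~$\Phi(z)$ collide, since both the rank-one factorization and the indexing of the summands degenerate there. This is handled by continuity: the full sum on the left is a symmetric function of the~$k$ sheet values~$w_j(z)$, hence extends to a rational function of~$z$ that must remain equal to~$I$ by analyticity. Aside from this, the bookkeeping of signs when converting between~$\adj(\Phi-wI)$ and~$\adj(wI-\Phi)$, and between the derivatives of the two determinants, is the main source of potential errors, but introduces no substantive difficulty.
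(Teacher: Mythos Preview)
Your argument for~\eqref{eq:adjoint} is correct and essentially the same as the paper's: both exploit that $\adj(\Phi(z)-wI)$ has rank one on the simple-eigenvalue locus, identify the ratio with $\adj/\Tr\adj$, and then apply Jacobi's formula. The paper phrases it via the relation $\adj^2=(\Tr\adj)\adj$ rather than an explicit factorization $\adj=vu^T$, but the content is identical.

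For~\eqref{eq:I_on_surface} you take a genuinely different route. The paper argues via eigenvector biorthogonality: one checks $\psi_{0,-}(z,w)\psi_{0,+}(z,w^*)=0$ for $w\neq w^*$, so each $\psi_{0,+}(z,w^*)$ is an eigenvector of the full sum with eigenvalue~$1$; since for generic $z$ these $k$ eigenvectors are independent, the sum is $I$. Your approach instead recognizes the sum as the total finite-pole residue of the resolvent $(wI-\Phi(z))^{-1}$ and reads off $I$ from the $w^{-1}I$ leading term at infinity. Both are standard and correct; your residue argument is slightly slicker here and avoids any appeal to the specific form of $\psi_{0,\pm}$, while the paper's version makes the spectral-projection structure more explicit (which resurfaces later, e.g.\ in the proof of Lemma~\ref{lem:behavior_one-forms}). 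Your continuity remark for the exceptional $z$ is adequate and matches what the paper does implicitly.
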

\begin{proof}
Recall the definition of the characteristic polynomial~$P$ given in~\eqref{eq:characteristic_polynomial}. For all~$(z,w)\in \mathcal R$ with~$P_w(z,w)\neq 0$, the matrix~$wI-\Phi(z)$ is of rank~$k-1$, which means that~$\adj(wI-\Phi(z))$ is of rank~$1$. This implies that~$\adj(wI-\Phi(z))^2=\Tr (\adj(wI-\Phi(z)))\adj(wI-\Phi(z))$. Combining this equality with the explicit form of~$\psi_{0,+}$ and~$\psi_{0,-}$ given in~\eqref{eq:eigenvector_right} and~\eqref{eq:eigenvector_left}, we conclude that~$\psi_{0,+}(z,w)$ is a right eigenvector and~$\psi_{0,-}$ is a left eigenvector of~$\adj(wI-\Phi(z))$, both with eigenvalue~$\Tr(\adj(wI-\Phi(z)))$. This means that~$\adj(wI-\Phi(z))$ is, up to a scalar, the outer product of~$\psi_{0,+}$ and~$\psi_{0,-}$. Comparing the eigenvalues we conclude that
\begin{equation}\label{eq:adjoint_c}
\frac{\psi_{0,+}(z,w)\psi_{0,-}(z,w)}{\psi_{0,-}(z,w)\psi_{0,+}(z,w)}=\frac{\adj(wI-\Phi(z))}{\Tr(\adj(wI-\Phi(z)))}.
\end{equation}
By Jacobi's formula~\eqref{eq:jacobis_formula}, 
\begin{equation}
\Tr \adj(wI-\Phi(z))=\partial_w\det(wI-\Phi(z)),
\end{equation}
which proves~\eqref{eq:adjoint}.

If~$(z,w),(z,w^*)\in \mathcal R$, then 
\begin{equation}
w\psi_{0,-}(z,w)\psi_{0,+}(z,w^*)=\psi_{0,-}(z,w)\Phi(z)\psi_{0,+}(z,w^*)=w^*\psi_{0,-}(z,w)\psi_{0,+}(z,w^*).
\end{equation}
Hence, if~$w\neq w^*$ then~$\psi_{0,-}(z,w)\psi_{0,+}(z,w^*)=0$. It follows that if~$(z,w^*)\in \mathcal R$ then~$\psi_{0,+}(z,w^*)$ is an eigenvector of the left hand side of~\eqref{eq:I_on_surface}, with eigenvalue~$1$. For all but a finite set of~$z\in \CC$, there are~$k$ distinct~$w_i$ such that~$(z,w_i)\in \mathcal R$. We claim that the eigenvectors~$\psi_{0,+}(z,w_i)$ are linearly independent. Indeed,~$\psi_{0,+}(z,w_i)$ is an eigenvector of~$\Phi$ with eigenvalue~$w_i$. Since the eigenvalues are distinct,~$\psi_{0,+}(z,w_i)$ must be linearly independent. Hence, the left hand side of~\eqref{eq:I_on_surface} has~$k$ linearly independent eigenvectors all with eigenvalue~$1$, which means it is the identity matrix. 
\end{proof}

We use the previous lemma to write the integral in Theorem~\ref{thm:bd_thm} as an integral on~$\mathcal R$. The new integral is expressed in terms of the nullvectors, which we understand from the results of Section~\ref{sec:wh}.

Recall that we are working under the assumption that~$\beta_i^v<1<\alpha_i^v/\gamma_i^v$ (Assumption~\ref{ass:main_ass}\eqref{ass:wh}).

\begin{proposition}\label{prop:finite_kernel}
The correlation kernel associated with the~$n\to \infty$ limit of the determinantal point process~$\PP_\text{path}$~\eqref{eq:measure_on_points} is given by
\begin{multline}
 \left[K_\text{path}(2\ell x+i,ky+j;2\ell x'+i',ky'+j')\right]_{j',j=0}^{k-1}
=-\frac{\one_{2\ell x+i>2\ell x'+i'}}{2\pi\i}\int_{\Gamma} \prod_{m=2\ell x'+i'+1}^{2\ell x+i}\phi_m(z)z^{y'-y}\frac{\d z}{z} \\
 + \frac{1}{(2\pi\i)^2}\int_{\tilde \Gamma_s}\int_{\tilde \Gamma_l} \left(\prod_{m=1}^{i'}\phi_m(z_1)\right)^{-1} \frac{\psi_{0,+}(z_1,w_1)\psi_{kN,-}(z_1,w_1)}{\psi_{0,-}(z_1,w_1)\psi_{0,+}(z_1,w_1)}\\
 \times\frac{\psi_{kN,+}(z_2,w_2)\psi_{0,-}(z_2,w_2)}{\psi_{0,-}(z_2,w_2)\psi_{0,+}(z_2,w_2)}\prod_{m=1}^i\phi_m(z_2)\frac{z_1^{y'-\ell N}}{z_2^{y-\ell N}}\frac{w_2^{x-kN}}{w_1^{x'}}\frac{\d z_2\d z_1}{z_2(z_2-z_1)},
\end{multline}
for~$x,x'=0,\dots, kN-1$,~$i,i'=0,\dots 2\ell-1$,~$y,y'\in \ZZ$ and~$j,j'=0,\dots,k-1$. The contour~$\Gamma$ is a circle centered at zero with radius~$1$ and~$\tilde \Gamma_{s/l}=\{(z,w)\in \mathcal R:|z|=r_{s/l}\}$ where~$\beta_i^v<r_s<r_l<\alpha_i^v/\gamma_i^v$. All contours are oriented in the positive direction. 
\end{proposition}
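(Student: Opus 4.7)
My starting point is the formula for $K_\text{path}$ given by Theorem~\ref{thm:bd_thm}, which is applicable under Assumption~\ref{ass:main_ass}\eqref{ass:wh}: with that assumption, one can choose $r_s, r_l$ satisfying $\beta_i^v < r_s < 1 < r_l < \alpha_i^v/\gamma_i^v$, which falls in the required regime of the theorem. The single-integral ``diagonal'' term $-\frac{\one_{2\ell x+i>2\ell x'+i'}}{2\pi\i}\int_\Gamma \cdots$ is already in the desired form, so the entire proof concerns rewriting the double-contour piece carrying the Wiener--Hopf factors $\widetilde\phi_\pm$.

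The engine of the rewriting is the resolution of identity from Lemma~\ref{lem:adjoint}, namely $I=\sum_{w:(z,w)\in\mathcal R}\frac{\psi_{0,+}(z,w)\psi_{0,-}(z,w)}{\psi_{0,-}(z,w)\psi_{0,+}(z,w)}$, which I insert twice into the integrand: once between $\Phi(z_1)^{-x'}$ and $\widetilde\phi_-(z_1)$, and once between $\widetilde\phi_+(z_2)$ and $\Phi(z_2)^{x-kN}$. Since $\psi_{0,\pm}(z,w)$ are left/right nullvectors of $\Phi(z)-wI$, one has $\Phi(z)^m\psi_{0,+}(z,w)=w^m\psi_{0,+}(z,w)$ for all $m\in\ZZ$, which converts the matrix powers $\Phi(z_1)^{-x'}$ and $\Phi(z_2)^{x-kN}$ into scalar factors $w_1^{-x'}$ and $w_2^{x-kN}$. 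Then Theorem~\ref{thm:wiener-hopf} replaces $\psi_{0,-}(z_1,w_1)\widetilde\phi_-(z_1)$ with $\psi_{kN,-}(z_1,w_1)\,z_1^{-\ell N}C^{-1}$ and $\widetilde\phi_+(z_2)\psi_{0,+}(z_2,w_2)$ with $z_2^{\ell N}C\,\psi_{kN,+}(z_2,w_2)$. Because $C^{\pm 1}$ and the $z^{\pm\ell N}$ are scalars with respect to the sandwich, the constant matrices $C^{-1}$ and $C$ cancel, and the $z_1^{-\ell N}$, $z_2^{\ell N}$ factors combine naturally with $z_1^{y'}$ and $z_2^{-y}$ to produce the shifted exponents $y'-\ell N$ and $y-\ell N$ visible in the target formula.

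The remaining step is to interpret the double sum over pairs $(w_1,w_2)$ with $(z_i,w_i)\in\mathcal R$ as a double integral on the spectral curve. For each fixed $z$ the equation $P(z,w)=0$ has $k$ roots in $w$ counted with multiplicity, which are precisely the $k$ sheets above $z$; pulling $\d z$ back from $\CC^*$ to $\mathcal R$ via $z:\mathcal R\to\CC^*$ gives the tautology $\int_{|z|=r}\sum_{w:(z,w)\in\mathcal R}g(z,w)\,\d z=\int_{\tilde\Gamma_r}g(z,w)\,\d z$, valid whenever $|z|=r$ avoids the discrete set of critical values of $z$. Applying this to the $z_1$- and $z_2$-integrals produces the desired form with contours $\tilde\Gamma_{s/l}=\{(z,w)\in\mathcal R:|z|=r_{s/l}\}$.

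The main point to check, and the principal obstacle, is that the choice of radii is in fact free within $\beta_i^v<r_s<r_l<\alpha_i^v/\gamma_i^v$, not just the narrower strip $r_s<1<r_l$ imposed by Theorem~\ref{thm:bd_thm}. This I will verify by a deformation argument: for fixed $y,y',x,x',i,i',j,j'$, the integrand on $\mathcal R\times\mathcal R$ has singularities only where the denominator $z_2(z_2-z_1)\,\psi_{0,-}\psi_{0,+}$ vanishes, where $\prod_{m=1}^{i'}\phi_m(z_1)$ fails to be invertible, or at the angles (where $\psi_{kN,\pm}$ may develop zeros/poles, as described in Proposition~\ref{prop:zeros_poles}). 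All of these are confined to $|z|\in\{0,\beta_i^v,\alpha_i^v/\gamma_i^v,\infty\}$ together with branch points of the $z$-projection; in particular, one can deform $r_s$ and $r_l$ independently through the open annulus $\beta_i^v<|z|<\alpha_i^v/\gamma_i^v$ avoiding the finitely many branch radii without encountering any pinch, provided $r_s\neq r_l$ (and the $z_2-z_1$ residue is handled as in Theorem~\ref{thm:bd_thm}). Since on this annulus the two-variable integrand is a closed two-form (away from the discrete singular set), both integrals are independent of the chosen radii and the formula holds throughout the stated range.
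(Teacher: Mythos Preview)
Your proof is correct and follows essentially the same approach as the paper's: insert the resolution of identity from Lemma~\ref{lem:adjoint} twice, use the eigenvector relations together with Theorem~\ref{thm:wiener-hopf} to turn $\Phi^{-x'}\widetilde\phi_-$ and $\widetilde\phi_+\Phi^{x-kN}$ into the $\psi_{kN,\pm}$ expressions (with the $C^{-1}C$ cancelling in the middle), and pass from the sums over sheets to integrals over $\tilde\Gamma_{s/l}\subset\mathcal R$. Your final paragraph justifying the full range $\beta_i^v<r_s<r_l<\alpha_i^v/\gamma_i^v$ rather than only $r_s<1<r_l$ is a point the paper simply leaves implicit.
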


\begin{proof}
There exists a Wiener--Hopf factorization of~$\phi=\Phi^{kN}$ by Assumption~\ref{ass:main_ass}\eqref{ass:wh}, which means that Theorem~\ref{thm:bd_thm} applies. The first integral in the statement is identical to the first integral in~\eqref{eq:bd_thm}, so we only need to consider the second integral. In order to use Theorem~\ref{thm:wiener-hopf} we write the second integral in~\eqref{eq:bd_thm} as an integral on~$\mathcal R$, using~\eqref{eq:I_on_surface}. We get that the integral is equal to
\begin{multline}
\frac{1}{(2\pi\i)^2}\int_{\Gamma_s}\int_{\Gamma_l} \left(\prod_{m=1}^{i'}\phi_m(z_1)\right)^{-1} \Phi(z_1)^{-x'}\sum_{w_1:(z_1,w_1)\in \mathcal R}\frac{\psi_{0,+}(z_1,w_1)\psi_{0,-}(z_1,w_1)}{\psi_{0,-}(z_1,w_1)\psi_{0,+}(z_1,w_1)}\,\widetilde \phi_-(z_1)\\
 \times\widetilde\phi_+(z_2)\sum_{w_2:(z_2,w_2)\in \mathcal R}\frac{\psi_{0,+}(z_2,w_2)\psi_{0,-}(z_2,w_2)}{\psi_{0,-}(z_2,w_2)\psi_{0,+}(z_2,w_2)}\,\Phi(z_2)^{x-kN}\prod_{m=1}^i\phi_m(z_2)\frac{z_1^{y'}}{z_2^{y}}\frac{\d z_2\d z_1}{z_2(z_2-z_1)} \\
 = \frac{1}{(2\pi\i)^2}\int_{\tilde \Gamma_s}\int_{\tilde \Gamma_l} \left(\prod_{m=1}^{i'}\phi_m(z_1)\right)^{-1} \Phi(z_1)^{-x'}\frac{\psi_{0,+}(z_1,w_1)\psi_{0,-}(z_1,w_1)}{\psi_{0,-}(z_1,w_1)\psi_{0,+}(z_1,w_1)}\,\widetilde \phi_-(z_1)\\
 \times\widetilde\phi_+(z_2)\frac{\psi_{0,+}(z_2,w_2)\psi_{0,-}(z_2,w_2)}{\psi_{0,-}(z_2,w_2)\psi_{0,+}(z_2,w_2)}\,\Phi(z_2)^{x-kN}\prod_{m=1}^i\phi_m(z_2)\frac{z_1^{y'}}{z_2^{y}}\frac{\d z_2\d z_1}{z_2(z_2-z_1)},
\end{multline}
where~$\tilde \Gamma_s$ and~$\tilde \Gamma_l$ are given in the statement. The proposition now follows from Theorem~\ref{thm:wiener-hopf} since~$\Phi(z_1)\psi_{0,+}(z_1,w_1)=w_1\psi_{0,+}(z_1,w_1)$ and~$\psi_{0,-}(z_2,w_2)\Phi(z_2)=w_2\psi_{0,-}(z_2,w_2)$.
\end{proof}

To see that the integral in the previous proposition is suitable for a steep descent analysis, we define for~$\tilde q\in \widetilde{\mathcal R}$ the following diagonal matrices, using the constants from Proposition~\ref{prop:zeros_poles},
\begin{equation}
D_{m,\mathrm w/\mathrm b}(\tilde q)=\diag\left\{\Theta\left(\tilde q;e_{\mathrm w_{0,j}/\mathrm b_{0,j}}^{(m)}\right)\right\}_{j=1}^k, \quad \text{and} \quad C_{m,\pm}=\diag\left\{c_{j,\pm}^{(m)}\right\}_{j=1}^k,
\end{equation}
for~$m=1,\dots,kN$. By Proposition~\ref{prop:zeros_poles} and Remark~\ref{rem:f_eigenvector},
\begin{equation}\label{eq:psi_n_psi_0+}
\psi_{kN,+}=f^ND_{kN,\mathrm w}D_{0,\mathrm w}^{-1}C_{kN,+}C_{0,+}^{-1}\psi_{0,+},
\end{equation}
and
\begin{equation}\label{eq:psi_n_psi_0-}
\psi_{kN,-}=\psi_{0,-}D_{kN,\mathrm b}D_{0,\mathrm b}^{-1}C_{kN,-}C_{0,-}^{-1}f^{-N}w^{kN},
\end{equation}
where~$f$ is as in Definition~\ref{def:action_function}. The following two lemmas show that the integral in Proposition~\ref{prop:finite_kernel} is suitable for a steep descent analysis. More precisely, Lemma~\ref{lem:meromorphic_one-forms} tells us how the contours in the integral can be deformed, and Lemma~\ref{lem:behavior_one-forms} tells us that the non-exponential part of the integral stays bounded. 

\begin{lemma}\label{lem:meromorphic_one-forms}
Assume that~$i,i'=0,\dots,2\ell-1$,~$0\leq x<kN$ and~$0\leq y'<\ell N-1$, and consider the differential forms 
\begin{equation}\label{eq:meromorphic_one-form_1}
\left(\prod_{m=1}^{i'}\phi_m(z_1)\right)^{-1} \frac{\psi_{0,+}(z_1,w_1)\psi_{kN,-}(z_1,w_1)}{\psi_{0,-}(z_1,w_1)\psi_{0,+}(z_1,w_1)}\frac{z_1^{y'-\ell N}}{w_1^{x'}}\d z_1,
\end{equation}
and
\begin{equation}\label{eq:meromorphic_one-form_2}
\frac{\psi_{kN,+}(z_2,w_2)\psi_{0,-}(z_2,w_2)}{\psi_{0,-}(z_2,w_2)\psi_{0,+}(z_2,w_2)}\prod_{m=1}^i\phi_m(z_2)\frac{w_2^{x-kN}}{z_2^{y-\ell N}}
\d z_2,
\end{equation}
both defined on~$\mathcal R$. These differential forms are meromorphic with possible poles only at~$q_{0,m}$ and~$q_{\infty,m}$,~$m=1,\dots,\ell$, and at~$p_{0,m}$ and~$p_{\infty,m}$,~$m=1,\dots,k$, respectively. 
\end{lemma}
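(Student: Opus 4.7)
The plan is to reduce both forms to matrix-valued $1$-forms whose pole structure is already known via Lemma~\ref{lem:kasteleyn_thetas}. The key input is, on one hand, Lemma~\ref{lem:adjoint}, which absorbs the scalar denominator $\psi_{0,-}\psi_{0,+}$ into the matrix identity
\begin{equation*}
\frac{\psi_{0,+}\psi_{0,-}}{\psi_{0,-}\psi_{0,+}}=\frac{\adj(wI-\Phi)}{\partial_w\det(wI-\Phi)},
\end{equation*}
and, on the other hand, Proposition~\ref{prop:zeros_poles} together with Remark~\ref{rem:f_eigenvector}, which allow one to factor the scalar ratios $R_j(q)=\psi_{kN,-}^{(j+1)}(q)/\psi_{0,-}^{(j+1)}(q)$ and $L_j(q)=\psi_{kN,+}^{(j+1)}(q)/\psi_{0,+}^{(j+1)}(q)$ into $c_j\,\Theta(q;e_{\mathrm b_{0,j}}^{(kN)})/\Theta(q;e_{\mathrm b_{0,j}}^{(0)})\cdot f(\tilde q)^{-N}w^{kN}$ (and analogously for $L_j$, with $\mathrm b\leftrightarrow\mathrm w$ and $N\leftrightarrow -N$). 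Since $\psi_{kN,-}=\psi_{0,-}\cdot\operatorname{diag}(R_j)$ and $\psi_{kN,+}=\operatorname{diag}(L_j)\cdot\psi_{0,+}$, the first form becomes, up to the monomial~$z_1^{y'-\ell N}w_1^{-x'}\d z_1$, the product
\begin{equation*}
\Bigl(\prod_{m=1}^{i'}\phi_m(z_1)\Bigr)^{-1}\frac{\adj(w_1I-\Phi(z_1))}{\partial_w\det(w_1I-\Phi(z_1))}\operatorname{diag}\!\left(R_j(q_1)\right)_{j=0}^{k-1},
\end{equation*}
and analogously for the second form with $\operatorname{diag}(L_j)$ multiplied on the left and $\prod_{m=1}^{i}\phi_m(z_2)$ on the right.

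The next step is to relate this to the Kasteleyn adjugate form. By Lemma~\ref{lem:phi_kasteleyn}, the combination $(\prod_{m=1}^{2a+1}\phi_m)^{-1}\adj(\Phi-wI)\prod_{m=1}^{2b}\phi_m\,\d z/(z\,\partial_w\det(\Phi-wI))$ is, entry-by-entry (after transposition), equal to $\adj K_{G_1}(z,w)\d z/(wz\partial_w P(z,w))$, which by Lemma~\ref{lem:kasteleyn_thetas} is meromorphic on~$\mathcal R$ with poles only at the angles. To reach this form, one inserts a factor $\prod_{m=1}^{2b}\phi_m\cdot(\prod_{m=1}^{2b}\phi_m)^{-1}$ between the adjugate block and $\operatorname{diag}(R_j)$ for an integer~$b$ chosen so that $i'\in\{2b,2b+1\}$; when $i'=2b$ one additionally factors out $\phi_{2b+1}^{-1}\cdot\phi_{2b+1}$. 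The ``missing'' compensating factor $(\prod_m\phi_m)^{\pm 1}$, by Lemma~\ref{lem:transition_matrices_properties}\eqref{eq:determinant_bern_geo}--\eqref{eq:geometric_infty}, has its only zeros and poles of the determinant at $z\in\{0,\infty,\beta_i^v,(-1)^k\alpha_i^v/\gamma_i^v\}$, which on~$\mathcal R$ correspond precisely to angles; the same analysis applies to the second form with the roles of the left and right products interchanged. The remaining monomial factors in~$z,w$ and the factor~$f^{\mp N}w^{\pm kN}$ likewise have zero and pole divisors supported only at the angles, by Definition~\ref{def:action_function} and the characterization of the angles~\eqref{eq:angles_1}--\eqref{eq:angles_2}.

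The principal technical obstacle lies in controlling the theta-function quotient inside~$R_j$ (respectively~$L_j$): individually, $\Theta(q;e_{\mathrm b_{0,j}}^{(kN)})/\Theta(q;e_{\mathrm b_{0,j}}^{(0)})$ does not descend to a single-valued function on~$\mathcal R$, and, even as a scalar function on~$\mathcal R$ (which it is in combination with the accompanying prime-form factors encoded by Remark~\ref{rem:f_eigenvector}), it has $g$~zeros and $g$~poles on the compact ovals at generically distinct points. These spurious compact-oval poles must cancel against zeros of the adjugate block. The cancellation is best seen column-by-column: the $(j+1)$-st column of $\adj(wI-\Phi)/\partial_w\det(wI-\Phi)$ is proportional (as a rank-one matrix) to the right null vector, with the column-dependent scalar governed by the row entries~$\psi_{0,-}^{(j+1)}$, whose divisor on the compact ovals is prescribed by the same $\Theta(q;e_{\mathrm b_{0,j}}^{(0)})$ that appears in the denominator of~$R_j$. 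Thus the compact-oval poles of $R_j$ are exactly matched by compact-oval zeros of the $(j+1)$-st column of the adjugate, and the resulting product inherits the pole structure of the Kasteleyn adjugate form. Verifying this matching of divisors---essentially a consequence of the construction of $e_{\mathrm b_{0,j}}^{(m)}$ in the proof of Proposition~\ref{prop:zeros_poles} via Abel's theorem and the linear flow~\eqref{eq:linear_flow}---is the main bookkeeping step, after which meromorphicity on~$\mathcal R$ and the claimed pole structure follow.
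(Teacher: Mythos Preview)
Your approach is essentially the same as the paper's: rewrite the form via Lemma~\ref{lem:adjoint} and the diagonal factorization~\eqref{eq:psi_n_psi_0-} (respectively~\eqref{eq:psi_n_psi_0+}), then invoke Lemmas~\ref{lem:phi_kasteleyn} and~\ref{lem:kasteleyn_thetas} to read off the divisor from the Kasteleyn adjugate. Two points deserve comment.

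First, the compact-oval cancellation you describe as ``the main bookkeeping step'' is in fact immediate and does not require Abel's theorem or the linear flow. By the proof of Lemma~\ref{lem:eigenvector_zeros_poles} one has $e_{\mathrm b_{0,j}}^{(0)}=e_{\mathrm b_{0,j}}$, so the factor $\Theta(q;e_{\mathrm b_{0,j}})$ in the Kasteleyn formula of Lemma~\ref{lem:kasteleyn_thetas} is \emph{identically} the denominator of your $R_j$; the cancellation is exact at the level of functions, not merely of divisor classes. (Likewise $e_{\mathrm w_{0,j}}^{(0)}$ appears for the second form.) Your insertion of a compensating $\prod_{m=1}^{2b}\phi_m\cdot(\prod_{m=1}^{2b}\phi_m)^{-1}$ on the right is also unnecessary: in Lemma~\ref{lem:phi_kasteleyn} one simply takes the black index $i=0$, so no right product is present.

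Second, and more substantively, you stop at ``poles only at angles'' but the lemma asserts something sharper: the first form has poles only at the $q$-angles, the second only at the $p$-angles. This is where the hypotheses $0\le y'<\ell N-1$ and $0\le x<kN$ enter. After the cancellation, the $(j'+1,j+1)$ entry of the first form carries, from the Kasteleyn formula and from $f^{-N}w^{kN}z^{y'-\ell N+1}$ expressed via Lemma~\ref{lem:parametrization_curve}, a factor $\prod_{m=1}^k E(p_{0,m},q)^{y'+1}E(p_{\infty,m},q)^{\ell N-y'-1}$ times bounded prime-form ratios in the $p$-angles; the stated range of $y'$ makes both exponents nonnegative, so no $p$-angle is a pole. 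The analogous count with $x$ handles the second form. Without this final check the proof is incomplete.
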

\begin{proof}
By the definition of~$\psi_{m,\pm}$,~$m=0$ and~$m=kN$, it is clear that the differential forms are meromorphic. 

Let us determine the poles of~\eqref{eq:meromorphic_one-form_1}. By~\eqref{eq:adjoint} and~\eqref{eq:psi_n_psi_0-}, the form~\eqref{eq:meromorphic_one-form_1} is given by
\begin{equation}\label{eq:meromorphic_one_form_rewritten}
f^{-N}\left(\prod_{m=1}^{i'}\phi_m\right)^{-1}\frac{\adj(w_1I-\Phi)}{z_1\partial_{w_1}\det(w_1I-\Phi)}D_{kN,\mathrm b}D_{0,\mathrm b}^{-1}C_{kN,-}C_{0,-}^{-1}
\frac{z_1^{y'-\ell N+1}}{w_1^{x'-kN}}\d z_1.
\end{equation}
First we assume that~$i'$ is odd, say,~$i'=2i^*+1$,~$i^*=0,\dots, \ell-1$. To determine the poles we write~\eqref{eq:meromorphic_one_form_rewritten} in terms of prime forms and theta functions. Lemmas~\ref{lem:phi_kasteleyn} and~\ref{lem:kasteleyn_thetas} imply that
\begin{multline}\label{eq:adjugate_matrix_prime_form}
\left(\left(\prod_{m=1}^{2i^*+1}\phi_m(z_1)\right)^{-1}\frac{\adj(w_1I-\Phi(z_1))\d z_1}{z_1\partial_{w_1}\det(w_1I-\Phi(z_1))}\right)_{j'+1,j+1}\\
=-c_{0ji^*j'}
\Theta\left(q_1;e_{\mathrm{b}_{0,j}}\right)\Theta\left(q_1;e_{\mathrm w_{i^*,j'}}\right)
\frac{\prod_{m=1}^{i^*}E(q_{\infty,m},q_1)}{\prod_{m=1}^{i^*+1}E(q_{0,m},q_1)}
\frac{\prod_{m=j+1}^{j'}E(p_{0,m},q_1)}{\prod_{m=j+1}^{j'+1}E(p_{\infty,m},q_1)},
\end{multline}
where~$q_1=(z_1,w_1)\in \mathcal R$. The functions~$z_1$ and~$w_1$ can be written in terms of~$E$ using Lemma~\ref{lem:parametrization_curve}, and the function~$f$ is also expressible through~$E$'s via Definition~\ref{def:action_function}, see also Remark~\ref{rem:f_eigenvector}. By definition of~$e_{\mathrm b_{0,j}}^{(0)}$, which is given in the beginning of the proof of Lemma~\ref{lem:eigenvector_zeros_poles}, we have that~$e_{\mathrm b_{0,j}}^{(0)}=e_{\mathrm b_{0,j}}$. This implies that the factor~$\Theta\left(q_1;e_{\mathrm{b}_{0,j}}\right)$ in~\eqref{eq:adjugate_matrix_prime_form} is canceled by~$D_{0,\mathrm b}^{-1}$. Altogether, we find that the~$(j'+1,j+1)$-entry of~\eqref{eq:meromorphic_one_form_rewritten} is equal to
\begin{multline}\label{eq:meromrophic_one_form_entry}
-\frac{c_{j,-}^{(kN)}c_{0ji^*j'}}{c_{j,-}^{(0)}}
\Theta\left(q;e_{\mathrm{b}_{0,j}}^{(kN)}\right)\Theta\left(q;e_{\mathrm w_{i^*,j'}}\right)
\frac{\prod_{m=1}^{i^*}E(q_{\infty,m},q)}{\prod_{m=1}^{i^*+1}E(q_{0,m},q)}
\frac{\prod_{m=j+1}^{j'}E(p_{0,m},q)}{\prod_{m=j+1}^{j'+1}E(p_{\infty,m},q)}\\
\times\frac{\prod_{m=1}^kE(p_{0,m},q)^{y'+1}E(p_{\infty,m},q)^{\ell N-y'-1}}{\prod_{m=1}^\ell E(q_{0,m},q)^{x'}E(q_{\infty,m},q)^{kN-x'}}.
\end{multline}
It is now clear that if~$0\leq y'<\ell N-1$ and~$i'$ is odd, then~\eqref{eq:meromorphic_one_form_rewritten} only has poles at~$q_{0,j}$ and~$q_{\infty,j}$. 

If~$i'$ is even instead,~$i'=2i^*$,~$i^*=0,\dots,\ell-1$, then we multiply~\eqref{eq:meromorphic_one_form_rewritten} by~$\phi_{2i^*+1}\phi_{2i^*+1}^{-1}$ from the left, and we are back to the odd case times the matrix~$\phi_{2i^*+1}$. The effect of this multiplication in~\eqref{eq:meromrophic_one_form_entry} is the factor~$E(q_{0,i^*+1},q)/E(p_{0,j'},q)$, and the constant~$e_{\mathrm w_{i^*,j'}}$ will change so that the expression remains meromorphic on~$\mathcal R$. This follows from a similar argument as in the proof of Lemma~\ref{lem:eigenvector_zeros_poles} (cf.~\eqref{eq:eigenvector_zeros_angle} and~\eqref{eq:eigenvector_poles_infty}). In particular, it is still true that~\eqref{eq:meromorphic_one-form_1} may only have poles at~$q_{0,j}$ and~$q_{\infty,j}$, if~$0\leq y'<\ell N-1$.

The proof for~\eqref{eq:meromorphic_one-form_2} follows by a similar argument.
\end{proof}

\begin{lemma}\label{lem:behavior_one-forms}
Let~$F$ be as in Definition~\ref{def:action_function}. In the coordinate system~\eqref{eq:local_coordinates_x} and~\eqref{eq:local_coordinates_y}, the product of the two~$1$-forms in Lemma~\ref{lem:meromorphic_one-forms} times~$z_2^{-1}(z_2-z_1)^{-1}$ is equal to
\begin{equation}
\e^{N(F(q_1;\xi_N,\eta_N)-F(q_2;\xi_N,\eta_N))}G(q_1;q_2)\frac{z_1^{\zeta'}}{w_1^{\kappa'}} \frac{w_2^{\kappa}}{z_2^{\zeta}} \frac{\d z_2\d z_1}{z_2(z_2-z_1)},
\end{equation}
where
\begin{multline}
G(\tilde q_1;\tilde q_2)=\left(\prod_{m=1}^{i'}\phi_m(z_1)\right)^{-1}\frac{\psi_{0,+}(z_1,w_1)\psi_{0,-}(z_1,w_1)}{\psi_{0,-}(z_1,w_1)\psi_{0,+}(z_1,w_1)}D_{kN,\mathrm b}(\tilde q_1)D_{0,\mathrm b}(\tilde q_1)^{-1}C_{kN,-}C_{0,-}^{-1} \\
\times C_{0,+}^{-1}C_{kN,+}D_{0,\mathrm w}(\tilde q_2)^{-1}D_{kN,\mathrm w}(\tilde q_2)\frac{\psi_{0,+}(z_2,w_2)\psi_{0,-}(z_2,w_2)}{\psi_{0,-}(z_2,w_2)\psi_{0,+}(z_2,w_2)}\prod_{m=1}^{i}\phi_m(z_2),
\end{multline}
where~$q_i=(z_i,w_i)\in \mathcal R$ for~$i=1,2$, and~$\tilde q_i\in \widetilde{\mathcal R}$ is an arbitrary lift of~$q_i$ to the universal cover. Moreover, if~$z_1=z_2$ then~$G(q_1,q_2)=0$ if~$w_1\neq w_2$, and 
\begin{equation}
G(q_1;q_2)=\left(\prod_{m=1}^{i'}\phi_m(z_1)\right)^{-1}\frac{\adj(w_1I-\Phi(z_1))}{\partial_{w_1}\det(w_1I-\Phi(z_1))}\prod_{m=1}^{i}\phi_m(z_1)
\end{equation}
if~$w_1=w_2$. If~$U$ and~$V$ are compact subsets of~$\widetilde{\mathcal R}$ such that~$U$ and~$V$ do not contain any lift of the angles, then the function~$G$ is uniformly bounded on~$U\times V$.
\end{lemma}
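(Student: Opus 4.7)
The plan is to verify the claimed identity by direct substitution and then to deduce each of the three additional properties of $G$ in turn.

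For the identity, I would substitute the local coordinates~\eqref{eq:local_coordinates_x}--\eqref{eq:local_coordinates_y} into the two $1$-forms of Lemma~\ref{lem:meromorphic_one-forms} and use the relations~\eqref{eq:psi_n_psi_0+}--\eqref{eq:psi_n_psi_0-} to trade $\psi_{kN,\pm}$ for $\psi_{0,\pm}$ multiplied by the diagonal $D$- and $C$-matrices and the scalar factors $f^{\pm N}w^{kN}$. The powers of $z_j$, $w_j$ and $f(q_j)$ that survive combine, via Definition~\ref{def:action_function}, into $\e^{N(F(q_1;\xi_N,\eta_N)-F(q_2;\xi_N,\eta_N))}$; after factoring out $(z_1^{\zeta'}/w_1^{\kappa'})(w_2^{\kappa}/z_2^{\zeta})\d z_1\d z_2/(z_2(z_2-z_1))$, what remains is precisely the matrix product defining $G$, the $C$- and $D$-matrices commuting past one another so that their ordering in the product is immaterial.

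For the vanishing at $z_1=z_2$ with $w_1\neq w_2$, the key observation is that the interior scalar of $G$ can be re-expressed back in terms of $\psi_{kN,\pm}$. Writing $M_1,M_2$ for the interior diagonal blocks depending on $\tilde q_1,\tilde q_2$, relations~\eqref{eq:psi_n_psi_0+}--\eqref{eq:psi_n_psi_0-} give
\begin{equation}
\psi_{0,-}(z_1,w_1)\,M_1 M_2\,\psi_{0,+}(z_2,w_2)=\frac{f(q_1)^N}{f(q_2)^N}\,w_1^{-kN}\,\psi_{kN,-}(z_1,w_1)\,\psi_{kN,+}(z_2,w_2).
\end{equation}
For $z_1=z_2=z$ and $w_1\neq w_2$, the vectors $\psi_{kN,-}(z,w_1)$ and $\psi_{kN,+}(z,w_2)$ are respectively left and right eigenvectors of $\Phi(z)$ with distinct eigenvalues, and the standard orthogonality argument (equate $\psi_{kN,-}(z,w_1)\Phi(z)\psi_{kN,+}(z,w_2)$ two ways) forces $\psi_{kN,-}(z,w_1)\psi_{kN,+}(z,w_2)=0$, so $G(q_1;q_2)=0$. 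For $q_1=q_2=q$, I would prove by induction on $m$ via $\psi_{m+1,-}\psi_{m+1,+}=\psi_{m,-}\Phi_m\psi_{m,+}=w\,\psi_{m,-}\psi_{m,+}$ that $\psi_{kN,-}(q)\psi_{kN,+}(q)=w^{kN}\psi_{0,-}(q)\psi_{0,+}(q)$; substituting this back collapses the interior scalar to $\psi_{0,-}(q)\psi_{0,+}(q)$, and invoking Lemma~\ref{lem:adjoint} replaces the remaining outer fraction by $\adj(wI-\Phi)/\partial_w\det(wI-\Phi)$, yielding the stated closed form.

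The uniform boundedness is the main obstacle, and my approach is to reduce it to the pole structure analyzed in Section~\ref{sec:wh}. I would substitute the explicit representations of $\psi_{0,\pm}$ and the diagonal $D$-matrices from Lemma~\ref{lem:eigenvector_zeros_poles} and Proposition~\ref{prop:zeros_poles} into every entry of $G$, so that the $\Theta(\,\cdot\,;e_{\mathrm w_{0,j}/\mathrm b_{0,j}}^{(0)})$ factors sitting in the denominators of $D_{0,\mathrm w/\mathrm b}^{-1}$ cancel term-by-term against the corresponding factors in the numerators of $\psi_{0,\pm}^{(j+1)}$ -- this is precisely why the divisors $e_{\mathrm w_{0,j}}^{(0)}$, $e_{\mathrm b_{0,j}}^{(0)}$ were chosen the way they were in those lemmas. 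After the cancellation each entry of $G$ becomes a sum of products of theta functions evaluated at $\tilde q_1,\tilde q_2$ and of prime-form factors whose only zeros and poles lie at lifts of angles, multiplied by the rational entries of $\phi_m(z)^{\pm 1}$, whose $z$-singularities also lie only at angles. Since the compact sets $U$ and $V$ avoid all lifts of angles, each such entry is uniformly bounded on $U\times V$.
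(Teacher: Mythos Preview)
Your treatment of the first three claims is fine and essentially coincides with the paper's: the identity follows from substituting \eqref{eq:psi_n_psi_0+}--\eqref{eq:psi_n_psi_0-}, and your orthogonality/induction arguments for the $z_1=z_2$ cases are equivalent to the paper's use of Theorem~\ref{thm:wiener-hopf} to get $\psi_{kN,-}(z,w_1)\psi_{kN,+}(z,w_2)=w_1^{kN}\psi_{0,-}(z,w_1)\psi_{0,+}(z,w_2)$.

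The gap is in the uniform boundedness. Your argument controls the \emph{location of poles} of $G$ in $(\tilde q_1,\tilde q_2)$, which indeed shows $G$ is continuous on $U\times V$ for each fixed $N$. But ``uniform'' here means uniform in $N$, and the only $N$-dependence of $G$ sits in the diagonal matrices $D_{kN,\mathrm w/\mathrm b}$ and $C_{kN,\pm}$. The entries of $D_{kN,\cdot}$ are $\Theta(\tilde q;e^{(kN)})$ with $e^{(kN)}\in\RR^g/\ZZ^g$, so periodicity of $\theta$ in real directions bounds them uniformly. But the entries of $C_{kN,\pm}$ are the \emph{constants} $c_{j,\pm}^{(kN)}$ from Proposition~\ref{prop:zeros_poles}; these do not depend on $\tilde q$ at all, so no pole-location argument touches them, and nothing in your outline prevents them from growing with $N$. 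Your final sentence implicitly treats these scalars as harmless, which is exactly the missing step.

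The paper closes this gap by exploiting the very identity you proved for the diagonal: writing $\psi_{kN,-}(q)\psi_{kN,+}(q)=w^{kN}\psi_{0,-}(q)\psi_{0,+}(q)$ out via the prime-form/theta expressions of Proposition~\ref{prop:zeros_poles}, both sides become sums over $j$ in which the $j$th term carries the factor $\prod_{i\neq j+1}E(p_{\infty,i},q)$. Evaluating at $q=p_{\infty,j+1}$ kills every term except the $j$th on each side, and one reads off
\[
c_{j,+}^{(kN)}c_{j,-}^{(kN)}\,\Theta\bigl(p_{\infty,j+1};e_{\mathrm b_{0,j}}^{(kN)}\bigr)\Theta\bigl(p_{\infty,j+1};e_{\mathrm w_{0,j}}^{(kN)}\bigr)
= c_{j,+}^{(0)}c_{j,-}^{(0)}\,\Theta\bigl(p_{\infty,j+1};e_{\mathrm b_{0,j}}^{(0)}\bigr)\Theta\bigl(p_{\infty,j+1};e_{\mathrm w_{0,j}}^{(0)}\bigr).
\]
Since $e^{(kN)}\in\RR^g$ forces the zeros of $\Theta(\cdot;e^{(kN)})$ onto the compact ovals, bounded away from the angle $p_{\infty,j+1}$, the left-hand theta factors are bounded below uniformly in $N$, and the product $c_{j,+}^{(kN)}c_{j,-}^{(kN)}$ is therefore uniformly bounded. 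This is precisely why the paper emphasizes that considering both left and right nullvectors simultaneously is the key device here.
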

As in the previous sections, we use~$q\in \mathcal R$ when the entire expression is well-defined on~$\mathcal R$ and~$\tilde q\in \widetilde{\mathcal R}$ when the expression is only valid on the universal cover.

A key motivation for examining the two linear flows, that is, considering both right and left nullvectors, lies in the proof of this lemma, see, in particular,~\eqref{eq:bound_cn}.
\begin{proof}
The first formula in the statement is straightforward to check via~\eqref{eq:psi_n_psi_0+} and~\eqref{eq:psi_n_psi_0-}. 

To obtain the formula for~$G$ if~$z_1=z_2$ we use Theorem~\ref{thm:wiener-hopf}, recalling also that~$\widetilde \phi_-\widetilde \phi_+=\Phi^{kN}$, to get that
\begin{equation}\label{eq:inner_product}
\psi_{kN,-}(z_1,w_1)\psi_{kN,+}(z_1,w_2)=w_1^{kN} \psi_{0,-}(z_1,w_1)\psi_{0,+}(z_1,w_2).
\end{equation} 
We recall also from the proof of Lemma~\ref{lem:adjoint} that the right hand side of the above equality is zero if~$w_1\neq w_2$. The expressions of~$G$ as~$z_1=z_2$ then follows by a straightforward computation using~\eqref{eq:psi_n_psi_0+} and~\eqref{eq:psi_n_psi_0-} together with~\eqref{eq:adjoint}.

The~$N$ dependence of~$G$ lies in~$D_{kN,\mathrm b/\mathrm w}$ and in~$C_{kN,\pm}$. Recall that~$\theta$ is an entire function, so it follows from the definition of~$\Theta(\tilde q;e_{\mathrm b_{0,j}}^{(kN)})$ and~\eqref{eq:quasi-periodic}, that the entries in~$D_{kN,\mathrm b/\mathrm w}$ are bounded from above. What remains is to show that the product ~$c_{j,+}^{(kN)}\cdot c_{j,-}^{(kN)}$ is bounded. 

Using Proposition~\ref{prop:zeros_poles} and Lemma~\ref{lem:parametrization_curve}, recall also our convention for products in Remark~\ref{rem:products}, we get for~$q\in \mathcal R$ that
\begin{multline}
\psi_{km,-}(q)\psi_{km,+}(q)=\frac{w^{km}}{\prod_{i=1}^kE(p_{0,i},q)E(p_{\infty,i},q)}\frac{\Theta\left(q;e_{\mathrm b}\right)\Theta\left(q;e_{\mathrm w}\right)}{E(q_{0,1},q)E(p_{\infty,1},q)} \\
\times\sum_{j=0}^{k-1}c_{j,+}^{(km)}c_{j,-}^{(km)}\Theta\left(q;e_{\mathrm b_{0,j}}^{(km)}\right)\Theta\left(q;e_{\mathrm w_{0,j}}^{(km)}\right)\prod_{\substack{i\neq j\\ i=1,\dots,k}}E(p_{0,i},q)\prod_{\substack{i\neq j+1 \\ i=1,\dots,k}}E(p_{\infty,i},q).
\end{multline}
Applying the above formula with~$m=0$ and~$m=N$ to equality~\eqref{eq:inner_product} we obtain
\begin{multline}
\sum_{j=0}^{k-1}c_{j,+}^{(kN)}c_{j,-}^{(kN)}\Theta\left(\tilde q;e_{\mathrm b_{0,j}}^{(kN)}\right)\Theta\left(\tilde q;e_{\mathrm w_{0,j}}^{(kN)}\right)\prod_{i\neq j}E(\tilde p_{0,i},\tilde q)\prod_{i\neq j+1}E(\tilde p_{\infty,i},\tilde q)\\
=\sum_{j=0}^{k-1}c_{j,+}^{(0)}c_{j,-}^{(0)}\Theta\left(\tilde q;e_{\mathrm b_{0,j}}^{(0)}\right)\Theta\left(\tilde q;e_{\mathrm w_{0,j}}^{(0)}\right)\prod_{i\neq j}E(\tilde p_{0,i},\tilde q)\prod_{i\neq j+1}E(\tilde p_{\infty,i},\tilde q),
\end{multline}
for some fixed choice of lifts~$\tilde q$,~$\tilde p_{0,i}$ and~$\tilde p_{\infty,i}$. Most terms in the last equality vanish when~$\tilde q=\tilde p_{\infty,j+1}$ for~$j=0,\dots,k-1$, and we obtain the equality
\begin{multline}\label{eq:bound_cn}
c_{j,+}^{(kN)}c_{j,-}^{(kN)}\Theta\left(\tilde p_{\infty,j+1};e_{\mathrm b_{0,j}}^{(kN)}\right)\Theta\left(\tilde p_{\infty,j+1};e_{\mathrm w_{0,j}}^{(kN)}\right)\prod_{i\neq j}E(\tilde p_{0,i},\tilde p_{\infty,j+1})\prod_{i\neq j+1}E(\tilde p_{\infty,i},\tilde p_{\infty,j+1})\\
=c_{j,+}^{(0)}c_{j,-}^{(0)}\Theta\left(\tilde p_{\infty,j+1};e_{\mathrm b_{0,j}}^{(0)}\right)\Theta\left(\tilde p_{\infty,j+1};e_{\mathrm w_{0,j}}^{(0)}\right)\prod_{i\neq j}E(\tilde p_{0,i},\tilde p_{\infty,j+1})\prod_{i\neq j+1}E(\tilde p_{\infty,i},\tilde p_{\infty,j+1}).
\end{multline}
Since~$e_{\mathrm b_{0,j}/\mathrm w_{0,j}}^{(kN)}\in \RR^g$, the discussion after~\eqref{eq:jacobi_inverse} implies that the zeros of~$\Theta\left(\tilde q;e_{\mathrm b_{0,j}/\mathrm w_{0,j}}^{(kN)}\right)$ lie on the compact ovals, which are bounded away from the angles. Thus,~$\Theta\left(\tilde p_{\infty,j+1};e_{\mathrm b_{0,j}}^{(kN)}\right)\Theta\left(\tilde p_{\infty,j+1};e_{\mathrm w_{0,j}}^{(kN)}\right)$ is bounded from below. Moreover, since the angles are assumed to be different, it follows from~\eqref{eq:prime_form_zero} of Fact~\ref{fact:prop_prime_form} that~$E(p_{\infty,i},p_{\infty,j+1})\neq 0$, if~$i\neq j+1$. Hence,~\eqref{eq:bound_cn} gives us a uniform bound from above on the product~$\left|c_{j,+}^{(kN)}c_{j,-}^{(kN)}\right|$.
\end{proof}

\subsection{The steep descent analysis}\label{sec:steepest_descent}

In this section we prove the first equality in Theorem~\ref{thm:local_limit} using a steep descent analysis. The details of the steep descent analysis depend on whether we consider the rough, smooth or frozen region, and we will therefore perform the analysis for one region at a time. The strategy, however, will be the same for all three cases. Namely, we prove, using Lemma~\ref{lem:behavior_one-forms}, that there exist suitable contours of steep descent and ascent,~$\gamma_s$ and~$\gamma_l$, so that if we take the double integral in Proposition~\ref{prop:finite_kernel} along those contours, then it tends to zero as~$N\to \infty$. These contours will intersect at some of the critical points defining the regions, recall Definition~\ref{def:regions}, and pass through some of the angles~$p_{0,j}$ and~$p_{\infty,j}$, and~$q_{0,i}$ and~$q_{\infty,i}$, respectively. We then use Lemma~\ref{lem:meromorphic_one-forms} to show that we can deform~$\tilde \Gamma_s$ and~$\tilde \Gamma_l$, given in Proposition~\ref{prop:finite_kernel} to~$\gamma_s$ and~$\gamma_l$, respectively, and that the only contribution from this deformation comes from the residue at~$z_1=z_2$ along the curve~$\gamma_{\xi,\eta}$ from Definition~\ref{def:curve_integration}. For a sketch of the image of the contours in the amoeba in the different regions, see Figures~\ref{fig:amoeba_curves_rough} and~\ref{fig:amoeba_curves_smooth_frozen}.

\subsubsection{The rough region}\label{sec:rough}

We begin by proving the equality of~\eqref{eq:limiting_kernel_lhs} and~\eqref{eq:limiting_kernel} in Theorem~\ref{thm:local_limit} in the rough region.

Let
\begin{equation}
\mathcal D=\mathcal D(\xi,\eta)=\{(z,w)\in \mathcal R:\re F(z,w;\xi,\eta)\geq \re F(q_c;\xi,\eta)\},
\end{equation}
where~$q_c=\Omega(\xi,\eta)$ and~$\Omega$ is, as in Definition~\ref{def:omega}, a the unique zero of~$\d F$ in~$\mathcal R_0$. The set~$\mathcal D$ is defined to help us understand the curves of steep descent and ascent. Since~$q_c$ is a critical point of a meromorphic function of order one, the level lines of~$\re F$ divide the plane locally at~$q_c$ into four parts, two of which lie in~$\mathcal D$. We denote the connected components of~$\mathcal D$ which contain these parts by~$D_1$ and~$D_2$. Similarly, we denote the connected components of~$\mathcal D^c$ which contain the other two parts by~$C_1$ and~$C_2$. At this point it is not clear that~$D_1\neq D_2$ and~$C_1\neq C_2$, but we will see in Lemma~\ref{lem:rough_sets} below that this is indeed the case. See Figure~\ref{fig:amoeba_curves_rough}, right panel, for a sketch of the images of these sets in the amoeba.

 \begin{figure}[t]
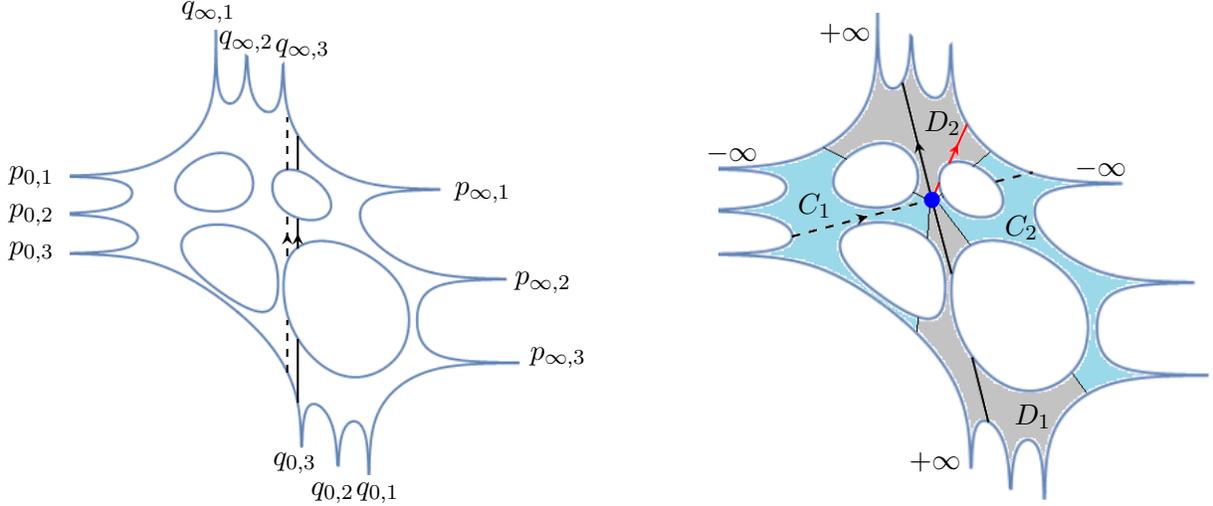

 \begin{center}
 \begin{tikzpicture}[scale=1]
  \tikzset{->-/.style={decoration={
  markings, mark=at position .5 with {\arrow{stealth}}},postaction={decorate}}}
    \draw (0,0) node {\includegraphics[trim={1cm, 1cm, 1cm, 1cm}, clip, angle=180, scale=.5]{amoeba3x3applicable.png}};
    %\tilde \Gamma_l
   \draw[thick](0.04,-2)--(0.04,-1.15);
   \draw[->-,thick](0.04,0.05)--(0.04,.45);
   \draw[thick](0.04,1.1)--(0.04,1.55);
   %\tilde \Gamma_s
   \draw[dashed,thick] (-.1,-1.6)--(-.1,-.9);
   \draw[->-,dashed,thick] (-.1,-.05)--(-.1,.55);
   \draw[dashed,thick] (-.1,1.1)--(-.1,1.8);
	%angles
	\draw (.1,2.7) node {$q_{\infty,3}$};
	\draw (-.65,2.8) node {$q_{\infty,2}$};
	\draw (-1.15,3.2) node {$q_{\infty,1}$};
	
	\draw (-3.5,1) node {$p_{0,1}$};
	\draw (-3.5,.5) node {$p_{0,2}$};
	\draw (-3.5,0) node {$p_{0,3}$};

	\draw (0,-2.8) node {$q_{0,3}$};
	\draw (.5,-3.2) node {$q_{0,2}$};
	\draw (1.1,-3.2) node {$q_{0,1}$};

	\draw (2.5,.8) node {$p_{\infty,1}$};
	\draw (3.3,-.4) node {$p_{\infty,2}$};
	\draw (3.5,-1.4) node {$p_{\infty,3}$};
  \end{tikzpicture}
  \qquad
   \begin{tikzpicture}[scale=1]
	\draw (0,0) node {\includegraphics[trim={0cm, .1cm, .8cm, .5cm}, clip, scale=.65]{amoeba3x3curvesRough2.png}};
	% regions
	\draw (1.05,.41) node {$C_2$};
	\draw (-1.7,.7) node {$C_1$};
	\draw (0,1.8) node {$D_2$};
	\draw (1.2,-2.1) node {$D_1$};
	
	%angles
	\draw (-1.3,3.0) node {$+\infty$};
	\draw (-2.8,1.4) node {$-\infty$};
	\draw (-.1,-2.7) node {$+\infty$};
	\draw (2.1,1.2) node {$-\infty$};

%	\draw (0,0) node {\includegraphics[trim={0cm, 0cm, 1cm, 0cm}, clip, scale=.7]{amoeba3x3curvesRough.png}};
%	% regions
%	\draw (1.05,.45) node {$C_2$};
%	\draw (-1.5,.7) node {$C_1$};
%	\draw (0,1.7) node {$D_2$};
%	\draw (1.2,-1.9) node {$D_1$};
%	
%	%angles
%	\draw (-1.2,2.9) node {$+\infty$};
%	\draw (-2.5,1.4) node {$-\infty$};
%	\draw (0,-2.5) node {$+\infty$};
%	\draw (2.1,1.2) node {$-\infty$};

%	\draw (.1,2.7) node {$+\infty$};
%	\draw (-.65,2.8) node {$+\infty$};
%	\draw (-1.15,3.2) node {$+\infty$};
%	
%	\draw (-3.5,1) node {$-\infty$};
%	\draw (-3.5,.5) node {$-\infty$};
%	\draw (-3.5,0) node {$-\infty$};
%
%	\draw (0,-2.8) node {$+\infty$};
%	\draw (.5,-3.2) node {$+\infty$};
%	\draw (1.1,-3.2) node {$+\infty$};

%	\draw (2.5,.8) node {$-\infty$};
%	\draw (3.3,-.4) node {$-\infty$};
%	\draw (3.5,-1.4) node {$-\infty$};
\end{tikzpicture}
%   \begin{tikzpicture}[scale=1]
%    \tikzset{->-/.style={decoration={
%  markings, mark=at position .5 with {\arrow{stealth}}},postaction={decorate}}}
%    \draw (0,0) node {\includegraphics[trim={1cm, 1cm, 1cm, 1cm}, clip, angle=180, scale=.5]{amoeba3x3applicable.png}};
%    %\gamma_l
%   \draw(0.3,-2.03)--(0.1,-1.25);
%   \draw(-.15,-.25)--(-.39,.64);
%   \draw[->-](-.39,.64)--(-.75,2.05);
%   %\gamma_s
%   \draw[->-,dashed] (-2.1,.2)--(-.25,.68);
%   \draw[dashed] (.39,.86)--(.84,.97);
%   %\gamma_{\xi,\eta}
%   \draw[red](-.39,.64)--(-.3,.85);
%   \draw[->-,red](-.18,1.08)--(0.04,1.55);
%   % Critical point
%   % Rough
%	\draw (-.39,.64) node[blue,circle,fill,inner sep=1.5pt]{};
%  \end{tikzpicture}
 \end{center}
  \caption{An example of an amoeba and a sketch of the curves of integration in the steep descent analysis. Left: original curves,~$\tilde \Gamma_s$ (dashed),~$\tilde \Gamma_l$ (solid). Right: curves in the rough region,~$\gamma_s$ (dashed),~$\gamma_l$ (solid),~$\gamma_{\xi,\eta}$ (red). The gray and blue regions represent the regions in Lemma~\ref{lem:rough_sets}, and the behavior of~$\re F$ at the angles is indicated by~$\pm \infty$. \label{fig:amoeba_curves_rough}}
\end{figure}

\begin{lemma}\label{lem:rough_sets}
After a possible relabeling of~$D_1$ and~$D_2$, there exist~$j, j'\in \{1,\dots,\ell\}$ such that~$q_{0,j} \in D_1$ and~$q_{\infty,j'} \in D_2$, while~$q_{0,i}\notin D_2$ and~$q_{\infty,i}\notin D_1$ for all~$i$. Similarly,~$p_{0,j}\in C_1$ and~$p_{\infty,j'}\in C_2$, for some (possibly different)~$j,j'\in \{1,\dots,k\}$ while~$p_{0,i}\notin C_2$ and~$p_{\infty,i}\notin C_1$ for all~$i$.   
\end{lemma}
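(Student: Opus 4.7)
The plan is to argue in three stages, using as the main inputs the behavior of $\re F$ at the angles established in \eqref{eq:limit_f_1}--\eqref{eq:limit_f_4}, the harmonicity of $\re F$ on $\mathcal R$ away from the angles (Lemma \ref{lem:conjugate_functions}), and the location of the zeros of $\d F$ recorded in Lemma \ref{lem:zeros_poles_df} together with the fact that $q_c$ is the \emph{unique} zero of $\d F$ in $\mathcal R_0$ in the rough region (Definition \ref{def:regions}).

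First I would observe that \eqref{eq:limit_f_1}--\eqref{eq:limit_f_4} place every $q$-angle in the interior of $\mathcal D$ and every $p$-angle in the interior of $\mathcal D^c$, so the content of the lemma is how these angles distribute into the components $D_1, D_2, C_1, C_2$. A maximum-principle argument then shows that each of these four components must contain at least one angle of the appropriate type: if $D$ is any component of $\mathcal D$ with nonempty interior, then $\re F \equiv c$ on $\partial D$ while $\re F > c$ somewhere in $D$ (in the local sector at $q_c$), so $\re F$ cannot be bounded on $D$ by the maximum principle for the harmonic function $\re F$, forcing $D$ to contain a pole of $\re F$ at $+\infty$, i.e.\ a $q$-angle. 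The analogous argument produces a $p$-angle in each of $C_1, C_2$.

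Second, I would establish within-type connectivity: all $q_{0,i}$ lie in the same component of $\mathcal D$, and likewise for the $q_\infty$, $p_0$ and $p_\infty$ angles. The key input is the expansion $\re F = -\tfrac{k}{2}(1+\xi)\log|w|+O(1)$ as $w\to 0$ from \eqref{eq:limit_f_3}, together with the fact that the arc $a_1\subset A_0$ containing all $q_{0,i}$'s projects to the bottom of the amoeba. A sufficiently small tubular neighborhood of $a_1$ in $\mathcal R$ intersected with $\{|w|<\delta\}$ lies entirely in $\{\re F > c\}$, and inside such a neighborhood one can construct a path in $\mathcal R_0$ visiting $q_{0,1},\dots,q_{0,\ell}$ in succession (the intermediate real critical points of $\re F$ on the non-corner components $A_{0,2\ell+k+1+i}$ are no obstacle, since we can cut through $\mathcal R_0$ while keeping $|w|<\delta$).

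The main obstacle is the separation: showing that $D_1$ contains no $q_\infty$-angle and $D_2$ no $q_0$-angle, and likewise for the $C_i$'s. My plan is to analyze the level set $\Sigma = \{\re F = c\}\cap \overline{\mathcal R_0}$. Since $q_c$ is the only interior critical point of $\re F$ in $\mathcal R_0$, the four smooth arcs of $\Sigma$ emanating from $q_c$ cannot meet another interior critical point or reach an angle, so they must terminate on $\partial \mathcal R_0 = \bigcup_i A_i$ at points where $\re F = c$. The four corner components $A_{0,1}, A_{0,\ell+1}, A_{0,k+\ell+1}, A_{0,2\ell+k+1}$ carry no zero of $\d F$ by Lemma \ref{lem:zeros_poles_df} and connect angles at which $\re F$ diverges with opposite signs, so $\re F$ is monotone on each such corner and hits $c$ exactly once. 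The idea is that the four arcs from $q_c$ terminate at precisely these four crossing points, producing a decomposition of $\mathcal R_0$ into four open regions matching the four tentacle types of the amoeba. The delicate part is ruling out that an arc from $q_c$ instead terminates on a compact oval $A_m$ or on a non-corner component of $A_0$, and ruling out that two of the four arcs form a closed loop through $q_c$; I would carry this out via a topological and counting argument using the amoeba picture (Theorem \ref{thm:arctic_curves}) together with the conjugation symmetry $\re F(\bar q)=\re F(q)$ from Lemma \ref{lem:conjugate_functions}, which makes $\Sigma$ invariant under $(z,w)\mapsto(\bar z,\bar w)$ and lets us analyze $\overline{\mathcal R_0}$ identically.
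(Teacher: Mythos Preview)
Your first stage (maximum principle forcing a $q$-angle into each $D_i$ and a $p$-angle into each $C_i$) matches the paper. The rest, however, is over-engineered and incomplete. Stage~2 has a real gap: the expansion \eqref{eq:limit_f_3} is \emph{local} at each $q_{0,i}$, so for small $\delta$ the set $\{|w|<\delta\}$ in the amoeba is a disjoint union of tentacle pieces, not a connected strip; between consecutive $q_0$-tentacles the relevant boundary component of $A_0$ has $|w|$ bounded away from zero and carries a genuine local minimum of $\re F$ (one of the real zeros of Lemma~\ref{lem:zeros_poles_df}) whose value may well lie below $c$. Hence you cannot connect the $q_0$-angles by your proposed path, and in any case the lemma does not assert they all lie in a single component of $\mathcal D$. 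Stage~3, which you correctly flag as the crux, is left open: ruling out that a level arc from $q_c$ lands on a compact oval or a non-corner piece of $A_0$ is exactly the difficulty, and invoking Theorem~\ref{thm:arctic_curves} does not help, since that result concerns the map $(\xi,\eta)\mapsto q_c$ and says nothing about the level sets of $\re F$ for fixed $(\xi,\eta)$.

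The paper sidesteps all of this with a one-move separation argument in the amoeba. After stage~1, suppose for contradiction that both $D_1$ and $D_2$ contain some $q_{0,\cdot}$. By the conjugation symmetry of $\mathcal D$ it suffices to work in $\mathcal R_0$, identified with the interior of the amoeba. There the images of $D_1$ and $D_2$ each contain a tentacle with $r_2\to-\infty$, and both touch the image of $q_c$; thus $D_1\cup\{q_c\}\cup D_2$ contains an arc in the plane joining two downward tentacles. This arc separates one of the two $C$-sectors at $q_c$ from every $p$-tentacle (all of which go to $r_1=\pm\infty$), so that $C_i$ cannot contain any $p$-angle, contradicting stage~1. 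No level-curve bookkeeping or within-type connectivity is needed; the same argument with the roles of $D$ and $C$ swapped gives the second half of the lemma.
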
 
\begin{proof}
Since~$\re F$ is harmonic, see Lemma~\ref{lem:conjugate_functions}, non-constant in~$D_1$ and~$D_2$, and constant along the boundaries,~$\re F$ is unbounded in~$D_1$ and~$D_2$. The function~$\re F$ is only unbounded at the angles, and by~\eqref{eq:limit_f_1}-\eqref{eq:limit_f_4} we conclude that both sets have to contain at least one of the points~$q_{0,j}$,~$q_{\infty,j}$. By a similar argument,~$C_1$ and~$C_2$ have to contain one of the points~$p_{0,j}$,~$p_{\infty,j}$.

By~\eqref{eq:conjugate_functions} the set~$\mathcal D$ is symmetric with respect to complex conjugation. It is therefore sufficient to understand the part of the sets in~$\mathcal R_0$, which we can think of as subsets of the amoeba. The images of~$D_1$,~$D_2$,~$C_1$ and~$C_2$ in the amoeba are still connected components that have to contain the tentacle that corresponds to any angle contained in the set. If, say,~$q_{0,j}\in D_1$ and~$q_{0,j'}\in D_2$, then the images of~$D_1$ and~$D_2$ in the amoeba separate one of the images of~$C_1$ and~$C_2$ from the tentacles corresponding to~$p_{0,j}$ and~$p_{\infty,j}$. Since this cannot happen, we find that one of the sets, say~$D_2$, cannot contain~$q_{0,j'}$ and has to contain a point~$q_{\infty,j}$. See Figure~\ref{fig:amoeba_curves_rough}, right panel.

A similar argument also applies to the sets~$C_1$ and~$C_2$.
\end{proof}

The idea is now to deform one of the contours of the double integral in Proposition~\ref{prop:finite_kernel} to~$\mathcal D$ and the other one to~$\mathcal D^c$. We then use Lemma~\ref{lem:behavior_one-forms} to see that the integrand is infinitesimally small on these contours, as~$N\to \infty$.\footnote{In fact, the integrand is uniformly exponentially small outside of any neighborhood of~$q_c$.} The final expression will come from the residues we pick up in these deformations.  

\begin{lemma}\label{lem:limit_kernel_rough}
The equality of~\eqref{eq:limiting_kernel_lhs} and~\eqref{eq:limiting_kernel} in Theorem~\ref{thm:local_limit} holds for~$(\xi,\eta)$ in the rough region, and the limit is uniform for~$(\xi_N,\eta_N)$ in compact subsets of the rough region.
\end{lemma}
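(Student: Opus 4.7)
The strategy is a steep descent analysis of the double contour integral in Proposition~\ref{prop:finite_kernel}. By Lemma~\ref{lem:behavior_one-forms}, its integrand factorizes as~$\e^{N(F(q_1;\xi_N,\eta_N)-F(q_2;\xi_N,\eta_N))}G(q_1;q_2)$ times elementary factors and~$\d z_2\d z_1/(z_2(z_2-z_1))$, where~$G$ is uniformly bounded on compact subsets of~$\widetilde{\mathcal R}\times\widetilde{\mathcal R}$ away from lifts of the angles. The first (indicator-function) term in the target expression~\eqref{eq:limiting_kernel} is obtained from the first term in Proposition~\ref{prop:finite_kernel} by direct manipulation: using the $2\ell$-periodicity of~$\phi_m$ together with~$x=kN(\xi_N+1)/2+\kappa$ and~$x'=kN(\xi_N+1)/2+\kappa'$, the product~$\prod_{m=2\ell x'+i'+1}^{2\ell x+i}\phi_m(z)$ reduces to~$(\prod_{m=1}^{i'}\phi_m(z))^{-1}\Phi(z)^{\kappa-\kappa'}\prod_{m=1}^i\phi_m(z)$ with no~$N$-dependence. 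The whole asymptotic task is therefore to extract the~$\gamma_{\xi,\eta}$-integral as the limit of the double integral.

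We proceed as follows. First, construct contours~$\gamma_s$ and~$\gamma_l$ on~$\mathcal R$, both passing through~$q_c=\Omega(\xi,\eta)$ and its complex conjugate~$\bar q_c$, with~$\gamma_s\subset \mathcal D^c$ (so~$\re F(q_1)\le \re F(q_c)$ on~$\gamma_s$, with strict inequality outside any neighborhood of~$\{q_c,\bar q_c\}$) and~$\gamma_l\subset \mathcal D$ (the opposite strict inequality). Lemma~\ref{lem:rough_sets} guarantees this is possible: the $q$-angles all lie in~$\mathcal D$ and so~$\gamma_s$ avoids them, while the $p$-angles all lie in~$\mathcal D^c$ and so~$\gamma_l$ avoids them. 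Second, deform~$\tilde\Gamma_s\to\gamma_s$ and~$\tilde\Gamma_l\to\gamma_l$. By Lemma~\ref{lem:meromorphic_one-forms}, the~$z_1$-form is meromorphic on~$\mathcal R$ with poles only at $q$-angles and the~$z_2$-form has poles only at $p$-angles, so each contour may be moved past the angles not meeting it; the only contribution during the joint deformation comes from the pole at~$z_2=z_1$, which fires whenever the two contours cross each other on~$\mathcal R$. Organising the deformation so that the crossing is a single oriented arc on~$\mathcal R$ running from~$q_c$ (and from~$\bar q_c$) out to the boundary component~$A_{0,1}$, and identifying this arc with~$\gamma_{\xi,\eta}$ from Definition~\ref{def:curve_integration}, one obtains the residue contribution
\begin{equation*}
-\frac{1}{2\pi\i}\int_{\gamma_{\xi,\eta}}G(q,q)\frac{z^{\zeta'-\zeta}}{w^{\kappa'-\kappa}}\frac{\d z}{z},
\end{equation*}
and the explicit form of~$G(q,q)$ given in Lemma~\ref{lem:behavior_one-forms} matches this to the~$\gamma_{\xi,\eta}$-integral in~\eqref{eq:limiting_kernel}. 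This deformation is best visualised through the amoeba, cf. Figure~\ref{fig:intro:amoeba_curves_rough}.

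Third, show that the residual double integral over~$\gamma_s\times\gamma_l$ vanishes as~$N\to\infty$. Outside any neighborhood of~$\{q_c,\bar q_c\}\times\{q_c,\bar q_c\}$, the exponent satisfies~$\re(F(q_1)-F(q_2))\le -\delta<0$, yielding an~$O(\e^{-N\delta})$ bound; inside such a neighborhood, a standard stationary-phase estimate, using that~$q_c$ and~$\bar q_c$ are simple zeros of~$\d F$, gives an~$O(1/N)$ bound. Any singularities of~$G$ at angles visited by~$\gamma_s$ or~$\gamma_l$ are tamed by the exponential factor via~\eqref{eq:limit_f_1}--\eqref{eq:limit_f_4}, since on~$\gamma_s$ one has~$\re F\to -\infty$ at the $p$-angles and on~$\gamma_l$ one has~$\re F\to +\infty$ at the $q$-angles. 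Uniformity on compact subsets of the rough region follows from continuous dependence of~$q_c$, the contours~$\gamma_s$, $\gamma_l$, $\gamma_{\xi,\eta}$, and the constant~$\delta$ on~$(\xi,\eta)$. The main obstacle will be the topological bookkeeping in the second step: checking that the joint homotopy produces exactly one~$\gamma_{\xi,\eta}$-contribution with the correct orientation, while respecting the poles of both~$z_1$- and~$z_2$-forms and the connectedness properties of Lemma~\ref{lem:rough_sets}. The amoeba viewpoint is essential here because the multi-sheeted complexity of~$\mathcal R$ collapses to a two-dimensional picture in which the crossing is visibly localised to a single arc.
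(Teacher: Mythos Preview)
Your proposal is correct and follows essentially the same steep descent argument as the paper. Two points the paper makes explicit that you leave inside ``topological bookkeeping'': first, when deforming~$\tilde\Gamma_s$ or~$\tilde\Gamma_l$ across a compact oval~$A_i$, the conjugation-invariance of the curves produces two oppositely oriented copies of the~$A_i$-integral which cancel; second, the fact that one can deform~$\tilde\Gamma_l$ to~$\gamma_l$ without crossing any~$p$-angle (and~$\tilde\Gamma_s$ to~$\gamma_s$ without crossing any~$q$-angle) uses Assumption~\ref{ass:main_ass}\eqref{ass:wh}, which controls the relative positions of the tentacles in the amoeba and hence of the original vertical contours~$\tilde\Gamma_{s/l}$.
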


\begin{proof}
Let~$\tilde \gamma_l$ be a simple curve in the interior of~$\mathcal D \cap \mathcal R_0$ that starts in a neighborhood of~$q_{0,j}$ for some~$j$, passes through~$q_c$, and ends in a neighborhood of~$q_{\infty,j'}$ for some~$j'$. The fact that such a curve exists follows from Lemma~\ref{lem:rough_sets}. The curve~$\tilde \gamma_l$ uniquely determines a closed oriented curve in~$\mathcal D\cap \mathcal R$, which is invariant, up to orientation, under complex conjugation. We denote this curve by~$\gamma_l$. We similarly define the curve~$\gamma_s$, but now taking a curve in the interior of~$\mathcal D^c \cap \mathcal R_0$ that starts in a neighborhood of one of~$p_{0,j}$, passes through~$q_c$ and ends in a neighborhood of~$p_{\infty,j'}$. See Figure~\ref{fig:amoeba_curves_rough} for a sketch of the images of the curves in the amoeba.

We deform the contours~$\tilde \Gamma_l$ and~$\tilde \Gamma_s$ in the integral of Proposition~\ref{prop:finite_kernel} to~$\gamma_l$ and~$\gamma_s$, respectively. Note that we might need to deform~$\tilde \Gamma_s$ ``through'' one of the compact ovals~$A_i$ for some~$i=1,\dots,g$, that is, the image in the amoeba is deformed to the other side of the image of~$A_i$. Such deformation contributes an integral along~$A_i$. However, since the curves are invariant under complex conjugation, we also get another contribution of the integral along~$A_i$, but with the opposite orientation. This means that the total contribution of the oval is zero. Moreover, we do not need to pass through the points~$p_{0,j}$ and~$p_{\infty,j}$ when we deform~$\tilde \Gamma_l$ to~$\gamma_l$ or the points~$q_{0,j}$ and~$q_{\infty,j}$ when we deform~$\tilde \Gamma_s$ to~$\gamma_s$, which follows from Assumption~\ref{ass:main_ass}\eqref{ass:wh}, see Figure~\ref{fig:amoeba_curves_rough}. Thus, Lemma~\ref{lem:meromorphic_one-forms} implies that the only contribution in the deformation of the contours comes from the residue at~$z_1=z_2$. By Lemma~\ref{lem:behavior_one-forms} the residue gives us a single integral along a curve~$\gamma_{\xi,\eta}$ which is as in Definition~\ref{def:curve_integration}, see Figure~\ref{fig:amoeba_curves_rough}. By Proposition~\ref{prop:finite_kernel} and Lemma~\ref{lem:behavior_one-forms}, the correlation kernel is given by 
\begin{multline}
 \left[K_\text{path}(2\ell x+i,ky+j;2\ell x'+i',ky'+j')\right]_{j',j=0}^{k-1} \\
 = -\frac{\one_{2\ell x+i>2\ell x'+i'}}{2\pi\i}\int_{\Gamma} \prod_{m=2\ell x'+i'+1}^{2\ell x+i}\phi_m(z)z^{y'-y}\frac{\d z}{z} \\
 + \frac{1}{2\pi\i}\int_{\gamma_{\xi,\eta}} \left(\prod_{m=1}^{i'}\phi_m(z_2)\right)^{-1}\frac{\adj (w_2I-\Phi(z_2))}{\partial_{w_2}\det (w_2I-\Phi(z_2))}
\prod_{m=1}^{i}\phi_m(z_2)\frac{z_2^{\zeta'-\zeta}}{w_2^{\kappa'-\kappa}}\frac{\d z_2}{z_2} \\
 + \frac{1}{(2\pi\i)^2}\int_{\gamma_s}\int_{\gamma_l}\e^{N(F(z_1,w_1;\xi,\eta)-F(z_2,w_2;\xi,\eta))}G(z_1,w_1;z_2,w_2)\frac{z_1^{\zeta'}}{w_1^{\kappa'}} \frac{w_2^{\kappa}}{z_2^{\zeta}} \frac{\d z_2\d z_1}{z_2(z_2-z_1)}.
\end{multline}
The first two terms on the right hand side are equal to the limiting kernel in the statement. Hence, what remains is to show that the last integral tends to zero uniformly on compact subsets as~$N\to \infty$. This follows from the definition of~$\mathcal D$ and standard arguments (see, for instance,~\cite{Oko03}) using the fact that~$G$ is bounded by Lemma~\ref{lem:behavior_one-forms}. 
\end{proof}

\subsubsection{The smooth regions}\label{sec:smooth}

We proceed by proving the equality of~\eqref{eq:limiting_kernel_lhs} and~\eqref{eq:limiting_kernel} in Theorem~\ref{thm:local_limit} in the smooth regions.

By definition of the smooth regions, there is an~$A_\iota$,~$\iota=1,\dots,g$, containing four simple zeros of~$\d F$. Two of these points are local maxima when we view~$\re F$ as a function on~$A_\iota$, we denote them by~$q_{M_1}$ and~$q_{M_2}$, and two are local minima and we denote them by~$q_{m_1}$ and~$q_{m_2}$. Note that the critical points come with a cyclic order along~$A_\iota$ which is alternating between local maxima and local minima. Similarly to the previous section, we define
\begin{equation}
\mathcal D_M=\mathcal D_M(\xi,\eta)=\big\{(z,w)\in \mathcal R:\re F(z,w;\xi,\eta)\geq \textstyle{\min_{i\in \{1,2\}}}\{\re F(q_{M_i};\xi,\eta)\}\big\}.
\end{equation}
and
\begin{equation}
\mathcal D_m=\mathcal D_m(\xi,\eta)=\big\{(z,w)\in \mathcal R:\re F(z,w;\xi,\eta)\leq \textstyle{\max_{i\in\{1,2\}}}\{\re F(q_{m_i};\xi,\eta)\}\big\}.
\end{equation}
Note that~$\mathcal D_M\cap \mathcal D_m=\emptyset$. We denote the connected component of~$\mathcal D_M$ containing~$q_{M_i}$ by~$D_i$, for~$i=1,2$, and the connected component of~$\mathcal D_m$ containing~$q_{m_i}$ by~$C_i$, for~$i=1,2$. See Figure~\ref{fig:amoeba_curves_smooth_frozen}, left panel.

\begin{lemma}\label{lem:smooth_sets}
After a possible relabeling of~$D_1$ and~$D_2$, there exist~$j,j'\in \{1,\dots,\ell\}$ such that~$q_{0,j} \in D_1$ and~$q_{\infty,j'} \in D_2$, while~$q_{0,i}\notin D_2$ and~$q_{\infty,i}\notin D_1$ for all~$i$. Similarly,~$p_{0,j}\in C_1$ and~$p_{\infty,j'}\in C_2$, for some (possibly different)~$j,j'\in\{1,\dots,k\}$, while~$p_{0,i}\notin C_2$ and~$p_{\infty,i}\notin C_1$ for all~$i$.   
\end{lemma}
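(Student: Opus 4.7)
The plan is to follow the strategy of the proof of Lemma~\ref{lem:rough_sets} closely, adapting it to the fact that here the four critical points lie on the compact oval $A_\iota$ rather than being a single interior point together with its conjugate. First I would verify that $D_1\neq D_2$ and $C_1\neq C_2$. This was automatic in the rough case from the cross structure of the level lines at the unique simple zero of $\d F$ in $\mathcal R_0$, but here I would use that along $A_\iota$ the critical points are arranged in the alternating cyclic order $q_{M_1},q_{m_1},q_{M_2},q_{m_2}$; hence both arcs of $A_\iota$ joining $q_{M_1}$ to $q_{M_2}$ pass through a minimum $q_{m_i}$ on which
\[
\re F(q_{m_i};\xi,\eta)\le\max_i\re F(q_{m_i};\xi,\eta)<\min_i\re F(q_{M_i};\xi,\eta),
\]
so both arcs leave $\mathcal D_M$, separating $q_{M_1}$ from $q_{M_2}$ in $\mathcal D_M$. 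The same argument with maxima and minima interchanged gives $C_1\neq C_2$.

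Next I would show that each $D_i$ contains at least one of the angles $q_{0,\cdot}$, $q_{\infty,\cdot}$ (and analogously each $C_i$ contains one of $p_{0,\cdot}$, $p_{\infty,\cdot}$). Since $\re F$ is harmonic and non-constant on $D_i$ by Lemma~\ref{lem:conjugate_functions} and equal to the constant $\min_j\re F(q_{M_j};\xi,\eta)$ on $\partial D_i$, the maximum principle forces $\re F$ to be unbounded above on $D_i$, and by \eqref{eq:limit_f_1}--\eqref{eq:limit_f_4} this can only happen at one of the listed angles. The analogous argument applied to $-\re F$ on $C_i$ yields a $p$-angle in each $C_i$.

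The heart of the proof is then to upgrade this to the statement that, after relabeling, $D_1$ contains only angles of type $q_{0,\cdot}$ and $D_2$ only angles of type $q_{\infty,\cdot}$, and similarly for $C_1,C_2$. Using the conjugation symmetry \eqref{eq:conjugate_functions}, the sets $D_1,D_2,C_1,C_2$ are invariant under $(z,w)\mapsto(\bar z,\bar w)$, so it suffices to analyse their connected images in the amoeba $\mathcal A$, each of which must contain the tentacle corresponding to any angle it contains. I would then run exactly the planarity obstruction of Lemma~\ref{lem:rough_sets}: if both $D_1$ and $D_2$ contained a $q_{0,\cdot}$ tentacle, so that two of the down-going tentacles of $\mathcal A$ belonged to these two distinct components, then together with $\Log A_\iota$ their images would separate the amoeba in such a way that one of $C_1,C_2$ could no longer reach its required $p$-tentacle while remaining connected and avoiding the $q_{\cdot,\cdot}$ tentacles. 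This contradiction forces the claimed distribution of $q$-angles between $D_1$ and $D_2$, and the same argument applied to $C_1,C_2$ with the $p$-angles completes the proof.

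The main obstacle I anticipate is making the planar separation argument inside the amoeba fully rigorous, now that $D_i$ and $C_i$ touch the inner boundary $\Log A_\iota$ rather than having boundaries entirely in the interior of $\mathcal A$. The resolution is the observation that the four critical points divide $A_\iota$ into four arcs which belong alternately to $\partial D_1\cup\partial D_2$ and $\partial C_1\cup\partial C_2$; consequently the images $\Log D_i$ and $\Log C_i$ abut $\Log A_\iota$ along disjoint arcs of this inner oval in a controlled cyclic pattern. This retains enough combinatorial structure to reduce the problem to the same two-dimensional planar separation argument used in Lemma~\ref{lem:rough_sets}, at which point the obstruction goes through unchanged.
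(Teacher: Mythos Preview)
Your proposal is correct and follows exactly the approach the paper takes—its entire proof is the single sentence ``The argument is very similar to the proof of Lemma~\ref{lem:rough_sets}.'' One small imprecision worth flagging: the fact that both arcs of $A_\iota$ between $q_{M_1}$ and $q_{M_2}$ leave $\mathcal D_M$ does not by itself show that $q_{M_1}$ and $q_{M_2}$ are separated in $\mathcal D_M\subset\mathcal R$, since a path through the interior of $\mathcal R_0$ could still connect them. Rather, that alternating structure on $A_\iota$ is precisely the input to the planarity obstruction you run afterwards (if $D_1=D_2$, the connecting path together with an arc of $A_\iota$ would trap one of the $C_i$ away from all $p$-tentacles), and it is that argument which ultimately forces $D_1\neq D_2$, just as in the rough case.
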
 

\begin{proof}
The argument is very similar to the proof of Lemma~\ref{lem:rough_sets}.
\end{proof}

\begin{lemma}\label{lem:limit_kernel_smooth}
The equality of~\eqref{eq:limiting_kernel_lhs} and~\eqref{eq:limiting_kernel} in Theorem~\ref{thm:local_limit} holds for~$(\xi,\eta)$ in the smooth regions, and the limit is uniform for~$(\xi_N,\eta_N)$ in compact subsets of the smooth regions.
\end{lemma}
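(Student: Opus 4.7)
The argument follows the same overall strategy as the proof of Lemma~\ref{lem:limit_kernel_rough}: we deform~$\tilde\Gamma_s$ and~$\tilde\Gamma_l$ from Proposition~\ref{prop:finite_kernel} to contours of steep descent/ascent that trap the critical points, pick up a residue contribution at~$z_1=z_2$ along a curve matching~$\gamma_{\xi,\eta}$ from Definition~\ref{def:curve_integration}, and then bound the remaining double integral by a standard steep descent estimate using Lemma~\ref{lem:behavior_one-forms}. Concretely, by Lemma~\ref{lem:smooth_sets}, after suitable relabeling we have distinct angles~$q_{0,j},q_{\infty,j'}$ lying inside components~$D_1,D_2$ of~$\mathcal D_M$ containing~$q_{M_1},q_{M_2}$ respectively, and~$p_{0,\hat\jmath},p_{\infty,\hat\jmath'}$ lying inside components~$C_1,C_2$ of~$\mathcal D_m$ containing~$q_{m_1},q_{m_2}$. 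Using this, we build a simple arc~$\tilde\gamma_l$ contained in~$\mathcal D_M$ that starts near~$q_{0,j}$, passes through~$q_{M_1}$, crosses to the other side of the real locus along (or tangentially through) the compact oval~$A_\iota$, passes through~$q_{M_2}$, and terminates near~$q_{\infty,j'}$; complex conjugation extends it to a closed oriented curve~$\gamma_l\subset \mathcal R$. The curve~$\gamma_s\subset \mathcal D_m$ is constructed analogously, passing through~$q_{m_1}$ and~$q_{m_2}$ with endpoints near~$p_{0,\hat\jmath}$ and~$p_{\infty,\hat\jmath'}$.

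The key step is to deform~$\tilde\Gamma_s$ to~$\gamma_s$ and~$\tilde\Gamma_l$ to~$\gamma_l$. By Lemma~\ref{lem:meromorphic_one-forms}, the two~$1$-forms appearing as factors of the integrand have possible poles only at the~$q_{0,m},q_{\infty,m}$ and~$p_{0,m},p_{\infty,m}$, respectively; by Lemma~\ref{lem:smooth_sets} the deformation of~$\gamma_l$ does not cross any~$p_{0,m},p_{\infty,m}$ and the deformation of~$\gamma_s$ does not cross any~$q_{0,m},q_{\infty,m}$. In the course of the deformation, each contour may be pushed across several of the compact ovals~$A_i$; however, any such contribution is an integral over~$A_i$ paired with the opposite-oriented integral coming from the conjugate half of the deformation, so these contributions cancel in pairs. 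The genuinely new contribution comes from the residue at~$z_1=z_2$: because~$\gamma_l$ lies in~$\mathcal D_M$ (high~$\re F$) and~$\gamma_s$ lies in~$\mathcal D_m$ (low~$\re F$), and these sets are separated by~$A_\iota$ and by the boundary components~$A_{0,i}$, one contour must be pushed across the other along a curve that, by Definition~\ref{def:curve_integration}, is precisely~$\gamma_{\xi,\eta}$ (a curve from~$A_\iota$ to~$A_{0,1}$, together with its conjugate that combines with it). Evaluating the residue by the second part of Lemma~\ref{lem:behavior_one-forms} produces exactly the single-contour integrand appearing in~\eqref{eq:limiting_kernel}.

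It remains to show that the double integral over~$\gamma_s\times\gamma_l$ of the form
\begin{equation*}
\e^{N(F(q_1;\xi_N,\eta_N)-F(q_2;\xi_N,\eta_N))}G(q_1;q_2)\,\frac{z_1^{\zeta'}}{w_1^{\kappa'}}\frac{w_2^{\kappa}}{z_2^{\zeta}}\frac{\d z_2\d z_1}{z_2(z_2-z_1)}
\end{equation*}
tends to zero uniformly on compacts of the smooth region. On~$\gamma_l\subset\mathcal D_M$ and~$\gamma_s\subset\mathcal D_m$, the exponent satisfies~$\re F(q_1)-\re F(q_2)\le -\delta<0$ off arbitrarily small neighborhoods of the critical points~$q_{M_1},q_{M_2},q_{m_1},q_{m_2}$; inside those neighborhoods, standard steep descent gives an~$N^{-1/2}$ contribution, which is absorbed by the uniform boundedness of~$G$ from Lemma~\ref{lem:behavior_one-forms} together with the fact that~$(z_2-z_1)^{-1}$ is integrable away from the residue locus that has already been removed. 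By continuity of zeros of analytic functions, the critical points depend continuously on~$(\xi,\eta)$, so the contours can be chosen uniformly on compact subsets and the estimates are uniform.

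The main obstacle I anticipate is the construction of the contour~$\tilde\gamma_l$ (and likewise~$\tilde\gamma_s$): unlike the rough case, the two components~$D_1,D_2$ of~$\mathcal D_M$ containing the two maxima need not share a boundary point in the interior of~$\mathcal R_0$, and the natural passage from~$D_1$ to~$D_2$ goes through the real locus on~$A_\iota$. One must verify that~$\tilde\gamma_l$ can be taken to pass through~$A_\iota$ so that, after combining with its conjugate, the resulting closed curve~$\gamma_l$ admits a well-defined isotopy from~$\tilde\Gamma_l$ in which the only uncancelled residue contribution lies along~$\gamma_{\xi,\eta}$; this is the content that makes the correspondence between the geometry of the amoeba and the endpoints of~$\gamma_{\xi,\eta}$ in Definition~\ref{def:curve_integration} (namely~$A_\iota$ on one end) arise naturally from the critical-point structure, rather than being imposed by hand.
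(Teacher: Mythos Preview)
Your overall strategy (deform to steep contours, pick up the residue along~$\gamma_{\xi,\eta}$, bound the remainder via Lemma~\ref{lem:behavior_one-forms}) matches the paper. The point where you diverge from the paper is precisely the obstacle you flag at the end, and the paper resolves it differently and more simply than you propose.

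You try to build a \emph{single} connected arc~$\tilde\gamma_l\subset\mathcal D_M$ linking~$D_1$ and~$D_2$ by crossing~$A_\iota$. This is genuinely problematic: along~$A_\iota$ the function~$\re F$ drops below $\min_i\re F(q_{M_i})$ at the local minima~$q_{m_i}$ separating~$q_{M_1}$ from~$q_{M_2}$, so~$D_1$ and~$D_2$ need not be connected within~$\mathcal D_M$, even along~$A_\iota$. The paper does not attempt this. Instead it takes \emph{two disjoint} arcs~$\tilde\gamma_{l,1}$ (from a neighborhood of~$q_{0,j}$ to~$q_{M_1}$) and~$\tilde\gamma_{l,2}$ (from~$q_{M_2}$ to a neighborhood of~$q_{\infty,j'}$), each lying in~$\mathcal D_M\cap\mathcal R_0$, and deforms~$\tilde\Gamma_l$ to~$\gamma_{l,1}\cup\gamma_{l,2}$ together with connecting segments that run \emph{along the real locus}~$\partial\mathcal R_0$ between~$q_{M_1}$ and~$q_{M_2}$. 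These connecting segments are not required to lie in~$\mathcal D_M$ at all, because they contribute nothing to the integral: the full deformed contour is, up to orientation, invariant under complex conjugation, so each such real segment is traversed twice with opposite orientations and the two contributions cancel. The same device handles the~$\gamma_s$ contour with segments between~$q_{m_1}$ and~$q_{m_2}$. After this observation, the remainder of the argument is literally the rough-region proof verbatim.

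So your proof plan is not wrong, but the specific construction you propose for~$\gamma_l$ would need extra work that the paper avoids; the cancellation-along-real-ovals trick is the missing idea.
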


\begin{proof}
We pick two simple curves~$\tilde \gamma_{l,1}$ and~$\tilde \gamma_{l,2}$ in the interior of~$\mathcal D_M \cap \mathcal R_0$ where~$\tilde \gamma_{l,1}$ starts in a neighborhood of~$q_{0,j}$ for some~$j$ and ends at~$q_{M_1}$ and~$\tilde \gamma_{l,2}$ starts at~$q_{M_2}$ and ends in a neighborhood of~$q_{\infty,j'}$ for some~$j'$. The existence of such curves follows from Lemma~\ref{lem:smooth_sets}. As described in the proof of Lemma~\ref{lem:limit_kernel_rough}, these curves define two curves in~$\mathcal R$,~$\gamma_{l,i}$,~$i=1,2$, which are invariant, up to orientation, under conjugation. In a similar way, we define the curves~$\gamma_{s,1}$ and~$\gamma_{s,2}$, but now taking the defining curves~$\tilde \gamma_{s,1}$ and~$\tilde \gamma_{s,2}$ in the interior of~$\mathcal D_m \cap \mathcal R_0$, going from a neighborhood of~$p_{0,j}$, for some~$j$, and ending at~$q_{m_1}$, and going from~$q_{m_2}$ and ending at~$p_{\infty,j'}$ for some~$j'$, respectively. See Figure~\ref{fig:amoeba_curves_smooth_frozen}, left panel. 

Let us now deform the contours~$\tilde \Gamma_l$ and~$\tilde \Gamma_s$ in the integral of Proposition~\ref{prop:finite_kernel} to~$\gamma_{l,1} \cup \gamma_{l,2}$ and~$\gamma_{s,1}\cup\gamma_{s,2}$, respectively. In the deformation we also obtain a contour going between~$q_{M_1}$ and~$q_{M_2}$ along~$\partial \mathcal R_0$, and also a contour going between~$q_{m_1}$ and~$q_{m_2}$. These contours do not contribute to the value of the integral since the curves are, up to orientation, invariant under conjugation, which means we will integrate along these parts in both directions.

The rest of the proof follows the proof of Lemma~\ref{lem:limit_kernel_rough} word for word.
\end{proof}

 \begin{figure}[t]
 \begin{center}
   \begin{tikzpicture}[scale=.8]
    \draw (0,0) node {\includegraphics[trim={1cm, 0cm, 1cm, 1cm}, clip, scale=.7]{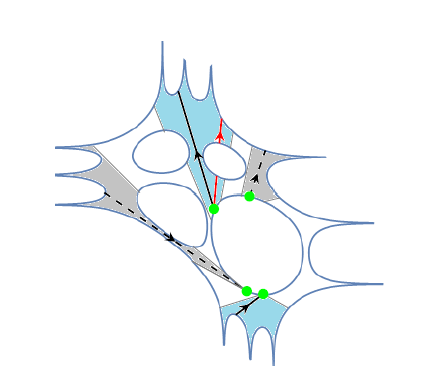}};
    	%angles
	\draw (-1.6,2.7) node {$+\infty$};
	\draw (-2.6,1.5) node {$-\infty$};
	\draw (-.5,-2.2) node {$+\infty$};
	\draw (1.8,1.3) node {$-\infty$};
	\end{tikzpicture}
%	\quad
	\begin{tikzpicture}[scale=.8]
    \draw (0,0) node {\includegraphics[trim={1cm, 0cm, 1cm, 1cm}, clip, scale=.7]{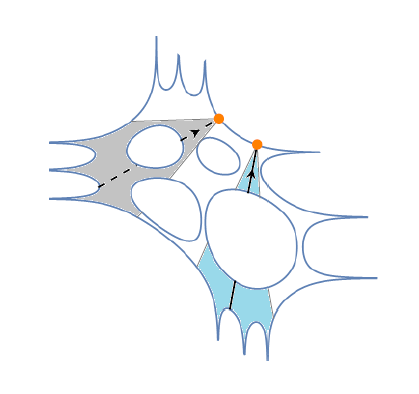}};
    	%angles
	\draw (-1.6,2.9) node {$+\infty$};
	\draw (-2.6,1.7) node {$-\infty$};
	\draw (-.5,-2.0) node {$+\infty$};
	\draw (1.8,1.5) node {$-\infty$};
	\end{tikzpicture}
%	\quad
	\begin{tikzpicture}[scale=.8]
    \draw (0,0) node {\includegraphics[trim={1cm, 0cm, 1cm, 1cm}, clip, scale=.7]{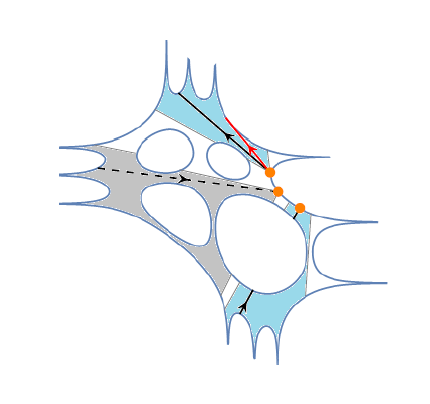}};
    	%angles
	\draw (-1.6,2.9) node {$+\infty$};
	\draw (-2.6,1.7) node {$-\infty$};
	\draw (-.5,-2.0) node {$+\infty$};
	\draw (1.8,1.5) node {$-\infty$};
	\end{tikzpicture}
 \end{center}
  \caption{The integration contours in the steep descent analysis. Left: smooth region,~$\gamma_{s,1}$,~$\gamma_{s,2}$ (dashed),~$\gamma_{l,1}$,~$\gamma_{l,2}$ (solid),~$\gamma_{\xi,\eta}$ (red). Middle and Right: frozen region,~$\gamma_{s,i}$ (dashed),~$\gamma_{l,i}$ (solid),~$\gamma_{\xi,\eta}$ (red). In the middle picture the curve~$\gamma_{\xi,\eta}$ is empty. The blue shaded regions are the sets~$D_i$ and the gray shaded regions are the sets~$C_i$, given in Lemmas~\ref{lem:smooth_sets} and~\ref{lem:frozen_sets}. At the tentacles the behavior of~$\re F$ is indicated. \label{fig:amoeba_curves_smooth_frozen}}
\end{figure}

\subsubsection{The frozen regions}\label{sec:frozen}
Finally, let us prove the equality of~\eqref{eq:limiting_kernel_lhs} and~\eqref{eq:limiting_kernel} in Theorem~\ref{thm:local_limit} in the frozen regions.

If~$(\xi,\eta)$ is in a frozen region, then there is, by Definition~\ref{def:regions}, a component of the unbounded oval~$A_{0,\iota}$, for some~$\iota=1,\dots,2(k+\ell)$, containing two or three simple zeros of~$\d F$. If the endpoints of~$A_{0,\iota}$ are two different types of angles, that is, if~$\iota\in\{1,\ell+1,\ell+k+1,2\ell+k+1\}$ (these correspond to the north, east, south and west frozen regions), then~$A_{0,\iota}$ contains two critical points, otherwise the component contains three critical points. See Figures~\ref{fig:critical_points} and~\ref{fig:amoeba_curves_smooth_frozen} (middle and right panels). If we view~$\re F$ as a function along~$A_{0,\iota}$, then the zeros of~$\d F$ are local minima or local maxima. We denote the local maxima of~$\re F$ by~$q_{M_i}$ and the local minima by~$q_{m_i}$. If~$A_{0,\iota}$ contains two critical points, then there is one local maximum and one local minimum. If~$A_{0,\iota}$ contains three critical points, then there are two local maxima and one local minimum or the other way around. More precisely, if the endpoints of~$A_{0,\iota}$ are~$p_{0,j}$ and~$p_{0,j'}$, or~$p_{\infty,j}$ and~$p_{\infty,j'}$, then there are two local maxima and one local minimum, while if the endpoints are~$q_{0,j}$ and~$q_{0,j'}$, or~$q_{\infty,j}$ and~$q_{\infty,j'}$, then there are two local minima and one local maximum. This follows from~\eqref{eq:limit_f_1}-\eqref{eq:limit_f_4}, see also Figure~\ref{fig:critical_points}.

Proceeding as in Section~\ref{sec:smooth}, we define the sets
\begin{equation}
\mathcal D_M=\mathcal D_M(\xi,\eta)=\big\{(z,w)\in \mathcal R:\re F(z,w;\xi,\eta)\geq \textstyle{\min_i}\{\re F(q_{M_i};\xi,\eta)\}\big\}.
\end{equation}
and
\begin{equation}
\mathcal D_m=\mathcal D_m(\xi,\eta)=\big\{(z,w)\in \mathcal R:\re F(z,w;\xi,\eta)\leq \textstyle{\max_i}\{\re F(q_{m_i};\xi,\eta)\}\big\}.
\end{equation}
We denote the connected component of~$\mathcal D_M$ containing~$q_{M_i}$ by~$D_i$, and the connected component of~$\mathcal D_m$ containing~$q_{m_i}$ by~$C_i$. See Figure~\ref{fig:amoeba_curves_smooth_frozen} for a sketch of the components.

\begin{lemma}\label{lem:frozen_sets}
If~$\iota=1,\dots,\ell+1$, then there exists~$j\in\{1,\dots,\ell\}$ such that~$q_{0,j}\in D_1$ and if~$\iota=k+\ell+1,\dots,k+2\ell+1$, then there exists~$j\in\{1,\dots,\ell\}$ such that~$q_{\infty,j}\in D_1$. Otherwise, after a possible relabeling of~$D_1$ and~$D_2$, there exists~$j\in\{1,\dots,\ell\}$ such that~$q_{\infty,j}\in D_1$ and a~$j'$ so that~$q_{0,j'} \in D_2$. Similarly, if~$\iota=\ell+1,\dots,k+\ell+1$, then there exists~$j\in\{1,\dots,\ell\}$ such that~$p_{\infty,j}\in C_1$ and if~$\iota=k+2\ell+1,\dots,2k+2\ell,1$, then there is a~$j$ so that~$p_{0,j}\in C_1$. Otherwise, after a possible relabeling of~$C_1$ and~$C_2$, there is a~$j$ so that~$p_{\infty,j}\in C_1$ and a~$j'$ so that~$p_{0,j'} \in C_2$.
\end{lemma}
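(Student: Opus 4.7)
The plan is to follow the template established by Lemmas~\ref{lem:rough_sets} and~\ref{lem:smooth_sets}. First, I would use the harmonicity of $\re F$ on $\mathcal R$ (Lemma~\ref{lem:conjugate_functions}) together with the symmetry $\re F(\bar z,\bar w;\xi,\eta) = \re F(z,w;\xi,\eta)$ to descend to a harmonic function on the amoeba interior obtained by pushing $\re F$ through the $\Log$ map. On the relative boundary of each $D_i$ (respectively each $C_i$), viewed in the amoeba, the function $\re F$ equals the constant $\re F(q_{M_i})$ (respectively $\re F(q_{m_i})$), while in the interior it is strictly larger (respectively strictly smaller). By the maximum principle, none of these sets can be bounded; each must meet at least one tentacle. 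The behavior of $\re F$ at the angles recorded in \eqref{eq:limit_f_1}--\eqref{eq:limit_f_4} then shows that each $D_i$ contains at least one angle of type $q_0$ or $q_\infty$, and each $C_i$ at least one angle of type $p_0$ or $p_\infty$. This yields the existence part of the statement in all cases.

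Next, I would identify the precise type of angle by a case analysis driven by the cyclic position of $\iota$ along $A_0$ and by the critical-point structure on $A_{0,\iota}$ recalled just before the statement of the lemma: two critical points (one local maximum, one local minimum) for the corner values $\iota\in\{1,\ell+1,\ell+k+1,2\ell+k+1\}$; a min--max--min pattern when both endpoints of $A_{0,\iota}$ are $q$-type; and a max--min--max pattern when both endpoints are $p$-type. The key topological input is the location of the critical points forced by Lemma~\ref{lem:zeros_poles_df}, namely two on each compact oval and one on every non-corner component of $A_0$, which act as ``gates'' through which the level curves of $\re F$ are required to pass. Tracing these level curves in the amoeba, and using the cyclic ordering of the angles along $\partial A_0$, one sees that the $q$-tentacles (respectively $p$-tentacles) adjacent to $A_{0,\iota}$ are separated from $q_{M_i}$ (respectively $q_{m_i}$) by an intervening local minimum (respectively maximum) of $\re F|_{A_{0,\iota}}$. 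Consequently $D_i$ and $C_i$ must each reach a tentacle whose angle is of the type specified in the statement. For the non-corner cases with two sets $D_1,D_2$ (respectively $C_1,C_2$), disjointness combined with the cyclic arrangement of the tentacles forces one of each type to appear, exactly as in Lemma~\ref{lem:smooth_sets}.

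The main obstacle I anticipate is making the ``separation by an intervening critical point'' argument precise, and then writing out the case analysis cleanly across the four cyclic ranges of $\iota$ together with the four corner values that interpolate between them. While each individual case is elementary once the amoeba picture is drawn---it amounts to tracing level curves of a harmonic function with prescribed boundary behavior---the bookkeeping needed to handle all eight sub-cases uniformly is where most of the effort will go. Beyond this, all ingredients are already in place from the proofs of the analogous Lemmas~\ref{lem:rough_sets} and~\ref{lem:smooth_sets}.
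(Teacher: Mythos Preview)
Your proposal is correct and takes essentially the same approach as the paper, which simply states that the proof is similar to that of Lemma~\ref{lem:rough_sets}. Your outline in fact supplies more detail than the paper does, expanding the harmonicity/maximum-principle argument and the amoeba separation reasoning that the paper leaves implicit.
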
 
\begin{proof}
The proof is similar to the proof of Lemma~\ref{lem:rough_sets}.
\end{proof}

\begin{lemma}\label{lem:limit_kernel_frozen}
The equality of~\eqref{eq:limiting_kernel_lhs} and~\eqref{eq:limiting_kernel} in Theorem~\ref{thm:local_limit} holds for~$(\xi,\eta)$ in the frozen regions, and the limit is uniform for~$(\xi_N,\eta_N)$ in compact subsets of the frozen regions.
\end{lemma}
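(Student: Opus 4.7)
The proof follows the same scheme as Lemmas~\ref{lem:limit_kernel_rough} and~\ref{lem:limit_kernel_smooth}, now with the data supplied by Lemma~\ref{lem:frozen_sets} used to construct the steep descent and ascent contours. The only genuinely new feature is that the relevant critical points defining the region lie on the non-compact oval $A_0$ rather than on a compact oval, and the number of critical points on $A_{0,\iota}$ depends on whether $\iota$ corresponds to a corner (north, east, south or west) frozen region.

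For the non-corner cases I plan to construct two pieces $\gamma_{l,1},\gamma_{l,2}$ of the large contour lying in the interior of $\mathcal D_M\cap\mathcal R_0$, each joining one of the distinguished angles $q_{0,j}$ or $q_{\infty,j'}$ singled out by Lemma~\ref{lem:frozen_sets} to the corresponding local maximum of $\re F$ on $A_{0,\iota}$; symmetrizing under complex conjugation then yields closed oriented curves $\gamma_{l,i}$ in $\mathcal R$. The small contour pieces $\gamma_{s,1},\gamma_{s,2}$ are built analogously inside $\mathcal D_m\cap\mathcal R_0$ from the angles $p_{0,j},p_{\infty,j'}$ to the corresponding local minima. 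For the four corner frozen regions, $A_{0,\iota}$ carries only one local maximum and one local minimum of $\re F$, so Lemma~\ref{lem:frozen_sets} supplies only one angle of each type, and I will use single curves $\gamma_l$ and $\gamma_s$ constructed exactly as in the rough case; see Figure~\ref{fig:amoeba_curves_smooth_frozen}.

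Next, I would deform $\tilde\Gamma_l$ and $\tilde\Gamma_s$ from Proposition~\ref{prop:finite_kernel} onto these new contours. As in the proof of Lemma~\ref{lem:limit_kernel_smooth}, any arc traversed along $\partial\mathcal R_0$ between two extrema of the same type (which appears when $A_{0,\iota}$ carries three critical points) is run over in both directions thanks to conjugation symmetry and contributes nothing. Assumption~\ref{ass:main_ass}\eqref{ass:wh} together with Lemma~\ref{lem:meromorphic_one-forms} guarantees that the only contribution picked up during the deformation is the residue at $z_1=z_2$, and Lemma~\ref{lem:behavior_one-forms} rewrites this residue as a single integral along a curve of the type described in Definition~\ref{def:curve_integration}: an empty curve in the north frozen region and a short arc joining $A_{0,\iota}$ to $A_{0,1}$ in all other cases. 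This produces exactly the first two terms on the right-hand side of~\eqref{eq:limiting_kernel}.

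What remains is to show that the resulting double integral along $\gamma_s\times\gamma_l$ (or its two-piece analogue) vanishes in the limit. By construction of $\mathcal D_M$ and $\mathcal D_m$, the difference $\re F(q_1;\xi_N,\eta_N)-\re F(q_2;\xi_N,\eta_N)$ is bounded above by a strictly negative constant outside any fixed neighborhood of the critical points, uniformly for $(\xi_N,\eta_N)$ in a compact subset of the frozen region; combined with the uniform bound on $G$ supplied by Lemma~\ref{lem:behavior_one-forms}, a standard Laplace-type estimate yields a uniform $\Ordo(\e^{-cN})$ bound. The only point that requires any care, and the one I expect to be the main obstacle, is verifying that the contour pieces $\gamma_{l,i}$ and $\gamma_{s,i}$ can actually be drawn inside the \emph{interiors} of $\mathcal D_M\cap\mathcal R_0$ and $\mathcal D_m\cap\mathcal R_0$ respectively so that strict exponential decay holds; this is handled case by case using Lemma~\ref{lem:frozen_sets} and the connectedness argument already used in Lemma~\ref{lem:rough_sets}.
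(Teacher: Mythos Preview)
Your proposal follows the same approach as the paper's proof, which is very brief and simply says to construct curves $\gamma_{l,i}$ in $D_i$ and $\gamma_{s,i}$ in $C_i$ connecting each critical point to an angle in the same component, with $i$ running over $\{1,2\}$ or only $\{1\}$, and then refers back to the proof of Lemma~\ref{lem:limit_kernel_rough}.

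One small inaccuracy worth flagging: in the non-corner frozen regions there are exactly \emph{three} critical points on $A_{0,\iota}$, not four, so only one of the two contours needs two pieces. As explained just before Lemma~\ref{lem:frozen_sets}, if the endpoints of $A_{0,\iota}$ are both of type $p_{0}$ or both $p_{\infty}$ there are two local maxima and one minimum (so $\gamma_l$ has two pieces, $\gamma_s$ one), whereas if the endpoints are both $q_{0}$ or both $q_{\infty}$ the situation is reversed. Lemma~\ref{lem:frozen_sets} reflects this asymmetry: it only provides a single $D_1$ (respectively $C_1$) in the cases $\iota=1,\dots,\ell+1$ and $\iota=k+\ell+1,\dots,k+2\ell+1$ (respectively $\iota=\ell+1,\dots,k+\ell+1$ and $\iota=k+2\ell+1,\dots,2k+2\ell,1$). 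Apart from this bookkeeping, your argument is correct and matches the paper.
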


\begin{proof}
As we did for the rough and smooth regions, we pick a simple closed curve~$\gamma_{l,i}$ in~$D_i$ which connects the critical point in~$D_i$ with an angle in~$D_i$. Moreover, we take the orientation to be compatible with~$\tilde \Gamma_l$. Similarly we define~$\gamma_{s,i}$ in~$C_i$, with an orientation that is compatible with~$\tilde \Gamma_s$. See Figure~\ref{fig:amoeba_curves_smooth_frozen}. Above~$i$ runs over~$\{1,2\}$ or only over~$\{1\}$.

We deform the contours~$\tilde \Gamma_l$ and~$\tilde \Gamma_s$ in the integral of Proposition~\ref{prop:finite_kernel} to~$\cup_i\gamma_{l,i}$ and~$\cup_i\gamma_{s,i}$, respectively. The rest of the proof follows the proof of Lemma~\ref{lem:limit_kernel_rough} word for word.
\end{proof}

\subsection{Proof of Theorem~\ref{thm:local_limit}}\label{sec:uniform_limt}

The first equality~\eqref{eq:limiting_kernel} of Theorem~\ref{thm:local_limit} is proved in Lemmas~\ref{lem:limit_kernel_rough},~\ref{lem:limit_kernel_smooth} and~\ref{lem:limit_kernel_frozen} above. What remains to complete the proof of Theorem~\ref{thm:local_limit} is to prove the equality of~\eqref{eq:limiting_kernel} and~\eqref{eq:local_limit}. The argument is similar to the proof of~\cite[Theorem 51]{BCT22}.

\begin{proof}[Proof of Theorem~\ref{thm:local_limit}]
We start from the integral on the torus~\eqref{eq:local_limit} and prove that it is equal to the integrals on the Riemann surface~\eqref{eq:limiting_kernel}. Up to details, the equality follows from the residue theorem. 

Let~$r_1$ and~$r_2$ be as in the statement, cf. Definition~\ref{def:point_integration}. Let us fix~$z\in \CC$ with~$|z|=\e^{r_1}$. If~$2\ell\kappa+i> 2\ell\kappa'+i'$, then, for~$|w|<\e^{r_2}$, the meromorphic function
\begin{equation}
w\mapsto \left(\prod_{m=1}^{i'}\phi_m(z)\right)^{-1}(\Phi(z)-wI)^{-1}\Phi(z)^{\one_{i'\geq i}}w^{\one_{i'< i}}
\prod_{m=1}^{i}\phi_m(z)\frac{z^{\zeta'-\zeta}}{w^{\kappa'-\kappa+1}} 
\end{equation}
has a pole in~$w$ if and only if~$w\mapsto \det (\Phi(z)-wI)$ vanishes, that is, if~$(z,w)\in \mathcal R$. By the residue theorem,
\begin{multline}\label{eq:unified_residue}
\frac{1}{2\pi\i}\int_{|w|=\e^{r_2}}\left(\prod_{m=1}^{i'}\phi_m(z)\right)^{-1}\frac{\adj(\Phi(z)-wI)}{\det (\Phi(z)-wI)}\cdot\Phi(z)^{\one_{i'\geq i}}w^{\one_{i'< i}}
\prod_{m=1}^{i}\phi_m(z)\frac{z^{\zeta'-\zeta}}{w^{\kappa'-\kappa}}\frac{\d w}{w} \\
= -\sum_{w:(z,w)\in \mathcal R, |w|<\e^{r_2}}\left(\prod_{m=1}^{i'}\phi_m(z)\right)^{-1}\frac{\adj(wI-\Phi(z))}{\partial_w\det (wI-\Phi(z))}
\prod_{m=1}^{i}\phi_m(z)\frac{z^{\zeta'-\zeta}}{w^{\kappa'-\kappa}}.
\end{multline}
Here we used the identity~$\adj(wI-\Phi(z))\Phi(z)=\adj(wI-\Phi(z))w$ if~$(z,w)\in \mathcal R$. The points~$(z,w)\in \mathcal R$ in the sum all lie in the part of the curve~$|z|=\e^{r_1}$ with~$|w|<\e^{r_2}$.

If instead~$2\ell\kappa+i\leq 2\ell\kappa'+i'$, then we compute the integral in~\eqref{eq:unified_residue} by summing the residues in the exterior of the contour. Since~$(wI-\Phi(z))^{-1}=\Ordo(w^{-1})$ as~$w\to \infty$, the only contribution comes, again, from the zeros of~$\det(wI-\Phi(z))$. Thus,~\eqref{eq:unified_residue} still holds, up to a sign due to the orientation of the curve, and the sum runs instead over the set~$\{w:(z,w)\in \mathcal R, |w|>\e^{r_2}\}$. The points~$(z,w)$ in the sum now lie in the part of the curve~$|z|=\e^{r_1}$ with~$|w|>\e^{r_2}$. 

 \begin{figure}[t]
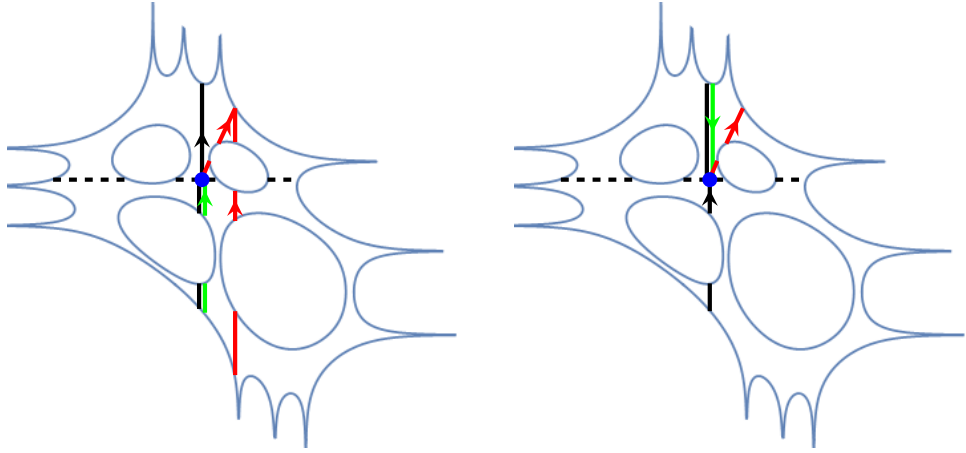

 \begin{center}
  \begin{tikzpicture}[scale=1]
    \tikzset{->-/.style={decoration={
  markings, mark=at position .5 with {\arrow{stealth}}},postaction={decorate}}}
    \tikzset{-->-/.style={decoration={
  markings, mark=at position .7 with {\arrow{stealth}}},postaction={decorate}}}
    \draw (0,0) node {\includegraphics[trim={1cm, 1cm, 1cm, 1cm}, clip, angle=180, scale=.5]{amoeba3x3applicable.png}};
	%|z|=\e^x
%   \draw[line width=1.2mm](-.4,-1.15)--(-.4,-.78);
%   \draw[line width=1.2mm](-.4,.15)--(-.4,.6);
%   \draw[->-,ultra thick](-.4,.6)--(-.4,1.88);
%   \draw[green,line width=.5mm](-.4,-1.15)--(-.4,-.78);
%   \draw[-->-,green,line width=.5mm](-.4,.15)--(-.4,.6);
\foreach \x in {0.039}
{   \draw[ultra thick](-.4-\x,-1.12)--(-.4-\x,-.78);
   \draw[ultra thick](-.4-\x,.15)--(-.4-\x,.6);
   \draw[->-,ultra thick](-.4,.6)--(-.4,1.88);
   \draw[green,ultra thick](-.4+\x,-1.17)--(-.4+\x,-.78);
   \draw[-->-,green,ultra thick](-.4+\x,.12)--(-.4+\x,.6);
   }
   %|w|=\e^y
   \draw [dashed,ultra thick](-2.38,.6)--(-1.35,.6);
   \draw [dashed,ultra thick](-.75,.6)--(-.2,.6);
   \draw [dashed,ultra thick](.47,.6)--(.89,.6);
   %\gamma_{\xi,\eta}
   \draw[red,ultra thick](-.39,.64)--(-.3,.85);
   \draw[-->-,red,ultra thick](-.18,1.08)--(0.04,1.55);
   % |z|=1
   \draw[red,ultra thick](0.04,-2)--(0.04,-1.15);
   \draw[-->-,red,ultra thick](0.04,0.05)--(0.04,.45);
   \draw[red,ultra thick](0.04,1.1)--(0.04,1.55);
   % Critical point
   % Rough
	\draw (-.4,.6) node[blue,circle,fill,inner sep=2.pt]{};
  \end{tikzpicture}
  \quad
   \begin{tikzpicture}[scale=1]
    \tikzset{->-/.style={decoration={
  markings, mark=at position .5 with {\arrow{stealth}}},postaction={decorate}}}
    \tikzset{-->-/.style={decoration={
  markings, mark=at position .7 with {\arrow{stealth}}},postaction={decorate}}}
    \draw (0,0) node {\includegraphics[trim={1cm, 1cm, 1cm, 1cm}, clip, angle=180, scale=.5]{amoeba3x3applicable.png}};
    %|z|=\e^x
\foreach \x in {0.039}
{   \draw[ultra thick](-.4,-1.15)--(-.4,-.78);
   \draw[-->-,ultra thick](-.4,.15)--(-.4,.6);
   \draw[ultra thick](-.4-\x,1.88)--(-.4-\x,.6);
   \draw[->-,green,ultra thick](-.4+\x,1.88)--(-.4+\x,.6);
   }
   %|w|=\e^y
   \draw [dashed,ultra thick](-2.38,.6)--(-1.35,.6);
   \draw [dashed,ultra thick](-.75,.6)--(-.2,.6);
   \draw [dashed,ultra thick](.47,.6)--(.89,.6);
   %\gamma_{\xi,\eta}
   \draw[red,ultra thick](-.39,.64)--(-.3,.85);
   \draw[-->-,red,ultra thick](-.18,1.08)--(0.04,1.55);
   % Critical point
   % Rough
	\draw (-.4,.6) node[blue,circle,fill,inner sep=2pt]{};
  \end{tikzpicture}
 \end{center}
  \caption{An example of the curves in Section~\ref{sec:uniform_limt} with~$(r_1,r_2)$ in the interior of the amoeba, that is, with~$(\xi,\eta)$ in the rough region. The curves are:~$|z|=\e^{r_1}$ (solid, black),~$|w|=\e^{r_2}$ (dashed),~$\gamma_{r_1,r_2}$ (green). Left: ($2\ell\kappa+i> 2\ell\kappa'+i'$)~$\gamma_{\xi,\eta}$ and~$\gamma_0$ (red). Right: ($2\ell\kappa+i\leq 2\ell\kappa'+i'$)~$\gamma_{\xi,\eta}$ (red). The adjacent black and green curves do, in reality, coincide. \label{fig:amoeba_uniformed_curves}}
\end{figure}

We saw in the proof of Lemma~\ref{lem:adjoint} that if~$(z,w),(z,\tilde w)\in \mathcal R$ then
\begin{equation}\label{eq:projection}
\frac{\adj(wI-\Phi(z))}{\partial_w\det (wI-\Phi(z))}\frac{\adj(\tilde wI-\Phi(z))}{\partial_{\tilde w}\det (\tilde wI-\Phi(z))}=\frac{\adj(\tilde wI-\Phi(z))}{\partial_{\tilde w}\det (\tilde wI-\Phi(z))},
\end{equation}
if~$w=\tilde w$, and the left hand side is zero if~$w\neq \tilde w$. Let~$\gamma_{r_1}=\{(z,\tilde w)\in \mathcal R: |z|=\e^{r_1}\}$ be oriented so that the projection to the~$z$ variable is oriented in the positive direction. As in the proof of Proposition~\ref{prop:finite_kernel}, we use Lemma~\ref{lem:adjoint} to write the double integral~\eqref{eq:local_limit} as an integral on~$\mathcal R$:
\begin{multline}\label{eq:unified_double_integral}
\frac{1}{(2\pi\i)^2}\int_{|z|=\e^{r_1}}\int_{|w|=\e^{r_2}}\left(\prod_{m=1}^{i'}\phi_m(z)\right)^{-1}\frac{\adj(\Phi(z)-wI)}{\det (\Phi(z)-wI)}\Phi(z)^{\one_{i'\geq i}}w^{\one_{i'< i}}
\prod_{m=1}^{i}\phi_m(z)\frac{z^{\zeta'-\zeta}}{w^{\kappa'-\kappa}}\frac{\d w}{w}\frac{\d z}{z} \\
=\frac{1}{(2\pi\i)^2}\int_{(z,\tilde w)\in \gamma_{r_1}}\int_{|w|=\e^{r_2}}\left(\prod_{m=1}^{i'}\phi_m(z)\right)^{-1}\frac{\adj(\Phi(z)-wI)}{\det (\Phi(z)-wI)}\frac{\adj(\tilde wI-\Phi(z))}{\partial_{\tilde w}\det (\tilde wI-\Phi(z))} \\
\times\Phi(z)^{\one_{i'\geq i}}w^{\one_{i'< i}}\prod_{m=1}^{i}\phi_m(z)\frac{z^{\zeta'-\zeta}}{w^{\kappa'-\kappa}} \frac{\d w}{w}\frac{\d z}{z}.
\end{multline}
We emphasize that~$(z,\tilde w) \in \mathcal R$ while~$w\in \CC$. By~\eqref{eq:unified_residue} and~\eqref{eq:projection}, the right hand side of~\eqref{eq:unified_double_integral} is equal to
\begin{equation}\label{eq:single_integral}
\frac{-1}{(2\pi\i)^2}\int_{\gamma_{r_1,r_2}}\left(\prod_{m=1}^{i'}\phi_m(z)\right)^{-1}\frac{\adj(\tilde wI-\Phi(z))}{\partial_{\tilde w}\det (\tilde wI-\Phi(z))}
\prod_{m=1}^i\phi_m(z)\frac{z^{\zeta'-\zeta}}{(\tilde w)^{\kappa'-\kappa}} \frac{\d z}{z},
\end{equation}
where~$\gamma_{r_1,r_2}$ is the part of~$\gamma_{r_1}$ with~$|\tilde w|<\e^{r_2}$ if~$2\ell\kappa+i>2\ell\kappa'+i'$ and the part with~$|\tilde w|>\e^{r_2}$ if~$2\ell\kappa+i\leq 2\ell\kappa'+i'$, but oriented in opposite direction. See Figure~\ref{fig:amoeba_uniformed_curves}. 

Using an argument as in the proof of Lemma~\ref{lem:meromorphic_one-forms}, see in particular~\eqref{eq:adjugate_matrix_prime_form}, we obtain that
\begin{equation}\label{eq:integrand_at_angles}
\left(\prod_{m=1}^{i'}\phi_m(z)\right)^{-1}\frac{\adj(\tilde wI-\Phi(z))}{\partial_{\tilde w}\det (\tilde wI-\Phi(z))}\prod_{m=1}^i\phi_m(z)\frac{\d z}{z}=
\begin{cases}
\Ordo\left(\prod_{m=\tilde i+\iota+1}^{\tilde i'+\iota'}E(q_{0,m},q)^{-1}\right),& q \to q_{0,m}, \\
\Ordo\left(\prod_{m=\tilde i+1}^{\tilde i'}E(q_{\infty,m},q)\right),& q \to q_{\infty,m},
\end{cases}
\end{equation}
where~$i=2\tilde i+\iota$, and~$i'=2\tilde i'+\iota'$, with~$\iota,\iota'\in \{0,1\}$. If~$2\ell\kappa+i\leq 2\ell\kappa'+i'$ the integrand in~\eqref{eq:single_integral} is analytic at~$q_{\infty,m}$, for all~$m$. Indeed, if~$\kappa<\kappa'$, then the power of~$\tilde w$ in~\eqref{eq:single_integral} cancels out any pole and the entire expression is analytic at~$q_{\infty,m}$, and if~$\kappa=\kappa'$, then~$i\leq i'$, and it follows from~\eqref{eq:integrand_at_angles} that~\eqref{eq:single_integral} is analytic at~$q_{\infty,m}$. This means that we can deform~$\gamma_{r_1,r_2}$ to~$-\gamma_{\xi,\eta}$, the contour~$\gamma_{\xi,\eta}$ from Definition~\ref{def:curve_integration}, oriented in opposite direction. We can change the orientation with the extra minus sign in~\eqref{eq:single_integral}, and this proves the theorem for~$2\ell\kappa+i\leq 2\ell\kappa'+i'$. 

When~$2\ell\kappa+i>2\ell\kappa'+i'$ there is an extra term in~\eqref{eq:limiting_kernel}. We write the extra integral as an integral on~$\mathcal R$ using Lemma~\ref{lem:adjoint},
\begin{multline}\label{eq:extra_integral}
-\frac{1}{2\pi\i}\int_{\Gamma} \left(\prod_{m=1}^{i'}\phi_m(z)\right)^{-1}\Phi(z)^{\kappa-\kappa'}\prod_{m=1}^{i}\phi_m(z)z^{\zeta'-\zeta}\frac{\d z}{z} \\
=-\frac{1}{(2\pi\i)^2}\int_{\gamma_0}\left(\prod_{m=1}^{i'}\phi_m(z)\right)^{-1}\frac{\adj(wI-\Phi(z))}{\partial_{w}\det (wI-\Phi(z))}
\prod_{m=1}^i\phi_m(z)\frac{z^{\zeta'-\zeta}}{w^{\kappa'-\kappa}}\frac{\d z}{z},
\end{multline}
where~$\gamma_0=\{(z,w)\in \mathcal R:|z|=1\}$ is oriented so that its projection to the~$z$ variable is oriented in the positive direction. By~\eqref{eq:integrand_at_angles} we may deform~$\gamma_{r_1,r_2}$ through~$q_{0,j}$ for all~$j$. Hence, we deform~$\gamma_{r_1,r_2}$ in~\eqref{eq:single_integral} to~$(-\gamma_{\xi,\eta})\cup \gamma_0$ (see the left panel of Figure~\ref{fig:amoeba_uniformed_curves}) and obtain the integral in the statement. This proves the theorem for~$2\ell\kappa+i>2\ell\kappa'+i'$, and concludes the proof of Theorem~\ref{thm:local_limit}.
\end{proof}

\subsection{Proof of Proposition~\ref{prop:limit_shape}}\label{sec:height_function}
In this final section we prove Proposition~\ref{prop:limit_shape}. The proposition does not follow from Theorem~\ref{thm:local_limit}, however, after writing the expectation of the height function appropriately, the proof of the proposition is obtained by tweaking the proof of that theorem slightly. 

\begin{proof}[Proof of Proposition~\ref{prop:limit_shape}]
Since we are interested in the normalized height function, it suffices to determine the limit
\begin{equation}
\lim_{N\to\infty}\frac{1}{k\ell N}\EE\left[h(2\ell x,2k y)\right].
\end{equation}

By definition of the height function~\eqref{eq:height_difference_aztec}, we express it at the face~$(2\ell x,2ky)$ by first going from~$(0,0)$ to~$(2\ell x,0)$ and then to~$(2\ell x,2ky)$, that is,
\begin{equation}
h(2\ell x,2k y)=\left(h(2\ell x,2k y)-h(2\ell x,0)\right)+\left(h(2\ell x,0)-h(0,0)\right).
\end{equation}
The second term~$h(2\ell x,0)-h(0,0)=\ell x$, see Figure~\ref{fig:height_function}. Note that for any~$j\geq 0$,~$h(2\ell x,2(j+1))=h(2\ell x, 2j)+1$ if~$\mathrm{b}_{\ell x,j}$ is covered by a west or south dimer, and~$h(2\ell x,2(j+1))=h(2\ell x, 2j)$ if it is covered by an east or north dimer. Equivalently, the value of~$h$ increases by~$1$ if there is a particle at~$\mathrm{b}_{\ell x,j}$ from the point process defined in Section~\ref{sec:paths}, and the value of~$h$ does not change if there is no particle. This means we can use~$K_\text{path}$ to express the expectation of the height function. In particular
\begin{equation}
\EE\left[h(2\ell x,2k y)\right]=\sum_{0\leq ky_0+j_0<k y} \PP_\text{path}\left[\text{there is a particle at }\mathrm{b}_{\ell x,ky_0+j_0}\right]+\ell x,
\end{equation}
where the probability measure is the point process~\eqref{eq:measure_on_points} with particles on the west and south dimers. Recall~\eqref{eq:dpp} which implies that
\begin{equation}
\PP_\text{path}\left[\text{there is a particle at }\mathrm{b}_{\ell x+i,ky_0+j_0}\right]=K_\text{path}(2\ell x+2i,ky_0+j_0,2\ell x+2i,ky_0+j_0),
\end{equation}
and hence,
\begin{equation}\label{eq:height_function_trace}
\EE\left[h(2\ell x,2k y)\right]
=\ell x+\sum_{0\leq y_0< y} \Tr \left[K_\text{path}(2\ell x,ky_0+j_0,2\ell x,ky_0+j_0')\right]_{j_0',j_0}^{k-1}.
\end{equation}
Using the formula for the correlation kernel in Theorem~\ref{thm:bd_thm}, we find that the sum in~\eqref{eq:height_function_trace} is equal to
\begin{multline}\label{eq:height_function_sum}
 \frac{1}{(2\pi\i)^2}\int_{\Gamma_s}\int_{\Gamma_l} \Tr \left[\Phi(z_1)^{-x}\widetilde \phi_-(z_1)\widetilde\phi_+(z_2)\Phi(z_2)^{x-kN}\right]\frac{\d z_2\d z_1}{(z_2-z_1)^2} \\
-\frac{1}{(2\pi\i)^2}\int_{\Gamma_s}\int_{\Gamma_l} \Tr \left[\Phi(z_1)^{-x}\widetilde \phi_-(z_1)\widetilde\phi_+(z_2)\Phi(z_2)^{x-kN}\frac{z_1^{y}}{z_2^{y}}\right]\frac{\d z_2\d z_1}{(z_2-z_1)^2},
\end{multline}
where we have taken the summation over~$y_0$ inside the integral. 

We begin by evaluating the first integral in~\eqref{eq:height_function_sum}. For the sake of brevity, let us use the notation~$A=\widetilde \phi_+\Phi^{x-kN}$. Note that~$A^{-1}=\Phi^{-x}\widetilde \phi_-$. The matrix~$A$ is analytic in the interior of~$\Gamma_l$, which means we may deform~$\Gamma_l$ to a point, and the only contribution comes from the double pole at~$z_2=z_1$. By Jacobi's formula~\eqref{eq:jacobis_formula}, 
\begin{equation}
\Tr\left[A(z)^{-1}A'(z)\right]=\frac{(\det A(z))'}{\det A(z)}.
\end{equation}
Thus, the first integral in~\eqref{eq:height_function_sum} is equal to
\begin{equation}
\frac{1}{2\pi\i}\int_{\Gamma_s}\frac{(\det A(z))'}{\det A(z)}\d z=\ell k N-\ell x,
\end{equation}
where the right hand side is the number of zeros of~$\det A$ inside~$\Gamma_s$ (all coming from~$\Phi^{x-kN}$, cf.~\eqref{eq:matrix_valued_function}), and the equality follows from the argument principle.

We proceed with the second term in~\eqref{eq:height_function_sum}. If we ignore the trace, it is similar to the expression in Theorem~\ref{thm:bd_thm} with~$x'=x$,~$y'=y$ and~$i'=i=0$, it only differs by a factor~$z_2(z_2-z_1)^{-1}$. This means we can follow the proof of~\eqref{eq:limiting_kernel} word for word, and find that the leading order term of the desired integral as~$N\to \infty$ comes from the residue at~$z_2=z_1$ along the curve~$\gamma_{\xi,\eta}$ given in Definition~\ref{def:curve_integration}. The residue will, however, be different from what we got in the proof of~\eqref{eq:limiting_kernel} since the pole is of order~$2$ instead of order~$1$. Hence, the leading order term of the second term of~\eqref{eq:height_function_sum} is given by
\begin{equation}\label{eq:height_function_leading_term}
-\frac{1}{2\pi\i}\int_{\gamma_{\xi,\eta}}\left.\frac{\d}{\d z_2}\left(\e^{N(F(q_1;\xi_N,\eta_N)-F(q_2;\xi_N,\eta_N))}\Tr G(q_1;q_2)\right)\right|_{q_2=q_1}\d z_1,
\end{equation}
where~$q_i=(z_i,w_i)\in \mathcal R$. The integral~\eqref{eq:height_function_leading_term} can be computed as~$N\to \infty$ using that
\begin{equation}
\Tr G(q_2;q_2)=1, \quad \text{and} \quad \left.\frac{\d }{\d z_2}\Tr G(\tilde q_1;\tilde q_2)\right|_{\tilde q_2=\tilde q_2}=\Ordo(1),
\end{equation}
where the error term is uniform on compact subsets of~$\widetilde{\mathcal R}$ which do not contain any lifts of the angles, and~$\tilde q_i$ is a lift of~$q_i$ to~$\widetilde{\mathcal R}$. For the first equality we have used~\eqref{eq:adjoint}, and for the second equality we have used the explicit expression of~$G$ and the fact that a similar bound is true for the derivative of~$\Theta\left(\tilde q;e_{\mathrm w_{0,j}/\mathrm b_{0,j}}^{(kN)}\right)$. This implies that~\eqref{eq:height_function_leading_term} is equal to
\begin{equation}
\frac{N}{2\pi\i}\int_{\gamma_{\xi,\eta}}\d F
-\frac{1}{2\pi\i}\int_{\gamma_{\xi,\eta}}\left.\frac{\d}{\d z_2}\Tr G(q_1;q_2)\right|_{q_2=q_1}\d z_1,
\end{equation}
where the second term is bounded. Hence,
\begin{equation}
\frac{1}{k\ell N}\EE\left[h(2\ell x,2k y)\right]=\frac{1}{2\pi\i k\ell}\int_{\gamma_{\xi,\eta}}\d F+1+\Ordo(N^{-1}),
\end{equation}
as~$N\to \infty$.

We end the proof by differentiating~$\bar h$. In the rough region the endpoints of~$\gamma_{\xi,\eta}$ lie at a zero of~$\d F$. In the smooth and frozen regions we can deform~$\gamma_{\xi,\eta}$ so this is still true. It means that (cf.~\eqref{eq:def_action_function})
\begin{equation}
\partial_\xi \bar h(\xi,\eta)=\frac{1}{k\ell}\frac{1}{2\pi\i}\int_{\gamma_{\xi,\eta}}\partial_\xi\d F=-\frac{1}{2\ell}\frac{1}{2\pi\i}\int_{\gamma_{\xi,\eta}}\frac{\d w}{w}.
\end{equation} 
The expression for the derivative with respect to~$\eta$ is obtained similarly.
\end{proof}

\appendix

\section{Connection with~\cite{ADPZ20} and~\cite{KO07}}\label{app:burgers}
It turns out that the map~$\Omega$ from Section~\ref{sec:global_limit} can be related to some of the objects studied in~\cite{ADPZ20} and in~\cite{KO07}, and we will discuss these relations in this section.

We first prove that the function~$\Omega$ solves the version of the complex Burgers equation as presented in~\cite[Theorem 1]{KO07}. The proof is similar to the one in~\cite{Ber21}.

\begin{proposition}\label{prop:app:burgers}
Let~$(\xi,\eta)\in \mathcal F_R$, the rough region, and let us denote~$(z(u,v),w(u,v))=\Omega(u,v)=\Omega(\xi,\eta)$, where~$u=-\frac{\xi+1}{2\ell}$ and~$v=-\frac{\eta+1}{2k}$, and~$\Omega$ is as in Definition~\ref{def:omega}. Then 
\begin{equation}
\frac{z_u}{z}+\frac{w_v}{w}=0
\end{equation}
and~$P(z,w)=0$.
\end{proposition}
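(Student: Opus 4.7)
\emph{Plan.} The second assertion $P(z,w)=0$ is immediate from the definition of $\Omega$, which by Definition~\ref{def:omega} maps into $\mathcal R_0\subset\mathcal R$, the zero locus of $P$. All the work is in the Burgers identity, which I would obtain by implicit differentiation of the critical point equation.

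First, I would rewrite the action function in the $(u,v)$ coordinates. Substituting $\xi=-2\ell u-1$, $\eta=-2kv-1$ into~\eqref{eq:def_action_function} gives
\begin{equation}
F(\tilde q;u,v)=(k+k\ell u)\log w-(\ell+k\ell v)\log z-\log f(\tilde q).
\end{equation}
Away from the finite set of ramification points of the $z$-projection $\mathcal R\to\CC$, I take $z$ as a local holomorphic coordinate and write $w=w(z)$ along $\mathcal R$. The critical point equation $\d F=0$ at $\Omega(u,v)$ becomes $G(z;u,v)=0$ with
\begin{equation}
G(z;u,v)=(k+k\ell u)\frac{1}{w}\frac{\d w}{\d z}-\frac{\ell+k\ell v}{z}-\frac{\d\log f}{\d z}.
\end{equation}

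Next I would invoke the implicit function theorem: the non-degeneracy $G_z\neq 0$ at the critical point is exactly simplicity of the critical point, which was verified in the proof of Theorem~\ref{thm:homeomorphism} (it is equivalent to the non-vanishing of~\eqref{eq:non-zero_determinant}). Consequently $z=z(u,v)$ is locally smooth with
\begin{equation}
z_u=-\frac{G_u}{G_z},\qquad z_v=-\frac{G_v}{G_z},\qquad G_u=\frac{k\ell}{w}\frac{\d w}{\d z},\qquad G_v=-\frac{k\ell}{z},
\end{equation}
and since $w(u,v)=w(z(u,v))$, we have $w_v=(\d w/\d z)\,z_v$. The Burgers identity follows from the one-line cancellation
\begin{equation}
\frac{z_u}{z}+\frac{w_v}{w}=-\frac{1}{zG_z}\cdot\frac{k\ell}{w}\frac{\d w}{\d z}+\frac{1}{w}\frac{\d w}{\d z}\cdot\frac{k\ell}{zG_z}=0.
\end{equation}

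The computation is essentially self-contained once $F$ is rewritten in $(u,v)$ coordinates; the only minor obstacle is that the argument uses $z$ as a local coordinate, which fails at branch points of the $z$-projection. I would handle those either by the symmetric computation using $w$ as local coordinate, or more simply by continuity, since the identity to be proved is smooth in $(u,v)$ on all of $\mathcal F_R$.
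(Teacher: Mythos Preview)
Your proof is correct and follows essentially the same route as the paper: implicit differentiation of the critical point equation $\partial_z F=0$ with respect to $u$ and $v$, using simplicity of the critical point (your $G_z\ne 0$ is the paper's nonvanishing of $\tfrac{\d^2}{\d z^2}F$), and then combining via $w_v=w'(z)\,z_v$. The paper carries out the same cancellation, only phrased as ``multiply the first equation by $z_v$ and substitute the second''; your treatment of branch points via continuity is a reasonable addition that the paper leaves implicit.
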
 
\begin{proof}
The identity~$P(z,w)=0$ follows by definition. 

Let~$(z,w(z))$ be a local coordinate in a neighborhood of~$\Omega(u,v)\in \mathcal R_0$. Since~$\Omega$ is a simple critical point of~$F$~\eqref{eq:def_action_function}, we get, by differentiating~$\d F=0$ with respect to~$u$,
\begin{equation}
0=\frac{\d^2 }{\d z^2}\left(F(z,w(z);u,v)\right)z_u+k\ell\frac{w'(z)}{w},
\end{equation}
where~$\frac{\d^2 }{\d z^2}\left(F(z,w(z);u,v)\right)\neq 0$. Similarly, by differentiating~$\d F=0$ with respect to~$v$, we obtain 
\begin{equation}
0=\frac{\d^2 }{\d z^2}\left(F(z,w(z);u,v)\right)z_v-\frac{k\ell}{z}.
\end{equation}
We multiply the first equality with~$z_v$ and then substitute the second equality into the first. The proposition now follows since~$w'(z)z_v=w_v$. 
\end{proof}

In the coordinate system of the previous proposition there is an alternative expression of the map~$\Log \circ \, \Omega$ of Theorem~\ref{thm:arctic_curves}. In fact, we will show that~$\Log \circ\, \Omega$ is closely related to the map studied in~\cite{ADPZ20}, see also Remark~\ref{rem:relation_to_adpz}. 

The \emph{Ronkin function}~$R:\RR^2\to \RR$ of~$P$ is given by
\begin{equation}
R(r_1,r_2)=\frac{1}{(2\pi\i)^2}\int_{|z|=1}\int_{|w|=1} \log\left|P\left(\e^{r_1}z,\e^{r_2}w\right)\right| \frac{\d w}{w}\frac{\d z}{z}.
\end{equation}
For properties of the Ronkin function, we refer to~\cite{Mik04} and references therein. The Ronkin function is strictly convex in the interior of the amoeba and linear in the exterior. Moreover,~$\Int N(P) \subset \nabla R \subset N(P)$,~where~$\Int N(P)$ is the interior of the Newton polygon~$N(P)$. The \emph{surface tension}~$\sigma$ is the Legendre transform of the Ronkin function:
\begin{equation}
\sigma (s,t)=\max_{(r_1,r_2)\in \RR^2}\left(-R(r_1,r_2)+sr_1+tr_2\right). 
\end{equation}
In the interior of the amoeba the gradient~$\nabla R$ is an invertible function, and it follows from the definition of the Legendre transform that~$\nabla \sigma =(\nabla R)^{-1}$, see, for instance,~\cite[§ 26]{Roc97}.

Recall that~$\nabla \bar h$ is the slope of the limiting height function~\eqref{eq:slope_new_coord}. We define~$L(s,t)=(s,t)+(0,k)$ so that~$L\circ \nabla \bar h\in N(P)$, see Remark~\ref{rem:possible_slopes}. As noted in the same remark, the composition corresponds to changing the definition of the height function by choosing a different reference configuration. The composition~$(I+\nabla \sigma) \circ L \circ \nabla \bar h$ was used in~\cite{ADPZ20}, see, in particular, Section~3 therein, to study the limit shape.
\begin{proposition}\label{prop:app:maps_relations}
Let~$(u,v)$ be as in Proposition~\ref{prop:app:burgers}, then
\begin{equation}\label{eq:maps_relations}
\nabla \sigma \circ L \circ \nabla \bar h(u,v)=\Log \circ \, \Omega(u,v).
\end{equation}
\end{proposition}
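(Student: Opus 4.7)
The idea is to reduce the claim to showing $\nabla R \circ \Log \circ \Omega(u,v) = L \circ \nabla \bar h(u,v)$, where $R$ is the Ronkin function from the preceding discussion. This reduction uses Legendre duality: on the interior of the amoeba $R$ is strictly convex, so $\nabla\sigma = (\nabla R)^{-1}$ there, and on the complement of the amoeba $\nabla R$ is constant on each component while $\nabla\sigma$ is set-valued at the dual lattice point, so the identity (\ref{eq:maps_relations}) still holds componentwise. I would then treat the three phases separately.

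In the rough region, with $(r_1,r_2) := \Log\Omega(u,v)$ in the interior of the amoeba, apply Jensen's formula to the factorization $P(z,w) = c(z)\prod_j(w-w_j(z))$ and integrate over $|z|=e^{r_1}$ to obtain
\begin{equation*}
\partial_{r_2}R(r_1,r_2) = \frac{1}{2\pi\i}\int_{\Gamma_{r_1}^{-}}\frac{\d z}{z}, \qquad \partial_{r_1}R(r_1,r_2) = \frac{1}{2\pi\i}\int_{\Gamma_{r_2}^{-}}\frac{\d w}{w},
\end{equation*}
where $\Gamma_{r_1}^{-} := \{(z,w)\in\mathcal R : |z|=e^{r_1},\,|w|<e^{r_2}\}$ is a $1$-chain on $\mathcal R$ (and symmetrically for $\Gamma_{r_2}^{-}$). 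By the $2$-to-$1$-ness of $\Log|_{\mathcal R^\circ}$ (Section~\ref{sec:harnack}), the preimage of $(r_1,r_2)$ is exactly $\{\Omega(u,v),\overline{\Omega(u,v)}\}$, so $\partial\Gamma_{r_1}^{-}$ agrees (up to sign) with $\partial\gamma_{\xi,\eta}$. Hence $\Gamma_{r_1}^{-}+\gamma_{\xi,\eta}$ is a closed cycle on $\mathcal R$ and
\begin{equation*}
\int_{\Gamma_{r_1}^{-}}\frac{\d z}{z} = -\int_{\gamma_{\xi,\eta}}\frac{\d z}{z} + 2\pi\i \,W,
\end{equation*}
where $W$ is the winding number in $\mathbb C^{*}_{z}$ of the closed cycle. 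A sheet-by-sheet count using the $k$-sheeted structure $\mathcal R \to \mathbb C^{*}_z$ from the end of Section~\ref{sec:harnack} shows that the cycle covers each of the $k$ sheets above $\mathbb C^{*}_z$ with net winding number $+1$, giving $W = k$. Combined with the formula for $t$ in (\ref{eq:slope_new_coord}), this yields $\partial_{r_2}R = t + k$, matching the second component of $L(s,t)$. An analogous argument for $\partial_{r_1}R$ (exchanging $z$ and $w$, with the opposite sign in the definition of $s$ compensating so that no $k$-shift appears) gives $\partial_{r_1}R = s$.

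For smooth and frozen regions, $(r_1,r_2)$ lies in a component of the complement of the amoeba, where $\nabla R$ is constant and equal to a specific lattice point of $N(P)$. For a smooth region corresponding to $A_m$, Corollary~\ref{cor:height_function_smooth} gives $\nabla\bar h = (-n',-n)$ so that $L\circ\nabla\bar h = (-n',k-n)$; the standard identification of the bounded component of the complement of the amoeba (bounded by $\Log A_m$) with the interior lattice point of $N(P)$ indexed by the sheet structure of $A_m$ (cf.\ Section~\ref{sec:limit_shape_smooth}) yields precisely $(-n',k-n)$. For the four corner frozen regions, Remark~\ref{rem:height_function_frozen} gives closed-form expressions for $\bar h$, from which one computes $L\circ\nabla\bar h = (0,k),(-\ell,k),(-\ell,0),(0,0)$ in the north/east/south/west regions respectively; these are the vertices of $N(P)$ corresponding to the unbounded components of the amoeba that contain $\Log\Omega$ in each case (using the description of tentacles following Corollary~\ref{cor:smooth_frozen_regions}). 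For the non-corner frozen regions the argument is analogous, matching the non-vertex boundary lattice points of $N(P)$ with the unbounded tentacle-adjacent components.

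The main obstacle is the cycle-counting in the rough region: one must verify $W=k$ by tracking precisely how $\gamma_{\xi,\eta}$ distributes across the sheets of $\mathcal R\to\mathbb C^{*}_z$ relative to the sheet containing $\Omega(u,v)$. This is most cleanly handled by homotoping $\gamma_{\xi,\eta}$ (through cycles with fixed boundary) so that its image in the amoeba runs along a straight vertical segment from $(r_1,r_2)$ upward to the boundary component corresponding to $A_{0,1}$, at which point the Harnack property — branch points lie only on the real axis (end of Section~\ref{sec:harnack}) — makes the sheet-by-sheet bookkeeping transparent.
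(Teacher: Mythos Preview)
Your overall strategy---reduce to showing $\nabla R\circ\Log\circ\Omega=L\circ\nabla\bar h$ and then invoke $\nabla\sigma=(\nabla R)^{-1}$---is exactly what the paper does. Two points, however.

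First, the proposition is stated only for $(u,v)$ as in Proposition~\ref{prop:app:burgers}, i.e.\ for $(\xi,\eta)\in\mathcal F_R$. Your entire treatment of the smooth and frozen regions is therefore unnecessary for this statement.

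Second, in the rough region your Jensen-plus-cycle-counting argument is correct but more laborious than what the paper actually does. The paper writes $\nabla R(r_1,r_2)$ directly as the pair of torus double integrals
\[
\left(\frac{1}{(2\pi\i)^2}\int_{|z|=\e^{r_1}}\int_{|w|=\e^{r_2}}\frac{P_z}{P}\,\frac{\d w}{w}\,\d z,\ \frac{1}{(2\pi\i)^2}\int_{|z|=\e^{r_1}}\int_{|w|=\e^{r_2}}\frac{P_w}{P}\,\d w\,\frac{\d z}{z}\right)
\]
and then invokes the residue computation already carried out in Section~\ref{sec:uniform_limt} (the proof of Theorem~\ref{thm:local_limit}) to convert these to the single integrals $\frac{1}{2\pi\i}\int_{\gamma_{\xi,\eta}}\frac{\d w}{w}$ and $-\frac{1}{2\pi\i}\int_{\gamma_{\xi,\eta}}\frac{\d z}{z}+k$, giving $L\circ\nabla\bar h(u,v)$ immediately. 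Your chain $\Gamma_{r_1}^-$ is precisely what emerges after the $w$-residue step in that computation, and your winding-number identity $W=k$ is the topological content of the contour deformation done there; so the two routes are the same computation in different packaging. The paper's version has the advantage that the deformation from $\gamma_{r_1,r_2}$ to $\gamma_{\xi,\eta}$ and the origin of the shift $+k$ are already established in Section~\ref{sec:uniform_limt}, so no separate sheet-by-sheet bookkeeping is needed.
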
 
\begin{proof}
It follows from the residue theorem, see also the proof of Theorem~\ref{thm:local_limit} in Section~\ref{sec:uniform_limt} for a similar computation with more details provided, that, if~$(r_1,r_2)=\Log \circ \, \Omega(u,v)$, then
\begin{multline}
\nabla R(r_1,r_2)=
\left(\frac{1}{(2\pi\i)^2}\int_{|z|=\e^{r_1}}\int_{|w|=\e^{r_2}} \frac{P_z(z,w)}{P(z,w)} \frac{\d w}{w}\d z,
\frac{1}{(2\pi\i)^2}\int_{|z|=\e^{r_1}}\int_{|w|=\e^{r_2}} \frac{P_w(z,w)}{P(z,w)} \d w\frac{\d z}{z} \right) \\
=\left(\frac{1}{2\pi\i}\int_{\gamma_{\xi,\eta}}\frac{\d w}{w}, -\frac{1}{2\pi\i}\int_{\gamma_{\xi,\eta}}\frac{\d z}{z}+k\right)=L\circ \nabla \bar h(u,v),
\end{multline}
with~$\gamma_{\xi,\eta}$ as in Definition~\ref{def:curve_integration}. Hence,
\begin{equation}
\nabla \sigma \circ L \circ \nabla \bar h(u,v)=(\nabla R)^{-1}\circ \nabla R\circ \Log \circ \, \Omega(u,v) = \Log \circ \, \Omega(u,v).
\end{equation}
\end{proof}
The mapping of the left hand side of~\eqref{eq:maps_relations} may be interpreted as the composition
\begin{equation}
(u,v)\mapsto (u,v,\bar h(u,v)) \mapsto (r_1,r_2,R(r_1,r_2)) \mapsto (r_1,r_2),
\end{equation}
where the second mapping takes a point on the graph of~$\bar h$ to the unique point on the graph of~$R$ with a parallel tangent plane. The equality~\eqref{eq:maps_relations} together with this interpretation provides an intuitive reason, although not a proof, why Corollary~\ref{cor:arctic_curve_slopes} hold.

\section{Alternative proofs of Theorem~\ref{thm:homeomorphism} and Lemma~\ref{lem:phi_kasteleyn}}\label{app:alt_proofs}
Several proofs in the present paper rely on profound algebraic geometry theory. While this is indeed necessary, some of the arguments can be alternatively proved in a more concrete manner. In this section, we provide two such alternative proofs.

In the proof of Theorem~\ref{thm:homeomorphism}, the logarithmic Gauss map was used to demonstrate the invertibility of the map~$\Omega$, more precisely, to prove that~\eqref{eq:non-zero_determinant_2} holds. Below we present an alternative proof of this fact, which does not rely on the logarithmic Gauss map.
\begin{lemma}
For all~$(z,w)\in \mathcal R_0$ the following holds:
\begin{equation}
\im \left(\frac{zP_z}{wP_w}\right)\neq 0.
\end{equation}
\end{lemma}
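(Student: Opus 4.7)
The plan is a proof by contradiction combining a direct computation of the real Jacobian of $\Log$ with the $\sigma$-invariance and at-most-$2$-to-$1$ Harnack property of~$\mathcal{R}$ recalled in Section~\ref{sec:harnack}. First, I would verify in a local holomorphic coordinate $u = \log z$ on~$\mathcal{R}^\circ$ that the real Jacobian of the smooth map $\Log \colon \mathcal{R}^\circ \to \mathbb{R}^2$ equals $|z|^{-2}\im(zP_z/(wP_w))$; this follows from the identity $|\partial_\zeta f|^2 - |\partial_{\bar\zeta} f|^2$ for the Jacobian of a smooth map $f\colon \mathbb{C} \to \mathbb{R}^2 \cong \mathbb{C}$, applied to $f = \log|z| + i\log|w|$ expressed through~$u$ and the holomorphic function $v(u) = \log w$. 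Consequently, the claim is equivalent to $\Log|_{\mathcal{R}_0}$, which by Assumption~\ref{ass:main_ass}\eqref{ass:non-singularity} and the Harnack property is a continuous bijection onto $\mathrm{int}(\mathcal{A})$, being a local diffeomorphism.

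Suppose, for contradiction, that the Jacobian vanishes at some $q_0 \in \mathcal{R}_0$, so that $\mu := \Psi(q_0) \in \mathbb{R}$ for $\Psi := zP_z/(wP_w)$. The holomorphic function $h(u) := v(u) + \mu u$ satisfies $h'(u_0) = 0$ in the coordinate~$u$ near~$q_0$. It is not locally constant: otherwise $\Psi \equiv \mu$ on $\mathcal{R}_0$, hence on all of~$\mathcal{R}$ by meromorphic continuation, which would force~$\mathcal{R}$ to be a monomial curve $w = Cz^{-\mu}$, incompatible with the genus $(k-1)(\ell-1)\geq 1$ guaranteed by Assumption~\ref{ass:main_ass}\eqref{ass:non-singularity} (the boundary cases $k=1$ or $\ell=1$ can be handled separately). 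By the local structure theorem for holomorphic functions, there exist an integer $n \geq 2$ and a local biholomorphism $\tau \mapsto u = u_0 + \varphi(\tau)$ with $\varphi(0) = 0$, $\varphi'(0)\neq 0$, such that $h(u) - h(u_0) = \tau^n$ in these coordinates, and $\Log$ becomes
\begin{equation*}
F(\tau) = \Log(q_0) + \bigl(\re\varphi(\tau),\; -\mu\re\varphi(\tau) + \re\tau^n\bigr).
\end{equation*}

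The remaining task is to show that $F$ is not injective on any neighborhood of $\tau = 0$, after which the Lemma follows immediately: two distinct preimages $q_1, q_2 \in \mathcal{R}_0$ of a common amoeba point, together with their complex conjugates $\sigma(q_1), \sigma(q_2) \in \overline{\mathcal{R}_0}$, would produce four distinct preimages of a single point in $\mathrm{int}(\mathcal{A})$ under $\Log|_{\mathcal{R}^\circ}$, contradicting the at-most-$2$-to-$1$ Harnack property. To verify non-injectivity of $F$, I would restrict it to the smooth real-analytic arc $\Gamma_s = \{\re\varphi(\tau) = s\}$ through a point close to $\tau = 0$ (well defined for small~$s$ since~$\varphi$ is a local biholomorphism) and study the real-analytic function $\tau \mapsto \re\tau^n$ along $\Gamma_s$. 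Using $\varphi(\tau) = \tau + O(\tau^2)$, parametrize $\Gamma_0$ by $\tau(t) = it + O(t^2)$: when $n$ is even, $\re\tau(t)^n = (\pm 1)t^n + O(t^{n+1})$ has a strict local extremum at $t = 0$ and the restriction is locally $2$-to-$1$; when $n$ is odd, the leading term along $\Gamma_0$ vanishes but a small generic perturbation to $\Gamma_s$ with $s \neq 0$ produces a nondegenerate local extremum by the same mechanism. The main obstacle is the careful bookkeeping of these expansions in the degenerate higher-order cases $n \geq 3$, but the essential phenomenon---a holomorphic critical point forcing multi-valence after taking real parts---is robust under small perturbations of the slicing direction.
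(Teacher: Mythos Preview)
Your overall strategy is sound and genuinely different from the paper's. The paper parametrizes~$\mathcal R_0$ by the amoeba coordinates~$(r_1,r_2)$, writing $(z,w)=(e^{r_1+i\lambda},e^{r_2+i\mu})$ for smooth functions~$\lambda,\mu$ (citing~[Pas16] for this), differentiates $P(z,w)=0$ in~$r_1$, and obtains $\im(-zP_z/(wP_w))=\mu_{r_1}/|1+i\lambda_{r_1}|^2$; then $\mu_{r_1}=0$ would force $P_z=0$, which is ruled out on~$\mathcal R_0$ by differentiating in~$r_2$. Your route is more self-contained in that it needs only the at-most-$2$-to-$1$ property, not an external smoothness input for the inverse of~$\Log$. (A small slip: in the coordinate $u=\log z$ the Jacobian is $\im(zP_z/(wP_w))$ without the factor $|z|^{-2}$; this is harmless for the argument.)

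The gap is in your final step, showing that~$F$ fails to be injective near~$\tau=0$. The normalization $\varphi(\tau)=\tau+O(\tau^2)$ is not available in general: the local structure theorem only gives $\varphi'(0)=(n!/h^{(n)}(u_0))^{1/n}$, and the freedom to multiply~$\tau$ by an $n$-th root of unity does not let you force $\varphi'(0)=1$. More seriously, your even/odd dichotomy is not the right one. Along~$\Gamma_0$ one finds $\re(\tau(t)^n)=\frac{1}{n!}\re\!\big(i^n h^{(n)}(u_0)\big)\,t^n+O(t^{n+1})$, so for~$n$ even the leading coefficient is $\pm\re h^{(n)}(u_0)$, which may vanish, while for~$n$ odd it is $\mp\im h^{(n)}(u_0)$, which need not vanish---and when it does not, $t\mapsto c\,t^n$ with~$n$ odd is \emph{monotone}, so~$\Gamma_0$ gives you nothing. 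The hand-wave ``perturb to~$\Gamma_s$'' then carries the whole burden; for instance with $\psi(\xi)=i\xi^3$ one has $g_0(t)=t^3$ injective, and one must really analyze $g_s(t)=t^3-3s^2t$ for $s\neq 0$ to see non-injectivity.

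There is a much cleaner completion that avoids all casework. In the coordinate $\xi=\varphi(\tau)$ the map becomes $\tilde F(\xi)=(\re\xi,\re\psi(\xi))$ with $\psi$ holomorphic and $\psi'(0)=0$, and one computes that the real Jacobian of~$\tilde F$ is $-\im\psi'(\xi)$. Since~$\psi'$ is holomorphic and non-constant with a zero at~$0$, the open mapping theorem forces~$\psi'$ to take values in both half-planes near~$0$, so the Jacobian of~$\tilde F$ assumes both signs arbitrarily close to~$0$. But if~$\tilde F$ were injective on a connected neighborhood, invariance of domain would make it a homeomorphism onto an open set, hence globally orientation-preserving or globally orientation-reversing, and the sign of its Jacobian would be constant where nonzero. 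This contradiction gives non-injectivity for every~$n\ge 2$ in one stroke, and the rest of your argument (producing four $\Log$-preimages via~$\sigma$) goes through unchanged.
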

\begin{proof}
For~$(z,w)\in \mathcal R_0$ let~$(r_1,r_2)=(\log|z|,\log|w|)\in \mathcal A$. Then
\begin{equation}
(z,w)=(\e^{r_1+\i\lambda(r_1,r_2)},\e^{r_2+\i\mu(r_1,r_2)}),
\end{equation}
for some functions~$\lambda$ and~$\mu$. The functions~$\lambda$ and~$\mu$ are explicitly given in~\cite{Pas16}, but we will not need that here. 

We differentiate the equality~$P(z,w)=0$ with respect to~$r_1$,
\begin{equation}\label{eq:app:diff_spectral_curve}
0=P_z(z,w)z_{r_1}+P_w(z,w)w_{r_1},
\end{equation}
to obtain the equalities  
\begin{equation}
-\frac{zP_z}{wP_w}=\frac{w_{r_1}}{z_{r_1}}\frac{z}{w}=\frac{\i\mu_{r_1}}{1+\i\lambda_{r_1}},
\end{equation}
where in the last equality we have used that 
\begin{equation}
z_{r_1}=(1+\i\lambda_{r_1})z, \quad \text{and} \quad w_{r_1}=\i\mu_{r_1} w.
\end{equation}
Hence,
\begin{equation}
\im \left(-\frac{zP_z}{wP_w}\right)=\frac{\mu_{r_1}}{|1+\i\lambda_{r_1}|^2}.
\end{equation}
The statement now follows since~$\mu_{r_1}\neq 0$. Indeed, if~$\mu_{r_1}=0$, then~$w_{r_1}=0$, which implies, by~\eqref{eq:app:diff_spectral_curve}, that~$P_z(z,w)=0$ which is false for~$(z,w)\in \mathcal R_0$.
\end{proof}

The proof of Lemma~\ref{lem:phi_kasteleyn} in Section~\ref{sec:one_form_thetas} is based on a general formalism, even though we have explicit formulas in our specific setting. It is therefore reasonable to ask for a more explicit proof. To address this, we provide such proof in the special case of~$k=2$.

\begin{lemma}
Let~$K_{G_1}$ be the Kasteleyn matrix defined by~\eqref{eq:magnetic_kasteleyn_matrix} and let~$q=(z,w)\in \mathcal R$. For~$i,i'=0,\dots,\ell-1$ and~$j,j'=0,\dots,k-1$, there exist~$e_{\mathrm{b}_{i,j}},e_{\mathrm{w}_{i',j'}}\in \RR^g$ such that
\begin{equation}\label{eq:app:kasteleyn_thetas}
\frac{\adj K_{G_1}(z,w)_{\mathrm b_{i,j}\mathrm w_{i',j'}}\d z}{wzP_w(z,w)}
=c_{iji'j'}\Theta\left(q;e_{\mathrm{b}_{i,j}}\right)\Theta\left(q;e_{\mathrm w_{i',j'}}\right)
\frac{\prod_{m=i+1}^{i'}E(q_{\infty,m},q)}{\prod_{m=i+1}^{i'+1}E(q_{0,m},q)}
\frac{\prod_{m=j+1}^{j'}E(p_{0,m},q)}{\prod_{m=j+1}^{j'+1}E(p_{\infty,m},q)},
\end{equation}
for some constant~$c_{iji'j'}\in \CC^*$.
\end{lemma}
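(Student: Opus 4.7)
\bigskip

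\noindent\textbf{Proof plan.} The strategy is to show that, for each fixed pair $(\mathrm b_{i,j},\mathrm w_{i',j'})$, the left-hand side and right-hand side of~\eqref{eq:app:kasteleyn_thetas} are meromorphic differentials on $\mathcal R$ with the same divisor, so that they differ by a nonzero constant. The genus is $g=\ell-1$ when $k=2$, and $\mathcal R$ is hyperelliptic with projection $(z,w)\mapsto z$. The first step is to verify that the right-hand side indeed descends from the universal cover to a well-defined meromorphic 1-form on $\mathcal R$: using the quasi-periodicity~\eqref{eq:quasi-periodic} and items~\eqref{eq:prime_form_lift_a}--\eqref{eq:prime_form_lift_b} of Fact~\ref{fact:prop_prime_form}, one sees that the transformation factors under translation by $A$-cycles and $B$-cycles cancel precisely when the vectors $e_{\mathrm b_{i,j}},e_{\mathrm w_{i',j'}}\in\RR^g$ are chosen so that their sum equals the Abel image of the divisor of the prime form factors, up to $2\Delta$. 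This pins down $e_{\mathrm b_{i,j}}$ and $e_{\mathrm w_{i',j'}}$ modulo $\ZZ^g$, and the $M$-curve property places them in $\RR^g$ (cf.~Remark~\ref{rem:meromorphic_differential}).

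For the left-hand side, I would use the explicit $2\times 2$ block structure of $K_{G_1}(z,w)$ in \eqref{eq:magnetic_kasteleyn_matrix} to compute $\adj K_{G_1}$ by cofactor expansion: each entry is (up to sign) the determinant of a $(2\ell-1)\times(2\ell-1)$ submatrix. Because of the (almost) block-circulant structure, these determinants factor as polynomials in $z$ and $w$ with coefficients that are products of edge weights along a natural path in $G_1$ from $\mathrm b_{i,j}$ to $\mathrm w_{i',j'}$. The denominator $wzP_w(z,w)$ has a clear interpretation on $\mathcal R$: the factor $w$ contributes simple poles at the $\ell$ points $q_{0,m}$, the factor $z$ contributes simple poles at the $k$ points $p_{0,m}$, and $P_w$ vanishes precisely at the ramification points of the $z$-projection, which is exactly where $dz$ vanishes on $\mathcal R$ (so this cancels). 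Together with the behavior $z\to\infty$ and $w\to\infty$, this gives the poles of the LHS at the angles, matching the angle factors of the RHS.

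Next I would match the remaining ``floating'' zeros on the compact ovals. The numerator $\adj K_{G_1}(z,w)_{\mathrm b_{i,j}\mathrm w_{i',j'}}$ is a polynomial whose degree in $(z,w)$, computed from the cofactor formula, yields exactly the correct total degree $2g-2+2(k+\ell)+\deg(\text{angle factors})$ expected from \eqref{eq:abels_thm_diff} after subtracting the poles at angles. So the number of remaining zeros on $\mathcal R$ is $2g$. Using Abel's theorem \eqref{eq:abels_thm} and the invariance of the Abel image of the divisor $(\omega)$ of any meromorphic 1-form, one checks that these $2g$ zeros split (generically) into two standard divisors, one on each copy of the compact ovals coming from the two sheets, which is precisely the structure of the zero divisors of $\Theta(q;e_{\mathrm b_{i,j}})\Theta(q;e_{\mathrm w_{i',j'}})$ described after \eqref{eq:jacobi_inverse}. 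The identification of $e_{\mathrm b_{i,j}}$ and $e_{\mathrm w_{i',j'}}$ is then forced by \eqref{eq:jacobi_inverse} and the constraint from step one.

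The main obstacle will be the step of identifying those $2g$ ``floating'' zeros of the cofactor polynomial with a standard divisor matching the theta factorization. For general $k$ this requires the algebraic-geometric machinery of Section~\ref{sec:one_form_thetas}; for $k=2$ the hyperelliptic involution $\sigma\colon(z,w_1)\leftrightarrow(z,w_2)$ with $w_1+w_2=\Tr\Phi(z)$ and $w_1w_2=\det\Phi(z)$ makes the bookkeeping tractable: the cofactor polynomial, viewed as a function of $w$ at fixed $z$, is of degree at most one, so its zero set on $\mathcal R$ defines a divisor which one can pair with $\sigma$ to produce (via the sum divisor, which is symmetric and hence a pullback from $\mathbb{CP}^1$) an explicit standard divisor. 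Once this matching is done, the ratio of the two sides of \eqref{eq:app:kasteleyn_thetas} is a meromorphic function on $\mathcal R$ with no zeros or poles, hence a nonzero constant $c_{iji'j'}$, completing the proof.
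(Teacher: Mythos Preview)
Your overall divisor-matching strategy is the same as the paper's alternative proof for $k=2$, but there is a genuine gap in the crucial step: the identification of the $2g$ ``floating'' zeros.

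First, your treatment of the angles is too coarse. Saying that the factor $w$ in the denominator contributes simple poles at all $q_{0,m}$ ignores that the numerator $\adj K_{G_1}(z,w)_{\mathrm b_{i,j}\mathrm w_{i',j'}}$ may vanish there as well, and the order of vanishing depends delicately on $(i,i')$. The paper handles this via Lemma~\ref{lem:phi_kasteleyn}: it rewrites the cofactor as a product of $\phi_m$'s and uses the replacement $\Phi\,\adj(\Phi-wI)=w\,\adj(\Phi-wI)$ to shuffle factors so that the exact order at each $q_{0,n}$ and $q_{\infty,n}$ can be read off. For $p_{0,n}$ and $p_{\infty,n}$ it uses the explicit $2\times2$ structure of $\adj(\Phi-wI)$. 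Without this, you cannot conclude that the angle contributions match the specific prime-form products on the right-hand side.

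Second, and more seriously, your argument for splitting the $2g$ remaining zeros into two standard divisors does not work as stated. Abel's theorem only gives the Abel image of the \emph{total} divisor; it cannot by itself tell you that the zeros factor as $D_{\mathrm b_{i,j}}+D_{\mathrm w_{i',j'}}$ with one piece depending only on the black vertex and the other only on the white vertex. Your hyperelliptic-involution sketch does not address this either: the compact ovals are not naturally paired with sheets, and pairing the zero set of a degree-one-in-$w$ polynomial with $\sigma$ produces a symmetric divisor, not a splitting into a $\mathrm b$-part and a $\mathrm w$-part. The paper's alternative proof obtains this factorization by invoking a nontrivial external input, namely \cite[Theorem~1]{KO06}: for each fixed white vertex $\mathrm w_{i',j'}$, the common zeros of $\adj K_{G_1}(z,w)_{\mathrm b\,\mathrm w_{i',j'}}$ as $\mathrm b$ ranges over $\mathcal B_1$ form a standard divisor $D_{\mathrm w}$. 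This gives $g$ of the zeros, independent of $\mathrm b_{i,j}$, and the remaining $g$ are then forced by Abel's theorem. Without this spectral-data result (or an explicit computation replacing it), your argument cannot close.
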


\begin{proof}[Proof for~$k=2$]
The differential form
\begin{equation}\label{eq:app:lhs}
\frac{\adj K_{G_1}(z,w)_{\mathrm b_{i,j}\mathrm w_{i',j'}}\d z}{wzP_w(z,w)}
\end{equation}
is meromorphic on~$\mathcal R$, thus, to prove the equality we only need to show that the zeros and poles agree. The proof relies on the fact that if~$(\omega_1,\omega_2)$ is an interior integer point of the Newton polygon, then
\begin{equation}\label{eq:app:holomorphic_differential}
\frac{z^{\omega_1}w^{\omega_2}\d z}{wzP_w(z,w)}
\end{equation}
is a holomorphic differential, see~\cite{CL18, KO06, Kho01}. In fact, the set of these differential forms is a basis of the holomorphic differentials on~$\mathcal R$. 

The entries of~$\adj K_{G_1}$ are, up to a possible sign, determinants of submatrices of~$K_{G_1}$, which means they are all linear combinations of~$z^{\omega_1}w^{\omega_2}$ with~$(\omega_1,\omega_2)\in N(P)\cap\ZZ^2$. Recall that~$N(P)$ is the Newton polygon. Thus, the possible poles of~\eqref{eq:app:lhs} come from the terms with~$(\omega_1,\omega_2)$ at the boundary of~$N(P)$. In particular, the possible poles are at the angles. Recall that~$N(P)$ is the rectangle with vertices~$(0,0)$,~$(k,0)$,~$(k,-\ell)$ and~$(0,-\ell)$. It follows from~\eqref{eq:app:holomorphic_differential} that if 
\begin{equation}
\adj K_{G_1}(z,w)_{\mathrm b_{i,j}\mathrm w_{i',j'}}=\Ordo\left(z^{\omega_1}w^{\omega_2}\right),
\end{equation}
as~$(z,w)\to q_{0,n}$,~$n=1,\dots,\ell$, then~\eqref{eq:app:lhs} has a pole at~$q_{0,n}$ if~$\omega_2=0$, a zero if~$\omega_2=2$ and neither pole nor zero if~$\omega_2=1$. Similarly,~\eqref{eq:app:lhs} has a pole/zero/neither at~$q_{\infty,n}$ if~$\omega_2=k/(k-2)/(k-1)$, at~$p_{\infty,n}$,~$n=1,\dots,k$, if~$\omega_1=0/(-2)/(-1)$ and at~$p_{0,n}$ if~$\omega_1=(-\ell)/(-\ell+2)/(-\ell+1)$. 

To obtain the behavior of~$\adj K_{G_1}(z,w)_{\mathrm b_{i,j}\mathrm w_{i',j'}}$ at the angles we use Lemma~\ref{lem:phi_kasteleyn}; for~$(z,w)\in \mathcal R$,
\begin{multline}\label{eq:app:periodic_adjoint}
\adj K_{G_1}(z,w)_{\mathrm b_{i,j}\mathrm w_{i',j'}} \\
=w\prod_{m=1}^\ell (1-\beta_m^v z^{-1})\left(\left(\prod_{m=1}^{2i'+1}\phi_m(z)\right)^{-1}\adj(\Phi(z)-wI)\prod_{m=1}^{2i}\phi_m(z)\right)^T_{j+1,j'+1}.
\end{multline}

We begin with the behavior of~\eqref{eq:app:periodic_adjoint} at~$q_{0,n}$. This behavior is determined by the products~$\prod_{m=1}^{2i'+1}\phi_m$ and~$\prod_{m=1}^{2i}\phi_m$. Indeed, recall that
\begin{equation}
\Phi(z)\adj(\Phi(z)-wI)=w\adj(\Phi(z)-wI)=\adj(\Phi(z)-wI)\Phi(z),
\end{equation}
which implies that we may replace the second and third product of the right hand side of~\eqref{eq:app:periodic_adjoint} by
\begin{equation}\label{eq:app:replace}
w^{-1}\left(\prod_{m=2(i'+1)}^{2\ell}\phi_m(z)\right) \quad \text{and} \quad w \left(\prod_{m=2i+1}^{2\ell}\phi_m(z)\right)^{-1}.
\end{equation}
Now, to determine the behavior of~\eqref{eq:app:periodic_adjoint} at~$q_{0,n}$, we make the above replacement of the second product if~$1\leq n\leq i'+1$ and of the third product if~$1\leq n \leq i$. If there is no factor~$w^{\pm 1}$ coming from the possible replacements, then~\eqref{eq:app:periodic_adjoint} is of order~$w$ as~$w\to 0$. In general we find that~\eqref{eq:app:periodic_adjoint} is of order~$w^{\omega_2}$ with~$\omega_2=0$ if~$i<n\leq i'+1$,~$\omega_2=1$ if~$n\leq i'+1$ and~$n\leq i$ or~$i'+1<n$ and~$i<n$, and~$\omega_2=2$ if~$i'+1<n\leq i$. This shows that the zeros and poles we have found at~$q_{0,n}$ agree with the right hand side of~\eqref{eq:app:kasteleyn_thetas}. Note, however, that the above calculations exclude additional poles at~$q_{0,n}$, but not additional zeros. Nevertheless, a counting argument at the end of the proof will prove that these indeed are all zeros.

The behavior at~$q_{\infty,n}$,~$n=1,\dots,\ell$, is again determined by the products~$\prod_{m=1}^{2i'+1}\phi_m$ and~$\prod_{m=1}^{2i}\phi_m$. Note that~$w(1-\beta_n^v z^{-1})$ is bounded at~$q_{\infty,n}$, by the definition of the angles~\eqref{eq:angles_1}. The behavior of~\eqref{eq:app:periodic_adjoint}, if we ignore the factor~$w^{\pm 1}$ coming from~\eqref{eq:app:replace}, is of order~$w^{k-1}$. We replace the second product in~\eqref{eq:app:periodic_adjoint} if~$1\leq n\leq i'$ and the third product if~$1\leq n \leq i$. This gives us the zeros and poles of the right hand side of~\eqref{eq:app:kasteleyn_thetas} at~$q_{0,n}$. 

We continue with the angles~$p_{\infty,n}$,~$n=1,\dots,k$. Note that the above is valid for any~$k$, from now on, however, we take~$k=2$. 

It follows from the explicit expression of~$\phi_m$, see~\eqref{eq:bernoulli} and~\eqref{eq:geometric}, that the products~$\prod_{m=1}^{2i'+1}\phi_m$ and~$\prod_{m=1}^{2i}\phi_m$ are lower triangular as~$(z,w)\to p_{\infty,n}$. Moreover,
\begin{equation}\label{eq:app:adjoint}
\adj(\Phi(z)-wI)=
\begin{pmatrix}
\Phi_{22}(z)-w & -\Phi_{12}(z) \\
-\Phi_{21}(z) & \Phi_{11}(z)-w
\end{pmatrix}
=
\begin{pmatrix}
\Ordo\left(E(p_{\infty,2},q)\right) & \Ordo\left(z^{-1}\right) \\
\Ordo(1) & \Ordo\left(E(p_{\infty,1},q)\right)
\end{pmatrix},
\end{equation}
where~$\Phi_{mm'}$ is the~$mm'$ entry of~$\Phi$. Here we have used the explicit expression of the factors~$\phi_m$ in the product~$\Phi=\prod_{m=1}^{2\ell}\phi_m$ to deduce that~$\Phi_{nn}(z)-w\to 0$ as~$(z,w)\to p_{\infty,n}$. It follows that~\eqref{eq:app:periodic_adjoint}, recall the transpose, is of order~$z^{\omega_1}$ at~$p_{\infty,n}$, where~$\omega_1=0$ if~$j=0$ and~$j'=1$ or if~$j=j'=n-1$, and~$\omega_1=1$ otherwise. This shows the right number of zeros and poles of~\eqref{eq:app:lhs} at~$p_{\infty,n}$. 

Similarly, the products~$\prod_{m=1}^{2i'+2}\phi_m$ (note that we include an extra factor) and~$\prod_{m=1}^{2i}\phi_m$ are upper triangular as~$q=(z,w)\to p_{0,n}$. Moreover, 
\begin{multline}
\phi_{2i'+2}(z)\adj(\Phi(z)-wI)=
\begin{pmatrix}
\Ordo(z) & \Ordo(1) \\
\Ordo(z) & \Ordo(z)
\end{pmatrix}
\begin{pmatrix}
\Ordo\left(E(p_{0,1},q)\right) & \Ordo(1) \\
\Ordo(z) & \Ordo\left(E(p_{0,2},q)\right)
\end{pmatrix} \\
=
\begin{pmatrix}
\Ordo(z) & \Ordo\left(E(p_{0,2},q)\right) \\
\Ordo\left(zE(p_{0,1},q)\right) & \Ordo(z)
\end{pmatrix}.
\end{multline}
This shows that~\eqref{eq:app:periodic_adjoint} has the right behavior at~$p_{0,n}$ to conclude that the zeros and poles of the right and left hand side of~\eqref{eq:app:kasteleyn_thetas} match.

Summing up the number of zeros and subtracting the number of poles we have found so far gives us~$-2$. Since we know that we have found all poles, it means (recall that the degree of a differential form is~$(2g-2)$~\eqref{eq:abels_thm_diff}) that we are missing exactly~$2g$ zeros. 

We obtain the last zeros using the spectral data. Indeed, as discussed in Section~\ref{sec:one_form_thetas}, the spectral transform contains a divisor on~$\mathcal R$, more precisely, for each white vertex~$\mathrm{w_{i',j'}}\in G_1$, there exists a divisor~$D_{\mathrm w}$ on~$\mathcal R$ which is defined as the sum of the common zeros of 
\begin{equation}
\adj K_{G_1}(z,w)_{\mathrm b\mathrm w_{i',j'}}=0, \quad \mathrm b\in \mathcal B_1.
\end{equation}
Moreover, the divisor~$D_{\mathrm w}$ is a standard divisor, see~\cite[Theorem 1]{KO06}. Recall that a standard divisor is a sum of~$g$ points, with one point on each compact oval, see~\eqref{eq:standard_divisor}. Since the angles lie on the non-compact oval, these zeros are distinct from the zeros we have found at the angles. Moreover, the fact that~$D_{\mathrm w}$ is standard implies that~$u(D_{\mathrm w})=-e_0+\Delta$, for some~$e_0\in (\RR/ \ZZ)^g$, where~$u$ is the Abel map and~$\Delta$ is the vector of Riemann constants. Hence, with~$e_{\mathrm w_{i',j'}}=\tilde e_0 \in \RR^g$, the lift of~$e_0$ to the universal cover, we have obtained the~$g$ zeros coming from the second theta factor of the right hand side of~\eqref{eq:app:kasteleyn_thetas}.

The final~$g$ zeros are determined by Abel's theorem~\eqref{eq:abels_thm_diff}. We denote the divisor of these final zeros by~$D_{\mathrm b}$. Since~$u(p)\in (\RR/ \ZZ)^g$ for any angle~$p=q_{0,n}, q_{\infty,n}, p_{0,n}, p_{\infty,n}$, Abel's theorem implies that~$u(D_{\mathrm b})=-e+\Delta$ for some~$e\in (\RR/ \ZZ)^g$. This gives us the first theta factor of the right hand side of~\eqref{eq:app:kasteleyn_thetas} with~$e_{\mathrm b_{i,j}}=\tilde e \in \RR^g$.
\end{proof}

\begin{remark}
What is missing to extend the previous argument to all~$k$ is the behavior of~\eqref{eq:app:periodic_adjoint} at~$p_{0,n}$ and~$p_{\infty,n}$,~$n=1,\dots,k$. An explicit computation can still be performed to obtain the leading order term of~\eqref{eq:app:adjoint}, and, hence, determine the poles at these angles. However, obtaining the second order term, which is necessary to determine the zeros, requires a more detailed computation, which we will not pursue here.
\end{remark}

\bibliographystyle{plain}
\bibliography{bibliotek}

\end{document}